\documentclass[12pt,a4paper]{article}
\usepackage{amsmath,amssymb,amsthm}
\usepackage{amsfonts}
\usepackage[utf8]{inputenc}
\usepackage[T1]{fontenc}
\usepackage{geometry}
\usepackage{cite}
\usepackage{bm}
\usepackage{bbm}
\usepackage{dsfont}
\usepackage[hidelinks]{hyperref}
\usepackage{enumitem}
\newtheorem{theorem}{Theorem}[section]
\newtheorem{lemma}[theorem]{Lemma}
\newtheorem{proposition}[theorem]{Proposition}
\newtheorem{definition}[theorem]{Definition}
\newtheorem{remark}[theorem]{Remark}
\newtheorem{corollary}[theorem]{Corollary}
\numberwithin{equation}{section}

\newcommand{\R}{\mathbb{R}}

\newcommand{\Pp}{\mathbb{P}}
\newcommand{\F}{\mathcal{F}}

\newcommand{\dd}{\,\mathrm{d}}

\DeclareMathOperator{\Div}{div}
\DeclareMathOperator{\supp}{supp}
\DeclareMathOperator{\Tr}{Tr}

\title{Compressible stochastic fluid--structure interaction with transport noise and Navier slip: A stochastic-flow approach}

\author{Yong Chen\thanks{Department of Mathematics, Zhejiang Sci-Tech University, Hangzhou 310018, China. E-mail: youngchen329@126.com}
	\and 
	Hongjun Gao\thanks{School of Mathematics, Southeast University, Nanjing 211189, China. E-mail: hjgao@seu.edu.cn}}

\date{}
\allowdisplaybreaks
\begin{document}
	
	\maketitle

	\begin{abstract}
		This paper establishes the existence of local martingale solutions for a three-dimensional stochastic fluid--structure interaction (FSI) problem coupling a viscous compressible isentropic fluid with an elastic shell through Navier-slip boundary conditions. The fluid and shell are perturbed by compatible Stratonovich transport noise. A stochastic-flow transformation removes the stochastic transport integrals and produces a pathwise random FSI system whose pullback coefficients are only H\"older continuous in time. We combine a multi-layer approximation scheme with localization on bounded-flow events and freezing of the random coefficients on short deterministic time subintervals. The shell transport fields are not required to generate isometries: the bending-energy martingale and It\^o correction are controlled at the natural $H^2$ level by commutator identities without derivative loss. Tightness is obtained in Jakubowski spaces, pressure compactness is proved through localized effective viscous flux, and the final artificial-pressure and penalty limit uses boundary-pressure tightness and $H^2$-graph trace tools. For $\gamma>3/2$ and non-colliding initial data, we obtain a martingale solution up to an almost surely positive stopping time.
		
	\end{abstract}
	\begin{center}
		\begin{minipage}{0.9\textwidth} 
			\small
			\textbf{Keywords:} stochastic fluid-structure interaction,  compressible Navier-Stokes equations, transport noise, Navier-slip boundary conditions, martingale solution.\\
			\noindent \textbf{2020 Mathematics Subject Classification:} 60H15, 35Q30, 74F10, 76N10. 
		\end{minipage}
	\end{center}
	
	\vspace{0.5cm}

		\section{Introduction}
	
	\subsection{Background}
	
	The analysis of compressible fluid--structure interaction (FSI) lies at the
	intersection of two compactness theories that are already delicate on
	their own.  For compressible Navier--Stokes equations, weak sequential
	stability is based on the Lions--Feireisl mechanism
	\cite{Lions1993,Lions1998,Feireisl2001}: the density is only weakly
	compact at the energy level, and strong convergence is recovered through
	renormalization and the effective viscous flux.  In fluid--structure
	interaction (FSI), one must combine this mechanism with a time-dependent
	fluid domain whose regularity is dictated by the elastic structure.
	Even in deterministic problems, this requires a careful compatibility
	between the moving geometry, traces, the momentum equation, and the
	compactness of the density; see, among others,
	\cite{BoulakiaGuerrero2009,Haak2019,Boulakia2005,BoulakiaGuerrero2010,
		KukavicaTuffaha2012,BoulakiaGuerrero2017,FloriOrenga1999,Flori2000,
		BreitSchwarzacher2018,BreitSchwarzacher2023,Macha2022,Mitra2020}.
	
	The interface law changes the compactness problem in an essential way.
	Under a no-slip condition the full fluid trace is tied to the structure
	velocity.  Under Navier slip, by contrast, only the normal velocities
	coincide, while the tangential mismatch is controlled through a
	friction law.  This distinction is physically natural but analytically
	important: a penalty approximation must recover the normal kinematic
	constraint without accidentally imposing tangential no-slip.  Weak
	theories with slip have been developed in deterministic and stochastic
	FSI; see
	\cite{MuhaCanic2016,NeustupaPenel2010,Liu2024,Tawri2024}.  In
	particular, the stochastic Navier--slip model of \cite{Tawri2024}
	concerns an incompressible fluid.  Hence it does not involve the density
	compactness, effective-viscous-flux, and pressure-identification mechanisms
	that are unavoidable in the present compressible setting.  Here the
	normal/tangential decomposition must therefore be closed simultaneously
	with the density and pressure compactness.
	
	Stochastic FSI introduces a second layer of difficulty.  Existing
	moving-boundary stochastic theories concern predominantly incompressible
	fluids \cite{KuanCanic2024,TawriCanic2025,Tawri2025a,Tawri2024},
	while \cite{KuanTawri2025} treats a compressible no-slip problem in which
	stochastic forcing enters the fluid momentum and structure equations, while
	the continuity equation remains deterministic; see also
	\cite{BreitMensahMoyo2024} for martingale solutions in stochastic FSI.  On fixed domains, stochastic
	compressible Navier--Stokes equations are well developed
	\cite{Feireisl2013,BreitHofmanova2016,BreitFeireislHofmanova2018}, and
	transport-type perturbations have recently been studied in
	\cite{BreitFeireislHofmanovaZatorska2022,
		BreitFeireislHofmanovaMucha2026}.  Such noise is geometrically different
	from an additive or multiplicative body force: in Stratonovich form it
	acts by random transport and is therefore naturally associated with a
	stochastic flow; cf. \cite{Holm2015,Cotter2017,
		HofmanovaLangePappalettera2024}.
	
	The present problem combines features that are separated in these
	theories.  The density and momentum are transported by Stratonovich
	noise, the elastic boundary is driven by the same stochastic
	characteristics, the fluid is compressible, and the coupling is of
	Navier-slip type.  The use of a common stochastic flow is therefore not
	only a convenient change of variables: it is the mechanism that keeps
	the fluid and shell geometrically synchronized.  After the
	transformation the stochastic transport integrals disappear, but the
	price is a random moving-domain problem whose pullback coefficients are
	only H\"older continuous in time.  Thus the classical deterministic
	compactness tools cannot be inserted verbatim; they have to be combined
	with localization, coefficient freezing, and stochastic compactness in
	a way that preserves the common Wiener structure.

	\subsection{The stochastic FSI model}\label{sec:physical_model}
	
	Let $\Gamma=\mathbb T^2$ denote the two-dimensional flat torus used as
	the reference midsurface of the shell. We write
	\[
	\bm x=(\bm y,z),\qquad \bm y=(x,y)\in\Gamma,
	\]
	and $\mathbf e_z=(0,0,1)$ for the vertical unit vector. We regard
	$\bm y$ as the shell material coordinate. The scalar field
	$\eta=\eta(t,\bm y)$ denotes the vertical displacement of the shell from
	the reference graph $z=1$. Thus the fluid domain and moving interface are
	\begin{align*}     & \mathcal O_\eta(t)=\{\bm x:\bm y\in\Gamma,\ 0<z<1+\eta(t,\bm y)\}, \\
		& \Gamma_\eta(t)=\{(\bm y,1+\eta(t,\bm y)):\bm y\in\Gamma\}.
	\end{align*}
	The fluid satisfies
	\begin{align}
		& \dd\rho+\Div(\rho\bm u)\,\dd t
		=\sum_{k=1}^K\Div(\rho\mathbf Q_k)\circ\dd W_k,
		\label{con} \\
		& \dd(\rho\bm u)
		+\bigl[\Div(\rho\bm u\otimes\bm u)+\nabla p(\rho)
		-\Div\mathbb S(\nabla\bm u)\bigr]\dd t
		=\sum_{k=1}^K\Div(\rho\bm u\otimes\mathbf Q_k)\circ\dd W_k,
		\label{mom} 
	\end{align}
	with $p(\rho)=a\rho^\gamma$, $\gamma>3/2$, and
	\[
	\mathbb S(\nabla\bm u)
	=\mu(\nabla\bm u+\nabla\bm u^\top)
	+\lambda\Div\bm u\,\mathbb I,
	\qquad \mu>0,\quad \lambda+\frac23\mu\ge0.
	\]
	The horizontal variables are periodic, and the fixed bottom boundary is
	\[
	\Gamma_b:=\Gamma\times\{0\},
	\]
	on which the no-slip condition is imposed.
	
	Write
	\[
	\Phi_\eta(\bm y)=(\bm y,1+\eta(\bm y)),\qquad
	\mathbf N^\eta=(-\nabla_\Gamma\eta,1),\qquad
	J_\eta=|\mathbf N^\eta|,\qquad
	\mathbf n^\eta=\frac{\mathbf N^\eta}{J_\eta},
	\]
	and let
	$\mathbf a_{\tau^\eta}=(\mathbb I-\mathbf n^\eta\otimes\mathbf n^\eta)\mathbf a$.
	The fluid and shell are coupled by the standard Navier-slip interface law
	used in moving-boundary FSI; see, for example,
	\cite{MuhaCanic2016,Liu2024,Tawri2024}. Namely,
	\begin{align}
		& (\bm u\cdot\mathbf n^\eta)\circ\Phi_\eta
		=(r\mathbf e_z)\cdot(\mathbf n^\eta\circ\Phi_\eta),
		\label{eq:physical_kinematic} \\
		& \bigl(\mathbb S(\nabla\bm u)\mathbf n^\eta\bigr)_{\tau^\eta}\circ\Phi_\eta
		=-\iota
		\left(\bm u-r\mathbf e_z\circ\Phi_\eta^{-1}\right)_{\tau^\eta}
		\circ\Phi_\eta,
		\label{eq:physical_dynamic_slip}
	\end{align}
	where $\iota\ge0$. Condition~\eqref{eq:physical_kinematic} is the
	impermeability condition: only the normal components of the fluid and shell
	velocities are required to agree. Condition~\eqref{eq:physical_dynamic_slip}
	is the linear Navier friction law, relating tangential traction to the
	relative tangential velocity. For $\iota>0$ it contributes a nonnegative
	interface dissipation, whereas $\iota=0$ corresponds to vanishing
	tangential traction. The vertical fluid force per unit reference area is
	\begin{equation*}		\mathcal F^\eta(\rho,\bm u)
		=-\left[
		\bigl(\mathbb S(\nabla\bm u)-p(\rho)\mathbb I\bigr)\mathbf N^\eta
		\right]\circ\Phi_\eta\cdot\mathbf e_z.
	\end{equation*}
	
	The shell is transported by the same surface fields that drive the fluid
	noise. Introducing the stochastic material velocity $r$ as the drift velocity
	along the shell characteristics, so that the drift part of $\dd\eta$ is
	$r\,\dd t$, we write
	\begin{align}
		& \dd\eta
		=r\,\dd t
		+\sum_{k=1}^K(\mathcal Y_k\cdot\nabla_\Gamma\eta)\circ\dd W_k,
		\label{eq:material_eta} \\
		& \dd r
		+\bigl(\Delta_\Gamma^2\eta-\nu_s\Delta_\Gamma r
		-\mathcal F^\eta(\rho,\bm u)\bigr)\dd t
		=\sum_{k=1}^K(\mathcal Y_k\cdot\nabla_\Gamma r)\circ\dd W_k.
		\label{eq:material_r} 
	\end{align}
	Thus $r$ is the shell velocity along the stochastic characteristics; after
	the common-flow pullback below it becomes the ordinary transformed velocity
	$w=\partial_t\zeta$.
	The fluid and shell transport fields are chosen compatibly:
	\begin{equation}
		\mathbf Q_k(\bm y,z)=(\mathcal Y_k(\bm y),0),\qquad
		\Div_\Gamma\mathcal Y_k=0,\qquad k=1,\ldots,K.
		\label{eq:noise_geometry}
	\end{equation}
	Thus the fluid transport has no vertical component and restricts on each
	horizontal slice to the same field that transports the shell. With this
	choice, one common stochastic flow removes all transport integrals from the
	fluid and shell equations.
	Our main result establishes the existence of a martingale solution for
	$\gamma>3/2$ up to an almost surely positive stopping time preceding loss of
	the graph geometry.
	
\subsection{Proof strategy and organization}

Let $\phi$ be the volume-preserving Stratonovich flow generated by the
transport fields and let $\chi=\phi|_\Gamma$. Following the stochastic-flow
approach to transport noise
\cite{BreitFeireislHofmanovaZatorska2022,
	BreitFeireislHofmanovaMucha2026,
	HofmanovaLangePappalettera2024},
we pull back the fluid and shell along the same stochastic characteristics:
\[
\varrho=\rho\circ\phi,\qquad
\mathbf v=\bm u\circ\phi,\qquad
\zeta=\eta\circ\chi,\qquad
w=r\circ\chi .
\]
The stochastic transport integrals then disappear from the transformed
equations, and the problem becomes a pathwise random FSI system on a graph
domain. A feature specific to the present coupling is that the same flow
acts on both phases, so that the fluid domain and the shell remain
geometrically synchronized after the transformation. On bounded-flow events,
the resulting pullback coefficients are smooth in space and
$C_t^\vartheta$, $\vartheta<1/2$, in time.

The construction is performed on the fixed maximal cylinder
\[
\mathcal O_\alpha=\Gamma\times(0,2+\alpha^{-1})
\]
with five approximation parameters,
\[
\Delta t\to0,\qquad
m\to\infty,\qquad
\epsilon\to0,\qquad
l\to0,\qquad
\delta\to0,
\]
removed in this order. The overall approximation architecture shares several
ingredients with the compressible stochastic FSI construction of
\cite{KuanTawri2025}, including time splitting, extension to a fixed maximal
domain, localization by stopped structure displacements, and a tubular
penalty near the moving interface. Weak compactness is combined with the
Jakubowski--Skorokhod representation theorem \cite{Jakubowski1997}.
At fixed $m$ and $\epsilon>0$, the splitting defect is controlled
quantitatively, and the Galerkin limit is taken while the density remains
parabolic and the inertial coefficient is still mollified.

The limit $\epsilon\to0$ is the first stage at which strong density
compactness must be recovered. We use the classical Lions--Feireisl
renormalization and effective-viscous-flux mechanism
\cite{Lions1998,Feireisl2001}. In the present setting, however, the
effective-flux argument is applied to pullback operators whose coefficients
are random and only H\"older continuous in time. We therefore localize on
bounded-flow events and freeze these coefficients on short deterministic
time intervals before applying the local elliptic estimates; compare
\cite{BreitFeireislHofmanovaMucha2026}. The freezing errors are removed
before the localization is released.

The limit $l\to0$ removes the mollified inertial coefficient. The resulting
defect is decomposed into Friedrichs commutators and ordinary mollification
errors, which are combined with momentum compactness to recover the
unmollified convective term. The commutator estimate itself is classical;
its role here is to preserve the effective-viscous-flux structure through
the removal of the inertial regularization.

The final limit $\delta\to0$ removes simultaneously the artificial pressure,
the exterior viscosity, and the normal penalty. The exterior-mass control
and the use of adapted test functions are related to the final
$\delta$-limit strategy of \cite{KuanTawri2025}; in the present problem,
however, the Navier-slip condition requires a different interface
decomposition. The singular penalty acts only on the normal mismatch, while
the tangential relative velocity is controlled by the Navier-friction
energy. The analysis further combines boundary-pressure tightness, global
pressure identification, and subcritical Korn--Sobolev estimates on
$H^2$ graph domains. The tubular penalty yields the limiting normal trace,
whereas the tangential trace is identified independently through the
friction dissipation.

The limiting weak formulation is recovered with geometry-adapted
semimartingale tests. In contrast to a no-slip test construction, these
tests enforce only the transformed normal compatibility condition and leave
the tangential component unconstrained. They satisfy the $\delta$-level
normal condition exactly, so the singular penalty vanishes identically on
the test pair. Their construction is compatible with the stochastic
filtration and with the random pullback geometry, and allows the complete
coupled residual to pass to the limit without introducing a separate
pairing of the $L^1$ pressure with a rough divergence.

The stochastic energy inequality is recovered only after the nonlinear
limits have been identified. The shell transport fields are assumed smooth
and divergence free, but not Killing. Consequently, the bending energy does
not enjoy an exact stochastic isometry cancellation. Instead, the
bending-energy martingale and its It\^o correction are controlled at the
natural $H^2(\Gamma)$ level by commutator identities. These identities are
combined, at the same regularization scale, with a normal-compatible
kinetic--internal-energy inequality. The elastic cross terms cancel before
the regularization is removed, yielding the final total-energy inequality
without introducing an independent low-regularity fluid traction.

The remainder of the paper is organized as follows.
Section~\ref{sec:model} introduces the stochastic-flow formulation, the
solution concept, and the existence theorem for the transformed system.
Section~\ref{sec:approx} constructs the multi-layer approximation.
Sections~\ref{s4}--\ref{s9} carry out the successive limit passages.
The effective-viscous-flux estimates, the removal of the inertial
mollification, and the $H^2$ graph estimates required in the final
$\delta$-limit are developed at the corresponding stages.
Appendix~\ref{app:shell_ito} contains the geometry-adapted test construction
and the low-regularity identities used in the final stochastic energy
recovery.

\section{The transformed system and main results}
	\label{sec:model}

	\subsection{The stochastic flow transformation}

	In this subsection we introduce the common stochastic-flow transformation
	for the fluid and the elastic shell.

	Under \eqref{eq:noise_geometry}, the shell transport fields enter the
	transformed energy estimate through the following commutator bound.
	
	\begin{lemma}[Shell-energy commutator estimate]
		\label{lem:shell_energy_commutator}
		Let
		\[
		A_k:=\mathcal Y_k\cdot\nabla_\Gamma,
		\qquad
		C_k:=[\Delta_\Gamma,A_k]
		=\Delta_\Gamma A_k-A_k\Delta_\Gamma.
		\]
		Assume $\Div_\Gamma\mathcal Y_k=0$ and
		$\mathcal Y_k\in W^{3,\infty}(\Gamma;\mathbb R^2)$. Then, for every
		$f\in C^\infty(\Gamma)$,
		\begin{align}
			& \mathcal G_k(f)
			:=
			\left\langle
			\Delta_\Gamma f,
			\Delta_\Gamma A_k f
			\right\rangle_{L^2(\Gamma)}
			=
			\left\langle
			\Delta_\Gamma f,
			C_k f
			\right\rangle_{L^2(\Gamma)},
			\label{eq:Gk_commutator} \\
			& \mathcal R_k(f)
			:=
			\|\Delta_\Gamma A_k f\|_{L^2(\Gamma)}^2
			+
			\left\langle
			\Delta_\Gamma f,
			\Delta_\Gamma A_k^2f
			\right\rangle_{L^2(\Gamma)}
			\notag \\
			& =
			\|C_k f\|_{L^2(\Gamma)}^2
			+
			\left\langle
			\Delta_\Gamma f,
			[C_k,A_k]f
			\right\rangle_{L^2(\Gamma)}.
			\label{eq:Rk_commutator} 
		\end{align}
		Moreover, there exists a constant $c_k$, depending only on
		$\|\mathcal Y_k\|_{W^{3,\infty}}$ and on $\Gamma$, such that
		\begin{equation}
			|\mathcal G_k(f)|+|\mathcal R_k(f)|
			\le
			c_k\|f\|_{H^2(\Gamma)}^2.
			\label{eq:commutator_energy_bound}
		\end{equation}
		On the flat torus, since the operators involved contain no zeroth-order
		terms, the right-hand side may equivalently be bounded by
		$c_k\|\Delta_\Gamma f\|_{L^2(\Gamma)}^2$ up to the harmless constant mode.
	\end{lemma}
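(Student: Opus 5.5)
The plan rests on one structural fact: since $\Div_\Gamma\mathcal Y_k=0$, the first-order operator $A_k=\mathcal Y_k\cdot\nabla_\Gamma$ is skew-adjoint on $L^2(\Gamma)$. Integrating by parts on the torus gives $\langle g,A_kg\rangle=-\langle g,A_kg\rangle-\langle(\Div_\Gamma\mathcal Y_k)g,g\rangle$, hence $\langle g,A_k g\rangle=0$ and $A_k^*=-A_k$ for every smooth $g$. All identities will follow from this by inserting commutators.

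\emph{Identity \eqref{eq:Gk_commutator}.} Write $\Delta_\Gamma A_kf=A_k\Delta_\Gamma f+C_kf$ and pair with $\Delta_\Gamma f$. The term $\langle\Delta_\Gamma f,A_k\Delta_\Gamma f\rangle$ vanishes by skew-adjointness with $g=\Delta_\Gamma f$, which leaves $\mathcal G_k(f)=\langle\Delta_\Gamma f,C_kf\rangle$.

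\emph{Identity \eqref{eq:Rk_commutator}.} First,
\[
\|\Delta_\Gamma A_kf\|^2=\|A_k\Delta_\Gamma f\|^2+2\langle A_k\Delta_\Gamma f,C_kf\rangle+\|C_kf\|^2 .
\]
Second, $\Delta_\Gamma A_k^2=A_k\Delta_\Gamma A_k+C_kA_k=A_k^2\Delta_\Gamma+A_kC_k+C_kA_k$. Pairing with $\Delta_\Gamma f$ and using $A_k^*=-A_k$ gives
\[
\langle\Delta_\Gamma f,A_k^2\Delta_\Gamma f\rangle=-\|A_k\Delta_\Gamma f\|^2,\qquad
\langle\Delta_\Gamma f,A_kC_kf\rangle=-\langle A_k\Delta_\Gamma f,C_kf\rangle .
\]
Next, $C_kA_k=A_kC_k+[C_k,A_k]$, so $\langle\Delta_\Gamma f,C_kA_kf\rangle=-\langle A_k\Delta_\Gamma f,C_kf\rangle+\langle\Delta_\Gamma f,[C_k,A_k]f\rangle$. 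Adding everything, the $\|A_k\Delta_\Gamma f\|^2$ terms cancel and the cross terms $2\langle A_k\Delta_\Gamma f,C_kf\rangle$ cancel. What remains is $\|C_kf\|^2+\langle\Delta_\Gamma f,[C_k,A_k]f\rangle$. This bookkeeping is routine; I only need to track the signs carefully.

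\emph{Bound \eqref{eq:commutator_energy_bound}.} This is where the order counting matters. Compute $C_k$ explicitly:
\[
C_kf=(\Delta_\Gamma\mathcal Y_k)\cdot\nabla_\Gamma f+2\nabla_\Gamma\mathcal Y_k:\nabla_\Gamma^2f .
\]
This is a second-order operator with coefficients in $W^{2,\infty}$, so $\|C_kf\|_{L^2}\lesssim\|\mathcal Y_k\|_{W^{2,\infty}}\|f\|_{H^2}$, which handles $\mathcal G_k$ by Cauchy--Schwarz and also the term $\|C_kf\|^2$. The main point is the commutator $[C_k,A_k]$. Naively it is third-order, but a commutator of a second-order and a first-order differential operator is again of order two, since the top symbols commute. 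Explicitly, $[C_k,A_k]f$ involves at most second derivatives of $f$, with coefficients built from up to third derivatives of $\mathcal Y_k$; this is exactly where the $W^{3,\infty}$ hypothesis is used. Hence $\|[C_k,A_k]f\|_{L^2}\le c\|f\|_{H^2}$, and Cauchy--Schwarz gives $|\langle\Delta_\Gamma f,[C_k,A_k]f\rangle|\le c\|f\|_{H^2}^2$.

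The final remark follows because on $\mathbb T^2$ the quantity $\|f-\bar f\|_{H^2}$ is equivalent to $\|\Delta_\Gamma f\|_{L^2}$, where $\bar f$ is the mean of $f$. Moreover, all the operators $C_k$, $A_k$, $[C_k,A_k]$ annihilate constants, so $f$ may be replaced by $f-\bar f$ throughout.

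I expect the only genuine obstacle to be verifying that $[C_k,A_k]$ loses no derivative. I will check this by computing the commutator of the principal part $2\nabla\mathcal Y:\nabla^2$ with $\mathcal Y\cdot\nabla$ in coordinates: the third-order terms $2\partial_j\mathcal Y^i\mathcal Y^l\partial_{ijl}$ appear with opposite signs and cancel.
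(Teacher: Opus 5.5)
Your proposal is correct and follows the paper's own proof essentially step by step. Both rest on the same ingredients: skew-adjointness of $A_k$ from $\Div_\Gamma\mathcal Y_k=0$, the expansions $\Delta_\Gamma A_k=A_k\Delta_\Gamma+C_k$ and $\Delta_\Gamma A_k^2=A_k^2\Delta_\Gamma+A_kC_k+C_kA_k$, and the order count showing that $C_k$ and $[C_k,A_k]$ are second-order operators whose coefficients involve at most three derivatives of $\mathcal Y_k$.

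Your version is somewhat more explicit than the paper's in three places. You do the sign bookkeeping through $C_kA_k=A_kC_k+[C_k,A_k]$, so the paper's appeal to self-adjointness of $C_k$ is not needed. You check directly that the third-order terms cancel. You justify the final remark by observing that all the operators annihilate constants, so $f$ may be replaced by $f-\bar f$.
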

	
	\begin{proof}
		Since $\Div_\Gamma\mathcal Y_k=0$, the transport operator $A_k$ is
		skew-adjoint on $L^2(\Gamma)$, whereas $\Delta_\Gamma$ is self-adjoint.
		Consequently $C_k=[\Delta_\Gamma,A_k]$ is self-adjoint. Using
		$\Delta_\Gamma A_k=A_k\Delta_\Gamma+C_k$ gives
		\[
		\left\langle\Delta_\Gamma f,\Delta_\Gamma A_kf\right\rangle
		=
		\left\langle\Delta_\Gamma f,C_kf\right\rangle,
		\]
		which proves \eqref{eq:Gk_commutator}. Furthermore,
		\[
		\Delta_\Gamma A_k^2
		=A_k^2\Delta_\Gamma+A_kC_k+C_kA_k.
		\]
		Expanding $\|\Delta_\Gamma A_kf\|_2^2$ and using $A_k^*=-A_k$ yields
		\[
		\|\Delta_\Gamma A_kf\|_2^2
		+
		\left\langle\Delta_\Gamma f,\Delta_\Gamma A_k^2f\right\rangle
		=
		\|C_kf\|_2^2
		+
		\left\langle\Delta_\Gamma f,[C_k,A_k]f\right\rangle,
		\]
		which is \eqref{eq:Rk_commutator}. In flat coordinates,
		\[
		C_k f
		=(\Delta_\Gamma\mathcal Y_{k,j})\partial_jf
		+2(\partial_i\mathcal Y_{k,j})\partial_{ij}f.
		\]
		Thus $C_k$ is a second-order operator. The commutator $[C_k,A_k]$ is
		again of order at most two; its coefficients involve derivatives of
		$\mathcal Y_k$ of order at most three. Hence
		\[
		\|C_kf\|_{L^2}
		+
		\|[C_k,A_k]f\|_{L^2}
		\le
		c_k\|f\|_{H^2},
		\]
		and \eqref{eq:commutator_energy_bound} follows by Cauchy--Schwarz inequality.
	\end{proof}
	
	Define $\phi(t,\bm{x})$ as the solution of the Stratonovich SDE
	\begin{align}
		& \dd \phi(t,\bm{x})
		=
		-
		\sum_{k=1}^{K}
		\mathbf{Q}_k(\phi(t,\bm{x}))
		\circ \dd W_k(t),
		\qquad
		\phi(0,\bm{x})=\bm{x},
		\qquad
		\bm{x}\in\mathcal{O}_{\alpha}.
		\label{phiv} 
	\end{align}
	Let $\chi(t,\bm{y})$ denote the induced stochastic flow on
	the reference interface $\Gamma$:
	\begin{align}
		& \dd \chi(t,\bm{y})
		=
		-
		\sum_{k=1}^{K}
		\mathcal{Y}_k(\chi(t,\bm{y}))
		\circ \dd W_k(t),
		\qquad
		\chi(0,\bm{y})=\bm{y}.
		\label{eq:surface_flow} 
	\end{align}
	Under \eqref{eq:noise_geometry},
	\[
	\phi(t,\bm{y},z)
	=
	\bigl(\chi(t,\bm{y}),z\bigr).
	\]
	Moreover, since $\Div\mathbf{Q}_k=0$ and
	$\Div_{\Gamma}\mathcal{Y}_k=0$, the flows $\phi(t,\cdot)$ and
	$\chi(t,\cdot)$ are volume- and area-preserving, respectively. In
	particular,
	\[
	\det\nabla\phi(t,\bm{x})=1,
	\qquad
	\det\nabla_{\Gamma}\chi(t,\bm{y})=1
	\]
	almost surely.
	
	We introduce the transformed fluid variables by
	\[
	\varrho(t,\bm{x})
	=
	\rho(t,\phi(t,\bm{x})),
	\qquad
	\mathbf{v}(t,\bm{x})
	=
	\mathbf{u}(t,\phi(t,\bm{x})),
	\]
	and the transformed structure variables by
	\begin{equation*}		\zeta(t,\bm{y})
		=
		\eta(t,\chi(t,\bm{y})),
		\qquad
		w(t,\bm{y})
		=
		r(t,\chi(t,\bm{y})).
	\end{equation*}
	Notice that the transformed structure velocity is defined as the pullback
	of the stochastic material velocity $r$, rather than the pullback of the
	ordinary time derivative of $\eta$.
	
	The graph-preserving property of $\phi$ yields
	\[
	\begin{aligned}
		\phi^{-1}
		\bigl(t,\mathcal{O}_{\eta}(t)\bigr)
		=
		\left\{
		(\bm{y},z):
		0<z<1+\eta(t,\chi(t,\bm{y}))
		\right\} \\
		=
		\left\{
		(\bm{y},z):
		0<z<1+\zeta(t,\bm{y})
		\right\}
		=:
		\mathcal{O}_{\zeta}(t).
	\end{aligned}
	\]
	For later use, we define the pullback differential operators by
	\begin{align*}		& \Div^{\phi}\bm{\psi}
		:=
		\bigl[
		\Div(\bm{\psi}\circ\phi^{-1})
		\bigr]\circ\phi,
		\;\;
		\nabla^{\phi}\psi
		:=
		\bigl[
		\nabla(\psi\circ\phi^{-1})
		\bigr]\circ\phi,  
	\end{align*}
	and, on $\Gamma$,
	\begin{align*}		& \nabla^{\chi}_{\Gamma}\psi
		:=
		\bigl[
		\nabla_{\Gamma}(\psi\circ\chi^{-1})
		\bigr]\circ\chi,
		\;\;
		\Delta^{\chi}_{\Gamma}\psi
		:=
		\bigl[
		\Delta_{\Gamma}(\psi\circ\chi^{-1})
		\bigr]\circ\chi.  
	\end{align*}
	
	The following lemma makes explicit the cancellation of the stochastic
	transport terms.
	
	\begin{lemma}[Random-flow cancellation]
		
		Assume \eqref{eq:noise_geometry}, and let $\phi$ and $\chi$ be the
		Stratonovich flows defined by \eqref{phiv} and \eqref{eq:surface_flow}.
		Suppose initially that $(\rho,\mathbf u,\eta,r)$ is smooth and satisfies
		\eqref{con}--\eqref{mom} and
		\eqref{eq:material_eta}--\eqref{eq:material_r}. Then the
		transformed variables
		\[
		\varrho=\rho\circ\phi,
		\qquad
		\mathbf{v}=\mathbf{u}\circ\phi,
		\qquad
		\zeta=\eta\circ\chi,
		\qquad
		w=r\circ\chi
		\]
		satisfy, pathwise,
		\begin{align}
			& \partial_t\varrho
			+
			\Div^{\phi}(\varrho\mathbf{v})
			=0,
			\label{eq:cont_trans} \\
			& \partial_t(\varrho\mathbf{v})
			+
			\Div^{\phi}
			(\varrho\mathbf{v}\otimes\mathbf{v})
			+
			\nabla^{\phi}p(\varrho)
			=
			\Div^{\phi}
			\mathbb{S}(\nabla^{\phi}\mathbf{v}),
			\label{eq:mom_trans} \\
			& \partial_t\zeta
			=w,
			\label{eq:zeta_w} \\
			& \partial_t w
			+
			(\Delta^{\chi}_{\Gamma})^{2}\zeta
			-
			\nu_s\Delta^{\chi}_{\Gamma}w
			=
			\mathcal{F}^{\chi}_{\mathrm{fluid}}
			(\varrho,\mathbf{v}),
			\label{eq:str_trans_first_order} 
		\end{align}
		where
		\begin{equation*}			\mathcal{F}^{\chi}_{\mathrm{fluid}}
			(\varrho,\mathbf{v})
			:=
			\bigl[
			\mathcal{F}^{\eta}(\rho,\mathbf{u})
			\bigr]\circ\chi .
		\end{equation*}
	\end{lemma}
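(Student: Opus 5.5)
The plan is to apply the Stratonovich chain rule, i.e.\ the Itô--Wentzell formula in Stratonovich form, to the compositions $\rho(t,\phi(t,\bm x))$, $\mathbf u(t,\phi(t,\bm x))$, $\eta(t,\chi(t,\bm y))$ and $r(t,\chi(t,\bm y))$. Since Stratonovich calculus obeys the ordinary chain rule, for a smooth semimartingale field $F$ with $\dd F=a\,\dd t+\sum_k b_k\circ\dd W_k$ one has
\[
\dd\bigl[F(t,\phi(t,\bm x))\bigr]
=\bigl[a\,\dd t+\textstyle\sum_k b_k\circ\dd W_k\bigr]\circ\phi
-\textstyle\sum_k\bigl[(\mathbf Q_k\cdot\nabla)F\bigr]\circ\phi\circ\dd W_k ,
\]
by \eqref{phiv}, and the same holds on $\Gamma$ with $\chi$ and $\mathcal Y_k$ by \eqref{eq:surface_flow}. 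The whole proof is therefore a matter of checking that, in each equation, the noise coefficient $b_k$ coincides with $(\mathbf Q_k\cdot\nabla)F$, respectively $(\mathcal Y_k\cdot\nabla_\Gamma)F$. It will also use that pullbacks of spatial differential operators are, by definition, the operators $\Div^\phi$, $\nabla^\phi$, $\Delta^\chi_\Gamma$.

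\textbf{Continuity equation.} Since $\Div\mathbf Q_k=0$, we have $\Div(\rho\mathbf Q_k)=\mathbf Q_k\cdot\nabla\rho$. Hence the $\circ\dd W_k$ terms cancel exactly and
\[
\partial_t\varrho=-[\Div(\rho\bm u)]\circ\phi=-\Div^\phi(\varrho\mathbf v),
\]
using $(\rho\bm u)\circ\phi=\varrho\mathbf v$. This gives \eqref{eq:cont_trans}.

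\textbf{Momentum equation.} In the same way, $\Div(\rho\bm u\otimes\mathbf Q_k)=(\mathbf Q_k\cdot\nabla)(\rho\bm u)$, again because $\Div\mathbf Q_k=0$. The chain rule applied to $\rho\bm u$ therefore yields \eqref{eq:mom_trans}, after rewriting each spatial term as a pullback. For example,
\[
[\Div\mathbb S(\nabla\bm u)]\circ\phi=\Div^\phi\mathbb S(\nabla^\phi\mathbf v),
\]
since $\nabla\bm u\circ\phi=\nabla^\phi\mathbf v$ by the definition of $\nabla^\phi$ with $\psi=\mathbf v$.

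\textbf{Shell equations.} Here \eqref{eq:material_eta} and \eqref{eq:material_r} have noise coefficients $A_k\eta$ and $A_kr$ exactly. The surface chain rule then gives
\[
\partial_t\zeta=r\circ\chi=w,
\qquad
\partial_t w=\bigl[-\Delta_\Gamma^2\eta+\nu_s\Delta_\Gamma r+\mathcal F^\eta\bigr]\circ\chi .
\]
Finally, we identify $(\Delta_\Gamma^2\eta)\circ\chi=(\Delta_\Gamma^\chi)^2\zeta$. This holds because $\Delta^\chi_\Gamma\psi\circ\chi^{-1}=\Delta_\Gamma(\psi\circ\chi^{-1})$, so the conjugated operator iterates. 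Likewise $(\Delta_\Gamma r)\circ\chi=\Delta^\chi_\Gamma w$. This proves \eqref{eq:zeta_w} and \eqref{eq:str_trans_first_order}, with $\mathcal F^\chi_{\mathrm{fluid}}$ as defined.

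\textbf{Main obstacle.} The step needing care is justifying the Itô--Wentzell/Stratonovich chain rule for a random field composed with a stochastic flow. Under the smoothness assumption this is Kunita's generalized Itô formula. In Stratonovich form there is no extra cross-variation correction beyond what the Stratonovich integrals already encode, but I would verify this by converting to Itô form: the Itô corrections of the field and of the flow combine with the cross-variation term $-\sum_k\dd\langle \nabla F,\phi\rangle$ and cancel, precisely because the noise coefficients match the transport operator. A secondary point is the domain. Since $\phi(t,\bm y,z)=(\chi(t,\bm y),z)$ preserves horizontal slices, the pulled-back equations live on $\mathcal O_\zeta(t)$, and the boundary data transform consistently. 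These boundary conditions are not part of the statement, however, so only the interior identities need to be verified.
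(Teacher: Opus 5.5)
Your proposal is correct and follows the same route as the paper. It applies the Stratonovich It\^o--Wentzell chain rule to $F\circ\phi$ and $F\circ\chi$, and uses $\Div\mathbf Q_k=0$ to rewrite $\Div(\rho\mathbf Q_k)$ and $\Div(\rho\bm u\otimes\mathbf Q_k)$ as $(\mathbf Q_k\cdot\nabla)\rho$ and $(\mathbf Q_k\cdot\nabla)(\rho\bm u)$, so that composition with the flow of $-\mathbf Q_k$ (respectively $-\mathcal Y_k$ on $\Gamma$) cancels every noise term. Beyond the paper's one-line argument, you also check the pullback identifications $[\Div\mathbb S(\nabla\bm u)]\circ\phi=\Div^\phi\mathbb S(\nabla^\phi\mathbf v)$ and $(\Delta_\Gamma^2\eta)\circ\chi=(\Delta_\Gamma^\chi)^2\zeta$, which the paper leaves implicit.
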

	
	\begin{proof}
		For smooth solutions this follows directly from the Stratonovich
		It\^o--Wentzell formula.  Since $\Div\mathbf Q_k=0$,
		\[
		\Div(\rho\mathbf Q_k)=(\mathbf Q_k\cdot\nabla)\rho,
		\qquad
		\Div(\rho\mathbf u\otimes\mathbf Q_k)
		=(\mathbf Q_k\cdot\nabla)(\rho\mathbf u),
		\]
		and composition with the flow generated by $-\mathbf Q_k$ cancels the fluid
		transport terms.  The same calculation on $\Gamma$ cancels the shell
		transport terms and gives $\partial_t\zeta=w$.
	\end{proof}
	
	Combining \eqref{eq:zeta_w} and
	\eqref{eq:str_trans_first_order}, the transformed shell equation may
	equivalently be written in second-order form as
	\begin{align}
		& \partial_t^2\zeta
		+
		(\Delta_{\Gamma}^{\chi})^{2}\zeta
		-
		\nu_s\Delta_{\Gamma}^{\chi}\partial_t\zeta
		=
		\mathcal{F}^{\chi}_{\mathrm{fluid}}
		(\varrho,\mathbf{v})
		\qquad\text{on }\Gamma.
		\label{eq:str_trans} 
	\end{align}
	Thus, after the stochastic flow transformation, the coupled system
	\eqref{eq:cont_trans}, \eqref{eq:mom_trans}, and
	\eqref{eq:str_trans_first_order} contains no stochastic integrals.
	The randomness enters only through the coefficients of the pullback
	operators and through the transformed moving domain
	$\mathcal{O}_{\zeta}(t)$.
	
	We record the transformed interface geometry. Here and below,
	$\operatorname{cof}F$ denotes the cofactor matrix of $F$; for invertible $F$,
	$\operatorname{cof}F=(\det F)F^{-T}$. The physical unit normal pulled back
	to $\Gamma_\zeta(t)$ is
	\begin{equation*}		\mathbf n^{\zeta,\phi}
		:=(\mathbf n^\eta)\circ\phi
		=\frac{\operatorname{cof}(\nabla\phi)\mathbf n^\zeta}
		{|\operatorname{cof}(\nabla\phi)\mathbf n^\zeta|},
		\qquad\text{on }\Gamma_\zeta(t),
	\end{equation*}
	and the corresponding tangential projection is
	\[
	(\cdot)_{\tau^{\zeta,\phi}}
	:=\bigl(\mathbb I-\mathbf n^{\zeta,\phi}\otimes
	\mathbf n^{\zeta,\phi}\bigr)(\cdot).
	\]
	The surface Jacobian associated with the restriction of $\phi$ to $\Gamma_\zeta(t)$ is
	\begin{equation*}		J_\Gamma^\phi
		:=|\operatorname{cof}(\nabla\phi)\mathbf n^\zeta|.
	\end{equation*}
	Because $\chi$ is area preserving on the flat reference torus, the weighted normals satisfy
	\[
	\mathbf N^\eta\circ\chi
	=\operatorname{cof}(\nabla\phi)\mathbf N^\zeta.
	\]
	Hence the pulled-back vertical traction appearing in the shell equation is
	\begin{align*}		& \mathcal F^\chi_{\mathrm{fluid}}(\varrho,\mathbf v)
		=-\left[
		\bigl(\mathbb S(\nabla^\phi\mathbf v)-p(\varrho)\mathbb I\bigr)
		\operatorname{cof}(\nabla\phi)\mathbf N^\zeta
		\right]\circ\Phi_\zeta\cdot\mathbf e_z \\
		& =-J_\zeta J_\Gamma^\phi
		\left[
		\bigl(\mathbb S(\nabla^\phi\mathbf v)-p(\varrho)\mathbb I\bigr)
		\mathbf n^{\zeta,\phi}
		\right]\circ\Phi_\zeta\cdot\mathbf e_z.
		\notag 
	\end{align*}
	
	\paragraph{Transformed Navier--slip condition.}
	Pulling back the physical interface laws
	\eqref{eq:physical_kinematic}--\eqref{eq:physical_dynamic_slip} by the
	common stochastic flow, and using the transformed material shell velocity
	\[
	w=r\circ\chi=\partial_t\zeta,
	\]
	gives
	\begin{align}
		& (\mathbf v\cdot\mathbf n^{\zeta,\phi})\circ\Phi_\zeta
		=(w\mathbf e_z)\cdot
		(\mathbf n^{\zeta,\phi}\circ\Phi_\zeta),
		\qquad\text{on }\Gamma,
		\label{eq:trans_kinematic} \\
		& \bigl(\mathbb S(\nabla^\phi\mathbf v)\mathbf n^{\zeta,\phi}\bigr)_{\tau^{\zeta,\phi}}
		\circ\Phi_\zeta
		=-\iota
		\left(\mathbf v-w\mathbf e_z\circ\Phi_\zeta^{-1}\right)_{\tau^{\zeta,\phi}}
		\circ\Phi_\zeta,
		\qquad\text{on }\Gamma.
		\label{eq:trans_dynamic_slip} 
	\end{align}
	In particular, \eqref{eq:trans_kinematic} imposes only normal-velocity continuity and does not imply
	$\mathbf v|_{\Gamma_\zeta}=w\mathbf e_z$. The factors
	$\mathbf n^{\zeta,\phi}$ and $J_\Gamma^\phi$ are retained unless an additional isometry property of the stochastic flow has been established.

	\medskip
	\noindent\textbf{Notation and conventions.}
	Time-dependent $L^p$-spaces on $\mathcal O_{\zeta(t)}$ are understood through
	zero extension to the fixed cylinder $\mathcal O_\alpha$, whereas Sobolev
	spaces are understood intrinsically on the moving domain. Statements prior
	to a stopping time are understood after localization to deterministic
	intervals $T'<\tau$.
	Since the vector fields $\mathbf{Q}_k$ are smooth, solenoidal, and satisfy
	the geometric compatibility condition introduced above, the stochastic
	flow $\phi(t,\cdot)$ defined by \eqref{phiv} is a
	$\Pp$-a.s. volume-preserving $C^\infty$-diffeomorphism of
	$\mathcal{O}_{\alpha}$. For every $\kappa\in(0,1/2)$,
	\begin{align*}		& \mathbb{E}
		\|\phi\|_{C^{\kappa}
			([0,T];C^2(\overline{\mathcal{O}}_{\alpha}))}
		+
		\mathbb{E}
		\|\phi^{-1}\|_{C^{\kappa}
			([0,T];C^2(\overline{\mathcal{O}}_{\alpha}))}
		<\infty. 
	\end{align*}
	Likewise, the induced surface flow $\chi(t,\cdot)$ is an
	area-preserving $C^\infty$-diffeomorphism of $\Gamma$ and satisfies
	\[
	\mathbb{E}
	\|\chi\|_{C^{\kappa}([0,T];C^2(\Gamma))}
	+
	\mathbb{E}
	\|\chi^{-1}\|_{C^{\kappa}([0,T];C^2(\Gamma))}
	<\infty,
	\qquad
	\kappa\in(0,1/2).
	\]
	
	The pullback coefficients
	\[
	\mathbf{A}_{\phi}(t,\bm{x})
	=
	\nabla\phi^{-1}(t,\phi(t,\bm{x}))
	\]
	have, in the general flow formulation, only H\"older continuity in time with exponent $\kappa\in(0,1/2)$. We therefore formulate the pressure compactness argument pathwise. On sufficiently short time intervals the coefficients are compared with frozen coefficients, which permits the use of the classical elliptic estimates entering the effective viscous flux argument; see \cite{BreitFeireislHofmanovaMucha2026}. If the admissible transport fields generate an isometric flow, then $\mathbf A_\phi$ simplifies and this localization step correspondingly reduces to the classical fixed-coefficient argument.

	For a given geometry $(\zeta,\phi)$, let
	$\mathcal T_{\rm ad}(\zeta,\phi)$ denote the geometry-adapted
	progressively measurable It\^o-semimartingale core constructed in
	Appendix~\ref{app:semimartingale_tests}. Each core element is obtained
	from a spatially smooth precursor pair $(\widetilde{\bm q},\psi)$ by
	projecting only the fluid component in a tubular neighborhood of the
	moving graph; the shell component $\psi$ is left unchanged. Thus $\psi$
	retains its spatial $H^2$ regularity, while the projected fluid test is
	only required to have energy-level $H^1$ regularity across the $H^2$
	interface. The core is determined by the geometry and does not depend on
	$(\varrho,\mathbf v,w)$. Its elements satisfy
	\[
	\bm q|_{\Gamma_b}=0,
	\qquad
	(\bm q\cdot\mathbf n^{\zeta,\phi})\circ\Phi_\zeta
	=
	(\psi\mathbf e_z)\cdot
	(\mathbf n^{\zeta,\phi}\circ\Phi_\zeta)
	\qquad\text{on }\Gamma.
	\]
	
	For a projected core pair, the pressure term is not extended as a
	separate functional on the rough divergence. Instead, the complete
	coupled momentum--structure residual is defined through the fixed causal
	smooth representatives of
	Appendix~\ref{app:semimartingale_tests}. For a spatially smooth
	compatible pair this residual coincides with the ordinary smooth weak
	residual.
	
	We further set
	\[
	\widehat\eta(t):=\zeta(t)\circ\chi^{-1}(t),
	\qquad
	\widehat r(t):=w(t)\circ\chi^{-1}(t),
	\]
	and define
	\begin{align}
		\mathcal E_{\rm tr}(t)
		&:=
		\int_{\mathcal O_\zeta(t)}
		\left(
		\frac12\varrho|\mathbf v|^2
		+\frac{a}{\gamma-1}\varrho^\gamma
		\right)\,\dd\bm x
		+\frac12\|w(t)\|_{L^2(\Gamma)}^2
		+\frac12
		\|\Delta_\Gamma^\chi\zeta(t)\|_{L^2(\Gamma)}^2,
		\label{eq:trans_energy_def}
		\\
		\mathcal D_{\rm tr}(t)
		&:=
		\int_0^t\!\!\int_{\mathcal O_\zeta(s)}
		\mathbb S(\nabla^\phi\mathbf v):
		\nabla^\phi\mathbf v
		\,\dd\bm x\,\dd s
		+
		\nu_s\int_0^t
		\|\nabla_\Gamma^\chi w\|_{L^2(\Gamma)}^2\,\dd s
		\notag\\
		&\quad+
		\iota
		\int_0^t\!\!\int_{\Gamma_\zeta(s)}
		J_\Gamma^\phi
		\left|
		\left(
		\mathbf v-w\mathbf e_z\circ\Phi_\zeta^{-1}
		\right)_{\tau^{\zeta,\phi}}
		\right|^2
		\,\dd S\,\dd s .
		\label{eq:trans_dissipation_def}
	\end{align}
	The energy is understood through its canonical lower-semicontinuous
	representative. Endpoint pairings in the coupled identity are understood
	through the canonical weakly continuous representative of the combined
	momentum--structure functional.
	
	We now give the definition of a martingale solution to
	\eqref{eq:cont_trans}--\eqref{eq:str_trans}.
	
	\begin{definition}[Martingale solution of the transformed system]
		\label{def:trans_solution}
		
		Let
		\[
		\zeta_0\in H^2(\Gamma),
		\qquad
		1+\zeta_0(\bm y)\ge\alpha_0>0,
		\]
		and
		\[
		\varrho_0\in L^\gamma(\mathcal O_{\zeta_0}),
		\qquad
		\bm m_0\in
		L^{\frac{2\gamma}{\gamma+1}}(\mathcal O_{\zeta_0}),
		\qquad
		\bm m_0=0
		\quad\text{a.e. on }\{\varrho_0=0\},
		\]
		with
		\[
		\frac{|\bm m_0|^2}{\varrho_0}
		\in L^1(\mathcal O_{\zeta_0}),
		\]
		where the quotient is set equal to zero on
		$\{\varrho_0=0\}$. Let $w_0\in L^2(\Gamma)$.
		
		A tuple
		\[
		\bigl(
		\Omega,\F,(\F_t)_{t\ge0},\Pp,W,
		\varrho,\mathbf v,\zeta,w,\tau^\zeta
		\bigr)
		\]
		is called a martingale solution of the transformed stochastic FSI
		system if the following conditions hold.
		
		\begin{enumerate}[label=(\arabic*)]
			
			\item
			$(\Omega,\F,(\F_t)_{t\ge0},\Pp)$ is a filtered probability
			space satisfying the usual conditions,
			$W=(W_1,\ldots,W_K)$ is a $K$-dimensional
			$(\F_t)$-Wiener process, and
			$\varrho,\mathbf v,\zeta,w$ are progressively measurable.
			
			\item
			$\tau^\zeta$ is an a.s. strictly positive
			$(\F_t)$-stopping time and, $\Pp$-a.s.,
			\begin{align*}
				&\varrho
				\in
				C_w([0,\tau^\zeta);
				L^\gamma(\mathcal O_{\zeta(\cdot)})),
				\qquad
				\varrho|\mathbf v|^2
				\in
				L^\infty(0,\tau^\zeta;
				L^1(\mathcal O_{\zeta(\cdot)})),
				\\
				&\mathbf v
				\in
				L^2(0,\tau^\zeta;
				H^1(\mathcal O_{\zeta(\cdot)})),
				\qquad
				\zeta
				\in
				C_w([0,\tau^\zeta);H^2(\Gamma)),
				\\
				&w
				\in
				L^\infty(0,\tau^\zeta;L^2(\Gamma))
				\cap
				L^2(0,\tau^\zeta;H^1(\Gamma)).
			\end{align*}
			
			\item
			For every $t<\tau^\zeta$,
			\[
			\zeta(t)
			=
			\zeta_0
			+
			\int_0^t w(s)\,\dd s
			\qquad
			\text{in }L^2(\Gamma).
			\]
			
			\item
			For every
			\[
			\varphi
			\in
			C_c^\infty
			\bigl(
			[0,T)\times
			\overline{\mathcal O}_{\zeta(\cdot)}
			\bigr)
			\]
			and every $b\in C^1(\R)$ with $b'$ compactly supported,
			the renormalized continuity equation
			\begin{align*}
				&\int_{\mathcal O_\zeta(t)}
				b(\varrho(t))\varphi(t)\,\dd\bm x
				-
				\int_{\mathcal O_{\zeta_0}}
				b(\varrho_0)\varphi(0)\,\dd\bm x
				\\
				&=
				\int_0^t\!\!\int_{\mathcal O_\zeta(s)}
				b(\varrho)
				\left(
				\partial_t\varphi
				+
				\mathbf v\cdot\nabla^\phi\varphi
				\right)
				\,\dd\bm x\,\dd s
				\\
				&\quad+
				\int_0^t\!\!\int_{\mathcal O_\zeta(s)}
				\bigl(
				b'(\varrho)\varrho-b(\varrho)
				\bigr)
				\Div^\phi\mathbf v\,
				\varphi
				\,\dd\bm x\,\dd s
			\end{align*}
			holds $\Pp$-a.s. for every $t<\tau^\zeta$.
			
			\item
			For every
			$(\bm q,\psi)\in\mathcal T_{\rm ad}(\zeta,\phi)$
			and every $T'<\tau^\zeta$, the fixed causal coupled residual
			defined in Appendix~\ref{app:semimartingale_tests} satisfies
			\[
			\mathfrak C_t
			(\varrho,\mathbf v,\zeta,w;\bm q,\psi)
			=0,
			\qquad
			0\le t\le T',
			\quad \Pp\text{-a.s.}
			\]
			
			For a spatially smooth compatible pair, the fixed causal residual
			coincides with the ordinary smooth residual. Writing
			\[
			\dd\bm q
			=
			\bm q_0\,\dd t
			+
			\sum_{k=1}^K
			\bm q_k\,\dd W_k,
			\qquad
			\dd\psi
			=
			\psi_0\,\dd t
			+
			\sum_{k=1}^K
			\psi_k\,\dd W_k,
			\]
			the coupled momentum--structure identity reads
			\begin{align*}
				&\int_{\mathcal O_\zeta(t)}
				\varrho(t)\mathbf v(t)\cdot\bm q(t)\,\dd\bm x
				+
				\int_\Gamma
				w(t)\psi(t)\,\dd\bm y
				\\
				&=
				\int_{\mathcal O_{\zeta_0}}
				\bm m_0\cdot\bm q(0)\,\dd\bm x
				+
				\int_\Gamma
				w_0\psi(0)\,\dd\bm y
				\\
				&\quad+
				\int_0^t\!\!\int_{\mathcal O_\zeta(s)}
				\Bigl[
				\varrho\mathbf v\cdot\bm q_0
				+
				\varrho\mathbf v\otimes\mathbf v:
				\nabla^\phi\bm q
				-
				\mathbb S(\nabla^\phi\mathbf v):
				\nabla^\phi\bm q
				+
				p(\varrho)\Div^\phi\bm q
				\Bigr]
				\,\dd\bm x\,\dd s
				\\
				&\quad+
				\int_0^t\!\!\int_\Gamma
				\Bigl[
				w\psi_0
				-
				\nu_s\nabla_\Gamma^\chi w
				\cdot\nabla_\Gamma^\chi\psi
				-
				\Delta_\Gamma^\chi\zeta\,
				\Delta_\Gamma^\chi\psi
				\Bigr]
				\,\dd\bm y\,\dd s
				\\
				&\quad-
				\iota
				\int_0^t\!\!\int_{\Gamma_\zeta(s)}
				J_\Gamma^\phi
				\left(
				\mathbf v
				-
				w\mathbf e_z\circ\Phi_\zeta^{-1}
				\right)_{\tau^{\zeta,\phi}}
				\cdot
				\left(
				\bm q
				-
				\psi\mathbf e_z\circ\Phi_\zeta^{-1}
				\right)_{\tau^{\zeta,\phi}}
				\,\dd S\,\dd s
				\\
				&\quad+
				\sum_{k=1}^K
				\int_0^t
				\left[
				\int_{\mathcal O_\zeta(s)}
				\varrho\mathbf v\cdot\bm q_k\,\dd\bm x
				+
				\int_\Gamma
				w\psi_k\,\dd\bm y
				\right]
				\,\dd W_k(s).
			\end{align*}
			
			For a general projected core pair, only the complete coupled
			residual is interpreted through the fixed causal closure; the
			pressure contribution is not defined separately on the rough
			divergence.
			
			\item
			The normal kinematic condition
			\[
			(\mathbf v\cdot\mathbf n^{\zeta,\phi})
			\circ\Phi_\zeta
			=
			(w\mathbf e_z)\cdot
			(\mathbf n^{\zeta,\phi}\circ\Phi_\zeta)
			\qquad\text{on }\Gamma
			\]
			holds $\Pp$-a.s. for a.e. $t<\tau^\zeta$.
			
			\item
			For every $t<\tau^\zeta$,
			\[
			\mathcal E_{\rm tr}(t)
			+
			\mathcal D_{\rm tr}(t)
			\le
			\mathcal E_{\rm tr}(0)
			+
			\sum_{k=1}^K
			\int_0^t
			\mathcal G_k(\widehat\eta(s))\,\dd W_k(s)
			+
			\frac12
			\sum_{k=1}^K
			\int_0^t
			\mathcal R_k(\widehat\eta(s))\,\dd s,
			\]
			where $\mathcal G_k$ and $\mathcal R_k$ are defined in
			\eqref{eq:Gk_commutator}--\eqref{eq:Rk_commutator}.
			
		\end{enumerate}
	\end{definition}

	We now state the existence theorem for the transformed system
	\eqref{eq:cont_trans}--\eqref{eq:str_trans_first_order}.
	
	\begin{theorem}[Existence for the transformed system]
		\label{thm:transformed_existence}
		Let $\gamma>3/2$ and assume \eqref{eq:noise_geometry}, with
		$\mathbf Q_k\in
		C^\infty(\overline{\mathcal O}_\alpha;\mathbb R^3)$.
		Let
		\[
		\varrho_0\in L^\gamma(\mathcal O_{\zeta_0}),
		\qquad
		\bm m_0\in
		L^{\frac{2\gamma}{\gamma+1}}(\mathcal O_{\zeta_0}),
		\qquad
		\bm m_0=0
		\quad\text{a.e. on }\{\varrho_0=0\},
		\]
		and assume
		\[
		\frac{|\bm m_0|^2}{\varrho_0}
		\in L^1(\mathcal O_{\zeta_0}),
		\]
		where the quotient is set equal to zero on
		$\{\varrho_0=0\}$. Let
		\[
		\zeta_0\in H^2(\Gamma),
		\qquad
		w_0\in L^2(\Gamma),
		\qquad
		1+\zeta_0\ge\alpha_0>0.
		\]
		Fix $s\in(3/2,2)$ and choose
		\begin{equation}
			\label{eq:initial_localization_margin}
			0<\alpha<
			\min\left\{
			\frac{\alpha_0}{2},
			\frac{1}{
				2(1+\|\zeta_0\|_{H^s(\Gamma)})
			}
			\right\}.
		\end{equation}
		Then there exist a stochastic basis, a $K$-dimensional Wiener process
		$W$, and processes $(\varrho,\mathbf v,\zeta,w)$ with a
		$\Pp$-a.s. positive stopping time $\tau^\zeta$ that form a martingale
		solution, in the sense of Definition~\ref{def:trans_solution}, of
		\eqref{eq:cont_trans}--\eqref{eq:str_trans_first_order} with
		\eqref{eq:trans_kinematic}--\eqref{eq:trans_dynamic_slip}.
	\end{theorem}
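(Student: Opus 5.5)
The plan follows the five-layer architecture announced in the introduction. Everything is posed on the fixed cylinder $\mathcal O_\alpha=\Gamma\times(0,2+\alpha^{-1})$ and localized by the stopping time
\[
\tau^\zeta_R:=\inf\Bigl\{t:\ \|\zeta(t)\|_{H^s(\Gamma)}\ge \|\zeta_0\|_{H^s}+1\ \text{or}\ 1+\min_\Gamma\zeta(t)\le\alpha\ \text{or}\ \|\phi\|_{C^\kappa([0,t];C^2)}+\|\phi^{-1}\|_{C^\kappa([0,t];C^2)}\ge R\Bigr\}.
\]
Condition \eqref{eq:initial_localization_margin}, together with the embedding $H^s\hookrightarrow C^{0,s-1}$, ensures that this stopping time is positive at $t=0$.

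The approximate problem carries an artificial pressure $\delta\rho^\beta$ with $\beta$ large and an exterior viscosity in $\mathcal O_\alpha\setminus\mathcal O_\zeta$. The normal kinematic condition \eqref{eq:trans_kinematic} is replaced by a tubular penalty $\delta^{-1}\int|(\mathbf v-w\mathbf e_z)\cdot\mathbf n^{\zeta,\phi}|^2$, which acts only on the normal mismatch, while the Navier friction term is kept explicitly. The continuity equation gets a parabolic regularization $\epsilon\Delta^\phi\varrho$, the convective coefficient is mollified at scale $l$, and the momentum equation is Galerkin-projected onto $m$ modes. Time is discretized by a Lie splitting with step $\Delta t$, separating the fluid subproblem on the frozen geometry from the structure subproblem.

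At every level the key input is the pathwise energy estimate for $\mathcal E_{\rm tr}+\mathcal D_{\rm tr}$. It uses $\det\nabla\phi=1$ and the pullback identities, and it controls the elastic term through Lemma~\ref{lem:shell_energy_commutator}: the bending martingale $\mathcal G_k$ and the correction $\mathcal R_k$ are bounded by $c_k\|\widehat\eta\|_{H^2}^2$. A BDG and Gronwall argument then gives moment bounds uniform in all parameters, up to the stopping time.

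The limits are taken in the order $\Delta t,m,\epsilon,l,\delta$, and each follows the same template. First, uniform bounds are combined with time-regularity estimates of the $C^\kappa$ type in negative spaces. Second, tightness is established in Jakubowski quasi-Polish spaces, including the noise $W$ and the flow $(\phi,\chi)$ as components so that the common Wiener structure is preserved. Third, the Jakubowski--Skorokhod representation is applied. Fourth, the equations are identified on the new probability space, and the martingale term is identified through quadratic-variation and cross-variation arguments. The stopping times must pass to the limit, which can be done with a lower semicontinuous choice or by re-localizing with $\tau^\zeta:=\liminf$.

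At the $\Delta t$ and $m$ levels the density is smooth and parabolic, so the convergence is routine. The splitting defect is estimated quantitatively using the $H^2$-in-space and $C^\vartheta$-in-time control of $\zeta$.

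At the $\epsilon\to0$ level, strong convergence of $\varrho$ is the central step. I would prove the effective viscous flux identity
\[
\overline{p(\varrho)T_k(\varrho)}-(\lambda+2\mu)\overline{T_k(\varrho)\Div^\phi\mathbf v}=\overline{p(\varrho)}\,\overline{T_k(\varrho)}-(\lambda+2\mu)\overline{T_k(\varrho)}\Div^\phi\mathbf v
\]
pathwise. To do so I test the momentum equation with $\varphi\,\nabla^\phi\Delta_\phi^{-1}[\varphi T_k(\varrho)]$, where $\varphi$ is supported away from the moving interface, freeze $\mathbf A_\phi$ at $t_j$ on deterministic intervals of length $h$, and use div–curl and Riesz commutator lemmas for the frozen constant-coefficient operator. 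The freezing error is $O(h^\kappa R)$ on the localized event; it is sent to zero before $R\to\infty$. Renormalization through DiPerna--Lions, applied with the volume-preserving pullback, then gives strong convergence of the oscillation defect measure, in the usual way for $\beta$ large.

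The $l\to0$ step is handled by Friedrichs commutators together with Aubin–Lions-type compactness of $\varrho\mathbf v$.

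I expect the main obstacle to be the final $\delta\to0$ limit. There are three difficulties.

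\begin{itemize}
\item \emph{Pressure integrability up to the moving boundary.} Equi-integrability of $p(\varrho)$ up to $\Gamma_\zeta$ requires boundary-pressure tightness. I would obtain it with Bogovskii-type test functions adapted to $H^2$ graph domains, using the restriction $\gamma>3/2$ and $s<2$ to handle the merely Lipschitz-in-$C^{0,s-1}$ geometry.
\item \emph{Mass outside the fluid domain.} The exterior mass must vanish. I would show this by a transport argument on the exterior region, combined with the penalty estimate.
\item \emph{Recovering the interface laws.} The penalty bound $\delta^{-1}\|\cdot\|^2\le C$ yields the normal condition \eqref{eq:trans_kinematic} in the limit. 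The tangential relative trace is identified from the bounded friction dissipation together with trace compactness in $H^{1/2-}$ on $H^2$ graphs.
\end{itemize}

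With these in hand, the weak formulation is recovered by using the adapted test pairs of Appendix~\ref{app:semimartingale_tests}. These pairs satisfy the $\delta$-level normal constraint exactly, so the penalty vanishes on them and the coupled residual passes to the limit.

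Finally, the energy inequality (7) is obtained by weak lower semicontinuity. The It\^o correction is identified via \eqref{eq:Rk_commutator} and strong $H^{2-}$ convergence of $\widehat\eta$, with the elastic cross terms cancelled at a common regularization scale.
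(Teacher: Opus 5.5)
Your architecture matches the paper's: fixed cylinder, stopped geometry, the five limits in the same order, a normal-only tubular penalty, a frozen-coefficient effective viscous flux, and adapted tests on which the penalty vanishes. The final step, however, fails as stated. Item~(7) cannot be obtained by weak lower semicontinuity, and strong $H^{2-}$ convergence of $\widehat\eta$ does not identify the It\^o correction.

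In flat coordinates the principal part of $C_k=[\Delta_\Gamma,A_k]$ is $(\partial_i\mathcal Y_{k,j}+\partial_j\mathcal Y_{k,i})\partial_{ij}$. This vanishes only for Killing fields, which on $\mathbb T^2$ means constant fields. So $\mathcal G_k(f)=\langle\Delta_\Gamma f,C_kf\rangle$ and $\mathcal R_k(f)=\|C_kf\|_2^2+\langle\Delta_\Gamma f,[C_k,A_k]f\rangle$ are quadratic forms in $\nabla^2 f$ with non-vanishing principal part. They are continuous only under strong $H^2$ convergence. From the Galerkin limit on, you only have $\zeta$ (hence $\widehat\eta$) in $C_tH^s$, $s<2$, and weak-$*$ in $L^\infty_tH^2$. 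Consequences:
\begin{itemize}
\item $\int_0^t\mathcal R_k(\widehat\eta_\delta)\,\dd s$ need not converge to $\int_0^t\mathcal R_k(\widehat\eta)\,\dd s$. Since $\|C_kf\|_2^2$ is only weakly lower semicontinuous, the available semicontinuity goes in the wrong direction for a right-hand-side term.
\item The limit of $\sum_k\int\mathcal G_k(\widehat\eta_\delta)\,\dd W_k$ cannot be identified by quadratic- or cross-variation arguments either, because $\int|\mathcal G_k(\widehat\eta_\delta)|^2\,\dd s$ does not converge.
\end{itemize}
The paper therefore keeps only moment bounds for the shell martingale after the $N$-level and \emph{reconstructs} (7) on the final solution. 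It applies It\^o's formula to $\frac12\|J_n\Delta_\Gamma\widehat\eta\|_2^2$, driven by $\dd\widehat\eta=(\widehat r+\frac12\sum_kA_k^2\widehat\eta)\,\dd t+\sum_kA_k\widehat\eta\,\dd W_k$. The Friedrichs commutators $[K_n,A_k]$ and $[[K_n,A_k],A_k]$, with $K_n=J_n^2$, vanish in the limit, which gives $\widetilde M_{n,k}\to\mathcal G_k(\widehat\eta)$ and $\widetilde R_{n,k}\to\mathcal R_k(\widehat\eta)$ strongly (Lemma~\ref{app:lem_bending_commutator}). This is added to a kinetic--internal-energy inequality obtained by testing the coupled identity with normal-compatible causal pairs $(\mathbf U_n,\psi_n)$, $\psi_n=\mathcal S^-_{\sigma_n}K_n\widehat r$. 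The terms $\pm\int_0^t\langle\Delta_\Gamma\widehat\eta,\Delta_\Gamma K_n\widehat r\rangle\,\dd s$ then cancel before $n\to\infty$ (Lemmas~\ref{app:lem_fluid_energy_compatible_regularization} and~\ref{app:lem_total_energy_recovery}). The cross-term cancellation you mention only makes sense inside such a reconstruction, not in a lower-semicontinuity passage.

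Three further steps are missing or need a different tool.
\begin{itemize}
\item \textbf{Density compactness after $\epsilon\to0$.} Strong density convergence is needed again at $l\to0$ and at $\delta\to0$. Once $\epsilon=0$ there is no parabolic compactness. Friedrichs commutators plus compactness of $\varrho\mathbf v$ identify only the convective term, so the frozen effective-viscous-flux argument must be repeated to identify $p_\delta(\varrho)$. At the $l$-level this involves the extra inertial commutator. At the $\delta$-level it is done on interior cylinders of the limiting moving domain, combined with boundary-pressure tightness and exterior-mass decay.
\item \textbf{Normal trace and the exterior-viscosity scaling.} The penalty bound alone does not give the normal trace. It controls the mismatch only in $L^2(T_\delta)$, and the vertical trace inequality yields the bound $\delta/h_\delta+h_\delta/\kappa_\delta$ (cf.~\eqref{app:eq_D_penalty_trace}). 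Hence the exterior viscosity must degenerate at a rate tied to the tube width, $\kappa_\delta=\delta^{\nu_\kappa}$ with $\nu_\kappa<a_\beta$ as in \eqref{eq:global_nu_choice}. Your scheme never fixes this scaling, and does not even state that the exterior viscosity vanishes.
\item \textbf{Boundary pressure near a non-Lipschitz graph.} A genuine Bogovski\u{\i} operator near an $H^2$ graph is problematic, since such graphs are $C^{0,\alpha}$ for all $\alpha<1$ but not Lipschitz. The paper's boundary-pressure estimate instead uses the multiplier $\Psi(d_\zeta)\mathbf B_{\zeta,\phi}$, whose transformed divergence is $\Psi'(d_\zeta)$ plus an $L^2$ remainder.
\end{itemize}
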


\subsection{Physical variables and main result}

The physical variables are recovered by the inverse stochastic flow.
This allows us to avoid repeating in physical coordinates every item of
the transformed weak formulation.

\begin{definition}[Martingale solution of the stochastic FSI system]
	\label{def:physical_solution}
	\label{def:weak_martingale}
	
	Let $\phi$ and $\chi=\phi|_\Gamma$ be the stochastic flows generated by
	\eqref{phiv} and \eqref{eq:surface_flow}. A tuple
	\[
	\bigl(
	\Omega,\mathcal F,(\mathcal F_t)_{t\ge0},\mathbb P,W,
	\rho,\bm u,\eta,r,\tau^\eta
	\bigr)
	\]
	is called a martingale solution of the physical system
	\eqref{con}--\eqref{mom},
	\eqref{eq:material_eta}--\eqref{eq:material_r}
	with Navier slip if, setting
	\[
	\varrho=\rho\circ\phi,
	\qquad
	\mathbf v=\bm u\circ\phi,
	\qquad
	\zeta=\eta\circ\chi,
	\qquad
	w=r\circ\chi,
	\qquad
	\tau^\zeta:=\tau^\eta,
	\]
	the tuple
	\[
	\bigl(
	\Omega,\mathcal F,(\mathcal F_t)_{t\ge0},\mathbb P,W,
	\varrho,\mathbf v,\zeta,w,\tau^\zeta
	\bigr)
	\]
	is a martingale solution in the sense of
	Definition~\ref{def:trans_solution}.
	
	The initial data are related by the same pullback and therefore coincide
	in the two coordinate systems at $t=0$, since
	$\phi(0)=\chi(0)=\mathrm{Id}$.
\end{definition}

The next lemma shows that Definition~\ref{def:physical_solution} agrees
with the distributional physical formulation and is not an additional
solution concept introduced by the stochastic-flow transformation.

\begin{lemma}[Weak It\^o--Wentzell equivalence]
	\label{lem:equivalence}
	
	Up to the common non-collision stopping time, composition with
	$(\phi,\chi)$ gives a one-to-one correspondence between martingale
	solutions in the sense of Definition~\ref{def:physical_solution} and
	martingale solutions in the sense of
	Definition~\ref{def:trans_solution}. Equivalently, inverse-flow
	transported smooth tests recover the distributional Stratonovich
	continuity equation and the coupled momentum--structure identity in
	physical coordinates.
\end{lemma}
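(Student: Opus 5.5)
The plan is to reduce the claim to a weak It\^o--Wentzell formula applied to test functions, not to the solutions. Since $\phi(t,\cdot)$ and $\chi(t,\cdot)$ are $\Pp$-a.s. volume- and area-preserving $C^\infty$-diffeomorphisms, adapted to $(\F_t)$ and $C^\kappa$ in time, composition maps progressively measurable processes to progressively measurable processes. It also preserves the $L^p$ and $H^1$ classes on $\mathcal O_{\eta(t)}$ and $\mathcal O_{\zeta(t)}$, with constants controlled on bounded-flow events. The map $(\rho,\bm u,\eta,r)\mapsto(\varrho,\mathbf v,\zeta,w)$ is invertible via $\phi^{-1},\chi^{-1}$. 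Moreover $\phi^{-1}(t,\mathcal O_\eta(t))=\mathcal O_\zeta(t)$, so the non-collision stopping times coincide and the identification $\tau^\zeta=\tau^\eta$ is consistent. The one-to-one correspondence at the level of Definition~\ref{def:physical_solution} is then tautological. The real content is the second sentence: the transformed weak identities are equivalent to the distributional Stratonovich equations \eqref{con}--\eqref{mom}, \eqref{eq:material_eta}--\eqref{eq:material_r} in physical coordinates.

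\textbf{Step 1 (continuity equation).} Given a smooth deterministic physical test $\tilde\varphi(t,\bm x)$, set $\varphi:=\tilde\varphi\circ\phi$, which is an It\^o semimartingale in $t$. By the Stratonovich chain rule for smooth functions composed with the smooth flow \eqref{phiv},
\[
\dd\varphi=(\partial_t\tilde\varphi)\circ\phi\,\dd t-\sum_k(\mathbf Q_k\cdot\nabla\tilde\varphi)\circ\phi\circ\dd W_k .
\]
Because $\det\nabla\phi=1$, we have $\int_{\mathcal O_\zeta}b(\varrho)\varphi=\int_{\mathcal O_\eta}b(\rho)\tilde\varphi$. We then insert $\varphi$ into the renormalized transformed equation (item (4) with $b(\varrho)=\varrho$, extended by density to semimartingale tests). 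The drift $\dd t$ terms map to $\rho(\partial_t\tilde\varphi+\bm u\cdot\nabla\tilde\varphi)$ using $\nabla^\phi\varphi=(\nabla\tilde\varphi)\circ\phi$. The $\dd W_k$ part produces $-\int\rho\,\mathbf Q_k\cdot\nabla\tilde\varphi\circ\dd W_k$, which, since $\Div\mathbf Q_k=0$, is the weak form of $\Div(\rho\mathbf Q_k)\circ\dd W_k$. The Stratonovich-to-It\^o correction appears identically on both sides, because it arises from the same second-order expansion. The converse direction uses $\tilde\varphi=\varphi\circ\phi^{-1}$ with $\varphi$ deterministic and smooth. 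This is precisely the statement that inverse-flow transported tests recover the physical equation.

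\textbf{Step 2 (coupled momentum--structure identity).} We repeat Step 1 with physical test pairs $(\tilde{\bm q},\tilde\psi)$ satisfying the physical normal compatibility on $\Gamma_\eta$, and set $\bm q=\tilde{\bm q}\circ\phi$, $\psi=\tilde\psi\circ\chi$. Since $\phi=(\chi,z)$ and $\mathbf N^\eta\circ\chi=\operatorname{cof}(\nabla\phi)\mathbf N^\zeta$, compatibility transfers exactly to the transformed condition, so $(\bm q,\psi)$ is compatible. We then identify each term of item (5):
\begin{itemize}
\item The pressure and viscous terms map via $\nabla^\phi\bm q=(\nabla\tilde{\bm q})\circ\phi$.
\item The friction term maps via $J_\Gamma^\phi$ and $\mathbf n^{\zeta,\phi}=\mathbf n^\eta\circ\phi$, which is a surface change of variables.
\item The bending and viscous shell terms map via area preservation of $\chi$.
\item The noise coefficients $\bm q_k=-(\mathbf Q_k\cdot\nabla\tilde{\bm q})\circ\phi$ and $\psi_k=-(\mathcal Y_k\cdot\nabla_\Gamma\tilde\psi)\circ\chi$ reproduce, after integration by parts with the divergence-free fields, the transport integrals $\Div(\rho\bm u\otimes\mathbf Q_k)\circ\dd W_k$ and $(\mathcal Y_k\cdot\nabla_\Gamma r)\circ\dd W_k$.
\end{itemize}
The kinematic condition (6) and the energy inequality (7) are pointwise or integral identities that transfer by the same change of variables. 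Here $\widehat\eta=\eta$, which is why (7) involves the commutator functionals of Lemma~\ref{lem:shell_energy_commutator} evaluated at the physical displacement.

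\textbf{Main obstacle.} The delicate point is the low regularity: item (5) is only imposed on the projected core $\mathcal T_{\rm ad}(\zeta,\phi)$ through the causal closure, not on arbitrary smooth pairs. Two density statements are therefore needed. First, the transported smooth physical pairs must lie in the core or in its closure in the topology in which $\mathfrak C_t$ is continuous. Second, conversely, core elements must be approximable by transported smooth pairs. I would try to exploit that the core is built from smooth precursors by a fluid-only tubular projection, which commutes with composition by $\phi$ because $\phi$ preserves the graph structure. Continuity of all residual terms then follows from the energy-class bounds in (2). The Stratonovich stochastic integrals are justified by rewriting them in It\^o form, localized by stopping times before $\tau^\eta$.
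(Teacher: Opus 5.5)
Your route is correct, and it is genuinely different from the paper's. The paper regularizes the \emph{solution}. It mollifies the continuity, shell-transport and coupled identities at scale $\ell$, applies the Stratonovich It\^o--Wentzell formula to the mollified fields, and removes the Friedrichs commutators $[J_\ell,\mathbf Q_k\cdot\nabla]$ and $[J_\ell,\mathcal Y_k\cdot\nabla_\Gamma]$ in negative Sobolev dualities. It gets the converse by rerunning the argument with $(\phi^{-1},\chi^{-1})$.

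You instead put all the stochastic calculus on the smooth test, through the chain rule for $\tilde\varphi\circ\phi$. You then need only the product rule between the pathwise transformed pairing and a semimartingale test. Because the transformed equations contain no stochastic integral, no cross-variation arises. The It\^o corrections match as you assert: the drift $\varphi_0$ contains $\frac12\sum_k[(\mathbf Q_k\cdot\nabla)^2\tilde\varphi]\circ\phi$, which is exactly the weak form of the correction of $\Div(\rho\mathbf Q_k)\circ\dd W_k$.

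What this buys you in the forward direction, which carries the content since the correspondence is tautological by Definition~\ref{def:physical_solution}:
\begin{itemize}
\item no commutator estimates are needed;
\item no rough field is mollified across the $H^2$ interface;
\item the interface terms transform exactly by the Piola identity.
\end{itemize}
The cost is twofold. First, you need the transformed identities for semimartingale tests; this is Lemma~\ref{app:lem_semimart_continuity} for the continuity equation, while item (5) is already posed on semimartingale pairs. Second, your one-line converse is not a mirror image. There the physical identity does contain stochastic integrals, so testing it with the random field $\varphi\circ\phi^{-1}$ produces a cross-variation term that must be computed, i.e.\ a weak It\^o--Wentzell formula. This is precisely where the paper's mollify-the-solution argument is the natural tool.

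Your ``main obstacle'' paragraph should be revised in two places.

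\textbf{The first density statement is automatic.} You showed that transported smooth compatible pairs are \emph{exactly} compatible. The fluid-only projection is therefore trivial on them, since the correction is controlled by the mismatch, see \eqref{app:eq_projection_correction}. So these pairs are spatially smooth core elements, and by the convention of Appendix~\ref{app:semimartingale_tests} their residual is the ordinary smooth one. No density argument is needed.

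\textbf{The continuity claim fails for the pressure.} You assert that continuity of all residual terms follows from the energy bounds in item (2). One only has $p(\varrho)\in L^\infty(0,T;L^1)$, while projected core fields are approximated only in $L^2(0,T;H^1)$, see \eqref{app:eq_core_density}. Hence $\int p(\varrho)\Div^\phi\bm q$ is not continuous in that topology. This is exactly why the paper defines $\mathfrak C$ as a fixed causal closure and never extends the pressure pairing. For the converse on the whole core, compare the residuals along the fixed causal representatives themselves, which transform term by term under $\phi$, rather than approximating in the energy topology.
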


\begin{proof}
	For smooth solutions, the assertion follows from the Stratonovich
	It\^o--Wentzell formula, the volume- and area-preserving changes of
	variables, and the Piola transformation of the interface normal.
	
	At energy regularity, fix $T'<\tau^\eta$ and localize on a bounded-flow
	event. Spatially regularize the continuity equation, the shell transport
	equation, and the coupled momentum--structure identity. The Friedrichs
	commutators
	\[
	[J_\ell,\mathbf Q_k\cdot\nabla]f,
	\qquad
	[J_\ell,\mathcal Y_k\cdot\nabla_\Gamma]g
	\]
	converge to zero in the negative Sobolev dualities determined by the
	energy bounds. The bulk terms pass to the limit by the
	volume-preserving change of variables, while the interface terms pass
	through the Piola identity and the coupled trace duality. In particular,
	no standalone low-regularity fluid traction is introduced.
	
	Letting $\ell\to0$ yields the physical weak identities on
	$[0,T']$. Since $T'<\tau^\eta$ is arbitrary, the identities hold up to
	the stopping time. Applying the same argument to $(\phi^{-1},\chi^{-1})$
	gives the converse implication.
\end{proof}

\begin{theorem}[Main result]
	\label{thm:main}
	
	Let $\gamma>3/2$ and assume \eqref{eq:noise_geometry}. Suppose that the
	transport fields are smooth on an open cylinder containing
	$\overline{\mathcal O}_\alpha$.
	
	Let
	\[
	\rho_0\in L^\gamma(\mathcal O_{\eta_0}),
	\qquad
	\rho_0\ge0
	\quad\text{a.e.},
	\]
	and
	\[
	\bm m_0
	\in
	L^{\frac{2\gamma}{\gamma+1}}(\mathcal O_{\eta_0}),
	\qquad
	\bm m_0=0
	\quad\text{a.e. on }\{\rho_0=0\},
	\]
	with
	\[
	\frac{|\bm m_0|^2}{\rho_0}
	\in L^1(\mathcal O_{\eta_0}),
	\]
	where the quotient is set equal to zero on $\{\rho_0=0\}$.
	Let
	\[
	\eta_0\in H^2(\Gamma),
	\qquad
	r_0\in L^2(\Gamma),
	\qquad
	1+\eta_0\ge\alpha_0>0.
	\]
	
	Fix $s\in(3/2,2)$ and choose $\alpha$ as in
	\eqref{eq:initial_localization_margin} with $\zeta_0=\eta_0$.
	Then there exist a stochastic basis, a $K$-dimensional Wiener process
	$W$, processes $(\rho,\bm u,\eta,r)$, and a
	$\mathbb P$-a.s. positive stopping time $\tau^\eta$ which form a
	martingale solution in the sense of
	Definition~\ref{def:physical_solution}.
\end{theorem}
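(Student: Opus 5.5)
The plan is to deduce Theorem~\ref{thm:main} from Theorem~\ref{thm:transformed_existence} together with Lemma~\ref{lem:equivalence}, rather than redo any compactness argument in physical coordinates.

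First, the data are transferred. Since $\phi(0)=\chi(0)=\mathrm{Id}$, we set $\varrho_0=\rho_0$, $\zeta_0=\eta_0$ and $w_0=r_0$, keeping $\bm m_0$ unchanged. All hypotheses of Theorem~\ref{thm:transformed_existence} then hold verbatim. This includes $1+\zeta_0\ge\alpha_0$ and the choice of $\alpha$ in \eqref{eq:initial_localization_margin} with $\zeta_0=\eta_0$. The smoothness assumption on an open cylinder containing $\overline{\mathcal O}_\alpha$ gives $\mathbf Q_k\in C^\infty(\overline{\mathcal O}_\alpha)$, and with \eqref{eq:noise_geometry} this also gives $\mathcal Y_k\in W^{3,\infty}(\Gamma)$, so Lemma~\ref{lem:shell_energy_commutator} applies in the energy inequality.

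Theorem~\ref{thm:transformed_existence} then yields a stochastic basis, a Wiener process $W$, a solution $(\varrho,\mathbf v,\zeta,w)$ and an a.s.\ positive stopping time $\tau^\zeta$. On this same basis we solve \eqref{phiv} and \eqref{eq:surface_flow} driven by $W$. The resulting flows are adapted, volume/area-preserving $C^\infty$ diffeomorphisms with the stated Hölder-in-time moment bounds. One point needs care: the transformed solution must have been constructed with the flow $\phi$ built from this same $W$, which is true because the Skorokhod representation in the proof carries $\phi$ jointly with $W$. Granting that, we define
\[
\rho=\varrho\circ\phi^{-1},\qquad \bm u=\mathbf v\circ\phi^{-1},\qquad \eta=\zeta\circ\chi^{-1},\qquad r=w\circ\chi^{-1},\qquad \tau^\eta:=\tau^\zeta .
\]
These are progressively measurable, being compositions of progressively measurable fields with adapted continuous flows. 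Because $\phi$ is graph-preserving, $\phi(t,\mathcal O_{\zeta}(t))=\mathcal O_\eta(t)$, so the physical domain is the correct graph domain and the non-collision stopping time is unchanged.

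By construction, pulling back $(\rho,\bm u,\eta,r)$ returns the transformed tuple, which is a solution in the sense of Definition~\ref{def:trans_solution}. This is exactly Definition~\ref{def:physical_solution}, so the existence statement follows. Lemma~\ref{lem:equivalence} then shows that this object also satisfies the distributional Stratonovich physical formulation up to $\tau^\eta$.

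The main obstacle I expect is the measurability and filtration compatibility in the previous step: $\phi$ must be adapted to the filtration of the new basis. I would handle this by including $\phi,\phi^{-1},\chi,\chi^{-1}$ among the random variables in the Jakubowski--Skorokhod step. Their laws are determined by $W$ through strong uniqueness of the flow SDE, so after representation they again solve \eqref{phiv} with the new $W$. All remaining properties, namely the function-space regularity, the kinematic condition and the energy inequality, are transported through the volume-preserving change of variables and the Piola identity for $\mathbf n^{\zeta,\phi}$.
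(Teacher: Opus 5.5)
Your proposal is correct and follows the paper's own route. The paper proves Theorem~\ref{thm:main} in one line by combining Theorem~\ref{thm:transformed_existence} with Lemma~\ref{lem:equivalence}, which is your deduction via inverse-flow composition with $\tau^\eta:=\tau^\zeta$; your point that $\phi$ must be the flow generated by the represented $W$ matches how the paper handles each Jakubowski--Skorokhod step, namely by carrying $\phi$ in the joint law and invoking pathwise uniqueness of \eqref{phiv}.
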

	
	\section{The multi-layer approximation scheme}
	\label{sec:approx}
	
	We construct the transformed problem on the fixed cylinder
	\[
	\mathcal O_\alpha=\mathbb T^2\times(0,2+\alpha^{-1}),
	\]
	with the stopping thresholds chosen as in
	\eqref{eq:initial_localization_margin}.  
	
	The approximation uses five parameters:
	a time step $\Delta t$, a Galerkin dimension $m$, artificial density diffusion
	$\epsilon$, a spatial mollification scale $l$, and the parameter $\delta$ for
	artificial pressure, exterior viscosity, and the normal interface penalty.  They
	are removed in the order stated in the Introduction.  The shell transport
	fields are not assumed to commute with $\Delta_\Gamma$; the stochastic
	bending-energy terms are controlled by Lemma~\ref{lem:shell_energy_commutator}.
	
	\subsection{Extension to a fixed maximal domain}
	
	Given the extension of the transformed problem from the physical moving
	domain $\mathcal{O}_\zeta(t)$ to the fixed maximal domain
	$\mathcal{O}_\alpha$, we extend both the viscosity coefficients and the
	initial fluid data to $\mathcal{O}_\alpha$.

	\medskip
	\noindent
	{\bf Extension of the viscosity coefficients.}
	
	Following the fixed-domain extension strategy of \cite{KuanTawri2025}, we construct a smooth cut-off which is
	equal to one on the physical fluid domain and vanishes away from it; the
	extended shear viscosity itself remains bounded below by $\kappa$. Let
	$\varphi_\kappa$ be a standard spatial mollifier on $\Gamma$ at scale
	$\kappa$. For a given structure displacement $\zeta$, define
	\[
	a_\kappa^\zeta
	:=
	1+
	\bigl(
	\zeta+2C_\alpha\kappa^{1/2}
	\bigr)*\varphi_\kappa,
	\qquad
	b_\kappa^\zeta
	:=
	1+
	\bigl(
	\zeta-2C_\alpha\kappa^{1/2}
	\bigr)*\varphi_\kappa,
	\]
	where $C_\alpha$ is chosen so that, on the stopped set,
	\[
	\|\zeta*\varphi_\kappa-\zeta\|_{L^\infty(\Gamma)}
	\le
	C_\alpha\kappa^{1/2}.
	\]
	Then
	\[
	b_\kappa^\zeta
	\le
	1+\zeta
	\le
	a_\kappa^\zeta,
	\]
	and
	\[
	C_\alpha\kappa^{1/2}
	\le
	a_\kappa^\zeta-(1+\zeta)
	\le
	3C_\alpha\kappa^{1/2},
	\]
	\[
	C_\alpha\kappa^{1/2}
	\le
	(1+\zeta)-b_\kappa^\zeta
	\le
	3C_\alpha\kappa^{1/2}.
	\]
	
	Let
	$
	\vartheta_\kappa^\zeta:
	\mathcal O_\alpha\to[0,1]
	$
	be defined by
	\[
	\vartheta_\kappa^\zeta(x,y,z)
	:=
	\phi_0
	\left(
	\frac{z-a_\kappa^\zeta(x,y)}{\kappa^{1/2}}
	\right),
	\]
	where $\phi_0\in C^\infty(\mathbb R)$ is non-increasing and satisfies
	\[
	\phi_0(s)=1\quad\text{for }s\le0,
	\qquad
	\phi_0(s)=0\quad\text{for }s\ge1.
	\]
	Since $1+\zeta\le a_\kappa^\zeta$, we have
	\[
	\vartheta_\kappa^\zeta=1
	\qquad\text{on }\mathcal O_\zeta,
	\]
	whereas $\vartheta_\kappa^\zeta=0$ sufficiently far outside the physical
	fluid region.
	
	We then set
	\[
	\mu_\kappa^\zeta
	:=
	\vartheta_\kappa^\zeta\mu
	+
	(1-\vartheta_\kappa^\zeta)\kappa,
	\qquad
	\lambda_\kappa^\zeta
	:=
	\vartheta_\kappa^\zeta\lambda.
	\]
	Thus
	\[
	\mu_\kappa^\zeta=\mu,
	\qquad
	\lambda_\kappa^\zeta=\lambda
	\qquad
	\text{on }\mathcal O_\zeta,
	\]
	while outside the physical region the shear viscosity remains strictly
	positive. Moreover,
	\[
	\lambda_\kappa^\zeta+\frac23\mu_\kappa^\zeta
	=
	\vartheta_\kappa^\zeta
	\left(\lambda+\frac23\mu\right)
	+
	(1-\vartheta_\kappa^\zeta)\frac23\kappa
	\ge0.
	\]
	Hence the physical coercivity condition is preserved by the extension.
	
	The corresponding extended viscous stress is
	\begin{equation*}		\mathbb S_\kappa^\zeta(\nabla^\phi\mathbf v)
		:=
		\mu_\kappa^\zeta
		\left(
		\nabla^\phi\mathbf v+(\nabla^\phi\mathbf v)^\top
		\right)
		+
		\lambda_\kappa^\zeta
		\Div^\phi\mathbf v\,\mathbb I.
	\end{equation*}
	
	\medskip
	\noindent
	{\bf Extension and approximation of the initial data.}
	
	The artificial-pressure exponent is first needed in the initial-data
	approximation. We therefore fix, once and for all,
	\begin{equation}
		\label{eq:global_beta_choice}
		\beta>\max\{6,\gamma\},
		\qquad
		a_\beta:=\frac12-\frac1\beta>\frac13.
	\end{equation}
	The same fixed exponent is used at all subsequent approximation levels.
	
	Write
	\[
	\mathbf m_0:=(\varrho\mathbf v)_0
	\quad\text{on }\mathcal O_{\zeta_0},
	\]
	and extend $(\varrho_0,\mathbf m_0)$ by zero to $\mathcal O_\alpha$; the
	extended fields are denoted by $(\widetilde\varrho_0,\widetilde{\mathbf m}_0)$.
	The initial approximation is chosen in the same order as the limits
	$\epsilon\to0$ and $\delta\to0$.
	
	First, for each $\delta>0$ choose
	\[
	\varrho_{0,\delta}\in L^\beta(\mathcal O_\alpha),
	\qquad
	\mathbf m_{0,\delta}\in
	L^{\frac{2\beta}{\beta+1}}(\mathcal O_\alpha;\mathbb R^3),
	\]
	with $\varrho_{0,\delta}\ge0$ and both fields supported in
	$\overline{\mathcal O_{\zeta_0}}$, such that, as $\delta\to0$,
	\begin{align}& \label{eq:delta_initial_density}
		\varrho_{0,\delta}\to\widetilde\varrho_0
		\;\;\text{in }L^\gamma(\mathcal O_\alpha),\;\;  
		\mathbf m_{0,\delta} \to\widetilde{\mathbf m}_0
		\;\;\text{in }L^{\frac{2\gamma}{\gamma+1}}(\mathcal O_\alpha),\;\;  
		\delta\int_{\mathcal O_\alpha}\varrho_{0,\delta}^\beta\,\dd\bm x
		\to0,  
	\end{align}
	and
	\begin{equation}
		\label{eq:delta_initial_kinetic}
		\limsup_{\delta\to0}
		\int_{\mathcal O_\alpha}
		\frac{|\mathbf m_{0,\delta}|^2}{\varrho_{0,\delta}}\,\dd\bm x
		\le
		\int_{\mathcal O_{\zeta_0}}
		\frac{|\mathbf m_0|^2}{\varrho_0}\,\dd\bm x,
	\end{equation}
	where the quotient is zero on the vacuum set. Set
	$\mathbf v_{0,\delta}:=\mathbf m_{0,\delta}/\varrho_{0,\delta}$ on
	$\{\varrho_{0,\delta}>0\}$ and zero elsewhere. Such a family is obtained by
	truncating the zero extensions at levels $M_\delta\uparrow\infty$ with
	$M_\delta\le\frac12\delta^{-1/\beta}$, followed by a standard diagonal
	approximation of the momentum; then
	$\delta M_\delta^{\beta-\gamma}\to0$ gives
	\eqref{eq:delta_initial_density}.
	
	Second, for each fixed $\delta>0$ and $\epsilon>0$ choose smooth data
	\begin{equation*}		\varrho_{0,\delta,\epsilon}\in
		C^{2+\nu}(\overline{\mathcal O}_\alpha),
		\qquad
		\mathbf v_{0,\delta,\epsilon}\in
		C^{2+\nu}(\overline{\mathcal O}_\alpha;\mathbb R^3),
	\end{equation*}
	with
	$
	0<\epsilon\le\varrho_{0,\delta,\epsilon}\le\delta^{-1/\beta}.
	$
	Writing
	$\mathbf m_{0,\delta,\epsilon}
	:=\varrho_{0,\delta,\epsilon}\mathbf v_{0,\delta,\epsilon}$,  for
	fixed $\delta$,
	\begin{align*}		& \varrho_{0,\delta,\epsilon}\to\varrho_{0,\delta}
		\;\;\text{in }L^\beta(\mathcal O_\alpha),  
		\;\;	\mathbf m_{0,\delta,\epsilon} \to\mathbf m_{0,\delta}\;\;
		\text{in }L^{\frac{2\beta}{\beta+1}}(\mathcal O_\alpha),  
	\end{align*}
	and
	\begin{align*}		& \int_{\mathcal O_\alpha}
		\left[
		\frac12\frac{|\mathbf m_{0,\delta,\epsilon}|^2}{\varrho_{0,\delta,\epsilon}}
		+\frac{a}{\gamma-1}\varrho_{0,\delta,\epsilon}^\gamma
		+\frac{\delta}{\beta-1}\varrho_{0,\delta,\epsilon}^\beta
		\right]\dd\bm x
		\notag \\
		& \qquad\to
		\int_{\mathcal O_\alpha}
		\left[
		\frac12\frac{|\mathbf m_{0,\delta}|^2}{\varrho_{0,\delta}}
		+\frac{a}{\gamma-1}\varrho_{0,\delta}^\gamma
		+\frac{\delta}{\beta-1}\varrho_{0,\delta}^\beta
		\right]\dd\bm x. 
	\end{align*}
	This can be achieved by positivity-preserving mollification and a diagonal
	choice of the smoothing scale. Hence the $\epsilon\to0$ limit has initial
	data $(\varrho_{0,\delta},\mathbf v_{0,\delta})$, while
	\eqref{eq:delta_initial_density}--\eqref{eq:delta_initial_kinetic} recover the
	original data at the final $\delta\to0$ limit.
	
	We also prescribe
	\[
	\zeta_N(0)=\zeta_0,\qquad w_N(0)=w_0,
	\]
	with $\zeta_0\in H^2(\Gamma)$ and $w_0\in L^2(\Gamma)$. The preceding
	construction yields the uniform approximation-level initial-energy bound used
	below.
	
	\subsection{Splitting scheme}
	
	We employ an operator splitting scheme on a fixed time interval $[0,T]$.
	For
	$
	\Delta t=\frac{T}{N},
	t_j=j\Delta t,
	$
	we solve successively a structure subproblem and a fluid subproblem on each
	interval
	$
	[t_j,t_{j+1}],
	j=0,\ldots,N-1.
	$

	Let
	$
	\{\psi_i\}_{i=1}^\infty
	$
	be an orthonormal basis of $L^2(\mathcal{O}_\alpha;\mathbb{R}^3)$ which
	is also an orthogonal basis of
	$H_0^{s_f}(\mathcal{O}_\alpha;\mathbb{R}^3)$ for some fixed
	$s_f>5/2$, and let
	$
	\{\xi_i\}_{i=1}^\infty
	$
	be an orthonormal basis of $L^2(\Gamma)$ which is also an orthogonal basis
	of $H^2(\Gamma)$. We define
	\[
	X_m^f
	:=
	\operatorname{span}
	\{\psi_1,\ldots,\psi_m\},\;\;\hbox{	and}\;\;
	X_m^{st}
	:=
	\operatorname{span}
	\{\xi_1,\ldots,\xi_m\},
	\]
	with corresponding $L^2$-orthogonal projections
	$P_m^f$ and $P_m^{st}$.

	\medskip
	\noindent
	{\bf The structure subproblem.}
	
	Let
	$
	w_N=\partial_t\zeta_N.
	$
	For a graph $\xi$ we write
	\[
	\mathbf n^{\xi,\phi}
	:=
	\frac{\operatorname{cof}(\nabla\phi)\mathbf n^\xi}
	{\left|\operatorname{cof}(\nabla\phi)\mathbf n^\xi\right|},
	\qquad
	J_\Gamma^\phi[\xi]
	:=
	\left|\operatorname{cof}(\nabla\phi)\mathbf n^\xi\right|,
	\]
	and use $(\cdot)_{\tau^{\xi,\phi}}$ for the corresponding tangential
	projection. Whenever the normal is used inside a tubular neighborhood, we use
	the vertical-fiber extension
	\[
	\mathbf n^{\xi,\phi}(\bm y,1+\xi(\bm y)+r)
	:=
	\mathbf n^{\xi,\phi}(\bm y,1+\xi(\bm y)),
	\qquad
	0<r<h_\delta,
	\]
	where
	$
	h_\delta:=\delta^{\frac12-\frac1\beta}.
	$
	The shell velocity
	$w_N\mathbf e_z$ is extended in the same way.  Before the stopping time
	and at fixed Galerkin dimension, these extensions depend locally
	Lipschitz-continuously on the graph in every spatial norm used below.
	
	For every $\psi\in X_m^{st}$, the structure approximation on
	$[t_j,t_{j+1}]$ is defined by
	\begin{align}
		& \label{spstr}
		\int_\Gamma w_N(t_{j+1})\psi\,\dd\bm y
		=\int_\Gamma w_N(t_j)\psi\,\dd\bm y
		-\int_{t_j}^{t_{j+1}}\!\!\int_\Gamma
		\Bigl(\Delta_\Gamma^\chi\zeta_N\,\Delta_\Gamma^\chi\psi
		+\nu_s\nabla_\Gamma^\chi w_N\cdot\nabla_\Gamma^\chi\psi\Bigr)\,\dd\bm y\dd s \\
		& \quad-\frac1\delta\int_{t_j}^{t_{j+1}}\!\!\int_{T^\delta_{\mathcal T_{\Delta t}\zeta_N^*}}
		((w_N\mathbf e_z-\mathcal T_{\Delta t}\mathbf v_N)\cdot\mathbf n^{\mathcal T_{\Delta t}\zeta_N^*,\phi})
		(\psi\mathbf e_z\cdot\mathbf n^{\mathcal T_{\Delta t}\zeta_N^*,\phi})\,\dd\bm x\dd s \notag \\
		& \quad-\iota\int_{t_j}^{t_{j+1}}\!\!\int_{\Gamma_{\mathcal T_{\Delta t}\zeta_N^*}}
		J_\Gamma^\phi[\mathcal T_{\Delta t}\zeta_N^*]
		\left(w_N\mathbf e_z\circ\Phi_{\mathcal T_{\Delta t}\zeta_N^*}^{-1}-\mathcal T_{\Delta t}\mathbf v_N\right)_{\tau^{\mathcal T_{\Delta t}\zeta_N^*,\phi}}\notag \\
		& \qquad\cdot\left(\psi\mathbf e_z\circ\Phi_{\mathcal T_{\Delta t}\zeta_N^*}^{-1}\right)_{\tau^{\mathcal T_{\Delta t}\zeta_N^*,\phi}}\,\dd S\dd s. \notag 
	\end{align}
	The last term is essential: it is the structure-side work of the Navier
	friction force. Together with the corresponding term in the fluid
	subproblem it yields dissipation of the tangential relative velocity rather
	than an artificial no-slip constraint.
	
	The backward shift operator is
	\[
	\mathcal T_{\Delta t}f(t)
	:=
	\begin{cases}
		f(t-\Delta t),& t\ge\Delta t,\\
		f(0),&0\le t<\Delta t.
	\end{cases}
	\]
	
	For $\delta>0$, define the exterior tubular neighborhood
	\[
	T^\delta_\xi
	:=
	\left\{
	(x,y,z)\in
	\mathcal O_\alpha\setminus\mathcal O_\xi:
	0<z-1-\xi(x,y)
	<
	\delta^{\frac12-\frac1\beta}
	\right\}.
	\]
	Only the normal mismatch is penalized; no term of the form
	$\delta^{-1}|\mathbf v_N-w_N\mathbf e_z|^2$ is introduced.
	
	The stopped displacement is
	\[
	\zeta_N^*(t)
	:=
	\zeta_N(t\wedge\tau_N^\zeta),
	\]
	where
	\begin{align*}		& \tau_N^\zeta
		:=
		T\wedge
		\inf
		\Bigg\{
		t>0:
		\inf_\Gamma(1+\zeta_N(t))\le\alpha
		\;\;\text{or}\quad
		\|\zeta_N(t)\|_{H^s(\Gamma)}
		\ge\frac1\alpha
		\Bigg\}.
		\nonumber 
	\end{align*}

	\medskip
	\noindent
	{\bf The fluid subproblem.}
	
	On each interval $[t_j,t_{j+1}]$, the fluid unknowns are
	$	\varrho_N,
	\mathbf{v}_N.$
	We seek
	\[
	\varrho_N
	\in
	L^2
	\Bigl(
	\Omega;
	C(
	[t_j,t_{j+1}];
	C^{2+\nu}
	(\overline{\mathcal{O}}_\alpha)
	)
	\Bigr),\;\;
	\mathbf{v}_N
	\in
	L^2
	\Bigl(
	\Omega;
	C(
	[t_j,t_{j+1}];
	X_m^f
	)
	\Bigr).
	\]
	Define
	\[
	\Delta^\phi
	:=
	\Div^\phi\nabla^\phi.
	\]
	The approximate continuity equation is
	\begin{align}
		& \label{spcon}
		\int_{\mathcal{O}_\alpha}
		\varrho_N(t)\varphi
		\,\dd\bm{x}
		=
		\int_{\mathcal{O}_\alpha}
		\varrho_N(t_j)\varphi
		\,\dd\bm{x}
		+
		\int_{t_j}^{t}
		\int_{\mathcal{O}_\alpha}
		\varrho_N\mathbf{v}_N
		\cdot
		\nabla^\phi\varphi
		\,\dd\bm{x}\dd s
		\\
		& \quad
		-
		\epsilon
		\int_{t_j}^{t}
		\int_{\mathcal{O}_\alpha}
		\nabla^\phi\varrho_N
		\cdot
		\nabla^\phi\varphi
		\,\dd\bm{x}\dd s \notag 
	\end{align}
	for every
	$\varphi\in C^\infty(\overline{\mathcal{O}}_\alpha)$.
	
	Equivalently,
	\[
	\partial_t\varrho_N
	+
	\Div^\phi
	(\varrho_N\mathbf{v}_N)
	=
	\epsilon\Delta^\phi\varrho_N
	\qquad
	\text{in }\mathcal{O}_\alpha,
	\]
	with the transformed homogeneous Neumann condition
	\[
	\nabla^\phi\varrho_N
	\cdot
	\mathbf{n}^\phi
	=
	0
	\qquad
	\text{on }\partial\mathcal{O}_\alpha.
	\]

	For fixed
	$
	\beta>\max\{6,\gamma\},
	\delta>0,
	\epsilon>0,
	l>0,
	$
	the momentum equation is imposed in the following weak form. For every
	$\mathbf{q}\in X_m^f$.  To shorten the interface terms in this display, set
	$
	\mathbf n_N^*:=\mathbf n^{\zeta_N^*,\phi},
	\tau_N^*:=\tau^{\zeta_N^*,\phi},
	J_N^*:=J_\Gamma^\phi[\zeta_N^*].
	$ Then
	\begin{align}
		& \label{spmom}
		\int_{\mathcal O_\alpha}(\epsilon+[\varrho_N]_l)\mathbf v_N(t_{j+1})\cdot\mathbf q\,\dd\bm x\\
		& \quad=\int_{\mathcal O_\alpha}(\epsilon+[\varrho_N]_l)\mathbf v_N(t_j)\cdot\mathbf q\,\dd\bm x \notag \\
		& \quad+\int_{t_j}^{t_{j+1}}\!\!\int_{\mathcal O_\alpha}
		\Bigl([\varrho_N]_l\mathbf v_N\otimes\mathbf v_N:\nabla^\phi\mathbf q
		+(a\varrho_N^\gamma+\delta\varrho_N^\beta)\Div^\phi\mathbf q\notag \\
		& \qquad\qquad-\mathbb S_\kappa^{\zeta_N^*}(\nabla^\phi\mathbf v_N):\nabla^\phi\mathbf q
		-\frac\epsilon2[\nabla^\phi\varrho_N]_l\cdot\nabla^\phi\mathbf v_N\cdot\mathbf q\Bigr)\,\dd\bm x\dd s \notag \\
		& \quad-\frac1\delta\int_{t_j}^{t_{j+1}}\!\!\int_{T^\delta_{\zeta_N^*}}
		((\mathbf v_N-w_N\mathbf e_z)\cdot\mathbf n_N^*)
		(\mathbf q\cdot\mathbf n_N^*)\,\dd\bm x\dd s \notag \\
		& \quad-\iota\int_{t_j}^{t_{j+1}}\!\!\int_{\Gamma_{\zeta_N^*}}J_N^*
		\left(\mathbf v_N-w_N\mathbf e_z\circ\Phi_{\zeta_N^*}^{-1}\right)_{\tau_N^*}\cdot\mathbf q_{\tau_N^*}\,\dd S\dd s. \notag 
	\end{align}
	Here
	$
	[\varrho]_l=\kappa_l*\varrho,
	$
	where $\kappa_l$ is a standard spatial mollifier of scale $l>0$.
	The notation $J_\Gamma^\phi[\xi]$ and
	$\mathbf n^{\xi,\phi}$ is the one introduced in the structure step above.

	\medskip
	\noindent
	{\bf Bending-energy identity.}
	
	For the approximate transformed shell variables define the corresponding
	physical-coordinate fields
	\[
	\widehat\eta_N(t,\bm y)
	:=
	\zeta_N(t,\chi^{-1}(t,\bm y)),
	\qquad
	\widehat r_N(t,\bm y)
	:=
	w_N(t,\chi^{-1}(t,\bm y)).
	\]
	By the Stratonovich chain rule,
	\begin{equation}
		\label{eq:approx_eta_physical}
		\dd\widehat\eta_N
		=
		\widehat r_N\,\dd t
		+
		\sum_{k=1}^K
		A_k\widehat\eta_N\circ\dd W_k,
		\qquad
		A_k:=\mathcal Y_k\cdot\nabla_\Gamma.
	\end{equation}
	Since $\chi$ is area-preserving,
	\[
	\|\Delta_\Gamma\widehat\eta_N\|_{L^2(\Gamma)}
	=
	\|\Delta_\Gamma^\chi\zeta_N\|_{L^2(\Gamma)},
	\qquad
	\langle
	\Delta_\Gamma\widehat\eta_N,
	\Delta_\Gamma\widehat r_N
	\rangle
	=
	\langle
	\Delta_\Gamma^\chi\zeta_N,
	\Delta_\Gamma^\chi w_N
	\rangle.
	\]
	
	\begin{lemma}[Approximate shell bending-energy formula]
		\label{lem:approx_bending_energy}
		Let $\mathcal G_k$ and $\mathcal R_k$ be defined by
		\eqref{eq:Gk_commutator}--\eqref{eq:Rk_commutator}. Then
		\begin{align*}			& \frac12
			\|\Delta_\Gamma^\chi\zeta_N(t)\|_{L^2(\Gamma)}^2
			=
			\frac12
			\|\Delta_\Gamma\zeta_0\|_{L^2(\Gamma)}^2
			+
			\int_0^t
			\left\langle
			\Delta_\Gamma^\chi\zeta_N,
			\Delta_\Gamma^\chi w_N
			\right\rangle
			\,\dd s
			\notag \\
			& \quad
			+
			M_N^{\rm sh}(t)
			+
			\frac12
			\sum_{k=1}^K
			\int_0^t
			\mathcal R_k(\widehat\eta_N(s))
			\,\dd s, 
		\end{align*}
		where
		\begin{equation*}			M_N^{\rm sh}(t)
			:=
			\sum_{k=1}^K
			\int_0^t
			\mathcal G_k(\widehat\eta_N(s))
			\,\dd W_k(s).
		\end{equation*}
		Moreover,
		\begin{align*}			& |\mathcal G_k(\widehat\eta_N)|
			+
			|\mathcal R_k(\widehat\eta_N)|
			\le
			c_k
			\|\Delta_\Gamma^\chi\zeta_N\|_{L^2(\Gamma)}^2, \\
			& \dd\langle M_N^{\rm sh}\rangle_t
			\le
			C
			\|\Delta_\Gamma^\chi\zeta_N\|_{L^2(\Gamma)}^4\,\dd t. 
		\end{align*}
	\end{lemma}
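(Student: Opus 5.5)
The plan is to apply It\^o's formula in physical coordinates to the functional $F(f)=\frac12\|\Delta_\Gamma f\|_{L^2(\Gamma)}^2$ along $\widehat\eta_N$, and then pull the result back with $\chi$. On the approximation level, $\zeta_N$ is a Galerkin/splitting process that is absolutely continuous in time with values in $X_m^{st}\subset C^\infty(\Gamma)$. Hence $\widehat\eta_N=\zeta_N\circ\chi^{-1}$ is spatially smooth, and \eqref{eq:approx_eta_physical} holds as an identity of smooth-in-space semimartingales. Since $\chi(0)=\mathrm{Id}$, we have $\widehat\eta_N(0)=\zeta_0$. I would first convert \eqref{eq:approx_eta_physical} to It\^o form,
\[
\dd\widehat\eta_N=\Bigl(\widehat r_N+\tfrac12\sum_k A_k^2\widehat\eta_N\Bigr)\dd t+\sum_k A_k\widehat\eta_N\,\dd W_k ,
\]
which is legitimate because the $W_k$ are independent and $A_k$ is a deterministic linear operator.

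Next I would apply It\^o's formula to $F$, which is a continuous quadratic form on $H^2$. This gives
\[
\dd F(\widehat\eta_N)=\langle\Delta_\Gamma\widehat\eta_N,\Delta_\Gamma\widehat r_N\rangle\dd t+\tfrac12\sum_k\Bigl(\langle\Delta_\Gamma\widehat\eta_N,\Delta_\Gamma A_k^2\widehat\eta_N\rangle+\|\Delta_\Gamma A_k\widehat\eta_N\|^2\Bigr)\dd t+\sum_k\langle\Delta_\Gamma\widehat\eta_N,\Delta_\Gamma A_k\widehat\eta_N\rangle\dd W_k .
\]
By the definitions \eqref{eq:Gk_commutator}--\eqref{eq:Rk_commutator}, the drift correction is exactly $\frac12\sum_k\mathcal R_k(\widehat\eta_N)$ and the martingale integrand is $\mathcal G_k(\widehat\eta_N)$. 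The area-preservation identities recorded just before the lemma then turn $F(\widehat\eta_N)$ into $\frac12\|\Delta^\chi_\Gamma\zeta_N\|^2$ and the first drift term into $\langle\Delta^\chi_\Gamma\zeta_N,\Delta^\chi_\Gamma w_N\rangle$.

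\textbf{Main obstacle.} The delicate point is justifying It\^o's formula, since $\widehat\eta_N$ is not finite dimensional: composition with $\chi^{-1}$ leaves $X_m^{st}$. To avoid an infinite-dimensional It\^o formula, I would expand $\widehat\eta_N$ in Fourier modes on $\mathbb T^2$ and apply the scalar It\^o formula to each coefficient. Each coefficient is an It\^o process, by the Stratonovich chain rule for $\zeta_N(t,\chi^{-1}(t,\cdot))$ with $\zeta_N$ of bounded variation in time and $\chi^{-1}$ a smooth flow. I would then sum over modes with weights $|\xi|^4$, truncating at $|\xi|\le M$ and letting $M\to\infty$. The passage to the limit uses dominated convergence, which rests on the uniform $C^2$/$H^4$ bounds of $\chi^{-1}$ on bounded-flow events, followed by localization by stopping times.

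For the estimates I would invoke \eqref{eq:commutator_energy_bound} together with the flat-torus remark, giving $|\mathcal G_k|+|\mathcal R_k|\le c_k\|\Delta_\Gamma\widehat\eta_N\|^2$ modulo the constant mode, which is annihilated by all operators involved. Area preservation again converts this to $c_k\|\Delta^\chi_\Gamma\zeta_N\|^2$. Finally, $\dd\langle M_N^{\rm sh}\rangle_t=\sum_k\mathcal G_k(\widehat\eta_N)^2\dd t\le C\|\Delta^\chi_\Gamma\zeta_N\|^4\dd t$ with $C=\sum_kc_k^2$.
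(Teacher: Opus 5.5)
Your proposal is correct and follows the paper's proof. Like the paper, you apply It\^o's formula to $\frac12\|\Delta_\Gamma\widehat\eta_N\|^2$ along \eqref{eq:approx_eta_physical}, read off the martingale integrand and drift correction as $\mathcal G_k$ and $\frac12\mathcal R_k$, pull back by area preservation, and take the bounds from Lemma~\ref{lem:shell_energy_commutator}. Your Fourier-mode justification of the It\^o formula is extra rigor the paper omits; one small correction is that the term $\langle\Delta_\Gamma\widehat\eta_N,\Delta_\Gamma A_k^2\widehat\eta_N\rangle$ needs $\widehat\eta_N\in H^4$, hence $C^4$ (not merely $C^2$) control of $\chi^{-1}$, which the smooth flow supplies.
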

	
	\begin{proof}
		Equation \eqref{eq:approx_eta_physical} and It\^o's formula give
		\[
		\dd\frac12\|\Delta_\Gamma\widehat\eta_N\|_2^2
		=
		\left\langle
		\Delta_\Gamma\widehat\eta_N,
		\Delta_\Gamma\widehat r_N
		\right\rangle\dd t
		+
		\sum_{k=1}^K
		\mathcal G_k(\widehat\eta_N)\,\dd W_k
		+
		\frac12
		\sum_{k=1}^K
		\mathcal R_k(\widehat\eta_N)\,\dd t.
		\]
		The identities and bounds follow from the area-preserving change of
		variables and Lemma~\ref{lem:shell_energy_commutator}.
	\end{proof}

	\noindent	\textbf{Approximate energy and splitting defect.} Define
	\begin{align*}		& \mathcal E_N(t)
		:={}
		\frac12
		\int_{\mathcal O_\alpha}
		\bigl(\epsilon+[\varrho_N]_l\bigr)
		|\mathbf v_N|^2\,\dd\bm x
		+
		\frac12\|w_N(t)\|_{L^2(\Gamma)}^2
		+
		\frac12
		\|\Delta_\Gamma^\chi\zeta_N(t)\|_{L^2(\Gamma)}^2
		\notag \\
		& +
		\int_{\mathcal O_\alpha}
		\frac{a}{\gamma-1}\varrho_N^\gamma\,\dd\bm x
		+
		\int_{\mathcal O_\alpha}
		\frac{\delta}{\beta-1}\varrho_N^\beta\,\dd\bm x . 
	\end{align*}
	Let $\mathcal D_N(t)$ denote
	\begin{align}
		& \label{eq:approx_dissipation_def}
		\mathcal D_N(t)
		:={}
		\int_0^t\int_{\mathcal O_\alpha}
		\mathbb S_\kappa^{\zeta_N^*}
		(\nabla^\phi\mathbf v_N):
		\nabla^\phi\mathbf v_N
		\,\dd\bm x\dd s
		+
		\nu_s
		\int_0^t
		\|\nabla_\Gamma^\chi w_N\|_{L^2(\Gamma)}^2\,\dd s
		\\
		& +
		\epsilon
		\int_0^t\int_{\mathcal O_\alpha}
		\left(
		a\gamma\varrho_N^{\gamma-2}
		+
		\delta\beta\varrho_N^{\beta-2}
		\right)
		|\nabla^\phi\varrho_N|^2
		\,\dd\bm x\dd s
		\notag \\
		& +
		\frac1{2\delta}
		\int_0^t
		\int_{T^\delta_{\zeta_N^*}}
		\left|
		(
		\mathbf v_N-w_N\mathbf e_z
		)
		\cdot
		\mathbf n^{\zeta_N^*,\phi}
		\right|^2
		\,\dd\bm x\dd s
		\notag \\
		& +
		\frac1{2\delta}
		\int_0^t
		\int_{T^\delta_{\mathcal T_{\Delta t}\zeta_N^*}}
		\left|
		(
		\mathcal T_{\Delta t}\mathbf v_N-w_N\mathbf e_z
		)
		\cdot
		\mathbf n^{\mathcal T_{\Delta t}\zeta_N^*,\phi}
		\right|^2
		\,\dd\bm x\dd s
		\notag \\
		& +
		\frac{\iota}{2}
		\int_0^t
		\int_{\Gamma_{\zeta_N^*}}
		J_\Gamma^\phi[\zeta_N^*]
		\left|
		\left(
		\mathbf v_N
		-
		w_N\mathbf e_z\circ\Phi_{\zeta_N^*}^{-1}
		\right)_{\tau^{\zeta_N^*,\phi}}
		\right|^2
		\,\dd S\dd s
		\notag \\
		& +
		\frac{\iota}{2}
		\int_0^t
		\int_{\Gamma_{\mathcal T_{\Delta t}\zeta_N^*}}
		J_\Gamma^\phi[\mathcal T_{\Delta t}\zeta_N^*]
		\left|
		\left(
		\mathcal T_{\Delta t}\mathbf v_N
		-
		w_N\mathbf e_z\circ
		\Phi_{\mathcal T_{\Delta t}\zeta_N^*}^{-1}
		\right)_{\tau^{\mathcal T_{\Delta t}\zeta_N^*,\phi}}
		\right|^2
		\,\dd S\dd s . \notag 
	\end{align}
	
	For later use we make the splitting defect explicit.  For a graph
	$\xi$, a flow $\phi$, and an ambient vector field $\mathbf a$, set
	\begin{align*}		& \mathcal Q_{n,\delta}[\xi,\phi;\mathbf a]
		:=
		\int_{T^\delta_\xi}
		\left|
		\mathbf a\cdot\mathbf n^{\xi,\phi}
		\right|^2
		\,\dd\bm x, \\
		& \mathcal Q_{\tau}[\xi,\phi;\mathbf a]
		:=
		\int_{\Gamma_\xi}
		J_\Gamma^\phi[\xi]
		\left|
		\mathbf a_{\tau^{\xi,\phi}}
		\right|^2
		\,\dd S. 
	\end{align*}
	For the shell field in
	$\mathcal Q_\tau$ we use
	$\mathbf a=w_N\mathbf e_z\circ\Phi_\xi^{-1}$; in
	$\mathcal Q_{n,\delta}$ it denotes its vertical extension.
	
	The splitting remainder is
	\begin{equation}
		\label{eq:splitting_remainder_decomposition}
		\mathfrak R_{N,\delta}^{\rm split}
		=
		\mathfrak R_{N,\delta}^{n}
		+
		\mathfrak R_{N,\delta}^{\tau},
	\end{equation}
	where
	\begin{align}
		& \mathfrak R_{N,\delta}^{n}(t)
		:={}
		\frac1{2\delta}
		\int_0^t
		\Big[
		\mathcal Q_{n,\delta}
		[
		\mathcal T_{\Delta t}\zeta_N^*,
		\phi;
		\mathcal T_{\Delta t}\mathbf v_N
		]
		-
		\mathcal Q_{n,\delta}
		[
		\zeta_N^*,
		\phi;
		\mathbf v_N
		]
		\notag \\
		& \hspace{2.6cm}
		+
		\mathcal Q_{n,\delta}
		[
		\zeta_N^*,
		\phi;
		w_N\mathbf e_z
		]
		-
		\mathcal Q_{n,\delta}
		[
		\mathcal T_{\Delta t}\zeta_N^*,
		\phi;
		w_N\mathbf e_z
		]
		\Big]
		\,\dd s, \notag \\
		& \mathfrak R_{N,\delta}^{\tau}(t)
		:={}
		\frac{\iota}{2}
		\int_0^t
		\Big[
		\mathcal Q_{\tau}
		[
		\mathcal T_{\Delta t}\zeta_N^*,
		\phi;
		\mathcal T_{\Delta t}\mathbf v_N
		]
		-
		\mathcal Q_{\tau}
		[
		\zeta_N^*,
		\phi;
		\mathbf v_N
		]
		\notag \\
		& \hspace{2.6cm}
		+
		\mathcal Q_{\tau}
		[
		\zeta_N^*,
		\phi;
		w_N\mathbf e_z
		]
		-
		\mathcal Q_{\tau}
		[
		\mathcal T_{\Delta t}\zeta_N^*,
		\phi;
		w_N\mathbf e_z
		]
		\Big]
		\,\dd s.
		\label{eq:splitting_remainder_tangent} 
	\end{align}
	
	\begin{proposition}[Approximate existence and energy estimate]
		\label{thm:approx_existence}
		Fix $N,m,\epsilon,l,\delta>0$ and $\beta>\max\{6,\gamma\}$, and let the
		initial data satisfy the assumptions above. Then \eqref{spstr}--\eqref{spmom}
		has an $(\mathcal F_t)$-adapted solution
		$(\varrho_N,\mathbf v_N,\zeta_N,w_N)$ on $[0,T]$ with
		\[
		\varrho_N\in C([0,T];C^{2+\nu}(\overline{\mathcal O}_\alpha)),\quad
		\varrho_N>0,\quad
		\mathbf v_N\in C([0,T];X_m^f),
		\]
		\[
		\zeta_N\in C^1([0,T];X_m^{st}),\qquad
		w_N=\partial_t\zeta_N.
		\]
		With $\mathcal E_N$, $\mathcal D_N$, and
		$\mathfrak R_{N,\delta}^{\rm split}$ defined above, for every $t\in[0,T]$,
		\begin{align}
			& \label{energy}
			\mathcal E_N(t)+\mathcal D_N(t)
			\le{}\mathcal E_N(0)+M_N^{\rm sh}(t)
			+\frac12\sum_{k=1}^K\int_0^t
			\mathcal R_k(\widehat\eta_N(s))\,\dd s
			+\mathfrak R_{N,\delta}^{\rm split}(t). 
		\end{align}
		Moreover, for every $p\ge1$,
		\begin{equation}
			\label{eq:approx_energy_moment}
			\mathbb E\left[\sup_{0\le t\le T}\mathcal E_N(t)^p
			+\mathcal D_N(T)^p\right]
			\le C_{p,T,\delta}\bigl(1+\mathcal E_N(0)^p\bigr),
		\end{equation}
		with a constant independent of $N$ and, at the energy level, of $m$.
		The splitting remainder converges to zero as $N\to\infty$ by
		Lemma~\ref{lem:N_splitting_remainder}.
	\end{proposition}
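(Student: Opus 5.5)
The plan is to construct the solution interval by interval, by induction on $j=0,\dots,N-1$, and to derive the energy inequality \eqref{energy} by summing one-step identities. On each $[t_j,t_{j+1}]$ the data $(\varrho_N(t_j),\mathbf v_N(t_j),\zeta_N(t_j),w_N(t_j))$ are $\mathcal F_{t_j}$-measurable. The stopped displacement $\zeta_N^*$ only enters through the bounded-flow and stopped-graph quantities: $\inf(1+\zeta_N^*)\ge\alpha$ and $\|\zeta_N^*\|_{H^s}\le\alpha^{-1}$. Hence the penalty tube $T^\delta$, the normals $\mathbf n^{\xi,\phi}$, the Jacobians $J_\Gamma^\phi$ and the cut-off $\vartheta_\kappa^\zeta$ are smooth, and they depend locally Lipschitz-continuously on the finite-dimensional unknowns.

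\textbf{Structure step.} Since \eqref{spstr} involves only the backward-shifted fluid velocity $\mathcal T_{\Delta t}\mathbf v_N$, it is a linear ODE system in $X_m^{st}$ with pathwise random coefficients. These coefficients are continuous in time, because $\chi$ is $C^\kappa_tC^2_x$. Pathwise Carathéodory theory gives a unique $C^1$ solution. Adaptedness follows because the solution map is a continuous functional of $(W,\text{data})$ restricted to $[0,t]$.

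\textbf{Fluid step.} For a given $\mathbf v_N\in C([t_j,t];X_m^f)$, the continuity equation $\partial_t\varrho+\Div^\phi(\varrho\mathbf v)=\epsilon\Delta^\phi\varrho$ with Neumann condition is uniformly parabolic. Its leading coefficients $\mathbf A_\phi\mathbf A_\phi^\top$ are smooth in $x$ and Hölder in $t$, so classical Schauder theory yields $\varrho\in C([t_j,t_{j+1}];C^{2+\nu})$. The maximum principle gives $\varrho>0$ with two-sided bounds depending on $\int\|\Div^\phi\mathbf v\|_\infty$. The momentum equation \eqref{spmom} is then a Galerkin ODE with mass matrix $\int(\epsilon+[\varrho]_l)\psi_i\cdot\psi_k$, which is uniformly positive definite because $\epsilon>0$. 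A Schauder fixed point in $C([t_j,t_j+T_*];X_m^f)$ gives local existence. The a priori energy bound (below) rules out blow-up, so the solution extends to all of $[t_j,t_{j+1}]$. Measurability again follows from the continuous dependence on the path.

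\textbf{Energy inequality.} In the fluid step I would test with $\mathbf q=\mathbf v_N$. I would also test the continuity equation with $\tfrac12|\mathbf v_N|^2$ (mollified as $[\cdot]_l$), and with $P'(\varrho)$ for $P=\frac a{\gamma-1}\varrho^\gamma+\frac\delta{\beta-1}\varrho^\beta$. The correction term $-\frac\epsilon2[\nabla^\phi\varrho]_l\cdot\nabla^\phi\mathbf v\cdot\mathbf q$ is designed to cancel the $\epsilon\Delta^\phi$ contribution to the kinetic energy. The integrations by parts with $\nabla^\phi,\Div^\phi$ are legitimate because $\det\nabla\phi=1$. In the structure step I would test with $\psi=w_N$. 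The identity $\langle\Delta^\chi_\Gamma\zeta,\Delta^\chi_\Gamma w\rangle$ from the structure step combines with Lemma~\ref{lem:approx_bending_energy} to produce $M_N^{\rm sh}$ and the $\mathcal R_k$ correction. The penalty and friction cross terms are handled with $2ab=a^2+b^2-(a-b)^2$, giving $-\frac1{2\delta}\mathcal Q_n$ and $-\frac\iota2\mathcal Q_\tau$ for each step. The leftover mismatch between the shifted and unshifted geometries is exactly $\mathfrak R^{\rm split}_{N,\delta}$. Summing over $j$ yields \eqref{energy}.

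\textbf{Moment bound.} For \eqref{eq:approx_energy_moment} I would bound $|\mathcal R_k|\le c_k\mathcal E_N$ by Lemma~\ref{lem:approx_bending_energy}. I would estimate $\mathfrak R^{\rm split}$ crudely by $C_\delta\int_0^t\mathcal E_N$, using that all quantities are bounded on the stopped set, or alternatively absorb it. Then I would raise to the $p$-th power and apply BDG with $\dd\langle M^{\rm sh}\rangle\le C\mathcal E_N^2\dd t$, followed by Gronwall.

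The main obstacle is this last point. A uniform-in-$N$ control of the splitting remainder is needed for \eqref{eq:approx_energy_moment}, and it must not depend on $m$. I would first try to show $|\mathfrak R^{\rm split}(t)|\le C_\delta\int_0^t(\mathcal E_N(s)+\mathcal E_N(s-\Delta t))\,\dd s$. To do this, I would write each difference of $\mathcal Q$'s as a quadratic form in the velocity and control it by the energy using the stopped $H^s$ bound on the graphs and trace bounds.
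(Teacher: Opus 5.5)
Your construction and your derivation of \eqref{energy} follow the paper's proof closely. That covers the lagged Carath\'eodory structure ODE, the parabolic density equation with maximum principle, the finite-dimensional fixed point for \eqref{spmom}, testing with $\mathbf v_N$, $w_N$, $P'(\varrho_N)$, polarization of the interface products, Lemma~\ref{lem:approx_bending_energy}, and then BDG and Gronwall. The step you single out as the main obstacle is, however, a genuine gap, and your proposed bound $|\mathfrak R^{\rm split}_{N,\delta}(t)|\le C_\delta\int_0^t(\mathcal E_N(s)+\mathcal E_N(s-\Delta t))\,\dd s$ is not available with a deterministic constant.

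You cannot borrow Lemma~\ref{lem:N_splitting_remainder}: its proof uses \eqref{eq:N_time_translation_v_moment} and \eqref{eq:N_zeta_shift_rate}, which are derived from \eqref{eq:N_energy_moment}, i.e.\ from the estimate being proved. The paper's proof of \eqref{eq:approx_energy_moment} cites only Lemma~\ref{lem:shell_energy_commutator}, BDG and Gronwall, and does not treat this term either.

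\emph{Normal part.} This part is harmless. Drop its two negative terms and use $|\mathbf a\cdot\mathbf n|\le|\mathbf a|$ and $|T^\delta_\xi|\le h_\delta|\Gamma|$. This gives $\mathfrak R^{n}_{N,\delta}(t)\le(\delta\epsilon)^{-1}\int_0^t\mathcal E_N((s-\Delta t)_+)\,\dd s+h_\delta\delta^{-1}\int_0^t\mathcal E_N\,\dd s$, with a deterministic but $\epsilon$-dependent constant.

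\emph{Tangential part.} Here your bound fails. The form $\mathcal Q_\tau[\mathcal T_{\Delta t}\zeta_N^*,\phi;\mathcal T_{\Delta t}\mathbf v_N]$ integrates the interface trace of $\mathbf v_N$, whereas $\mathcal E_N$ controls only $\epsilon\|\mathbf v_N\|_{L^2}^2$. You therefore need either norm equivalence on $X_m^f$ (an $m$-dependent constant, which Section~\ref{s4} tolerates but the statement does not) or absorption into the viscous part of $\mathcal D_N$ via a trace estimate. Moreover, both friction forms carry the weight $J_\Gamma^\phi[\xi]J_\xi=|\operatorname{cof}(\nabla\phi)\mathbf N^\xi|$ and the projection $\tau^{\xi,\phi}$, which depend on $\nabla\phi(s)$. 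The stopping time $\tau_N^\zeta$ controls only the graph, not these flow factors or the velocity.

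\emph{Consequence.} The constants in any such bound are random. A pathwise Gronwall argument then yields moments only if $\sup_t\|\nabla\phi^{\pm1}(t)\|_{L^\infty}$ has exponential moments, which the recorded flow estimates (polynomial moments of $\Lambda_\phi$) do not supply. You need an adapted flow localization, for instance stopping when $\|\phi(t)\|_{C^2}+\|\phi^{-1}(t)\|_{C^2}$ reaches $R$, and you should prove \eqref{eq:approx_energy_moment} up to that time with $C_R$. Unlocalized, $m$-uniform bounds can be recovered after $N\to\infty$ from the remainder-free inequality \eqref{eq:N_limit_energy}, whose stochastic terms have deterministic constants by Lemma~\ref{lem:shell_energy_commutator}.

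A second step fails as written: the kinetic-energy cancellation you assert. Testing \eqref{spmom} with $\mathbf v_N$ gives $\frac{\dd}{\dd t}\frac12\int_{\mathcal O_\alpha}(\epsilon+[\varrho_N]_l)|\mathbf v_N|^2\,\dd\bm x$ as the right-hand side at $\mathbf q=\mathbf v_N$ minus $\frac12\int\partial_t[\varrho_N]_l|\mathbf v_N|^2$. Here $\partial_t[\varrho_N]_l=[\epsilon\Delta^\phi\varrho_N-\Div^\phi(\varrho_N\mathbf v_N)]_l$, and testing \eqref{spcon} with a mollified $\frac12|\mathbf v_N|^2$ only re-expresses this term. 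The difficulty is that \eqref{spcon} uses the unmollified flux while the convective term uses $[\varrho_N]_l$. As a result, the transport contributions leave
\[
\frac12\int_{\mathcal O_\alpha}\Bigl([\Div^\phi(\varrho_N\mathbf v_N)]_l-\Div^\phi\bigl([\varrho_N]_l\mathbf v_N\bigr)\Bigr)|\mathbf v_N|^2\,\dd\bm x .
\]
This is a commutator with no sign, cubic in $\mathbf v_N$. At fixed $m,l$ it is bounded only by a flow-dependent multiple of $\|\varrho_N\|_{L^1}\|\mathbf v_N\|_{L^2}^3$, which is superlinear in $\mathcal E_N$.

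Likewise, even if $\mathcal J_l$ commuted with $\Div^\phi$ and $\nabla^\phi$ (it does not, since their coefficients depend on $\bm x$), the diffusive contributions would leave $\frac\epsilon4\int[\nabla^\phi\varrho_N]_l\cdot\nabla^\phi|\mathbf v_N|^2$. The standard cancellation requires the coefficient $\epsilon$, not $\frac\epsilon2$, in the correction term of \eqref{spmom}.

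Hence \eqref{energy} does not follow for the scheme as written. Neither does your continuation argument in the fluid step, which relies on the energy bound to rule out blow-up. The paper's proof asserts this cancellation without computation. The remedy is therefore an energy-consistent regularization, in which the inertial density satisfies a continuity equation whose flux is the one used in the convective term, rather than a different argument.

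Finally, a Schauder fixed point alone gives no uniqueness, so ``measurability from continuous dependence'' is unjustified there. Use the contraction estimate instead, as the paper does.
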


	\begin{proof}
		We first construct the approximation interval by interval. Assume that the
		solution is known up to $t_j$. For a fixed realization of the stochastic
		flow, the coefficients of $\nabla_\Gamma^\chi$,
		$\Delta_\Gamma^\chi$, $\nabla^\phi$, and $\Div^\phi$ are continuous in
		time and smooth in space. Hence the finite-dimensional structure equation
		\eqref{spstr} is a Carath\'eodory ODE on $X_m^{st}$. Its coefficients at
		time $t$ depend only on the stochastic flow up to time $t$ and on the
		already constructed lagged fluid variables; consequently the solution is
		adapted.
		
		Once $(\zeta_N,w_N)$ is known on $[t_j,t_{j+1}]$, the continuity equation
		\eqref{spcon} is uniformly parabolic for fixed $\epsilon>0$. Positivity of
		$\varrho_N$ follows from the maximum principle. Given
		$\mathbf v\in C([t_j,t_{j+1}];X_m^f)$, solve \eqref{spcon} for
		$\varrho$, and then solve the finite-dimensional momentum equation
		\eqref{spmom} for a new velocity $\widetilde{\mathbf v}$. The resulting
		map
		\[
		\mathfrak T:\mathbf v\mapsto\widetilde{\mathbf v}.
		\]
		On every ball of radius $R$ in $C([t_j,t_j+h];X_m^f)$, the parabolic
		stability estimate for \eqref{spcon} and equivalence of norms on $X_m^f$
		give, for some $\theta>0$,
		\[
		\|\mathfrak T(\mathbf v_1)-\mathfrak T(\mathbf v_2)\|_{C_tX_m^f}
		\le C_{m,\epsilon,l,\delta,R}h^\theta
		\|\mathbf v_1-\mathbf v_2\|_{C_tX_m^f}.
		\]
		The constant is finite on bounded-flow localization events and uses only
		the spatial smoothness and time continuity of the flow coefficients. Choose
		$h$ so that the coefficient is below one. The local energy identity derived
		below, together with the density maximum principle, controls the
		finite-dimensional continuation norm; hence the solution extends
		successively to $[t_j,t_{j+1}]$ and then to $[0,T]$.

		We next derive the energy estimate. Testing the continuity equation with
		\[
		\frac{a\gamma}{\gamma-1}\varrho_N^{\gamma-1}
		+
		\frac{\delta\beta}{\beta-1}\varrho_N^{\beta-1},
		\]
		the momentum equation with $\mathbf v_N$, and the structure equation with
		$w_N$, and summing over the time steps yields the deterministic part of
		\eqref{energy}, including the viscous, artificial-diffusion, shell-damping,
		penalty, and Navier-friction dissipations.
		
		The only nonstandard point is the bending-energy contribution, since the
		coefficients induced by $\chi$ are only H\"older continuous in time.
		Lemma~\ref{lem:approx_bending_energy} gives the corresponding shell martingale
		$M_N^{\rm sh}$ and It\^o correction.
		Finally,
		Lemma~\ref{lem:shell_energy_commutator}, the
		Burkholder--Davis--Gundy inequality (BDG), and Gronwall's lemma imply
		\eqref{eq:approx_energy_moment}. 
	\end{proof}

	\medskip

\section{Time-discretization limit \texorpdfstring{$N\to\infty$}{N to infinity}}
	\label{s4}
	
	Let $\Delta t=T/N\to0$ with $m,\epsilon,l,\delta$ fixed.  The stochastic flow is
	not discretized.  At this level the density equation remains parabolic and the
	velocity and shell spaces are finite dimensional, so the only additional issue
	is the splitting error between the current and lagged geometries.  The
	translation estimates below show that this defect vanishes as $N\to\infty$.
	The shell martingale is kept in the joint law in order to preserve its relation
	with the common Wiener process after the representation step.
	
	\subsection{Uniform estimates independent of \texorpdfstring{$N$}{N}}
	
	The approximate stochastic energy estimate
	\eqref{eq:approx_energy_moment} immediately yields the following bounds.

	\begin{lemma}[Uniform estimates]
		
		For every $p\ge1$ there exists
		$
		C=C(p,T,m,\epsilon,l,\delta,\alpha),
		$
		independent of $N$, such that
		\begin{equation}
			\label{eq:N_energy_moment}
			\sup_N
			\mathbb E\left[
			\sup_{0\le t\le T}\mathcal E_N(t)^p
			+
			\mathcal D_N(T)^p
			\right]
			\le C.
		\end{equation}
		Consequently,
		\begin{align}	& \label{eq:N_basic_bounds}
			\sup_N\mathbb E\Big[
			\|\varrho_N\|_{L^\infty(0,T;L^\beta(\mathcal O_\alpha))}^{\beta p}
			+
			\|\mathbf v_N\|_{L^2(0,T;H^1(\mathcal O_\alpha))}^{2p}
			\\
			& +
			\|\zeta_N\|_{L^\infty(0,T;H^2(\Gamma))}^{2p}
			+
			\|w_N\|_{L^\infty(0,T;L^2(\Gamma))}^{2p}
			+
			\|w_N\|_{L^2(0,T;H^1(\Gamma))}^{2p}
			\Big]
			\le C, \notag 
		\end{align}
		and
		\begin{equation*}			\sup_N
			\mathbb E\left[
			\left(
			\epsilon
			\int_0^T\!\!\int_{\mathcal O_\alpha}
			\left(
			a\gamma\varrho_N^{\gamma-2}
			+
			\delta\beta\varrho_N^{\beta-2}
			\right)
			|\nabla^\phi\varrho_N|^2
			\,\dd\bm x\dd t
			\right)^p
			\right]
			\le C.
		\end{equation*}
		
		The normal-penalty and Navier-friction terms appearing in
		\eqref{eq:approx_dissipation_def} satisfy the corresponding uniform $L^p(\Omega)$ bounds.
		Moreover,
		\begin{equation}
			\label{eq:N_stochastic_bounds}
			\sup_N
			\mathbb E\left[
			\sup_{0\le t\le T}|M_N^{\rm sh}(t)|^p
			+
			\left(
			\sum_{k=1}^K
			\int_0^T
			|\mathcal R_k(\widehat\eta_N)|\,\dd t
			\right)^p
			\right]
			\le C.
		\end{equation}
	\end{lemma}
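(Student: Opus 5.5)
The plan is to read everything off the approximate energy inequality \eqref{energy} and the moment bound \eqref{eq:approx_energy_moment} of Proposition~\ref{thm:approx_existence}, and to check that the constants there depend only on $(p,T,m,\epsilon,l,\delta,\alpha)$ and not on $N$. For \eqref{eq:N_energy_moment} I start from \eqref{energy}. The splitting remainder $\mathfrak R_{N,\delta}^{\rm split}$ is a difference of lagged and current quadratic forms. Each of its four terms in $\mathfrak R^n$ and $\mathfrak R^\tau$ is bounded by the corresponding penalty or friction dissipation in $\mathcal D_N$. The forms involving $w_N\mathbf e_z$ are bounded by $C_\delta\int_0^t\|w_N\|_{L^2(\Gamma)}^2\,\dd s$. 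This uses that, before the stopping time, the graphs $\zeta_N^*$ and $\mathcal T_{\Delta t}\zeta_N^*$ stay in the $H^s$-ball of radius $1/\alpha$. That keeps $J_\Gamma^\phi$, the normals and the tube volume uniformly bounded on bounded-flow events. This absorbs the splitting defect into the left side up to a Gronwall term $C\int_0^t\mathcal E_N$, with constants uniform in $N$.

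Next come the stochastic terms. Lemma~\ref{lem:approx_bending_energy} gives $|\mathcal R_k(\widehat\eta_N)|\le c_k\|\Delta^\chi_\Gamma\zeta_N\|^2\le 2c_k\mathcal E_N$. For the martingale, BDG combined with $\dd\langle M_N^{\rm sh}\rangle\le C\|\Delta^\chi_\Gamma\zeta_N\|^4\dd t$ gives
\[
\mathbb E\sup_{s\le t}|M_N^{\rm sh}(s)|^p\le C\,\mathbb E\Bigl(\int_0^t\mathcal E_N^2\,\dd s\Bigr)^{p/2}\le \tfrac12\mathbb E\sup_{s\le t}\mathcal E_N^p+C\int_0^t\mathbb E\sup_{r\le s}\mathcal E_N^p\,\dd s.
\]
Raising \eqref{energy} to the $p$-th power, taking suprema and expectations, and applying Gronwall yields \eqref{eq:N_energy_moment}. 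The initial energy $\mathcal E_N(0)$ is independent of $N$ by the fixed data choice. The same computation yields \eqref{eq:N_stochastic_bounds}: the martingale bound comes from the BDG display, and the drift bound from $\int_0^T|\mathcal R_k|\le C T\sup_t\mathcal E_N$.

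The consequences \eqref{eq:N_basic_bounds} are then read off term by term. The $L^\infty L^\beta$ bound on $\varrho_N$ comes from $\frac{\delta}{\beta-1}\int\varrho_N^\beta\le\mathcal E_N$, and the bounds on $w_N$ come from $\mathcal E_N$ and the $\nu_s$-dissipation. For $\zeta_N$ in $L^\infty H^2$ I combine $\|\Delta_\Gamma^\chi\zeta_N\|\le C\mathcal E_N^{1/2}$ with the mean $\int_\Gamma\zeta_N$, which is controlled through $\zeta_N=\zeta_0+\int w_N$. The pulled-back Laplacian is then converted to $\Delta_\Gamma$ using the area-preserving change of variables and $C^2$ bounds on $\chi^{\pm1}$, which introduce flow-dependent constants; I handle these by localization, or by H\"older in $\omega$ using the moment bounds on $\chi$. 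For $\mathbf v_N\in L^2H^1(\mathcal O_\alpha)$ I use the viscous dissipation. The extended shear viscosity satisfies $\mu_\kappa^\zeta\ge\min(\mu,\kappa)>0$ and $\lambda_\kappa+\frac23\mu_\kappa\ge0$, so $\mathbb S_\kappa:\nabla^\phi\mathbf v\ge c\,|\mathbb D^\phi\mathbf v|^2$. Korn's inequality on $\mathcal O_\alpha$ with the no-slip condition at the bottom, in the pulled-back metric, gives $H^1$ control with constant depending on $\|\nabla\phi\|_{C^0}$. At fixed $m$ one may alternatively simply use the equivalence of norms on $X_m^f$, which is allowed since $C$ may depend on $m$. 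The density-gradient and penalty/friction bounds are direct parts of $\mathcal D_N$.

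The main obstacle I expect is the flow dependence of constants: the Korn constant, the conversion between $\Delta^\chi_\Gamma$ and $\Delta_\Gamma$, and the geometric constants in the splitting defect. All of these are random through $\phi$ and $\chi$. I plan to handle this by working on the bounded-flow events $\{\|\phi\|_{C^\kappa C^2}+\|\phi^{-1}\|_{C^\kappa C^2}\le R\}$ and then using H\"older in $\omega$ together with the finite moments of the flow norms. Alternatively, I can note that the transformed energy already uses pulled-back operators, so only the final norm conversions need polynomial moments of $\|\nabla\phi^{\pm1}\|_\infty$. I would take the exponent in Proposition~\ref{thm:approx_existence} slightly larger than $p$ to accommodate this.
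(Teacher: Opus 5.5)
Your re-derivation of \eqref{eq:N_energy_moment} from \eqref{energy} breaks at the splitting remainder. You claim that each term of $\mathfrak R^{n}_{N,\delta}$ and $\mathfrak R^{\tau}_{N,\delta}$ is bounded by the corresponding dissipation in $\mathcal D_N$, and this is false. The positive terms $\frac1{2\delta}\mathcal Q_{n,\delta}[\mathcal T_{\Delta t}\zeta_N^*,\phi;\mathcal T_{\Delta t}\mathbf v_N]$ and $\frac\iota2\mathcal Q_\tau[\mathcal T_{\Delta t}\zeta_N^*,\phi;\mathcal T_{\Delta t}\mathbf v_N]$ measure the normal (resp.\ tangential) part of the lagged velocity itself. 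By contrast, $\mathcal D_N$ contains only its mismatch with $w_N\mathbf e_z$. If $\mathcal T_{\Delta t}\mathbf v_N\cdot\mathbf n=w_N\mathbf e_z\cdot\mathbf n\neq0$ in the tube, the dissipation term vanishes while the remainder does not. The triangle inequality only gives $(1+\varepsilon)$ times a summand of $\mathcal D_N$, so nothing can be absorbed into the left side.

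The normal part can be repaired. Discard the two negative forms and bound the positive ones by the energy, which produces a Gronwall term. Since $|\mathbf n^{\xi,\phi}|=1$ and $\frac\epsilon2\|\mathbf v_N\|_{L^2}^2\le\mathcal E_N$, you get $\frac1{2\delta}\mathcal Q_{n,\delta}[\mathcal T_{\Delta t}\zeta_N^*,\phi;\mathcal T_{\Delta t}\mathbf v_N]\le\frac1{\delta\epsilon}\sup_{r\le s}\mathcal E_N(r)$. The unit-Jacobian fibre parametrization of the tube gives $\frac1{2\delta}\mathcal Q_{n,\delta}[\zeta_N^*,\phi;w_N\mathbf e_z]\le\frac{h_\delta}{\delta}\mathcal E_N(s)$. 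Both constants are deterministic.

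For the friction forms, the same repair does not fit your localization plan. Besides $\|\mathbf v\|_{C^0}\le C_m\|\mathbf v\|_{L^2}$ on $X_m^f$, you need to control the weight $\mathcal J^{\xi,\phi}=J_\xi J_\Gamma^\phi[\xi]=|\operatorname{cof}(\nabla\phi)\mathbf N^\xi|$. This weight grows with $\|\nabla\chi^{\pm1}\|_\infty$, since the $H^s$ stopping controls $\xi$ but not the flow. The Gronwall coefficient is therefore random. On $\{\Lambda\le R\}$ the resulting constant is of order $e^{CRT}$; note also that for BDG this event must be replaced by an adapted stopping time on the running flow norm. H\"older in $\omega$ with the polynomial moments $\mathbb E\Lambda^r<\infty$ removes polynomial $R$-dependence, such as the Korn constant or the conversion of $\Delta_\Gamma^\chi$ to $\Delta_\Gamma$. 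It cannot remove exponential dependence, so your argument does not reach a deterministic $C(p,T,m,\epsilon,l,\delta,\alpha)$.

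The paper never faces this issue. Its proof simply identifies \eqref{eq:N_energy_moment} with \eqref{eq:approx_energy_moment} from Proposition~\ref{thm:approx_existence}, where $N$-independence is asserted. The remainder is not estimated inside that Gronwall argument, and Lemma~\ref{lem:N_splitting_remainder} is proved afterwards using bounds that rest on the present lemma.

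If you also take \eqref{eq:approx_energy_moment} as given, the rest of your proposal is correct, and cleaner than the paper's in two respects. First, the bound $|\mathcal G_k(\widehat\eta_N)|+|\mathcal R_k(\widehat\eta_N)|\le c_k\|\Delta_\Gamma^\chi\zeta_N\|_{L^2}^2\le2c_k\mathcal E_N$ from Lemma~\ref{lem:approx_bending_energy} avoids the random factor $C(\phi)$ in the paper's $\|\widehat\eta_N\|_{H^2}\le C(\phi)\|\zeta_N\|_{H^2}$. Second, finite-dimensional norm equivalence on $X_m^f$ and $X_m^{st}$, together with $\zeta_N=\zeta_0+\int_0^tw_N\,\dd s$, yields \eqref{eq:N_basic_bounds} without the flow-dependent Korn and pullback-norm constants the paper invokes.
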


	\begin{proof}
		The estimate \eqref{eq:N_energy_moment} is precisely the stochastic
		approximation-level estimate of
		Proposition~\ref{thm:approx_existence}. The remaining bounds follow from the
		definition of $\mathcal E_N$ and $\mathcal D_N$, the coercivity of
		$\mathbb S_\kappa^{\zeta_N^*}$, Korn's inequality on the fixed maximal
		domain, and the equivalence of the standard Sobolev norms with the pullback
		norms generated by $\phi$ and $\chi$.
		
		For the shell-noise terms, Lemma~\ref{lem:shell_energy_commutator} gives
		\[
		|\mathcal G_k(\widehat\eta_N)|
		+
		|\mathcal R_k(\widehat\eta_N)|
		\le
		C_k
		\|\widehat\eta_N\|_{H^2(\Gamma)}^2.
		\]
		Since $\chi$ is area-preserving and has uniformly controlled spatial
		derivatives,
		\[
		\|\widehat\eta_N\|_{H^2(\Gamma)}
		\le
		C(\phi)
		\|\zeta_N\|_{H^2(\Gamma)}.
		\]
		The BDG inequality and
		\eqref{eq:N_basic_bounds} therefore imply
		\eqref{eq:N_stochastic_bounds}.
	\end{proof}

	Because $m$ is fixed, the finite-dimensional fluid and structure equations
	yield uniform time regularity. In particular, for every $p\ge1$,
	\begin{align}
		& \sup_N
		\mathbb E
		\|\zeta_N\|_{W^{1,\infty}(0,T;H^2(\Gamma))}^p
		\le C_{p,m},
		\label{eq:N_zeta_strong_time_bound} \\
		& \sup_N
		\mathbb E
		\|\partial_t w_N\|_{L^2(0,T;X_m^{st})}^p
		\le C_{p,m,\delta},
		\label{eq:N_w_time_bound} \\
		& \sup_N
		\mathbb E
		\|\partial_t\mathbf v_N\|_{L^2(0,T;X_m^f)}^{2p}
		\le C_{p,m,\epsilon,l,\delta}. \notag 
	\end{align}
	Consequently,
	\begin{equation}
		\label{eq:N_time_translation_v_moment}
		\sup_N
		\mathbb E
		\left[
		\left(
		\int_0^T
		\|\mathcal T_h\mathbf v_N-\mathbf v_N\|_{X_m^f}^2
		\,\dd t
		\right)^p
		\right]
		\le
		C_{p,m,\epsilon,l,\delta}h^{2p}.
	\end{equation}
	Moreover, since stopping preserves the time modulus,
	\begin{equation}
		\label{eq:N_zeta_shift_rate}
		\sup_N
		\mathbb E
		\left[
		\sup_{0<h<T}
		h^{-p}
		\|\mathcal T_h\zeta_N^*-\zeta_N^*\|_{L^\infty(0,T;C^1(\Gamma))}^p
		\right]
		\le C_{p,m}.
	\end{equation}
	
	Fix $0<\vartheta<1/2$ below the time H\"older exponent of the stochastic
	flow and set
	\[
	\Lambda_\phi
	:=
	1+
	\|\phi\|_{C^\vartheta([0,T];C^2)}
	+
	\|\phi^{-1}\|_{C^\vartheta([0,T];C^2)}.
	\]
	Then $\Lambda_\phi$ has finite moments of all required orders and
	\begin{equation}
		\label{eq:N_flow_shift_rate}
		\|\mathcal T_h\phi-\phi\|_{L^\infty C^2}
		+
		\|\mathcal T_h\phi^{-1}-\phi^{-1}\|_{L^\infty C^2}
		\le
		\Lambda_\phi h^\vartheta .
	\end{equation}
	
	\begin{lemma}[Quantitative vanishing of the splitting remainder]
		\label{lem:N_splitting_remainder}
		Let $h=\Delta t=T/N$ and let $\mathfrak R_{N,\delta}^{\rm split}$ be
		given by \eqref{eq:splitting_remainder_decomposition}--%
		\eqref{eq:splitting_remainder_tangent}. For fixed
		$m,\epsilon,l,\delta>0$ and every $p\ge1$,
		\begin{equation*}			\mathbb E
			\left[
			\sup_{0\le t\le T}
			|\mathfrak R_{N,\delta}^{\rm split}(t)|^p
			\right]
			\le
			C_{p,m,\epsilon,l,\delta}h^{p\vartheta}.
		\end{equation*}
		In particular,
		\begin{equation*}			\mathbb E
			\left[
			\sup_{0\le t\le T}
			|\mathfrak R_{N,\delta}^{\rm split}(t)|^p
			\right]
			\to0
			\qquad\text{as }N\to\infty.
		\end{equation*}
	\end{lemma}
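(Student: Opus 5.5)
We bound the two pieces $\mathfrak R^{n}_{N,\delta}$ and $\mathfrak R^{\tau}_{N,\delta}$ of \eqref{eq:splitting_remainder_decomposition} separately. In each piece we arrange the four quadratic forms into two telescoping differences:
\[
\mathcal Q[\mathcal T_h\zeta^*_N;\mathcal T_h\mathbf v_N]-\mathcal Q[\zeta^*_N;\mathbf v_N]
\quad\text{and}\quad
\mathcal Q[\zeta^*_N;w_N\mathbf e_z]-\mathcal Q[\mathcal T_h\zeta^*_N;w_N\mathbf e_z].
\]
Each difference then splits into a \emph{geometry variation} (same field, graph $\mathcal T_h\zeta_N^*$ versus $\zeta_N^*$) and, for the first one, a \emph{field variation} (same graph, $\mathcal T_h\mathbf v_N$ versus $\mathbf v_N$). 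The flow $\phi$ is not shifted in these forms; both arguments use the current $\phi(t)$. The flow modulus \eqref{eq:N_flow_shift_rate} is therefore only needed where the lagged quantities implicitly carry $\phi$, and it is the source of the final exponent $\vartheta$. Otherwise every rate is at least $h^{\vartheta}$, since $h\le T$.

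\textbf{Field variation.} On a fixed graph $\xi$, $\mathcal Q$ is a quadratic form in $\mathbf a$, so $|\mathcal Q[\xi;\mathbf a]-\mathcal Q[\xi;\mathbf b]|\le \mathcal Q[\xi;\mathbf a-\mathbf b]^{1/2}(\mathcal Q[\xi;\mathbf a]^{1/2}+\mathcal Q[\xi;\mathbf b]^{1/2})$. For $\mathcal Q_{n,\delta}$ the tube $T^\delta_\xi$ sits inside $\mathcal O_\alpha$, so $\mathcal Q_{n,\delta}[\xi;\mathbf a]\le\|\mathbf a\|_{L^2(\mathcal O_\alpha)}^2$. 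For $\mathcal Q_\tau$ we use the trace on the graph: before $\tau_N^\zeta$ we have $\|\xi\|_{H^s}\le\alpha^{-1}$ with $s>3/2$, hence $\xi\in C^{1}$ uniformly, and $J^\phi_\Gamma$ is bounded by $C\Lambda_\phi$. Since $X_m^f\subset C^1(\overline{\mathcal O}_\alpha)$ with equivalent norms, this gives $\mathcal Q_\tau[\xi;\mathbf a]\le C_m\Lambda_\phi^2\|\mathbf a\|_{X_m^f}^2$. Integrating in time and applying Cauchy--Schwarz with \eqref{eq:N_time_translation_v_moment} and \eqref{eq:N_energy_moment} bounds this contribution by $C\Lambda_\phi^2 h$ times energy moments. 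The initial layer $t<h$, where $\mathcal T_h$ freezes at time $0$, contributes $O(h)$ by the $\sup_t$ energy bound.

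\textbf{Geometry variation.} This is the main step. The integrand $|\mathbf a\cdot\mathbf n^{\xi,\phi}|^2$ over $T^\delta_\xi$ changes with $\xi$ in two ways: through the domain and through the normal. To handle the domain we change variables $z=1+\xi(\bm y)+r$ with $r\in(0,h_\delta)$, which has unit Jacobian and moves the $\xi$-dependence into the integrand. This yields
\[
\mathcal Q_{n,\delta}[\xi;\mathbf a]=\int_\Gamma\int_0^{h_\delta}|\mathbf a(\bm y,1+\xi+r)\cdot\mathbf n^{\xi,\phi}(\bm y,1+\xi)|^2\dd r\dd\bm y .
\]
This expression is Lipschitz in $\xi\in C^1(\Gamma)$ for fixed smooth $\mathbf a$. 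The Lipschitz constant is of order $\|\mathbf a\|_{C^1}^2\Lambda_\phi^{c}$, and $\|\mathbf a\|_{C^1}$ is controlled by $\|\mathbf a\|_{X_m^f}$ for the fluid field and by $\|w_N\|_{X_m^{st}}$ for the shell field; the $C^1$ control of $\xi$ again comes from the stopping set. The same pullback to $\Gamma$ works for $\mathcal Q_\tau$, with $\dd S$ replaced by the graph Jacobian $J_\xi$. The difference is then bounded by $C_m\Lambda_\phi^c(\|\mathbf a\|^2_{X_m})\,\|\mathcal T_h\zeta_N^*-\zeta_N^*\|_{L^\infty C^1}$. By \eqref{eq:N_zeta_shift_rate} this is $O(h)$ in every moment. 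If one instead reads the lagged normal as built with $\mathcal T_h\phi$, the extra term is controlled by \eqref{eq:N_flow_shift_rate} and gives $\Lambda_\phi h^\vartheta$; this is where the exponent $\vartheta$ enters.

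\textbf{Conclusion.} Finally we take $\sup_t$, which is harmless because every bound is an integral of nonnegative quantities over $[0,T]$. We then raise to the power $p$ and apply H\"older's inequality in $\omega$, separating $\Lambda_\phi$, which has all moments, from the energy and increment moments \eqref{eq:N_energy_moment}, \eqref{eq:N_zeta_strong_time_bound}, \eqref{eq:N_zeta_shift_rate}. The factors $\delta^{-1}$ and $\iota$ are fixed constants. This gives $C_{p,m,\epsilon,l,\delta}h^{p\vartheta}$. I expect the main obstacle to be making the geometry-variation Lipschitz bound uniform up to the stopping time. This needs the graph to be in $C^1$ with $1+\xi\ge\alpha$, so that the tubes stay inside $\mathcal O_\alpha$ and the normals are nondegenerate. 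The margin \eqref{eq:initial_localization_margin} and the stopping at $\tau_N^\zeta$ provide exactly this.
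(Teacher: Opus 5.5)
Your argument is essentially the paper's. The paper asserts that the normal-penalty and friction forms are locally Lipschitz in the graph, the flow and the finite-dimensional fields on the stopped set, and then invokes \eqref{eq:N_zeta_shift_rate}, \eqref{eq:N_time_translation_v_moment} and \eqref{eq:N_flow_shift_rate}; your field/geometry telescoping, with $h^\vartheta$ entering only through a flow shift, is that argument made explicit.

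One slip needs fixing. On the two-dimensional torus, $H^s(\Gamma)$ with $s<2$ does not embed into $C^1(\Gamma)$, so the stopping bound alone does not give uniform $C^1$ control of $\zeta_N^*$. That control must instead come from norm equivalence on the finite-dimensional space $X_m^{st}$ combined with the stopping bound $\|\zeta_N^*\|_{H^s}\le\alpha^{-1}$. This gives an $m$-dependent constant, which the lemma permits.
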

	
	\begin{proof}
		On the stopped set, the normal-penalty and tangential-friction quadratic
		forms depend locally Lipschitz-continuously on the graph, the stochastic
		flow, and the finite-dimensional fluid and shell variables. Therefore the
		decomposition
		\eqref{eq:splitting_remainder_decomposition}--%
		\eqref{eq:splitting_remainder_tangent},
		together with
		\eqref{eq:N_zeta_shift_rate},
		\eqref{eq:N_time_translation_v_moment},
		and \eqref{eq:N_flow_shift_rate}, gives
		\[
		\mathbb E
		\sup_{0\le t\le T}
		|\mathfrak R_{N,\delta}^{\rm split}(t)|^p
		\le
		C\bigl(h^p+h^{p\vartheta}\bigr)
		\le
		Ch^{p\vartheta}.
		\]
		The $h^\vartheta$ term is the contribution of the stochastic-flow time
		shift; no $O(h)$ estimate for the flow coefficients is used.
	\end{proof}

	\subsection{Compactness, representation, and martingale structure}
	
	To pass the shell martingale to the limit, it is useful to keep it as an
	explicit component of the joint law. Fix
	$
	0<\kappa_0<\kappa_1<\frac12.
	$
	We use the path space
	\[
	\mathcal X
	=
	\mathcal X_\varrho
	\times
	\mathcal X_{\mathbf v}
	\times
	\mathcal X_\zeta
	\times
	\mathcal X_{\zeta^*}
	\times
	\mathcal X_w
	\times
	\mathcal X_\phi
	\times
	\mathcal X_W
	\times
	\mathcal X_M,
	\]
	where
	\begin{align*}		& \mathcal X_\varrho
		:=
		C_w([0,T];L^\beta(\mathcal O_\alpha))
		\cap
		L^4((0,T)\times\mathcal O_\alpha),
		\;\;
		\mathcal X_{\mathbf v}
		:=
		L^2(0,T;X_m^f), \\
		& \mathcal X_\zeta
		:=
		C([0,T];X_m^{st}),
		\;\;
		\mathcal X_{\zeta^*}
		:=
		C([0,T];X_m^{st}),
		\;\;
		\mathcal X_w
		:=
		L^2(0,T;X_m^{st}), \\
		& \mathcal X_\phi
		:=
		C^{\kappa_0}
		([0,T];C^2(\overline{\mathcal O}_\alpha;
		\overline{\mathcal O}_\alpha)),
		\;\;
		\mathcal X_W
		:=
		C([0,T];\mathbb R^K),
		\;\;
		\mathcal X_M
		:=
		C([0,T];\mathbb R). 
	\end{align*}

	Let $\mu_N$ be the joint law of
	$
	\left(
	\varrho_N,
	\mathbf v_N,
	\zeta_N,
	\zeta_N^*,
	w_N,
	\phi,
	W,
	M_N^{\rm sh}
	\right)
	$
	on $\mathcal X$.

	\begin{lemma}[Tightness]
		
		The family $\{\mu_N\}_{N\in\mathbb N}$ is tight on $\mathcal X$.
	\end{lemma}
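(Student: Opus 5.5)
Since a finite product of tight families is tight, we check tightness of each marginal law separately on the corresponding factor of $\mathcal X$. In every case we use Chebyshev's inequality together with the $N$-uniform moment bounds \eqref{eq:N_energy_moment}--\eqref{eq:N_stochastic_bounds}, \eqref{eq:N_zeta_strong_time_bound}--\eqref{eq:N_time_translation_v_moment}. The trivial marginals are handled first. The laws of $\phi$ and $W$ do not depend on $N$, so each is a single Radon measure on a Polish space and hence tight. For $\phi$ one can also argue directly: the embedding $C^{\kappa_1}([0,T];C^3)\hookrightarrow C^{\kappa_0}([0,T];C^2)$ is compact, and $\Lambda_\phi$ has finite moments at exponent $\kappa_1$.

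For the shell components we use Arzel\`a--Ascoli in the finite-dimensional space $X_m^{st}$. By \eqref{eq:N_zeta_strong_time_bound}, $\zeta_N$ is bounded in $W^{1,\infty}(0,T;X_m^{st})$ uniformly in $L^p(\Omega)$, and $W^{1,\infty}\hookrightarrow C$ is compact. Stopping does not increase the Lipschitz constant, so $\zeta_N^*$ satisfies the same bound. For $w_N$, combine the bound in $L^\infty(0,T;X_m^{st})$ with \eqref{eq:N_w_time_bound}. This gives a bound in $W^{1,2}(0,T;X_m^{st})$, which embeds compactly into $L^2(0,T;X_m^{st})$. The velocity is treated the same way. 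We have a bound for $\mathbf v_N$ in $L^2(0,T;X_m^f)$, all norms on $X_m^f$ are equivalent, and the bound on $\partial_t\mathbf v_N$ (or the translation estimate \eqref{eq:N_time_translation_v_moment}) holds. By Aubin--Lions/Simon, the corresponding balls are relatively compact in $L^2(0,T;X_m^f)$.

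For the martingale $M_N^{\rm sh}$ we prove a H\"older bound and invoke Kolmogorov's criterion. Lemma~\ref{lem:approx_bending_energy} gives $\dd\langle M_N^{\rm sh}\rangle\le C\|\Delta^\chi_\Gamma\zeta_N\|^4\dd t$. Combining this with BDG and \eqref{eq:N_basic_bounds} yields
\[
\mathbb E|M_N^{\rm sh}(t)-M_N^{\rm sh}(s)|^p\le C|t-s|^{p/2}.
\]
Hence $M_N^{\rm sh}$ is bounded in $L^p(\Omega;C^{\kappa}([0,T]))$ for $\kappa<1/2-1/p$, and this space embeds compactly into $C([0,T];\mathbb R)$.

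The main work is the density marginal on $\mathcal X_\varrho$. For the $C_w([0,T];L^\beta)$ part we use the $L^\infty(0,T;L^\beta)$ bound together with equicontinuity in a negative Sobolev space. Equation \eqref{spcon} gives $\partial_t\varrho_N=-\Div^\phi(\varrho_N\mathbf v_N)+\epsilon\Delta^\phi\varrho_N$. Here $\varrho_N\mathbf v_N$ is bounded in $L^2(0,T;L^{q})$ because $\mathbf v_N$ is bounded in $L^\infty$ by finite dimensionality. Also $\epsilon\nabla^\phi\varrho_N$ is bounded in $L^2L^2$, which follows from the $\epsilon$-dissipation with $\beta\ge2$. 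The pullback coefficients are bounded by $\Lambda_\phi$, whose moments are finite. Together these give a uniform bound for $\partial_t\varrho_N$ in $L^2(0,T;W^{-1,r})$, which yields tightness in $C_w$ by the standard criterion.

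For strong compactness in $L^4$ we would use Aubin--Lions. The spatial bound for $\varrho_N$ in $L^2(0,T;H^1)$ follows from the dissipation term $\epsilon\delta\beta\int\varrho_N^{\beta-2}|\nabla^\phi\varrho_N|^2$, which controls $\nabla\varrho_N^{\beta/2}$, combined with the $L^\infty L^\beta$ bound. Since this gives only $L^2_tH^1_x$, reaching $L^4$ in space-time requires interpolation with $L^\infty_tL^\beta_x$, $\beta>6$. If that proves delicate, we would instead use the parabolic regularity of \eqref{spcon} at fixed $m,\epsilon$, which bounds $\varrho_N$ in $C_tC^{2+\nu}$ on bounded-flow events. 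These events have probability close to one uniformly in $N$, which is enough for tightness. The intersection of the two topologies is handled as usual: a set compact in each topology is compact in the intersection.
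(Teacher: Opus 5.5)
\paragraph{Overall.} Your proof takes the same componentwise route as the paper's. You use finite-dimensional Arzel\`a--Ascoli/Aubin--Lions for $\zeta_N,\zeta_N^*,w_N,\mathbf v_N$, the compact embedding $C^{\kappa_1}_tC^3_x\subset C^{\kappa_0}_tC^2_x$ for $\phi$, a BDG increment estimate for $M_N^{\rm sh}$, and fixed-$\epsilon$ parabolic compactness for $\varrho_N$. You are more explicit than the paper on the density and martingale components.

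\paragraph{The density gradient bound.} One step is justified incorrectly. The weighted dissipation $\epsilon\int(a\gamma\varrho_N^{\gamma-2}+\delta\beta\varrho_N^{\beta-2})|\nabla^\phi\varrho_N|^2$ does not control $\epsilon\int|\nabla^\phi\varrho_N|^2$ on $\{\varrho_N<1\}$. There the weight $\varrho_N^{\beta-2}$ degenerates whatever $\beta\ge2$ is, and so does $\varrho_N^{\gamma-2}$ once $\gamma>2$. Likewise, an $L^2$ bound on $\nabla\varrho_N^{\beta/2}$ plus the $L^\infty_tL^\beta_x$ bound gives no $N$-uniform control of $\nabla\varrho_N$ near vacuum. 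So the $L^2_tH^1_x$ bound, which is the spatial input to your Aubin--Lions step in $L^4$, does not follow as written.

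The repair is to test \eqref{spcon} with $\varrho_N$ itself, as in Lemma~\ref{app:lem_unweighted_diffusion}:
\[
\tfrac12\|\varrho_N(t)\|_{L^2}^2+\epsilon\int_0^t\|\nabla^\phi\varrho_N\|_{L^2}^2\,\dd s
=\tfrac12\|\varrho_N(0)\|_{L^2}^2-\tfrac12\int_0^t\!\!\int_{\mathcal O_\alpha}\varrho_N^2\,\Div^\phi\mathbf v_N\,\dd\bm x\,\dd s .
\]
This uses $\mathbf v_N=0$ on $\partial\mathcal O_\alpha$ and volume preservation. Since $\beta>4$, the right-hand side is bounded by $C\Lambda_\phi\sqrt T\,\|\varrho_N\|_{L^\infty_tL^4_x}^2\|\mathbf v_N\|_{L^2_tH^1_x}$, which has $N$-uniform moments. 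With this bound, your Aubin--Lions plus interpolation argument gives the $L^4$ compactness. For the $C_w([0,T];L^\beta)$ part the gradient bound is not needed at all. Testing with compactly supported $\varphi$ shows that $\epsilon\Delta^\phi\varrho_N$ is bounded in $L^\infty(0,T;W^{-2,\beta})$ directly from the $L^\infty_tL^\beta_x$ bound.

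\paragraph{The flow component.} The space $C^{\kappa_0}([0,T];C^2)$ is not separable. So the ``single measure on a Polish space'' argument does not apply to $\phi$, although it does apply to $W$. Your direct argument is the right one, and it is the paper's. However, it needs the standard $C^{\kappa_1}_tC^3_x$ moment bounds for the stochastic flow. $\Lambda_\phi$ only controls $C^\vartheta_tC^2_x$, and a bound in $C^2$ in space gives no compactness in $C^2$.
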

	
	\begin{proof}
		Since $m$ is fixed, the structure variables are compact in the
		finite-dimensional spaces. Indeed,
		\eqref{eq:N_zeta_strong_time_bound} yields tightness of
		$\zeta_N$ and $\zeta_N^*$ in $C([0,T];X_m^{st})$, while
		\eqref{eq:N_basic_bounds} and \eqref{eq:N_w_time_bound} yield
		tightness of $w_N$ in $L^2(0,T;X_m^{st})$.
		
		Likewise, \eqref{eq:N_basic_bounds} together with the time-translation
		estimate \eqref{eq:N_time_translation_v_moment} gives tightness of
		$\mathbf v_N$ in $L^2(0,T;X_m^f)$.
		
		At fixed $\epsilon>0$, the continuity equation is uniformly parabolic.
		The energy bounds therefore give tightness of $\varrho_N$ in
		\[
		C_w([0,T];L^\beta(\mathcal O_\alpha))
		\cap
		L^4((0,T)\times\mathcal O_\alpha),
		\]
		while the stronger fixed-$\epsilon$ parabolic compactness will be used after the representation.
		
		For the stochastic flow, choose
		$\kappa_0<\kappa_1<1/2$. Standard stochastic-flow estimates and the
		compact embedding
		\[
		C^{\kappa_1}([0,T];C^3)
		\subset
		C^{\kappa_0}([0,T];C^2)
		\]
		give tightness of $\phi$ and $\phi^{-1}$. The tightness of $W$ in
		$C([0,T];\mathbb R^K)$ is standard. Moreover,
		\eqref{eq:N_stochastic_bounds}, together with the corresponding increment
		estimate, yields tightness of $M_N^{\rm sh}$ in $C([0,T];\mathbb R)$.
		Combining the componentwise estimates proves the claim.
	\end{proof}

	We next identify the represented Wiener process and the limiting martingale structure.
		Since the density component contains a weak topology, we use the
	Jakubowski--Skorokhod representation theorem
	\cite{Jakubowski1997}.
	
	\begin{lemma}[Representation]
		\label{lem:skorokhod_N}
		There exist a probability space
		$
		(\widetilde\Omega,\widetilde{\mathcal F},
		\widetilde{\mathbb P}),
		$
		random variables
		$
		\widetilde{\mathcal U}_N
		=
		(
		\widetilde\varrho_N,
		\widetilde{\mathbf v}_N,
		\widetilde\zeta_N,
		\widetilde\zeta_N^*,
		\widetilde w_N,
		\widetilde\phi_N,
		\widetilde W_N,
		\widetilde M_N^{\rm sh}
		),
		$
		and
		$
		\mathcal U
		=
		(
		\varrho,
		\mathbf v,
		\zeta,
		\zeta^*,
		w,
		\phi,
		W,
		M^{\rm sh}
		)
		$
		such that
		$
		\mathcal L(\widetilde{\mathcal U}_N)=\mu_N
		$
		and
		\[
		\widetilde{\mathcal U}_N
		\to
		\mathcal U
		\qquad
		\widetilde{\mathbb P}\text{-a.s. in }\mathcal X.
		\]
	\end{lemma}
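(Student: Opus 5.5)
The plan is to apply the Jakubowski--Skorokhod theorem \cite{Jakubowski1997} to the tight family $\{\mu_N\}$ from the preceding Tightness lemma. Its hypotheses are tightness of the laws and the existence of a countable family of real-valued continuous functions on $\mathcal X$ that separates points. Tightness is already known, so the work consists of checking the topological hypothesis for each factor of the product $\mathcal X$.

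\textbf{Step 1: the Polish factors.} All factors except the density are Polish: $L^2(0,T;X_m^f)$, $C([0,T];X_m^{st})$ (twice), $L^2(0,T;X_m^{st})$, $C([0,T];\mathbb R^K)$ and $C([0,T];\mathbb R)$. The flow factor $\mathcal X_\phi$ is not separable as a full H\"older space, so I would replace it by the closure of smooth paths in $C^{\kappa_0}([0,T];C^2)$, i.e.\ the little H\"older space. This closed subspace is separable. The tightness proof places $\phi$ in a compact subset of $C^{\kappa_1}([0,T];C^3)$, and by the embedding $C^{\kappa_1}\subset c^{\kappa_0}$ that compact set already lies in this subspace. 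Each Polish factor admits a countable separating family of continuous functionals, for instance distances to the points of a dense countable set.

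\textbf{Step 2: the density factor.} $\mathcal X_\varrho=C_w([0,T];L^\beta)\cap L^4$ carries the supremum of the two topologies. For the $C_w$ part, I would use $\varrho\mapsto\int_{\mathcal O_\alpha}\varrho(t)g_j\,\dd\bm x$ with $t\in\mathbb Q\cap[0,T]$ and $(g_j)$ dense in $L^{\beta'}$. These maps are continuous, and they separate points by weak continuity in time and density of $(g_j)$. Because tightness is witnessed on sets bounded in $L^\infty(0,T;L^\beta)$, the relevant compact sets lie in metrizable bounded subsets, as the theorem requires. The $L^4$ part is Polish. A countable union of separating families on the factors then separates points of the product, and products of tight families are tight.

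\textbf{Step 3: applying the theorem.} Jakubowski's theorem now gives a subsequence, a probability space $(\widetilde\Omega,\widetilde{\mathcal F},\widetilde{\mathbb P})$ (taken to be $[0,1]$ with Lebesgue measure), and variables $\widetilde{\mathcal U}_N$ with $\mathcal L(\widetilde{\mathcal U}_N)=\mu_N$ that converge a.s.\ in $\mathcal X$ to some $\mathcal U$. One remaining issue is Borel-measurability of the components in the non-metrizable topology. I would handle it by noting that on $C_w([0,T];L^\beta)$ bounded in $L^\infty(0,T;L^\beta)$ the Borel $\sigma$-algebra coincides with the one generated by the countable family above, so laws are determined by those functionals.

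\textbf{Main obstacle.} I expect the main obstacle to be these topological hypotheses on the non-metrizable factor $C_w([0,T];L^\beta)$, together with the non-separability of the H\"older space for $\phi$. The fixes above — restricting to separable closed subspaces and using rational times paired with a dense set in $L^{\beta'}$ — should suffice.
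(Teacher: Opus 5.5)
Your proposal is correct and follows the paper's route: both apply the Jakubowski--Skorokhod theorem directly to the tight family $\{\mu_N\}$, passing to a subsequence. Your checks of the countable separating family on $C_w([0,T];L^\beta)\cap L^4$, and your restriction of the flow factor to the separable little-H\"older subspace, make explicit hypotheses that the paper leaves implicit; the paper's additional remarks on stronger convergences (finite-dimensional Galerkin spaces, parabolic density compactness) go beyond what this lemma asserts.
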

	
	\begin{proof}
		The representation theorem gives almost sure convergence in $\mathcal X$.
		The stronger convergences follow from the compactness estimates in the
		previous subsection. In particular, because $X_m^{st}$ and $X_m^f$ are
		finite-dimensional, convergence in the corresponding path spaces implies
		convergence in every spatial Sobolev norm on those spaces. The strong
		density convergence follows from the fixed-$\epsilon$ parabolic stability
		estimate.
	\end{proof}
	
	From now on we work on the new probability space and omit the tildes.
	Define
	\[
	\chi_N:=\phi_N|_\Gamma,
	\qquad
	\chi:=\phi|_\Gamma,\;\;
	\widehat\eta_N
	:=
	\zeta_N\circ\chi_N^{-1},
	\qquad
	\widehat\eta
	:=
	\zeta\circ\chi^{-1}.
	\]
	By Lemma~\ref{lem:skorokhod_N} and the finite-dimensional compactness,
	\begin{equation*}		\widehat\eta_N
		\to
		\widehat\eta
		\qquad
		\text{in }C([0,T];H^2(\Gamma)),
		\quad
		\widetilde{\mathbb P}\text{-a.s.}
	\end{equation*}
	Consequently,
	\begin{align}& \label{eq:N_G_conv}
		\mathcal G_k(\widehat\eta_N)
		\to
		\mathcal G_k(\widehat\eta)\;\;
		\text{and }\;\;
		\mathcal R_k(\widehat\eta_N)
		\to
		\mathcal R_k(\widehat\eta)\;\;
		\text{in }C([0,T]).  
	\end{align}
	
	The next lemma records the stochastic information that must be retained
	after the representation. Equal laws alone are not used as a substitute
	for adaptedness; instead we pass the martingale characterizations.
	
	\begin{lemma}[Identification of the limiting shell martingale]
		\label{lem:N_martingale_identification}
		Let $\widetilde{\mathfrak F}_t$ be the
		usual $\widetilde{\mathbb P}$-augmentation of the natural filtration generated by
		\[
		(\varrho,\mathbf v,\zeta,\zeta^*,w,\phi,W,M^{\rm sh})
		\]
		up to time $t$. Then $W$ is a $K$-dimensional
		$(\widetilde{\mathfrak F}_t)$-Wiener process, $\phi$ is the stochastic flow
		generated by $W$, and
		\begin{equation}
			\label{eq:N_limit_shell_martingale}
			M^{\rm sh}(t)
			=
			\sum_{k=1}^K
			\int_0^t
			\mathcal G_k(\widehat\eta(s))\,\dd W_k(s).
		\end{equation}
	\end{lemma}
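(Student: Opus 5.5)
We would follow the standard martingale-identification route via quadratic variations, working throughout on the representation space of Lemma~\ref{lem:skorokhod_N}.

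\textbf{Step 1: the Wiener process and the flow.} For each $N$, $\widetilde W_N$ has the law of $W$. Since $\mathcal L(\widetilde{\mathcal U}_N)=\mu_N$, for every $s<t$ and every bounded continuous functional $h$ of the restriction of the path to $[0,s]$ we have
\[
\widetilde{\mathbb E}\bigl[h(\widetilde{\mathcal U}_N|_{[0,s]})\,(\widetilde W_N(t)-\widetilde W_N(s))\bigr]=0 .
\]
The analogous identities hold for the increments of $W_{N,k}W_{N,j}-\delta_{kj}t$. This is because on the original space $W$ is an $(\mathcal F_t)$-Wiener process and all components are adapted. Uniform moment bounds give uniform integrability, so almost sure convergence in $\mathcal X$ lets us pass to the limit. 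Lévy's characterization then shows that $W$ is an $(\widetilde{\mathfrak F}_t)$-Wiener process.

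For the flow, the pair $(\phi_N,W_N)$ is a strong solution of \eqref{phiv}. Hence its joint law is supported on the closed set where $\phi$ is the image of $W$ under the Itô map. Equivalently, one can pass to the limit in the Itô form of \eqref{phiv} using the convergence of stochastic integrals (Debussche--Glatt-Holtz--Temam lemma) together with convergence of $\phi_N$ in $C^{\kappa_0}C^2$. Either way, $\phi$ is the flow generated by $W$.

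\textbf{Step 2: martingale characterization of $M^{\rm sh}$.} On the original space, $M_N^{\rm sh}$ is a martingale with
\[
\langle M_N^{\rm sh}\rangle_t=\sum_k\int_0^t\mathcal G_k(\widehat\eta_N)^2\,\dd s,
\qquad
\langle M_N^{\rm sh},W_k\rangle_t=\int_0^t\mathcal G_k(\widehat\eta_N)\,\dd s .
\]
Each of these three properties is encoded in the law through identities of the form $\mathbb E[h(\cdot|_{[0,s]})(X(t)-X(s))]=0$. For the new variables, $\widehat\eta_N=\zeta_N\circ\chi_N^{-1}$ is a measurable function of the path, so the identities transfer. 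We then pass to the limit using \eqref{eq:N_G_conv}. Uniform integrability comes from \eqref{eq:N_stochastic_bounds} with $p>2$ and from the bound $|\mathcal G_k|\le c_k\|\widehat\eta_N\|_{H^2}^2$ combined with the moment bound on $\zeta_N$ in \eqref{eq:N_basic_bounds} and on $\Lambda_\phi$.

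Consequently $M^{\rm sh}$, $(M^{\rm sh})^2-\sum_k\int_0^\cdot\mathcal G_k(\widehat\eta)^2$, and $M^{\rm sh}W_k-\int_0^\cdot\mathcal G_k(\widehat\eta)$ are all $(\widetilde{\mathfrak F}_t)$-martingales.

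\textbf{Step 3: identification.} Set $I(t)=\sum_k\int_0^t\mathcal G_k(\widehat\eta)\,\dd W_k$. This is well defined because $\widehat\eta$ is $(\widetilde{\mathfrak F}_t)$-progressive, continuous in $H^2$, and $\mathcal G_k$ is continuous. Expanding,
\[
\langle M^{\rm sh}-I\rangle=\langle M^{\rm sh}\rangle-2\sum_k\int\mathcal G_k\,\dd\langle M^{\rm sh},W_k\rangle+\langle I\rangle=0 .
\]
Therefore $M^{\rm sh}=I$, which is \eqref{eq:N_limit_shell_martingale}.

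\textbf{Main obstacle.} We expect the main obstacle to be the uniform integrability in Step 2. The quantities $\mathcal G_k^2$ are quartic in $\|\widehat\eta_N\|_{H^2}$ and also involve $C(\phi)$, so they require the higher moments from \eqref{eq:approx_energy_moment} and $\Lambda_\phi$. A secondary point is that the functionals $h$ must be continuous on the path space, which holds because we restrict to $[0,s]$ in $\mathcal X$ with the weak topology for $\varrho$.
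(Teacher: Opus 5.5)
Your proposal is correct and follows essentially the same route as the paper. The paper, too, transfers the martingale identities for the increments of $W_N$, for $W_{N,i}W_{N,j}-\delta_{ij}t$, and for $M_N^{\rm sh}$ together with its quadratic variation and its covariation with $W_{N,j}$ through the representation, passes to the limit using \eqref{eq:N_G_conv} and uniform integrability, identifies $\phi$ by pathwise uniqueness, and concludes by what it calls the ``martingale representation characterization''; your computation $\langle M^{\rm sh}-I\rangle=0$ is exactly that last step made explicit.

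The only inaccuracy is the claim that the graph of the It\^o map is closed. That map is in general not continuous, so its graph is only Borel. For the flow you should therefore use your second route (passage to the limit in the It\^o form of \eqref{phiv} plus pathwise uniqueness, as the paper does). Alternatively, use that the joint law of $(\phi_N,W_N)$ does not depend on $N$, so the limit $(\phi,W)$ has the same law and hence $\phi$ is the It\^o map of $W$ almost surely.
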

	
	\begin{proof}
		For every $N$, the copied processes satisfy the same martingale
		characterizations as the original approximation. In particular, for
		$0\le s<t\le T$ and every bounded continuous functional $F$ of the paths
		up to time $s$,
		\[
		\mathbb E
		\left[
		F\bigl(W_N(t)-W_N(s)\bigr)
		\right]
		=0,
		\]
		and the analogous identities hold for the quadratic covariations. Passing
		to the limit using Lemma~\ref{lem:skorokhod_N}, uniform integrability, and the same
		identities for $|W_N|^2-t$ proves that $W$ remains Wiener with respect to
		the canonical limiting filtration. Since the stochastic flow equation has
		a pathwise unique strong solution for the smooth transport fields, the
		flow component is the solution generated by this $W$.
		
		Similarly, $M_N^{\rm sh}$ is a continuous martingale with
		\begin{align*}			& \langle M_N^{\rm sh}\rangle_t
			=
			\sum_{k=1}^K
			\int_0^t
			|\mathcal G_k(\widehat\eta_N)|^2\,\dd s,\;\;
			\langle M_N^{\rm sh},W_{N,j}\rangle_t
			=
			\int_0^t
			\mathcal G_j(\widehat\eta_N)\,\dd s. 
		\end{align*}
		The convergence in Lemma~\ref{lem:skorokhod_N}, together with
		\eqref{eq:N_G_conv} and \eqref{eq:N_stochastic_bounds}, allows passage to the corresponding
		martingale and covariation identities. The martingale representation
		characterization then gives \eqref{eq:N_limit_shell_martingale}.
	\end{proof}

	\subsection{Passage to the limit as \texorpdfstring{$N\to\infty$}{N to infinity}}
	
	The representation provides almost-sure convergence in precisely the
	topologies selected above. The bulk nonlinearities are then harmless at
	fixed $m,\epsilon,l,\delta$: the parabolic density is strongly compact and
	the velocity is finite dimensional. The interface terms require one
	additional observation. Their current and lagged versions differ only
	through time translations of the solution and through the
	$C_t^\vartheta$ stochastic-flow coefficients; the quantitative splitting
	estimate shows that these differences vanish. Consequently the two
	half-sized penalty and friction contributions recombine into the single
	coupled interface forms of the continuous-time approximation.

	For the $N\to\infty$ limit define the stopping time by
	\begin{equation*}		\tau^\zeta
		:=
		T\wedge
		\inf
		\left\{
		t>0:
		\inf_\Gamma(1+\zeta(t))\le\alpha
		\ \text{or}\
		\|\zeta(t)\|_{H^s(\Gamma)}\ge\frac1\alpha
		\right\}.
	\end{equation*}
	and set
	$
	\widehat\eta:=\zeta\circ\chi^{-1}
	$
	and
	\begin{equation*}		M^{\rm sh}(t)
		:=
		\sum_{k=1}^K
		\int_0^t
		\mathcal G_k(\widehat\eta(s))\,\dd W_k(s).
	\end{equation*}
	Define
	\begin{align}
		& \mathcal E(t)
		:={}
		\frac12
		\int_{\mathcal O_\alpha}
		(\epsilon+[\varrho]_l)|\mathbf v|^2\,\dd\bm x
		+
		\frac12\|w(t)\|_{L^2(\Gamma)}^2
		+
		\frac12
		\|\Delta_\Gamma^\chi\zeta(t)\|_{L^2(\Gamma)}^2
		\notag \\
		& +
		\int_{\mathcal O_\alpha}
		\frac{a}{\gamma-1}\varrho^\gamma\,\dd\bm x
		+
		\int_{\mathcal O_\alpha}
		\frac{\delta}{\beta-1}\varrho^\beta\,\dd\bm x ,
		\label{eq:N_limit_energy_def} 
	\end{align}
	and
	\begin{align}
		& \mathcal D(t)
		:={}
		\int_0^t
		\int_{\mathcal O_\alpha}
		\mathbb S_\kappa^{\zeta^*}
		(\nabla^\phi\mathbf v):
		\nabla^\phi\mathbf v
		\,\dd\bm x\dd s
		+
		\nu_s
		\int_0^t
		\|\nabla_\Gamma^\chi w\|_{L^2(\Gamma)}^2\,\dd s
		\notag \\
		& +
		\epsilon
		\int_0^t
		\int_{\mathcal O_\alpha}
		\left(
		a\gamma\varrho^{\gamma-2}
		+
		\delta\beta\varrho^{\beta-2}
		\right)
		|\nabla^\phi\varrho|^2
		\,\dd\bm x\dd s
		\notag \\
		& +
		\frac1\delta
		\int_0^t
		\int_{T^\delta_{\zeta^*}}
		\left|
		(\mathbf v-w\mathbf e_z)
		\cdot\mathbf n^{\zeta^*,\phi}
		\right|^2
		\,\dd\bm x\dd s
		\notag \\
		& +
		\iota
		\int_0^t
		\int_{\Gamma_{\zeta^*}}
		J_\Gamma^\phi[\zeta^*]
		\left|
		\left(
		\mathbf v
		-
		w\mathbf e_z\circ\Phi_{\zeta^*}^{-1}
		\right)_{\tau^{\zeta^*,\phi}}
		\right|^2
		\,\dd S\dd s .
		\label{eq:N_limit_dissipation_def} 
	\end{align}
	
	\begin{proposition}[Limit $N\to\infty$]
		\label{thm:limit_N}
		The limiting variables
		$
		(
		\varrho,\mathbf v,\zeta,\zeta^*,w,\phi,W
		)
		$
		satisfy, $\widetilde{\mathbb P}$-almost surely, the following properties.
		
		\begin{enumerate}[label=(\arabic*)]
			
			\item
			We have
			\begin{equation*}				\partial_t\zeta=w
				\qquad\text{on }(0,T)\times\Gamma.
			\end{equation*}
			
			Then
			\[
			\zeta^*(t)=\zeta(t)
			\qquad
			\text{for every }t<\tau^\zeta,
			\]
			and
			\[
			\widetilde{\mathbb P}(\tau^\zeta>0)=1.
			\]
			
			\item
			For every
			$\varphi\in C^\infty(\overline{\mathcal O}_\alpha)$,
			\begin{align}
				& \int_{\mathcal O_\alpha}
				\varrho(t)\varphi\,\dd\bm x
				=
				\int_{\mathcal O_\alpha}
				\varrho_{0,\delta,\epsilon}\varphi\,\dd\bm x
				+
				\int_0^t
				\int_{\mathcal O_\alpha}
				\varrho\mathbf v\cdot\nabla^\phi\varphi
				\,\dd\bm x\dd s
				\notag \\
				& \quad
				-
				\epsilon
				\int_0^t
				\int_{\mathcal O_\alpha}
				\nabla^\phi\varrho\cdot\nabla^\phi\varphi
				\,\dd\bm x\dd s.
				\label{eq:N_limit_cont} 
			\end{align}
			
			\item
			For every
			\[
			\mathbf q\in X_m^f,
			\qquad
			\psi\in X_m^{st},
			\]
			set
			$\mathbf n_*:=\mathbf n^{\zeta^*,\phi}$,
			$\tau_*:=\tau^{\zeta^*,\phi}$, and
			$J_*:=J_\Gamma^\phi[\zeta^*]$. Then the coupled
			momentum--structure identity reads
			\begin{align}
				& \label{eq:N_limit_coupled}
				\int_{\mathcal O_\alpha}(\epsilon+[\varrho]_l)\mathbf v(t)\cdot\mathbf q\,\dd\bm x
				+\int_\Gamma w(t)\psi\,\dd\bm y\\
				& =\int_{\mathcal O_\alpha}(\epsilon+[\varrho_{0,\delta,\epsilon}]_l)\mathbf v_{0,\delta,\epsilon}\cdot\mathbf q\,\dd\bm x
				+\int_\Gamma w_0\psi\,\dd\bm y \notag \\
				& \quad+\int_0^t\!\!\int_{\mathcal O_\alpha}\Bigl([\varrho]_l\mathbf v\otimes\mathbf v:\nabla^\phi\mathbf q
				+(a\varrho^\gamma+\delta\varrho^\beta)\Div^\phi\mathbf q\notag \\
				& \qquad\qquad-\mathbb S_\kappa^{\zeta^*}(\nabla^\phi\mathbf v):\nabla^\phi\mathbf q
				-\frac\epsilon2[\nabla^\phi\varrho]_l\cdot\nabla^\phi\mathbf v\cdot\mathbf q\Bigr)\,\dd\bm x\dd s \notag \\
				& \quad-\int_0^t\!\!\int_\Gamma\Bigl(\Delta_\Gamma^\chi\zeta\,\Delta_\Gamma^\chi\psi
				+\nu_s\nabla_\Gamma^\chi w\cdot\nabla_\Gamma^\chi\psi\Bigr)\,\dd\bm y\dd s \notag \\
				& \quad-\frac1\delta\int_0^t\!\!\int_{T^\delta_{\zeta^*}}
				((\mathbf v-w\mathbf e_z)\cdot\mathbf n_*) ((\mathbf q-\psi\mathbf e_z)\cdot\mathbf n_*)\,\dd\bm x\dd s \notag \\
				& \quad-\iota\int_0^t\!\!\int_{\Gamma_{\zeta^*}}J_* \left(\mathbf v-w\mathbf e_z\circ\Phi_{\zeta^*}^{-1}\right)_{\tau_*} \cdot\left(\mathbf q-\psi\mathbf e_z\circ\Phi_{\zeta^*}^{-1}\right)_{\tau_*}\,\dd S\dd s. \notag 
			\end{align}
			\item
			The energy defined above satisfies
			\begin{align}
				& \mathcal E(t)+\mathcal D(t)
				\le{}
				\mathcal E(0)
				+
				M^{\rm sh}(t)
				+
				\frac12
				\sum_{k=1}^K
				\int_0^t
				\mathcal R_k(\widehat\eta(s))\,\dd s
				\label{eq:N_limit_energy} 
			\end{align}
			for every $t\in[0,T]$, after choosing the standard lower-semicontinuous
			representative. Moreover, for every $p\ge1$,
			\begin{equation}
				\label{eq:N_limit_energy_moment}
				\widetilde{\mathbb E}
				\left[
				\sup_{0\le t\le T}\mathcal E(t)^p
				+
				\mathcal D(T)^p
				\right]
				\le
				C_{p,T,m,\epsilon,l,\delta}.
			\end{equation}
			
		\end{enumerate}
	\end{proposition}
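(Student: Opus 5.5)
The plan is to pass to the limit item by item on the representation space of Lemma~\ref{lem:skorokhod_N}. We use the almost sure convergences in $\mathcal X$, upgraded by finite dimensionality of $X_m^f$ and $X_m^{st}$ and by fixed-$\epsilon$ parabolic compactness of the density, together with the quantitative vanishing of the splitting defect (Lemma~\ref{lem:N_splitting_remainder}).

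\emph{Item (1).} Since the approximations satisfy $\zeta_N(t)=\zeta_0+\int_0^t w_N\,\dd s$ and $\zeta_N\to\zeta$ in $C([0,T];X_m^{st})$, $w_N\to w$ in $L^2(0,T;X_m^{st})$, we get $\partial_t\zeta=w$. For $\zeta^*=\zeta$ before $\tau^\zeta$, we exploit lower semicontinuity of the hitting time. On $\{t<\tau^\zeta\}$, uniform convergence gives, for $N$ large, $\inf_{[0,t]}\inf_\Gamma(1+\zeta_N)>\alpha$ and $\sup_{[0,t]}\|\zeta_N\|_{H^s}<1/\alpha$, since all norms on $X_m^{st}$ are equivalent. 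Hence $\tau_N^\zeta>t$ and $\zeta_N^*=\zeta_N$ on $[0,t]$. Passing to the limit then yields $\zeta^*=\zeta$ there. Positivity $\tau^\zeta>0$ a.s. follows from continuity of $\zeta$ in $C([0,T];H^2)$ and the strict margin \eqref{eq:initial_localization_margin} at $t=0$.

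\emph{Items (2) and (3).} We first assemble the split equations \eqref{spcon}, \eqref{spstr}, \eqref{spmom} into one continuous-time identity with lagged arguments $\mathcal T_{\Delta t}\zeta_N^*$, $\mathcal T_{\Delta t}\mathbf v_N$ in the structure interface terms. The density, viscous, pressure, convective and $\epsilon$-correction terms then converge. This uses strong convergence of $\varrho_N$ in $L^4$ and the parabolic stability, the convergence $\mathbf v_N\to\mathbf v$ in $L^2(0,T;X_m^f)$ (hence in $L^2H^{s_f}$), and $\phi_N\to\phi$ in $C^{\kappa_0}C^2$, which gives uniform convergence of the pullback coefficients. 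The mollified coefficient $[\varrho_N]_l$ converges uniformly in space. For the penalty and friction terms we write the lagged versions as current ones plus translation differences. These differences are bounded via \eqref{eq:N_zeta_shift_rate}, \eqref{eq:N_time_translation_v_moment}, \eqref{eq:N_flow_shift_rate} and the local Lipschitz dependence of $T^\delta_\xi$, $\mathbf n^{\xi,\phi}$, $J^\phi_\Gamma[\xi]$ on the graph, exactly as in the proof of Lemma~\ref{lem:N_splitting_remainder}. The fluid-side and structure-side forms then recombine into the coupled forms in \eqref{eq:N_limit_coupled}. Convergence of the graph-dependent domains $T^\delta_{\zeta_N^*}$ is handled through indicator functions converging in $L^1$, using the uniform $C^1$ convergence of $\zeta_N^*$. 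Finally, the weak-in-time formulation is upgraded to every $t$ by the time continuity available at fixed $m$.

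\emph{Item (4).} We pass to the limit in \eqref{energy}. The left-hand side is handled by weak lower semicontinuity of the convex terms, with the two half penalty/friction terms in $\mathcal D_N$ both converging to half of the limit forms and summing to the full coefficient. On the right, $\mathcal E_N(0)$ is fixed data. $M_N^{\rm sh}\to M^{\rm sh}$ in $C([0,T])$, and it is identified via Lemma~\ref{lem:N_martingale_identification}. The $\mathcal R_k$ term converges by \eqref{eq:N_G_conv}, and $\mathfrak R^{\rm split}_{N,\delta}\to0$ in probability, hence a.s. along a subsequence. The moment bound \eqref{eq:N_limit_energy_moment} follows from \eqref{eq:N_energy_moment} by Fatou's lemma.

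\emph{Main obstacle.} The delicate step is the recombination of the lagged interface terms. The stochastic-flow coefficients are only $C^\vartheta$ in time, and the tubular domains move with the graph. I would first try to reduce everything to the Lipschitz estimates already used for Lemma~\ref{lem:N_splitting_remainder}, applied at the level of bilinear forms in place of quadratic forms.
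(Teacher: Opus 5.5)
Your proposal is correct and follows essentially the same route as the paper. You pass to the limit on the representation space, using finite dimensionality and fixed-$\epsilon$ parabolic compactness for the bulk terms, and you absorb the lagged half-step interface forms via \eqref{eq:N_zeta_shift_rate}, \eqref{eq:N_time_translation_v_moment}, \eqref{eq:N_flow_shift_rate} and Lemma~\ref{lem:N_splitting_remainder}; you identify $M^{\rm sh}$ through Lemma~\ref{lem:N_martingale_identification} and get the energy bounds by lower semicontinuity and Fatou's lemma. Your hitting-time argument for $\zeta^*=\zeta$ on $[0,\tau^\zeta)$ and your accounting of the two half-weighted penalty and friction terms in $\mathcal D_N$ spell out what the paper states in one line via uniform $H^s$ convergence and \eqref{eq:initial_localization_margin}.
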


	\begin{proof}
		Lemma~\ref{lem:skorokhod_N} and the fixed-parameter compactness give
		\[
		\varrho_N\to\varrho
		\quad\text{in }L^4((0,T)\times\mathcal O_\alpha),
		\qquad
		\mathbf v_N\to\mathbf v
		\quad\text{in }L^2(0,T;X_m^f),
		\]
		and
		\[
		(\zeta_N,\zeta_N^*)\to(\zeta,\zeta^*)
		\quad\text{in }C([0,T];X_m^{st})^2,
		\qquad
		\phi_N\to\phi
		\quad\text{in }C^{\kappa_0}_tC^2_x.
		\]
		Since
		$l>0$,
		\[
		[\varrho_N]_l\to[\varrho]_l
		\quad\text{in }L^4(0,T;C^r(\overline{\mathcal O}_\alpha))
		\quad\text{for every fixed }r\ge0.
		\] Hence the
		continuity equation, $\partial_t\zeta=w$, and all bulk terms pass to the
		limit.

		Let $\mathcal R_N^{\rm int}$ be the difference between the two half-step
		interface forms and their continuous-time counterparts. Then
		\[
		\mathcal R_N^{\rm int}\longrightarrow0
		\]
		by \eqref{eq:N_zeta_shift_rate},
		\eqref{eq:N_time_translation_v_moment}, the
		$C_t^\vartheta$ convergence of the flow coefficients, and
		Lemma~\ref{lem:N_splitting_remainder}. Thus the limiting interface terms
		are precisely those in \eqref{eq:N_limit_coupled}.

		Lemma~\ref{lem:N_martingale_identification} gives
		\[
		M^{\rm sh}(t)
		=
		\sum_{k=1}^K
		\int_0^t
		\mathcal G_k(\widehat\eta(s))\,\dd W_k(s),
		\]
		and \eqref{eq:N_G_conv} gives the It\^o correction. Lower
		semicontinuity and Fatou's lemma yield
		\eqref{eq:N_limit_energy}--\eqref{eq:N_limit_energy_moment}. Finally,
		\[
		\zeta_N\to\zeta
		\quad\text{in }C([0,T];H^s(\Gamma)),
		\qquad s>\frac32,
		\]
		together with \eqref{eq:initial_localization_margin}, gives
		$\zeta^*=\zeta$ on $[0,\tau^\zeta)$ and
		$\widetilde{\mathbb P}(\tau^\zeta>0)=1$.
	\end{proof}

	\medskip

\section{Galerkin limit \texorpdfstring{$m\to\infty$}{m to infinity}}
	\label{s6}
	
	We now let $m\to\infty$ with $\epsilon,l,\delta>0$ fixed.  The artificial
	diffusion still gives strong compactness of the density, while the mollified
	inertial coefficient is spatially regular.  This is sufficient to identify the
	bulk nonlinearities and the Navier term.  We do not identify the quadratic
	shell-energy martingale at this intermediate level; only its uniform moment
	bound is retained, and the final shell energy is reconstructed in
	Appendix~\ref{app:shell_ito}.
	
	\subsection{Uniform estimates independent of \texorpdfstring{$m$}{m}}
	
	The estimates of Proposition~\ref{thm:limit_N} are uniform with respect to the
	Galerkin dimension.
	
	\begin{lemma}[Uniform estimates]
		
		For every $p\ge1$ there is a constant $C$, independent of $m$, with
		\[
		C=C(p,T,\epsilon,l,\delta,\alpha).
		\]
		Here $\mathcal E_m$ and $\mathcal D_m$ are the
		energy and dissipation defined in
		\eqref{eq:N_limit_energy_def}--\eqref{eq:N_limit_dissipation_def},
		evaluated at the $m$-level variables after the $N\to\infty$ passage.
		Then
		\begin{equation}
			\label{eq:m_energy_moment}
			\sup_m
			\widetilde{\mathbb E}
			\left[
			\sup_{0\le t\le T}\mathcal E_m(t)^p
			+
			\mathcal D_m(T)^p
			\right]
			\le C.
		\end{equation}
	\end{lemma}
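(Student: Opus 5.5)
The plan is to derive the $m$-uniform bound from the $N$-limit energy inequality \eqref{eq:N_limit_energy} rather than from \eqref{eq:N_limit_energy_moment}. The constant in \eqref{eq:N_limit_energy_moment} depends on $m$, because it was inherited from time-regularity bounds at fixed Galerkin dimension. The inequality \eqref{eq:N_limit_energy} itself, however, contains no $m$-dependent quantity. Its ingredients are the energy $\mathcal E_m$, the dissipation $\mathcal D_m$, the shell martingale $M^{\rm sh}_m$ and the It\^o correction $\frac12\sum_k\int_0^t\mathcal R_k(\widehat\eta_m)\,\dd s$. The initial energy $\mathcal E_m(0)$ is built from $(\varrho_{0,\delta,\epsilon},\mathbf v_{0,\delta,\epsilon},\zeta_0,w_0)$. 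Here one must check that the Galerkin projection of the initial velocity does not increase the kinetic energy by more than a constant depending on $\epsilon,l,\delta$. Since $\epsilon\le\epsilon+[\varrho_{0,\delta,\epsilon}]_l\le\epsilon+\delta^{-1/\beta}$ and $P^f_m$ is an $L^2$-contraction, $\mathcal E_m(0)\le C(\epsilon,l,\delta)$ uniformly in $m$.

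Next, bound the noise terms by the energy. By Lemma~\ref{lem:shell_energy_commutator} and the area-preserving change of variables, exactly as in Lemma~\ref{lem:approx_bending_energy},
\[
|\mathcal G_k(\widehat\eta_m)|+|\mathcal R_k(\widehat\eta_m)|\le c_k\,C(\chi)\|\Delta^\chi_\Gamma\zeta_m\|_{L^2(\Gamma)}^2\le C\Lambda_\phi^{r}\,\mathcal E_m .
\]
This uses the flat-torus remark for the constant mode, which is controlled by $\zeta_0$ and $\int_0^t w_m$. The $\Lambda_\phi$-dependence can be removed by localizing on bounded-flow events $\{\Lambda_\phi\le R\}$ via the stopping time $\sigma_R=\inf\{t:\|\phi\|_{C^\vartheta([0,t];C^2)}+\|\phi^{-1}\|_{C^\vartheta([0,t];C^2)}\ge R\}$. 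Alternatively, one keeps $\Lambda_\phi$ and uses that it has moments of every order with a law independent of $m$, since the flow is not discretized.

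Then take the $p$-th power of the supremum over $[0,t\wedge\sigma_R]$ and expectation. The drift term is bounded by $C_R\int_0^t\mathbb E\sup_{s'\le s}\mathcal E_m^p\,\dd s$ via H\"older. For the martingale, apply BDG:
\[
\mathbb E\sup_{s\le t}|M^{\rm sh}_m|^p\le C\,\mathbb E\Bigl(\int_0^t\mathcal E_m^2\Bigr)^{p/2}\le \tfrac12\mathbb E\sup_{s\le t}\mathcal E_m^p+C\int_0^t\mathbb E\sup_{s'\le s}\mathcal E_m^p\,\dd s .
\]
Absorbing the first term and applying Gronwall gives a bound on $\mathbb E\sup_{t\le T\wedge\sigma_R}\mathcal E_m^p$ by a constant $C(p,T,\epsilon,l,\delta,\alpha,R)$. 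Returning to the inequality, $\mathcal D_m(T)^p$ is then bounded as well.

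The main obstacle is the absorption step, which needs $\mathbb E\sup\mathcal E_m^p<\infty$ a priori; this holds at fixed $m$ by \eqref{eq:N_limit_energy_moment}. A second point is that the energy inequality holds only for the lower-semicontinuous representative. I would handle this by taking the supremum over a countable dense set of times together with $t=T$. Finally, the localization must be released: if $\Lambda_\phi$ is retained instead, pairing it with its moments via H\"older yields a genuinely $m$-independent constant. Because the law of $\phi$ does not depend on $m$, either route gives \eqref{eq:m_energy_moment}.
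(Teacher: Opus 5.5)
\textbf{Verdict: correct, by a different route from the paper's.} One step should be deleted rather than defended.

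\textbf{Comparison with the paper.} The paper's proof simply cites \eqref{eq:N_limit_energy_moment} together with the coercivity $\lambda_\kappa^{\zeta_m^*}+\frac23\mu_\kappa^{\zeta_m^*}\ge0$ and Korn's inequality. Because the constant in \eqref{eq:N_limit_energy_moment} is written as $C_{p,T,m,\epsilon,l,\delta}$, that citation implicitly relies on the remark in Proposition~\ref{thm:approx_existence}: the constant in \eqref{eq:approx_energy_moment} is independent of $m$ at the energy level, and it survives the Fatou and lower-semicontinuity passage.

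You instead rerun the BDG--Gronwall argument on the pathwise inequality \eqref{eq:N_limit_energy}, which contains no $m$-dependent quantity. Before doing so you check that the projected initial energy is bounded uniformly in $m$. If the initial velocity is the weighted projection implicit in the initial term of \eqref{eq:N_limit_coupled}, the weighted kinetic energy is even bounded by that of $\mathbf v_{0,\delta,\epsilon}$. The shell data are handled by contractivity of $P^{st}_m$. This makes the $m$-independence explicit and self-contained, at the cost of a stochastic Gronwall step that the paper avoids by citation. You should say explicitly that $\mathcal D_m\ge0$. This is where the coercivity condition enters your argument, since you drop $\mathcal D_m$ to close Gronwall and only afterwards read off $\mathcal D_m(T)^p$. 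Korn is needed only to turn $\mathcal D_m$ into $L^2_tH^1_x$ control, which is not part of \eqref{eq:m_energy_moment} itself.

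\textbf{The factor $C(\chi)\lesssim\Lambda_\phi^r$ is spurious.} On the flat torus, $\Delta_\Gamma$, $A_k$, $C_k$ and $[C_k,A_k]$ all annihilate constants. Hence $\mathcal G_k(f)=\mathcal G_k(f-\bar f)$ and $\mathcal R_k(f)=\mathcal R_k(f-\bar f)$, and therefore $|\mathcal G_k(f)|+|\mathcal R_k(f)|\le c_k\|\Delta_\Gamma f\|_{L^2}^2$ with no mean-value term at all. Area preservation gives $\|\Delta_\Gamma\widehat\eta_m\|_{L^2}=\|\Delta^\chi_\Gamma\zeta_m\|_{L^2}$ exactly; this is the bound stated in Lemma~\ref{lem:approx_bending_energy}. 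So both noise terms are bounded by $2c_k\mathcal E_m$ with a deterministic constant, and your BDG--Gronwall argument closes directly on $[0,T]$ without any localization.

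This matters because neither of your two ways of removing $\Lambda_\phi$ works as written:
\begin{itemize}
\item \emph{Localization.} A Gronwall bound on $[0,T\wedge\sigma_R]$ carries a constant growing roughly like $\exp(CR^{rp})$. Releasing it to a moment bound would need tail estimates on $\Lambda_\phi$ far beyond the polynomial moments available.
\item \emph{Keeping $\Lambda_\phi$.} $\Lambda_\phi$ is a H\"older norm over all of $[0,T]$, so it is not adapted. It cannot be carried inside the BDG estimate and paired with its moments afterwards. Even its running version would lead Gronwall to produce $\exp(C\Lambda_\phi^r)$, whose expectation is not known to be finite.
\end{itemize}
Your closing claim that ``either route gives'' \eqref{eq:m_energy_moment} is therefore not justified. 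It is also unnecessary: once the factor is dropped, the rest of your argument stands.
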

	
	\begin{proof}
		All estimates follow from
		\eqref{eq:N_limit_energy_moment} and the definitions
		\eqref{eq:N_limit_energy_def}--\eqref{eq:N_limit_dissipation_def}.
		The extended stress is kept in the tensorial form
		\[
		\mathbb S_\kappa^{\zeta_m^*}(\nabla^\phi\mathbf v_m)
		=
		\mu_\kappa^{\zeta_m^*}
		\left(
		\nabla^\phi\mathbf v_m
		+
		(\nabla^\phi\mathbf v_m)^\top
		\right)
		+
		\lambda_\kappa^{\zeta_m^*}
		\Div^\phi\mathbf v_m\,\mathbb I,
		\]
		so that the coercivity assumption
		$
		\lambda_\kappa^{\zeta_m^*}
		+
		\frac23\mu_\kappa^{\zeta_m^*}\ge0
		$
		and Korn's inequality give \eqref{eq:m_energy_moment}. The penalty bound is
		only a bound on the \emph{normal} mismatch; no full-vector penalty is used.
	\end{proof}
	
	Since
	$
	\partial_t\zeta_m=w_m,
	$
	we also have
	\begin{equation}
		\label{eq:m_zeta_time}
		\sup_m
		\widetilde{\mathbb E}
		\|\zeta_m\|_{W^{1,2}(0,T;H^1(\Gamma))}^{2p}
		\le C.
	\end{equation}
	
	The equations provide the time regularity needed for compactness of the
	velocity and of the material shell velocity.
	
	\begin{lemma}[Time regularity]
		
		There exist $r_f>3/2$ and a constant
		$C=C(\epsilon,l,\delta)$ independent of $m$ such that
		\begin{align}
			& \sup_m
			\widetilde{\mathbb E}
			\|\partial_t w_m\|_{L^2(0,T;H^{-2}(\Gamma))}^2
			\le C,
			\label{eq:m_w_time} \\
			& \sup_m
			\widetilde{\mathbb E}
			\|\partial_t\mathbf v_m\|_{L^1(0,T;H^{-r_f}(\mathcal O_\alpha))}^2
			\le C.
			\label{eq:m_v_time} 
		\end{align}
		Consequently, for every $\sigma\in(0,1/2)$,
		\begin{align}
			& \{\mathbf v_m\}_m
			\quad\text{is relatively compact in}\quad
			L^2(0,T;H^{1-\sigma}(\mathcal O_\alpha)),
			\label{eq:m_v_compact} \\
			& \{w_m\}_m
			\quad\text{is relatively compact in}\quad
			L^2(0,T;L^2(\Gamma)).
			\label{eq:m_w_compact} 
		\end{align}
	\end{lemma}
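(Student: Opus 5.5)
The plan is to read off \eqref{eq:m_w_time} and \eqref{eq:m_v_time} from the Galerkin equations \eqref{eq:N_limit_coupled} by duality, using only the uniform bounds \eqref{eq:m_energy_moment}. I will then obtain the compactness statements from an Aubin--Lions--Simon argument applied pathwise and converted into tightness statements.

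\emph{Shell time derivative.} I test \eqref{eq:N_limit_coupled} with $\mathbf q=0$ and $\psi=P_m^{st}\widetilde\psi$ for $\widetilde\psi\in H^2(\Gamma)$. Since $\{\xi_i\}$ is orthogonal in both $L^2$ and $H^2$, the projection $P_m^{st}$ is uniformly bounded on $H^2$. The bending term is bounded by
$C(\phi)\|\zeta_m\|_{H^2}\|\psi\|_{H^2}$ and the damping term by $C(\phi)\|w_m\|_{H^1}\|\psi\|_{H^1}$. The penalty term is bounded by $\delta^{-1}\|(\mathbf v_m-w_m\mathbf e_z)\cdot\mathbf n_*\|_{L^2(T^\delta)}\,h_\delta^{1/2}\|\psi\|_{L^\infty}$. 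The friction term is bounded by the square root of the friction dissipation times $\|\psi\|_{L^2(\Gamma)}$, with $J_*$ bounded on the stopped set. All of these factors are in $L^2(0,T)$ with moments controlled by \eqref{eq:m_energy_moment}, and this gives \eqref{eq:m_w_time}.

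\emph{Fluid time derivative.} I take $\psi=0$ and $\mathbf q=P_m^f\widetilde{\mathbf q}$ with $\widetilde{\mathbf q}\in H^{r_f}_0$, choosing $r_f>5/2$ so that $\nabla\mathbf q\in L^\infty$. This bounds $\partial_t\bigl((\epsilon+[\varrho_m]_l)\mathbf v_m\bigr)$ in $L^1(0,T;H^{-r_f})$. The individual terms are handled as follows.
\begin{itemize}
\item The convective term uses $[\varrho_m]_l\in L^\infty_{t,x}$ (since $l>0$) and $\mathbf v_m\in L^2H^1$, giving an $L^1_t$ bound.
\item The pressure is controlled by $\varrho_m\in L^\infty L^\beta$.
\item The stress lies in $L^2L^2$.
\item The $\epsilon$-term uses the parabolic gradient bound.
\item The penalty and friction terms are bounded by their dissipations, the friction term via the trace of $\mathbf q$.
\end{itemize}
To pass from the weighted momentum to $\partial_t\mathbf v_m$ itself, I use that $\epsilon+[\varrho_m]_l\ge\epsilon$ and that $\partial_t[\varrho_m]_l$ is smooth in space, with $\partial_t\varrho_m\in L^2H^{-1}$ from \eqref{eq:N_limit_cont}. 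Hence dividing by the smooth positive weight is bounded on $H^{-r_f}$, up to an $L^1$ lower-order error. I expect this division step to be the main obstacle. It must be done carefully at the Galerkin level, since the projection does not commute with multiplication. I would therefore first state the estimate for the weighted momentum, and if needed apply Aubin--Lions to the weighted momentum and recover $\mathbf v_m$ afterwards by strong convergence of $[\varrho_m]_l$.

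\emph{Compactness.} For \eqref{eq:m_v_compact} I use $\mathbf v_m\in L^2H^1$, the compact embedding $H^1\subset H^{1-\sigma}$, and the continuous embedding into $H^{-r_f}$, together with the $L^1H^{-r_f}$ time bound; Simon's lemma then gives compactness in $L^2H^{1-\sigma}$. For \eqref{eq:m_w_compact} the same argument applies with $w_m\in L^2H^1\cap L^\infty L^2$ and $\partial_tw_m\in L^2H^{-2}$. Since the bounds hold in expectation, relative compactness is understood in law: Chebyshev's inequality on balls of these compact sets yields tightness.
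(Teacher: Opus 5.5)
Your proposal follows the paper's proof essentially step for step. In the shell equation you argue by duality against $H^2$ tests, with the bending, damping, penalty and friction terms controlled by the energy and dissipation at fixed $\delta$. For the fluid you bound the time derivative of $(\epsilon+[\varrho_m]_l)\mathbf v_m$ in a negative Sobolev space, correct it by $\partial_t[\varrho_m]_l$ via the continuity equation, and conclude with Aubin--Lions--Simon using $H^1\subset H^{1-\sigma}\hookrightarrow H^{-r_f}$ and $H^1(\Gamma)\subset L^2(\Gamma)\hookrightarrow H^{-2}(\Gamma)$.

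The Galerkin division step you flag is passed over in a single line in the paper as well, and you leave it open. It closes if you take $r_f=s_f$, with $s_f$ an integer and the standard inner product. Then $P_m^f$ is also the $H^{s_f}$-orthogonal projection. For $u\in X_m^f$ and $b_m:=\epsilon+[\varrho_m]_l$ this gives the G{\aa}rding-type bound $\bigl(P_m^f(b_mu),u\bigr)_{H^{s_f}}=(b_mu,u)_{H^{s_f}}\ge\epsilon\|u\|_{H^{s_f}}^2-C\|b_m\|_{C^{s_f}}\|u\|_{H^{s_f-1}}\|u\|_{H^{s_f}}$. Combined with interpolation and $\epsilon\|u\|_{L^2}\le\|P_m^f(b_mu)\|_{L^2}$, it yields $\|u\|_{H^{s_f}}\le C\|P_m^f(b_mu)\|_{H^{s_f}}$. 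Here $C$ depends only on $\epsilon$ and $\|b_m\|_{C^{s_f}}\le\epsilon+C_l\|\varrho_m\|_{L^1}$, and the mass is conserved. By $L^2$-duality this bounds the inverse of $P_m^fb_mP_m^f$ on $X_m^f$ in the $H^{-s_f}$ norm uniformly in $m$, which is exactly the division you need. Your fallback of applying Aubin--Lions to the weighted momentum would give only the compactness, not the $L^1H^{-r_f}$ bound itself.
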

	
	\begin{proof}
		The shell equation is tested against arbitrary
		$\psi\in H^2(\Gamma)$. The bending term is bounded by
		$\|\zeta_m\|_{H^2}\|\psi\|_{H^2}$, the damping term by
		$\|w_m\|_{H^1}\|\psi\|_{H^1}$, and the normal penalty term by
		Cauchy--Schwarz inequality together with \eqref{eq:m_energy_moment}. The Navier
		friction term is controlled by \eqref{eq:m_energy_moment} and the trace
		theorem. Since $\delta>0$ is fixed, this yields
		\eqref{eq:m_w_time}.
		
		For the fluid equation, write the time derivative of
		$(\epsilon+[\varrho_m]_l)\mathbf v_m$ in a sufficiently negative Sobolev
		space. The convective term is controlled by
		\eqref{eq:m_energy_moment}, the pressure by
		the fixed artificial pressure, the stress by
		\eqref{eq:m_energy_moment}, and the interface terms by
		\eqref{eq:m_energy_moment}.
		The time derivative of the mollified density is controlled by the
		continuity equation. This gives \eqref{eq:m_v_time}.
		
		The compactness conclusions follow from the Aubin--Lions--Simon theorem.
		Using
		\[
		H^1(\mathcal O_\alpha)
		\subset
		H^{1-\sigma}(\mathcal O_\alpha)
		\hookrightarrow
		H^{-r_f}(\mathcal O_\alpha),\;\;
		H^1(\Gamma)
		\subset
		L^2(\Gamma)
		\hookrightarrow
		H^{-2}(\Gamma),
		\]
		we get \eqref{eq:m_v_compact}-\eqref{eq:m_w_compact}.
	\end{proof}
	
	A useful consequence of \eqref{eq:m_v_compact} is compactness of traces.
	If $\sigma\in(0,1/2)$, then
	\[
	H^{1-\sigma}(\mathcal O_\alpha)
	\to
	H^{1/2-\sigma}(\partial\mathcal O_\alpha)
	\]
	continuously. After pulling the moving interfaces back to the reference
	torus by the graph parametrizations, the traces of $\mathbf v_m$ therefore
	converge strongly in the spaces needed for the Navier friction term.

	\subsection{Compactness and representation}
	
	Fix
	$
	s\in(3/2,2),
	\sigma\in(0,1/2),
	0<\kappa_0<\kappa_1<\frac12.
	$
	We use the path space
	\[
	\mathcal X^{(m)}
	=
	\mathcal X_\varrho
	\times
	\mathcal X_{\mathbf v}
	\times
	\mathcal X_\zeta
	\times
	\mathcal X_{\zeta^*}
	\times
	\mathcal X_w
	\times
	\mathcal X_\phi
	\times
	\mathcal X_W,
	\]
	where
	\begin{align*}		& \mathcal X_\varrho
		:=
		C_w([0,T];L^\beta(\mathcal O_\alpha))
		\cap
		L^4((0,T)\times\mathcal O_\alpha), \\
		& \mathcal X_{\mathbf v}
		:=
		\bigl(L^2(0,T;H^1(\mathcal O_\alpha)),w\bigr)
		\cap
		L^2(0,T;H^{1-\sigma}(\mathcal O_\alpha)), \\
		& \mathcal X_\zeta
		:=
		C([0,T];H^s(\Gamma))
		\cap
		\bigl(L^\infty(0,T;H^2(\Gamma)),w^*\bigr), \\
		& \mathcal X_{\zeta^*}
		:=
		C([0,T];H^s(\Gamma))
		\cap
		\bigl(L^\infty(0,T;H^2(\Gamma)),w^*\bigr), \\
		& \mathcal X_w
		:=
		\bigl(L^2(0,T;H^1(\Gamma)),w\bigr)
		\cap
		L^2(0,T;L^2(\Gamma)), \\
		& \mathcal X_\phi
		:=
		C^{\kappa_0}
		([0,T];C^2(\overline{\mathcal O}_\alpha;
		\overline{\mathcal O}_\alpha)),\;\;
		\mathcal X_W
		:=
		C([0,T];\mathbb R^K). 
	\end{align*}
	
	Let $\mu_m$ be the joint law of
	$
	(
	\varrho_m,\mathbf v_m,\zeta_m,\zeta_m^*,w_m,\phi,W
	)
	$
	on $\mathcal X^{(m)}$.
	
	\begin{lemma}[Tightness]
		
		The family
		$
		\{\mu_m\}_{m\in\mathbb N}
		$
		is tight on $\mathcal X^{(m)}$.
	\end{lemma}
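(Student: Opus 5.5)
Since $\mathcal X^{(m)}$ is a product space, tightness of $\{\mu_m\}$ reduces to tightness of each marginal. For each component and each $\varepsilon>0$ I will exhibit a compact set $K_\varepsilon$ in the corresponding factor with $\sup_m\widetilde{\mathbb P}(\cdot\notin K_\varepsilon)\le\varepsilon$. Every such set will be a sublevel set of a norm whose moments are bounded uniformly in $m$ by \eqref{eq:m_energy_moment}, \eqref{eq:m_zeta_time}, \eqref{eq:m_w_time} and \eqref{eq:m_v_time}, so Chebyshev's inequality gives the tail bound. The components $\phi$ and $W$ do not depend on $m$, so their laws are single Radon measures on Polish spaces and are therefore tight. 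For $\phi$ one can also repeat the embedding $C^{\kappa_1}([0,T];C^3)\subset C^{\kappa_0}([0,T];C^2)$ used in the $N$-level tightness lemma.

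\emph{Structure components.} For $\zeta_m$ and $\zeta_m^*$ I take
\[
K_R=\{\xi:\ \|\xi\|_{L^\infty(0,T;H^2)}+\|\xi\|_{W^{1,2}(0,T;H^1)}\le R\}.
\]
By Aubin--Lions--Simon (using $H^2\Subset H^s\hookrightarrow H^1$ and a $W^{1,2}$ time bound, which gives $C^{1/2}$ equicontinuity in $H^1$, interpolated against the $L^\infty H^2$ bound), $K_R$ is compact in $C([0,T];H^s(\Gamma))$. It is also bounded, hence weak-$*$ relatively compact, in $L^\infty(0,T;H^2)$. The stopped process $\zeta_m^*$ satisfies the same bounds, because stopping preserves both the sup norm and the time modulus.

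For $w_m$ I take the intersection of a ball in $L^2(0,T;H^1(\Gamma))$ with $\{\|\partial_t w\|_{L^2H^{-2}}\le R\}$. This set is weakly compact in $L^2H^1$, and by \eqref{eq:m_w_compact} it is strongly compact in $L^2L^2$.

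\emph{Velocity and density.} For $\mathbf v_m$ I take a ball in $L^2(0,T;H^1)$ intersected with $\{\|\partial_t\mathbf v_m\|_{L^1H^{-r_f}}\le R\}$. This set is weakly compact in $L^2H^1$ and, by \eqref{eq:m_v_compact}, strongly compact in $L^2H^{1-\sigma}$.

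The density is the main obstacle. One could try to deduce strong compactness in $L^4_{t,x}$ from \eqref{eq:m_energy_moment} via the $\epsilon$-dissipation $\epsilon\int(a\gamma\varrho^{\gamma-2}+\delta\beta\varrho^{\beta-2})|\nabla^\phi\varrho|^2$. Since the flow Jacobians are bounded, this controls $\nabla(\varrho_m^{\beta/2})$ in $L^2_{t,x}$ and hence gives a spatial $W^{1,q}$-type bound on a power of $\varrho_m$. Time regularity of $\varrho_m$ comes from the continuity equation \eqref{eq:N_limit_cont}: $\partial_t\varrho_m$ is bounded in $L^2(0,T;W^{-1,q'})$ by the $L^\beta$ bound on $\varrho_m$, the $L^2H^1$ bound on $\mathbf v_m$, and the $\epsilon$-term. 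Equicontinuity in the weak topology of $L^\beta$, combined with the $L^\infty L^\beta$ bound, then gives compactness in $C_w([0,T];L^\beta)$ via Arzel\`a--Ascoli on bounded sets of $L^\beta$, which are metrizable for the weak topology.

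For strong $L^4$ compactness I would combine the gradient bound with the time bound in a Dubinskii/Aubin--Lions argument applied to $\varrho_m^{\beta/2}$, and then interpolate using $\beta>6$ so that $L^4$ lies below the available $L^\infty L^\beta$ integrability. If the nonlinear Aubin--Lions step becomes delicate, the fallback is the parabolic $L^2H^1$ estimate for $\varrho_m$ itself: test \eqref{eq:N_limit_cont} with $\varrho_m$ and use $\mathbf v_m\in L^2H^1\subset L^2L^6$ together with $\varrho_m\in L^\infty L^\beta$, $\beta>6$, to absorb the convective term. Ordinary Aubin--Lions then applies.

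\emph{Conclusion.} The space $\mathcal X^{(m)}$ is not metrizable because of its weak factors, but each factor carries a countable family of continuous functions separating points, so it is a Jakubowski space. Tightness of the joint law then follows from tightness of the marginals, since a product of compact sets is compact.
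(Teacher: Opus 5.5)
Your proposal is correct and follows essentially the same route as the paper: tightness marginal by marginal, obtained from \eqref{eq:m_energy_moment}, \eqref{eq:m_zeta_time} and the time-derivative bounds via Aubin--Lions--Simon; parabolic compactness of the density at fixed $\epsilon$, where your fallback $L^2_tH^1_x$ bound from testing with $\varrho_m$ is the same computation the paper records in Lemma~\ref{app:lem_unweighted_diffusion}; and $m$-independence of the laws of $(\phi,W)$. The only slip is calling $\mathcal X_\phi$ Polish: $C^{\kappa_0}([0,T];C^2(\overline{\mathcal O}_\alpha))$ is not separable, so tightness of the single law of $\phi$ is not automatic, but the compact embedding $C^{\kappa_1}_tC^3_x\Subset C^{\kappa_0}_tC^2_x$ that you already mention, and that the paper uses, closes this.
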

	
	\begin{proof}
		The bounds
		\eqref{eq:m_energy_moment}, \eqref{eq:m_zeta_time}, and the compact
		embedding
		\[
		L^\infty(0,T;H^2(\Gamma))
		\cap
		W^{1,2}(0,T;H^1(\Gamma))
		\subset
		C([0,T];H^s(\Gamma)),
		\qquad s<2,
		\]
		give tightness of $\zeta_m$ and $\zeta_m^*$. The stopped displacement is
		handled by the same localization argument as in Section~\ref{s4}.
		
		Tightness of $w_m$ follows from
		\eqref{eq:m_energy_moment}, \eqref{eq:m_w_time}, and
		\eqref{eq:m_w_compact}. Tightness of $\mathbf v_m$ follows from
		\eqref{eq:m_energy_moment}, \eqref{eq:m_v_time}, and
		\eqref{eq:m_v_compact}.
		
		For the density,
		\[
		\partial_t\varrho_m
		=
		-\Div^\phi(\varrho_m\mathbf v_m)
		+
		\epsilon\Delta^\phi\varrho_m.
		\]
		Since $\epsilon>0$ is fixed, the density dissipation and the time derivative
		bound imply, by Aubin--Lions,
		\[
		\varrho_m
		\quad\text{is relatively compact in}\quad
		L^4((0,T)\times\mathcal O_\alpha).
		\]
		Together with \eqref{eq:m_energy_moment}, this yields tightness in
		$\mathcal X_\varrho$.
		
		The flow component is independent of $m$ before representation. Its
		tightness in $\mathcal X_\phi$ follows from the stronger
		$C^{\kappa_1}_tC^3_x$ moment bounds and the compact embedding into
		$C^{\kappa_0}_tC^2_x$. Tightness of the Wiener process is standard.
	\end{proof}

	We next apply the Jakubowski--Skorokhod representation to the tight family.
		Since weak and weak-$*$ topologies occur in $\mathcal X^{(m)}$, the space
	is not Polish. We therefore apply the Jakubowski--Skorokhod representation
	theorem.
	
	\begin{lemma}[Jakubowski--Skorokhod representation]
		\label{lem:skorokhod_m}
		There exist a probability space
		\[
		(\widehat\Omega,\widehat{\mathcal F},\widehat{\mathbb P}),
		\]
		random variables
		\[
		\widehat{\mathcal U}_m
		=
		(
		\widehat\varrho_m,
		\widehat{\mathbf v}_m,
		\widehat\zeta_m,
		\widehat\zeta_m^*,
		\widehat w_m,
		\widehat\phi_m,
		\widehat W_m
		),
		\qquad
		\mathcal U
		=
		(
		\varrho,\mathbf v,\zeta,\zeta^*,w,\phi,W
		),
		\]
		such that
		$
		\mathcal L(\widehat{\mathcal U}_m)=\mu_m
		$
		and
		\[
		\widehat{\mathcal U}_m
		\to
		\mathcal U
		\qquad
		\widehat{\mathbb P}\text{-a.s. in }\mathcal X^{(m)}.
		\]
	\end{lemma}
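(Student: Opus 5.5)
The statement is an application of the Jakubowski--Skorokhod theorem \cite{Jakubowski1997}, so the plan is to check its two hypotheses for $\mathcal X^{(m)}$ and the family $\{\mu_m\}$. These are: tightness of $\{\mu_m\}$, and the existence on $\mathcal X^{(m)}$ of a countable family of continuous real functions separating points. Tightness is exactly the preceding lemma. So the real work is checking that every factor of $\mathcal X^{(m)}$ is a quasi-Polish (Jakubowski) space. Products and intersections of such spaces inherit the property, provided the two intersected topologies are compatible. Compatibility holds here because both topologies on each intersection are Hausdorff and the finer one is metrizable on bounded sets.

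I will go factor by factor.

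\begin{enumerate}[label=(\roman*)]
\item $C([0,T];H^s(\Gamma))$, $L^2(0,T;H^{1-\sigma}(\mathcal O_\alpha))$, $L^2(0,T;L^2(\Gamma))$, $L^4((0,T)\times\mathcal O_\alpha)$, $\mathcal X_W$, and the separable closed subspace of $C^{\kappa_0}_tC^2_x$ spanned by the little-Hölder functions (where the flow laws are concentrated, via the $C^{\kappa_1}_tC^3_x$ bounds) are all Polish.
\item $(L^2(0,T;H^1),w)$ is a separable reflexive space with its weak topology. It is separated by the countable family $f\mapsto\langle f,g_j\rangle$, with $\{g_j\}$ dense in the dual.
\item $(L^\infty(0,T;H^2(\Gamma)),w^*)$ is the dual of the separable space $L^1(0,T;H^{-2})$. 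It is separated by pairings against a countable dense subset, and bounded sets are metrizable compacts.
\item For $C_w([0,T];L^\beta)$, take the functionals $f\mapsto\int_{\mathcal O_\alpha} f(t_i)g_j\,\dd\bm x$ with $t_i$ rational and $g_j$ dense in $L^{\beta'}$. These are continuous and separate points.
\end{enumerate}

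Having these separating families, the theorem yields a subsequence (not relabelled), a new probability space, and random variables with laws $\mu_m$ converging almost surely in $\mathcal X^{(m)}$ to some $\mathcal U$.

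A point to record in the proof is that laws are defined on the Borel $\sigma$-algebra generated by the separating family. This $\sigma$-algebra coincides with the Borel sets on the $\sigma$-compact sets carrying the measures. Measurability of $\widehat{\mathcal U}_m$ is therefore unambiguous.

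I will also note that the flow and Wiener components now depend on $m$ after representation. The equation $\widehat\phi_m=\Phi(\widehat W_m)$, for a measurable solution map $\Phi$, transfers by equality of laws. Consequently $\phi$ is still the stochastic flow of $W$, exactly as in Lemma~\ref{lem:N_martingale_identification}.

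The main obstacle is the weak and weak-$*$ factors. Here one must make sure that the separating functionals are continuous for the intersection topology. One must also make sure that the tightness compacts from the previous lemma, which are bounded balls intersected with Aubin--Lions compacts, are metrizable. I will argue this by noting that bounded weak(-$*$) balls in separable duals are metrizable, and that on them the strong and weak convergences are compatible.
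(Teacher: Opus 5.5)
Your proposal is correct and follows the paper's route. The paper gives no separate proof beyond the preceding tightness lemma and the remark that the weak and weak-$*$ factors of $\mathcal X^{(m)}$ require Jakubowski's version of the Skorokhod theorem; your factor-by-factor separating families, including the restriction of the flow component to the separable little-H\"older subspace of $C^{\kappa_0}_tC^2_x$, make explicit the hypotheses the paper leaves implicit.

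Two minor remarks. First, the separating-family property passes to countable products and to intersection (supremum) topologies with no compatibility condition, and metrizability of the tight compacts then follows automatically, so that proviso is superfluous for this lemma. Second, the limit statement ``$\phi$ is the flow of $W$'' does not follow from a merely measurable solution map $\Phi$. It needs the passage of the weak flow equation to the limit together with pathwise uniqueness, as the paper does in the next lemma; this is in any case not part of the present statement.
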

	
	From now on we work on the new probability space and omit the hats.
	Equal laws alone do not automatically provide the required limiting
	filtration. As in Section~\ref{s4}, one passes the canonical martingale
	characterizations of the Wiener process and of the stochastic flow.
	
	\begin{lemma}[Wiener process and stochastic flow after representation]
		
		Let $\widehat{\mathfrak F}_t$ be the
		usual $\widehat{\mathbb P}$-augmentation of the natural filtration generated by
		\[
		(\varrho,\mathbf v,\zeta,\zeta^*,w,\phi,W)
		\]
		up to time $t$. Then $W$ is a $K$-dimensional
		$(\widehat{\mathfrak F}_t)$-Wiener process and $\phi$ satisfies
		\eqref{phiv}. In particular,
		\[
		\chi=\phi|_\Gamma
		\]
		satisfies the surface-flow equation \eqref{eq:surface_flow}.
	\end{lemma}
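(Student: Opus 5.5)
The argument follows the pattern of Lemma~\ref{lem:N_martingale_identification}, without the shell-martingale component. First we show that $W$ is an $(\widehat{\mathfrak F}_t)$-Wiener process. Fix $0\le s<t\le T$ and a bounded continuous functional $F$ of the restriction of the paths of $(\varrho,\mathbf v,\zeta,\zeta^*,w,\phi,W)$ to $[0,s]$. Because these restrictions are continuous (or at least measurable, with the Jakubowski structure giving enough continuous functionals) on the relevant path spaces, we choose $F$ from the countable family of continuous functionals that separate points of $\mathcal X^{(m)}$ restricted to $[0,s]$. On the original space, $W$ is an $(\mathcal F_t)$-Wiener process and the $m$-level variables are adapted. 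Equality of laws then gives
\[
\widehat{\mathbb E}\bigl[F(\widehat{\mathcal U}_m|_{[0,s]})(\widehat W_m(t)-\widehat W_m(s))\bigr]=0,
\]
\[
\widehat{\mathbb E}\bigl[F(\widehat{\mathcal U}_m|_{[0,s]})\bigl(\widehat W_{m,i}(t)\widehat W_{m,j}(t)-\widehat W_{m,i}(s)\widehat W_{m,j}(s)-\delta_{ij}(t-s)\bigr)\bigr]=0.
\]
Since every $\widehat W_m$ is Gaussian with the same law, all moments are bounded uniformly in $m$, which gives uniform integrability. Together with the a.s. convergence in Lemma~\ref{lem:skorokhod_m}, we can pass to the limit in both identities. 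A monotone class argument extends them to all bounded $\widehat{\mathfrak F}_s$-measurable $F$, and the identities survive the usual augmentation. Lévy's characterization then shows that $W$ is an $(\widehat{\mathfrak F}_t)$-Wiener process.

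Next we identify the flow. Before representation, $\phi$ solves \eqref{phiv} driven by $W$. The Stratonovich equation is equivalent to an Itô equation with smooth coefficients $-\mathbf Q_k$ and drift $\frac12\sum_k(\mathbf Q_k\cdot\nabla)\mathbf Q_k$. Since $\phi$ is a deterministic measurable functional of $W$, namely the strong solution map, equality of joint laws gives $\widehat\phi_m=\Psi(\widehat W_m)$ a.s. for the same measurable map $\Psi$. This alone does not pass to the limit, because $\Psi$ is not continuous. We therefore pass the integral equation instead. For fixed $\bm x$ we show that
\[
\widehat\phi_m(t,\bm x)-\bm x+\sum_k\int_0^t\mathbf Q_k(\widehat\phi_m)\,\dd\widehat W_{m,k}-\frac12\sum_k\int_0^t(\mathbf Q_k\cdot\nabla)\mathbf Q_k(\widehat\phi_m)\,\dd r
\]
vanishes, equivalently that the associated martingale and covariation identities hold, and then pass to the limit. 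Since $\widehat\phi_m\to\phi$ in $C^{\kappa_0}_tC^2_x$ and the $\mathbf Q_k$ are smooth, the coefficients converge uniformly, and the stochastic integrals converge in probability by the standard lemma on convergence of stochastic integrals with converging integrands and integrators (Debussche--Glatt-Holtz--Temam type). Alternatively, we can check directly that
\[
N(t):=\phi(t,\bm x)-\bm x-\tfrac12\int_0^t\sum_k(\mathbf Q_k\cdot\nabla)\mathbf Q_k(\phi)\,\dd r
\]
is an $(\widehat{\mathfrak F}_t)$-martingale with quadratic variation $\sum_k\int_0^t\mathbf Q_k\otimes\mathbf Q_k(\phi)\,\dd r$ and cross variation $-\int_0^t\mathbf Q_k(\phi)\,\dd r$ with $W_k$. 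Here uniform integrability comes from the $C^2$ moment bounds of the flow. These identities imply
\[
\bigl\langle N+\textstyle\sum_k\int\mathbf Q_k(\phi)\,\dd W_k\bigr\rangle=0,
\]
so $\phi$ solves the Itô form of \eqref{phiv}, and hence \eqref{phiv} itself, for every $\bm x$ in a countable dense set. Spatial continuity extends this to all $\bm x$.

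Finally, $\phi(t,\bm y,z)=(\chi(t,\bm y),z)$ holds for each $\widehat\phi_m$ because of \eqref{eq:noise_geometry}, and this structure is closed under uniform convergence. Restricting to $z$ fixed and using that the vertical component of $\mathbf Q_k$ vanishes, $\chi=\phi|_\Gamma$ solves \eqref{eq:surface_flow}. Pathwise uniqueness for this smooth SDE then identifies $\chi$ with the strong solution driven by $W$.

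The main obstacle is the adaptedness and filtration issue in the first step. The filtration $\widehat{\mathfrak F}_t$ includes $\varrho,\mathbf v,w$, and in these components path restrictions are taken in weak topologies. We must therefore make sure the test functionals $F$ are continuous on the Jakubowski spaces and still generate the full $\sigma$-algebra. We would handle this as in Section~\ref{s4}, using a countable separating family of continuous functionals and the fact that Borel and Jakubowski $\sigma$-algebras coincide for these tight families.
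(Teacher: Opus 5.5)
Your proposal is correct and follows the same route as the paper. In both, the martingale identities for the increments of $\widehat W_m$ and for $\widehat W_{m,i}\widehat W_{m,j}-\delta_{ij}t$, together with the martingale form of the flow equation, are passed through the representation using equality of laws and uniform moment bounds; L\'evy's characterization then gives the Wiener property, and pathwise uniqueness of the smooth flow equation identifies $\phi$, and hence $\chi$. Your version is simply more detailed than the paper's sketch: you write out the It\^o--Stratonovich correction, the quadratic- and cross-variation identification, and the filtration and augmentation technicalities.
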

	
	\begin{proof}
		For the represented approximations, the martingale identities for
		increments of $W_m$, for $W_{m,i}W_{m,j}-\delta_{ij}t$, and the weak
		formulation of the stochastic-flow equation depend only on their joint
		laws. The almost sure convergence
		Lemma~\ref{lem:skorokhod_m} and uniform moment bounds allow
		passage to these identities. L\'evy's characterization then gives the
		Wiener property. Pathwise uniqueness for the smooth stochastic-flow
		equation identifies $\phi$ as the flow generated by $W$.
	\end{proof}

	\subsection{Passage to the limit as \texorpdfstring{$m\to\infty$}{m to infinity}}
	
	We first record convergence of the Galerkin projections. For
	\[
	\mathbf q\in C_c^\infty(\overline{\mathcal O}_\alpha;\mathbb R^3),
	\qquad
	\psi\in C^\infty(\Gamma),
	\]
	set
	\[
	\mathbf q_m:=P_m^f\mathbf q,
	\qquad
	\psi_m:=P_m^{st}\psi.
	\]
	By the construction of the Galerkin bases in
	Section~\ref{sec:approx},
	\begin{equation}
		\label{eq:m_projection_conv}
		\mathbf q_m\to\mathbf q
		\quad\text{strongly in }H^{s_f}(\mathcal O_\alpha),
		\qquad
		\psi_m\to\psi
		\quad\text{strongly in }H^2(\Gamma).
	\end{equation}
	
	\begin{proposition}[Limit $m\to\infty$]
		\label{thm:limit_m}
		The represented limit $(\varrho,\mathbf v,\zeta,\zeta^*,w,\phi,W)$ satisfies
		$\partial_t\zeta=w$, the stopping relation of Proposition~\ref{thm:limit_N}, and
		$\widehat{\mathbb P}(\tau^\zeta>0)=1$. The continuity identity
		\eqref{eq:N_limit_cont} and the coupled identity \eqref{eq:N_limit_coupled}
		remain valid with the limiting variables, now for arbitrary smooth tests
		\[
		\mathbf q\in C_c^\infty([0,T]\times\overline{\mathcal O}_\alpha;\mathbb R^3),
		\qquad \psi\in C_c^\infty([0,T]\times\Gamma).
		\]
		If $\mathcal E^{(m)}$ and $\mathcal D^{(m)}$ denote
		\eqref{eq:N_limit_energy_def}--\eqref{eq:N_limit_dissipation_def} with the
		limiting variables, then for every $p\ge1$,
		\begin{equation}
			\label{eq:m_limit_energy_moment}
			\widehat{\mathbb E}\!\left[
			\sup_{0\le t\le T}\mathcal E^{(m)}(t)^p+\mathcal D^{(m)}(T)^p
			\right]\le C_{p,T,\epsilon,l,\delta}.
		\end{equation}
	\end{proposition}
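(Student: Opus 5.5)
We pass to the limit term by term in the $m$-level identities \eqref{eq:N_limit_cont}--\eqref{eq:N_limit_coupled}, written for the projected tests $(\mathbf q_m,\psi_m)=(P_m^f\mathbf q,P_m^{st}\psi)$ on the represented space of Lemma~\ref{lem:skorokhod_m}. Everything is done on bounded-flow events $\{\Lambda_\phi\le L\}$, and $L\to\infty$ is released at the end. The test functions depend smoothly on time, so we apply the $m$-level identity to time-dependent Galerkin tests. This is justified because $\partial_t\mathbf q_m=P_m^f\partial_t\mathbf q$, and it produces the extra term $\int(\epsilon+[\varrho]_l)\mathbf v\cdot\partial_t\mathbf q$, which converges like the other bulk terms.

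\emph{Kinematics and stopping.} The identity $\zeta_m(t)=\zeta_0+\int_0^tw_m$ passes to the limit, since $\zeta_m\to\zeta$ in $C([0,T];H^s(\Gamma))$ and $w_m\to w$ in $L^2(0,T;L^2(\Gamma))$. The argument at the end of Proposition~\ref{thm:limit_N} then applies verbatim: uniform convergence in $H^s$, $s>3/2$, together with the margin \eqref{eq:initial_localization_margin}, gives $\zeta^*=\zeta$ on $[0,\tau^\zeta)$ and $\widehat{\mathbb P}(\tau^\zeta>0)=1$.

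\emph{Bulk terms.} By \eqref{eq:m_projection_conv} and $s_f>5/2$, the tests converge in $C^1$, so it suffices to show that the bulk integrands converge in $L^1$.
\begin{itemize}[label=]
\item The density converges strongly in $L^4$ and is bounded in $L^\infty_tL^\beta$. Since $\beta>\gamma$, the pressure $a\varrho_m^\gamma+\delta\varrho_m^\beta$ converges strongly in $L^1$ by Vitali's theorem. For $\delta\varrho^\beta$, use the $\epsilon$-gradient bound $\epsilon\delta\beta\varrho^{\beta-2}|\nabla^\phi\varrho|^2$ from $\mathcal D_m$; this gives $\varrho^{\beta/2}\in L^2H^1$, hence higher integrability than $L^1$.
\item Fixed $l>0$ gives $[\varrho_m]_l\to[\varrho]_l$ in $L^4_tC^r_x$. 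Together with $\mathbf v_m\to\mathbf v$ strongly in $L^2H^{1-\sigma}$ (hence in $L^2L^q$, $q<6$), this handles the convective term and the inertia term $(\epsilon+[\varrho]_l)\mathbf v\cdot\mathbf q$ pointwise in $t$. For the latter, use the weak continuity implied by \eqref{eq:m_v_time}.
\item The viscous and damping terms are linear and converge weakly.
\item In $\frac\epsilon2[\nabla^\phi\varrho]_l\cdot\nabla^\phi\mathbf v\cdot\mathbf q$, the mollified gradient converges strongly and $\nabla\mathbf v$ converges weakly.
\item The pullback coefficients converge in $C^{\kappa_0}_tC^2_x$, so $\nabla^\phi,\Div^\phi,\Delta^\chi_\Gamma$ converge uniformly.
\item In the bending term, $\zeta_m$ converges weak-$*$ in $L^\infty H^2$ and is paired with a strongly convergent $\Delta^\chi_\Gamma\psi_m$.
\end{itemize}

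\emph{Interface terms (main obstacle).} These involve the moving geometry. For the penalty term over $T^\delta_{\zeta_m^*}$, write it on a fixed domain by the vertical shift $z\mapsto z-1-\zeta_m^*(\bm y)$. The set indicators and the fiber-extended normals $\mathbf n^{\zeta_m^*,\phi}$ converge in $L^\infty_tC^0$, because $\zeta^*_m\to\zeta^*$ in $C_tH^s\subset C_tC^{1}$ and $\phi$ converges in $C^2$. Then $\mathbf v_m\to\mathbf v$ strongly in $L^2L^2$ and $w_m\to w$ in $L^2L^2$, with vertical extension, so the product converges.

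For the Navier friction term, pull back to $\Gamma$ via $\Phi_{\zeta_m^*}$. The trace $\mathbf v_m\circ\Phi_{\zeta_m^*}$ converges strongly in $L^2(0,T;L^2(\Gamma))$. To see this, use $H^{1-\sigma}\to H^{1/2-\sigma}$ traces on the Lipschitz graph, plus uniform-in-$m$ control of the change of graph, which follows from the $H^1$ bound and the estimate $\|f(\cdot,1+\xi_1)-f(\cdot,1+\xi_2)\|_{L^2}\lesssim\|\xi_1-\xi_2\|_{L^\infty}^{1/2}\|f\|_{H^1}$. The Jacobians $J^\phi_\Gamma$, the tangential projections and $\Phi$ converge uniformly, so the product of a strongly converging trace with converging coefficients converges. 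This is the step I expect to require the most care, since the trace domain moves with $m$.

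\emph{Energy bound.} The bound \eqref{eq:m_limit_energy_moment} follows from \eqref{eq:m_energy_moment} by weak lower semicontinuity of each convex term in $\mathcal E$ and $\mathcal D$, with the penalty and friction terms handled by the convergences above, followed by Fatou's lemma in $\omega$. Equality of laws transfers \eqref{eq:m_energy_moment} to the represented variables. Finally, density of $\{\mathbf q\}$ and $\{\psi\}$ in the stated test classes completes the proof.
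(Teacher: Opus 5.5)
Your route is the paper's: projected Galerkin tests $P_m^f\mathbf q$, $P_m^{st}\psi$, term-by-term passage using the represented convergences, $C_tH^s$ convergence plus the margin \eqref{eq:initial_localization_margin} for the stopping relation, and lower semicontinuity plus Fatou for \eqref{eq:m_limit_energy_moment}. In two places you are more explicit than the paper: the $\delta\varrho^\beta$ pressure via the $\epsilon$-weighted gradient bound, and the extension to time-dependent Galerkin tests.

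The interface step, however, rests on a false embedding. On the two-dimensional torus, $H^s(\Gamma)\hookrightarrow C^1(\Gamma)$ requires $s>2$. So for $s\in(3/2,2)$, and even under the uniform $L^\infty_tH^2$ bound, the graphs $\zeta_m^*,\zeta^*$ are not uniformly Lipschitz. This has three consequences:
\begin{enumerate}
\item The normals $\mathbf n^{\zeta_m^*,\phi}$, the tangential projections and $J_\Gamma^\phi[\zeta_m^*]$ do not converge uniformly.
\item There is no $H^{1-\sigma}\to H^{1/2-\sigma}$ trace theorem ``on the Lipschitz graph'' with constants uniform in $m$.
\item When you pull the friction integral back to $\Gamma$, you pick up the surface element $J_{\zeta_m^*}=\sqrt{1+|\nabla_\Gamma\zeta_m^*|^2}$, which you omit. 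It is not bounded in $L^\infty$ at all; this is precisely the caveat recorded after Corollary~\ref{app:cor_normal_penalty_trace}.
\end{enumerate}
As written, the claim that ``the Jacobians, the tangential projections and $\Phi$ converge uniformly'' fails. So does your energy-step appeal to ``the convergences above'' for the friction dissipation.

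The conclusion survives if integrability replaces uniform convergence, as in Lemma~\ref{app:lem_graph_compactness}, Lemma~\ref{app:lem_transformed_geometry} and Proposition~\ref{app:prop_moving_trace_convergence}.
\begin{itemize}
\item \emph{Geometry.} $\nabla_\Gamma\zeta_m^*\to\nabla_\Gamma\zeta^*$ in $C([0,T];L^2(\Gamma))$. Hence the normals, projections and $J_\Gamma^\phi$ are bounded by a fixed constant and converge in measure.
\item \emph{Penalty term.} After your vertical shift this suffices, because the mismatch $\mathbf v_m-w_m\mathbf e_z$ converges strongly in $L^2$.
\item \emph{Traces.} $\mathbf v_m\circ\Phi_{\zeta_m^*}\to\mathbf v\circ\Phi_{\zeta^*}$ in $L^2(0,T;L^2(\Gamma,\dd\bm y))$. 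Use the vertical-fiber bound $\|\Tr_\zeta u\|_{L^2(\Gamma)}\le C_\theta\|u\|_{H^\theta}$, $\theta>1/2$, which needs no Lipschitz regularity, together with your change-of-graph estimate (also a purely vertical argument).
\item \emph{Friction pairing.} The weight $\mathcal J^{\zeta_m^*,\phi_m}=J_{\zeta_m^*}J_\Gamma^{\phi_m}[\zeta_m^*]$ converges in $L^2((0,T)\times\Gamma)$. Cauchy--Schwarz in $(t,\bm y)$ then gives convergence of the pairing against the bounded, uniformly convergent test traces.
\item \emph{Friction dissipation in $\mathcal D^{(m)}$.} This term must be handled by Fatou along an a.e.\ convergent subsequence, not by strong convergence.
\end{itemize}
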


	\begin{proof}
		For smooth $(\mathbf q,\psi)$ take
		\[
		\mathbf q_m=P_m^f\mathbf q,
		\qquad
		\psi_m=P_m^{st}\psi.
		\]
		By \eqref{eq:m_projection_conv} and the represented compactness,
		\[
		\mathbf q_m\to\mathbf q
		\quad\text{in }H^{s_f}(\mathcal O_\alpha),
		\qquad
		\psi_m\to\psi
		\quad\text{in }H^2(\Gamma),
		\]
		\[
		\mathbf v_m\to\mathbf v
		\quad\text{in }L^2_tH^{1-\sigma}_x,
		\qquad
		[\varrho_m]_l\to[\varrho]_l
		\quad\text{in }L^4(0,T;C^r(\overline{\mathcal O}_\alpha))
		\quad\text{for every fixed }r\ge0,
		\]
		while
		\[
		\nabla^{\phi_m}\mathbf v_m
		\rightharpoonup
		\nabla^\phi\mathbf v
		\quad\text{in }L^2.
		\]
		Thus the continuity equation and the bulk terms in
		\eqref{eq:N_limit_coupled} pass to the limit.

		Let $\mathcal P_m^{\rm nor}$ and $\mathcal N_m^{\rm slip}$ denote the
		normal-penalty and Navier-friction forms. The graph and trace compactness
		give
		\[
		\mathcal P_m^{\rm nor}\to\mathcal P^{\rm nor},
		\qquad
		\mathcal N_m^{\rm slip}\to\mathcal N^{\rm slip},
		\]
		while the shell terms pass weakly in the energy spaces. Hence the coupled
		identity holds for arbitrary smooth tests.

		By lower semicontinuity and Fatou's lemma,
		\[
		\widehat{\mathbb E}
		\left[
		\sup_{t\le T}\mathcal E^{(m)}(t)^p
		+\mathcal D^{(m)}(T)^p
		\right]
		\le C_{p,T,\epsilon,l,\delta},
		\]
		which is \eqref{eq:m_limit_energy_moment}. The $C_tH^s$ convergence and
		\eqref{eq:initial_localization_margin} yield the stopping relation and
		$\widehat{\mathbb P}(\tau^\zeta>0)=1$.
	\end{proof}

	\medskip

\section{Artificial-viscosity limit \texorpdfstring{$\epsilon\to0$}{epsilon to 0}}
	\label{s7}
	
	This section removes the artificial density diffusion while keeping the
	spatial mollification and artificial pressure fixed. We let
	$\epsilon\to0$ with $l,\delta>0$ fixed and write
	\[
	p_\delta(z)=az^\gamma+\delta z^\beta,\qquad \beta>\max\{6,\gamma\}.
	\]
	The loss of parabolic density compactness is handled by the effective viscous
	flux.  Because the pullback coefficients are only H\"older continuous in time,
	we localize bounded-flow events and freeze the coefficients on short
	deterministic intervals; the required quantitative estimates are proved below
	in Subsection~\ref{app:effective_flux}.
	
	\subsection{Uniform estimates}
	
	The Galerkin-limit estimates of Section~\ref{s6} are uniform in
	$\epsilon$.
	
	\begin{lemma}[Uniform estimates independent of $\epsilon$]
		
		For every $p\ge1$ there exists
		$
		C=C(p,T,l,\delta,\alpha)
		$
		independent of $\epsilon$.  The quantities
		$\mathcal E_\epsilon$ and $\mathcal D_\epsilon$ are the energy and
		dissipation in \eqref{eq:N_limit_energy_def}--\eqref{eq:N_limit_dissipation_def}
		after the Galerkin limit, with $\epsilon$ still present.  Then
		\begin{align}
			& \sup_{\epsilon>0}
			\mathbb E_\epsilon
			\left[
			\sup_{0\le t\le T}
			\mathcal E_\epsilon(t)^p
			+
			\mathcal D_\epsilon(T)^p
			\right]
			\le C,
			\label{eq:eps_energy_moment} 
		\end{align}
	\end{lemma}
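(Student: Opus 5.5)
The bound \eqref{eq:eps_energy_moment} will come from the limiting energy inequality at the Galerkin level, rederived so that every constant is tracked and shown to be independent of $\epsilon$. The constant in \eqref{eq:m_limit_energy_moment} is written $C_{p,T,\epsilon,l,\delta}$. I expect the $\epsilon$-dependence there to be inherited only from bookkeeping, not from the structure of the estimate. So I will not simply cite \eqref{eq:m_limit_energy_moment}. Instead I start from the pathwise energy inequality \eqref{eq:N_limit_energy}, which survives the Galerkin limit by lower semicontinuity:
\[
\mathcal E_\epsilon(t)+\mathcal D_\epsilon(t)\le\mathcal E_\epsilon(0)+M^{\rm sh}_\epsilon(t)+\frac12\sum_{k=1}^K\int_0^t\mathcal R_k(\widehat\eta_\epsilon(s))\,\dd s .
\]
Here $\mathcal E_\epsilon$ is defined through the canonical lower-semicontinuous representative. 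The shell martingale is retained only through its moment bound, as announced at the start of Section~\ref{s6}.

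First, the initial energy is bounded independently of $\epsilon$. By the construction of $(\varrho_{0,\delta,\epsilon},\mathbf v_{0,\delta,\epsilon})$, the quantity $\mathcal E_\epsilon(0)$ converges to the $\delta$-level initial energy. The extra term $\tfrac{\epsilon}{2}\int|\mathbf v_{0,\delta,\epsilon}|^2$ is bounded for $\epsilon\le1$ since $\varrho_{0,\delta,\epsilon}\ge\epsilon$. The mollified kinetic term $\int[\varrho_{0,\delta,\epsilon}]_l|\mathbf v_{0,\delta,\epsilon}|^2$ is controlled at fixed $l$ by the same positivity-preserving diagonal choice. Second, I note that every dissipative term in $\mathcal D_\epsilon$ is nonnegative. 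This includes the $\epsilon$-weighted density dissipation, which only helps. Hence it may be kept on the left, and none of the right-hand side depends on $\epsilon$ except through the solution.

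Third, the noise terms. Lemma~\ref{lem:shell_energy_commutator} together with the area-preserving change of variables gives
\[
|\mathcal G_k(\widehat\eta_\epsilon)|+|\mathcal R_k(\widehat\eta_\epsilon)|\le c_k\Lambda_\phi^{C}\,\|\Delta^\chi_\Gamma\zeta_\epsilon\|_{L^2}^2\le C\Lambda_\phi^{C}\,\mathcal E_\epsilon .
\]
Here $\Lambda_\phi$ is the flow norm from \eqref{eq:N_flow_shift_rate}, and the harmless constant mode is handled on the torus. I raise the energy inequality to the power $p$ and take $\sup_{t\le\tau}$ for a stopping time $\tau$ localizing $\Lambda_\phi\le R$. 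Applying BDG to $M^{\rm sh}_\epsilon$ gives
\[
\mathbb E\sup_{s\le t}|M^{\rm sh}_\epsilon|^p\le C\,\mathbb E\Bigl(\int_0^t\mathcal E_\epsilon^2\Bigr)^{p/2}\le \tfrac12\mathbb E\sup_{s\le t}\mathcal E_\epsilon^p+C\int_0^t\mathbb E\sup_{r\le s}\mathcal E_\epsilon^p .
\]
Gronwall then yields the bound on the event $\{\Lambda_\phi\le R\}$. Finally, I remove the localization using the finite moments of $\Lambda_\phi$ of all orders. The constant may grow with $R$ in a polynomial or exponential way, so this last step will be done via Hölder's inequality with the Gaussian-type tail of the flow norm.

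I expect the main obstacle to be that the Galerkin-limit inequality is only known for the limit object, through lower semicontinuity, so BDG cannot be applied directly to a limit semimartingale identity. My plan is to perform the BDG–Gronwall argument at the $m$-level, where $M^{\rm sh}_m$ is an explicit Itô integral. Those bounds are uniform in $m$ and, by the same argument, in $\epsilon$. I then pass them through the Skorokhod representation via Fatou, checking at each level that no constant depended on $\epsilon$ beyond the initial data. A secondary point is the dependence of the flow-localization constant on $R$, which requires the integrable tail of $\Lambda_\phi$.
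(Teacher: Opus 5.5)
Your final plan is sound, and it is the argument that the paper's one-line proof presupposes rather than states. The paper simply cites \eqref{eq:m_limit_energy_moment}, whose constant is written $C_{p,T,\epsilon,l,\delta}$, so the citation by itself does not show $\epsilon$-independence. The implicit justification is that \eqref{eq:approx_energy_moment} is stated with the $\epsilon$-free constant $C_{p,T,\delta}(1+\mathcal E_N(0)^p)$, and that Fatou preserves it through the limits. You make this explicit. You run BDG--Gronwall after $N\to\infty$ at fixed $m$, where \eqref{eq:N_limit_energy} holds pathwise and $M^{\rm sh}$ is an identified It\^o integral (Lemma~\ref{lem:N_martingale_identification}). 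You then transport the moment bound through the $m$-representation by lower semicontinuity and Fatou. What this buys is an actual proof of uniformity. The Gronwall constant depends only on $p$, $T$, $K$ and $\max_k\|\mathcal Y_k\|_{W^{3,\infty}}$, and the only $\epsilon$-dependent input is $\mathcal E_\epsilon(0)$. The splitting remainder, whose bound in Lemma~\ref{lem:N_splitting_remainder} does depend on $\epsilon$, has already disappeared at that level.

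Three points need repair.

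\emph{First, the flow factor.} Drop $\Lambda_\phi^{C}$ and the flow localization entirely. By area preservation, $\|\Delta_\Gamma\widehat\eta\|_{L^2}=\|\Delta_\Gamma^\chi\zeta\|_{L^2}$ exactly. Since $C_k$ and $[C_k,A_k]$ have no zeroth-order part, on the flat torus (where $\|\nabla_\Gamma f\|_{L^2}+\|\nabla_\Gamma^2 f\|_{L^2}\le C\|\Delta_\Gamma f\|_{L^2}$) one has $|\mathcal G_k(f)|+|\mathcal R_k(f)|\le c_k\|\Delta_\Gamma f\|_{L^2}^2$. This is the flow-free bound of Lemma~\ref{lem:approx_bending_energy}. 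As you wrote it, the step would not close. Gronwall on $\{\Lambda_\phi\le R\}$ produces a constant exponential in a power of $R$. For $\Lambda_\phi$ only finiteness of polynomial moments is available; a Gaussian tail is not justified, since the Jacobian $\nabla\chi$ solves a linear SDE with multiplicative noise. H\"older cannot absorb such growth.

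\emph{Second, the opening claim.} Discard the claim that \eqref{eq:N_limit_energy} survives the Galerkin limit pathwise. The paper deliberately does not identify the shell martingale there. Moreover, $\int_0^t\mathcal R_k(\widehat\eta_m)\,\dd s$ is an indefinite quadratic form in second derivatives that converge only weakly-$*$ in $L^\infty_tH^2$, so lower semicontinuity does not reach the right-hand side. Your last paragraph already avoids this. In the BDG step, the absorption of $\frac12\mathbb E\sup\mathcal E^p$ should be justified either by the a priori finiteness in \eqref{eq:N_limit_energy_moment} (its $(m,\epsilon)$-dependent constant is harmless there) or by a stopping time in $\mathcal E$.

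\emph{Third, the initial data.} The mollified initial kinetic term $\int[\varrho_{0,\delta,\epsilon}]_l|\mathbf v_{0,\delta,\epsilon}|^2\,\dd\bm x$ is not controlled by the stated data construction. That construction only controls $\int\varrho_{0,\delta,\epsilon}|\mathbf v_{0,\delta,\epsilon}|^2\,\dd\bm x$. Since $\|[\varrho]_l\|_{L^\infty}\le C_l\|\varrho\|_{L^1}$, it suffices to require additionally $\sup_\epsilon\|\mathbf v_{0,\delta,\epsilon}\|_{L^2}<\infty$, e.g.\ by truncating $\mathbf v_{0,\delta}$. This is compatible with the other conditions on the data.
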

	
	\begin{proof}
		This is the estimate
		\eqref{eq:m_limit_energy_moment} obtained after the Galerkin limit,
		together with the definitions of the limiting energy and dissipation.
		Notice in particular that
		\eqref{eq:eps_energy_moment} controls only the normal mismatch. There is no
		estimate of the form
		$
		\delta^{-1}
		\int
		|\mathbf v_\epsilon-w_\epsilon\mathbf e_z|^2,
		$
		which would be incompatible with the Navier-slip model.
	\end{proof}
	
	Since
	$
	\partial_t\zeta_\epsilon=w_\epsilon,
	$
	we also have
	\begin{equation}
		\label{eq:eps_zeta_time}
		\sup_{\epsilon>0}
		\mathbb E_\epsilon
		\|\zeta_\epsilon\|_
		{W^{1,2}(0,T;H^1(\Gamma))}^{2p}
		\le C.
	\end{equation}

	\subsection{Higher integrability and frozen-coefficient preparation}
	\label{app:effective_flux}
	
	The pressure gain is needed in the compactness of the momentum and in the
	effective-viscous-flux argument below. Since the pullback coefficients are
	only H\"older continuous in time, we first localize on bounded-flow events
	and freeze the geometry on short deterministic intervals. This yields the
	uniform higher-integrability estimate before the tightness lemma is invoked.

	We first localize the stochastic flow and freeze the pullback operators on deterministic time intervals.

	Fix
	$
	0<\vartheta<\frac12
	$
	and set
	\begin{equation*}		\Lambda
		:=
		1+
		\|\phi\|_{C^\vartheta([0,T];C^2(\overline {\mathcal O_\alpha}))}
		+
		\|\phi^{-1}\|_{C^\vartheta([0,T];C^2(\overline {\mathcal O_\alpha}))}.
	\end{equation*}
	The stochastic-flow estimates give
	\begin{equation*}		\mathbb E[\Lambda^r]<\infty
		\qquad\text{for every }1\le r<\infty .
	\end{equation*}
	
	For $R\ge1$ let
	\[
	\Omega_R:=\{\Lambda\le R\}.
	\]
	Given $\xi\in(0,1)$, choose a deterministic partition
	\[
	0=t_0<t_1<\cdots<t_M=T
	\]
	fine enough that, on $\Omega_R$,
	\begin{equation}
		\label{app:eq_freezing_bound}
		\sup_{t\in[t_i,t_{i+1}]}
		\left(
		\|A_\phi(t)-A_\phi(t_i)\|_{C^1(\overline {\mathcal O_\alpha})}
		+
		\|\nabla\phi(t)-\nabla\phi(t_i)\|_{C^1(\overline {\mathcal O_\alpha})}
		\right)
		\le\xi ,
	\end{equation}
	where
	\[
	A_\phi(t,x)
	=
	\nabla\phi^{-1}(t,\phi(t,x)).
	\]
	The partition is deterministic for fixed $R,\xi$, so no anticipative
	random time is introduced.  At the end of the argument we let
	$\xi\to0$ and then $R\to\infty$.
	
	We first record the artificial-diffusion estimate used below.
	
	\begin{lemma}[Artificial-diffusion estimate]
		\label{app:lem_unweighted_diffusion}
		Let $\varrho_\epsilon$ solve
		\[
		\partial_t\varrho_\epsilon
		+
		\Div^\phi(\varrho_\epsilon\mathbf v_\epsilon)
		=
		\epsilon\Delta^\phi\varrho_\epsilon .
		\]
		Then, for every $R\ge1$,
		\begin{equation}
			\label{app:eq_unweighted_diffusion}
			\sup_{\epsilon>0}
			\mathbb E
			\left[
			\mathbf 1_{\Omega_R}
			\epsilon
			\int_0^T
			\|\nabla^\phi\varrho_\epsilon\|_{L^2({\mathcal O_\alpha})}^2
			\,\dd t
			\right]
			\le C_R .
		\end{equation}
		Consequently,
		\begin{equation}
			\label{app:eq_epsilon_grad_vanish}
			\epsilon\nabla^\phi\varrho_\epsilon
			\to0
			\qquad
			\text{in }L^2((0,T)\times {\mathcal O_\alpha})
			\text{ in probability}.
		\end{equation}
	\end{lemma}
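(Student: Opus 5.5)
The plan is to derive \eqref{app:eq_unweighted_diffusion} from an unweighted $L^2$ energy identity for the regularized continuity equation, rather than from the weighted density dissipation in $\mathcal D_\epsilon$. The weighted term $\epsilon\int(a\gamma\varrho^{\gamma-2}+\delta\beta\varrho^{\beta-2})|\nabla^\phi\varrho|^2$ degenerates where $\varrho$ is small when $\gamma<2$, so we cannot read off the unweighted gradient directly. We therefore test the equation with $\varrho_\epsilon$ itself, which is admissible at fixed $\epsilon$ since the density is parabolic and sufficiently smooth. Because $\phi$ is volume preserving, $\int f\,\Div^\phi\bm g\,\dd\bm x=-\int\nabla^\phi f\cdot\bm g\,\dd\bm x$ holds on $\mathcal O_\alpha$. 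The boundary term vanishes by the transformed Neumann condition together with the no-flux structure of the transported fields, since $\mathbf Q_k$ has zero vertical component. This gives
\[
\frac12\frac{\dd}{\dd t}\|\varrho_\epsilon\|_{L^2}^2+\epsilon\|\nabla^\phi\varrho_\epsilon\|_{L^2}^2
=-\frac12\int_{\mathcal O_\alpha}\varrho_\epsilon^2\,\Div^\phi\mathbf v_\epsilon\,\dd\bm x .
\]

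To bound the right-hand side, we use $\beta>6$. By H\"older's inequality it is at most $\|\varrho_\epsilon\|_{L^4}^2\|\Div^\phi\mathbf v_\epsilon\|_{L^2}\le C\|\varrho_\epsilon\|_{L^\beta}^2\|\nabla^\phi\mathbf v_\epsilon\|_{L^2}$. Integrating in time yields
\[
\epsilon\int_0^T\|\nabla^\phi\varrho_\epsilon\|_2^2\,\dd t\le \tfrac12\|\varrho_{0,\delta,\epsilon}\|_2^2+C\sup_t\|\varrho_\epsilon\|_{L^\beta}^2\,T^{1/2}\Big(\int_0^T\|\nabla^\phi\mathbf v_\epsilon\|_2^2\Big)^{1/2}.
\]
The initial term is bounded uniformly in $\epsilon$ by $\|\varrho_{0,\delta,\epsilon}\|_{L^\beta}$, which converges. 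The remaining factors are controlled by $\sup_t\mathcal E_\epsilon^{2/\beta}$ through the artificial pressure with $\delta$ fixed, and by $\mathcal D_\epsilon^{1/2}$ through the coercivity of $\mathbb S_\kappa^{\zeta^*}$ and Korn's inequality. On $\Omega_R$ the constants in Korn's inequality and in the comparison between $\nabla^\phi$ and $\nabla$ depend only on $R$. Taking expectations with the indicator $\mathbf 1_{\Omega_R}$ and applying Cauchy--Schwarz together with \eqref{eq:eps_energy_moment} gives \eqref{app:eq_unweighted_diffusion}. In fact the bound holds without localization, but $C_R$ is all that is needed.

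For \eqref{app:eq_epsilon_grad_vanish}, observe that $\|\epsilon\nabla^\phi\varrho_\epsilon\|_{L^2_{t,x}}^2=\epsilon\cdot\epsilon\|\nabla^\phi\varrho_\epsilon\|^2_{L^2_{t,x}}$. Markov's inequality then gives
\[
\mathbb P\big(\|\epsilon\nabla^\phi\varrho_\epsilon\|_{L^2}>\eta\big)\le \mathbb P(\Omega_R^c)+\epsilon C_R\eta^{-2},
\]
and we let first $\epsilon\to0$ and then $R\to\infty$, using $\mathbb P(\Omega_R^c)\le R^{-1}\mathbb E\Lambda\to0$.

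The main obstacle is rigorously justifying the testing by $\varrho_\epsilon$ at the level reached after the Galerkin limit, where $\varrho_\epsilon$ is only a weak solution and the $C^{2+\nu}$ regularity has been lost. The first approach is to repeat the identity at the $m$-level, where it is classical, and pass to the limit $m\to\infty$ by lower semicontinuity of the dissipation. An alternative is to use the renormalized/parabolic $L^2_tH^1_x$ regularity available at fixed $\epsilon$, with the boundary terms handled through the Neumann condition.
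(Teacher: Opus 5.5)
Your proposal is correct and is essentially the paper's argument: you test the regularized continuity equation with $\varrho_\epsilon$, bound $\frac12\int\varrho_\epsilon^2\Div^\phi\mathbf v_\epsilon$ via H\"older, $L^\beta\subset L^4$ and the energy bounds, and obtain the convergence in probability from the extra factor $\epsilon$; your further points go beyond the paper's sketch, namely that the localization on $\Omega_R$ is actually unnecessary (everything is invariant under the volume-preserving pullback) and that the testing must be justified at the Galerkin level. Two side remarks need correcting: the convective boundary term vanishes because $\mathbf v_\epsilon$ has zero trace on $\partial\mathcal O_\alpha$ (the Galerkin basis lies in $H_0^{s_f}$), not because of the structure of $\mathbf Q_k$, and the weight $a\gamma z^{\gamma-2}+\delta\beta z^{\beta-2}$ is bounded below by a positive constant when $\gamma\le2$, so it degenerates at vacuum only when $\gamma>2$.
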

	
	\begin{proof}
		Testing the continuity equation by $\varrho_\epsilon$ and using volume
		preservation gives
		\[
		\frac12\frac{\dd}{\dd t}
		\|\varrho_\epsilon\|_{L^2({\mathcal O_\alpha})}^2
		+
		\epsilon
		\|\nabla^\phi\varrho_\epsilon\|_{L^2({\mathcal O_\alpha})}^2
		=
		-\frac12
		\int_{\mathcal O_\alpha}
		\varrho_\epsilon^2
		\Div^\phi\mathbf v_\epsilon\,\dd x .
		\]
		On $\Omega_R$, the pullback norms are uniformly equivalent to the
		standard ones.  Since $\beta>4$, the right-hand side is controlled by
		the energy bounds, which yields
		\eqref{app:eq_unweighted_diffusion}.  The convergence
		\eqref{app:eq_epsilon_grad_vanish} follows immediately.
	\end{proof}
	
	On $I_i=(t_i,t_{i+1})$ write
	$
	\phi_i:=\phi(t_i).
	$
	Let
	\[
	\mathcal B:
	L_0^q({\mathcal O_\alpha})\to W_0^{1,q}({\mathcal O_\alpha};\mathbb R^3)
	\]
	be a classical Bogovski\u{\i} operator and define the frozen transformed
	operator
	\begin{equation*}		\mathcal B_i^\phi[f]
		:=
		\left[
		\mathcal B(f\circ\phi_i^{-1})
		\right]\circ\phi_i .
	\end{equation*}
	By volume preservation and the classical Bogovski\u{\i} estimates,
	\begin{equation*}		\Div^{\phi_i}\mathcal B_i^\phi[f]=f,
	\end{equation*}
	and
	\begin{equation*}		\|\mathcal B_i^\phi[f]\|_{W^{1,q}({\mathcal O_\alpha})}
		\le C(R,q)\|f\|_{L^q({\mathcal O_\alpha})}
		\qquad
		\text{on }\Omega_R .
	\end{equation*}
	We also use the corresponding negative-norm estimate
	\begin{equation}
		\label{app:eq_Bi_negative}
		\left\|
		\mathcal B_i^\phi
		\bigl[\Div^{\phi_i}\mathbf F\bigr]
		\right\|_{L^q({\mathcal O_\alpha})}
		\le
		C(R,q)\|\mathbf F\|_{L^q({\mathcal O_\alpha})}.
	\end{equation}
	
	Density-dependent Bogovski\u{\i} tests are regularized in time before
	being inserted into the weak momentum equation.  Since
	$\mathcal B_i^\phi$ is frozen on $I_i$, the continuity equation together
	with \eqref{app:eq_Bi_negative} gives the required time-derivative
	bounds.  The only additional term is produced by
	$\Div^\phi-\Div^{\phi_i}$, and is $O_R(\xi)$ by
	\eqref{app:eq_freezing_bound}.

	We next derive the pressure gain uniformly in $\epsilon$ at fixed $\delta$.

	\begin{proposition}[Higher integrability at fixed $\delta$]
		\label{app:prop_higher_integrability}
		There exists $C_\delta<\infty$, independent of $\epsilon$, such that
		\begin{equation}
			\label{app:eq_higher_integrability}
			\sup_{\epsilon>0}
			\mathbb E
			\int_0^T\!\!\int_{\mathcal O_\alpha}
			\left(
			a\varrho_\epsilon^{\gamma+1}
			+
			\delta\varrho_\epsilon^{\beta+1}
			\right)
			\,\dd x\,\dd t
			\le
			C_\delta .
		\end{equation}
	\end{proposition}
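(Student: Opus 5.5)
We test the coupled momentum identity \eqref{eq:N_limit_coupled} with a frozen Bogovski\u{\i} test built from the density itself, which is the classical route to the pressure gain. We work on the bounded-flow event $\Omega_R$ and on each deterministic interval $I_i$ of the partition satisfying \eqref{app:eq_freezing_bound}. Set
\[
\mathbf q_i:=\chi_i(t)\,\mathcal B_i^\phi\Bigl[\varrho_\epsilon-\frac{1}{|\mathcal O_\alpha|}\int_{\mathcal O_\alpha}\varrho_\epsilon\,\dd x\Bigr],\qquad \psi=0,
\]
with $\chi_i$ a smooth cut-off supported in $I_i$. The test is regularized in time, as indicated before the proposition, so that it is admissible. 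Since $\mathbf q_i\in W^{1,q}_0(\mathcal O_\alpha)$ vanishes on $\partial\mathcal O_\alpha$, the test is legitimate, because the identity holds on the fixed cylinder. The resulting term $\int p_\delta(\varrho_\epsilon)\Div^\phi\mathbf q_i$ equals $\int p_\delta(\varrho_\epsilon)(\varrho_\epsilon-\bar\varrho_\epsilon)$ up to the freezing defect $\int p_\delta(\varrho_\epsilon)(\Div^\phi-\Div^{\phi_i})\mathbf q_i$. This defect is bounded by $C_R\xi\int p_\delta(\varrho_\epsilon)|\nabla\mathbf q_i|$.

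Next we estimate every other term using \eqref{eq:eps_energy_moment}, in which $\varrho_\epsilon\in L^\infty_tL^\beta_x$ with $\beta>6$.
\begin{enumerate}
\item Viscous term: bounded by $\|\nabla\mathbf v_\epsilon\|_{L^2}\|\varrho_\epsilon\|_{L^2}$.
\item Convective term: $\|[\varrho_\epsilon]_l|\mathbf v_\epsilon|^2\|_{L^{\beta'}}$ is controlled, since $\mathbf v_\epsilon\in L^2_tL^6_x$ and $[\varrho_\epsilon]_l$ is bounded for fixed $l$. It multiplies $\|\nabla\mathbf q_i\|_{L^\beta}\lesssim\|\varrho_\epsilon\|_{L^\beta}$.
\item Time-derivative term: it becomes $\int(\epsilon+[\varrho_\epsilon]_l)\mathbf v_\epsilon\cdot\partial_t\mathbf q_i$. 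Since $\mathcal B_i^\phi$ is frozen on $I_i$, we write $\partial_t\mathbf q_i=-\mathcal B_i^\phi[\Div^\phi(\varrho_\epsilon\mathbf v_\epsilon)-\epsilon\Delta^\phi\varrho_\epsilon]$ plus the mean-value term. Rewriting $\Div^\phi=\Div^{\phi_i}+(\Div^\phi-\Div^{\phi_i})$, the negative-norm estimate \eqref{app:eq_Bi_negative} bounds the main part by $\|\varrho_\epsilon\mathbf v_\epsilon\|_{L^q}+\epsilon\|\nabla^\phi\varrho_\epsilon\|_{L^2}$. Lemma \ref{app:lem_unweighted_diffusion} controls the latter.
\item Endpoint terms from $\chi_i$: bounded by the energy.
\item Interface terms: the normal penalty in $T^\delta$ and the Navier friction are bounded, at fixed $\delta$, by the dissipation $\mathcal D_\epsilon$ together with a trace estimate for $\mathbf q_i\in W^{1,\beta}$.
\item Term $\frac\epsilon2[\nabla^\phi\varrho_\epsilon]_l\cdot\nabla^\phi\mathbf v_\epsilon\cdot\mathbf q_i$: handled by the same diffusion bound.
\end{enumerate}
All of these bounds are polynomial in energy quantities whose moments are controlled.

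The main obstacle is the freezing defect, together with the commutator $(\Div^\phi-\Div^{\phi_i})$ inside the time derivative. Both are of size $\xi$ but multiply the very quantity $\int p_\delta(\varrho_\epsilon)\varrho_\epsilon$ that we are trying to bound. The first step is to absorb them: using Hölder, $\int p_\delta(\varrho)|\nabla\mathbf q_i|\le \|p_\delta(\varrho)\|_{L^{(\beta+1)/\beta}}\|\varrho\|_{L^{\beta+1}}\lesssim\int(\varrho^{\beta+1}+1)$. Choosing $\xi=\xi(R,\delta)$ small then absorbs $C_R\xi\int\delta\varrho^{\beta+1}$ into the left-hand side. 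The defect in the time-derivative term is handled the same way, with the bound $\xi\|\varrho\mathbf v\|$.

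Summing over the finitely many intervals, with $M=M(R,\xi)$ intervals and overlapping cut-offs covering $[0,T]$, gives
\[
\mathbb E\Bigl[\mathbf 1_{\Omega_R}\int_0^T\!\!\int(a\varrho_\epsilon^{\gamma+1}+\delta\varrho_\epsilon^{\beta+1})\Bigr]\le C_{R,\delta}.
\]
To remove the localization we do not let $R\to\infty$ naively. Instead we track the polynomial $R$-dependence of all constants: the Bogovski\u{\i} constants, $M\sim (R/\xi)^{1/\vartheta}$, and $\xi$ itself, which depends on $R$ polynomially. This gives a pathwise bound of the form $\int\!\!\int\varrho^{\beta+1}\le C\Lambda^{k}(1+\text{energy quantities})^{k}$ for some fixed $k$. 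Cauchy--Schwarz in $\Omega$, combined with $\mathbb E\Lambda^r<\infty$ and \eqref{eq:eps_energy_moment} for large $p$, then yields \eqref{app:eq_higher_integrability} uniformly in $\epsilon$.
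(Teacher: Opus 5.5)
Your proposal follows essentially the same route as the paper. Both arguments test with the frozen Bogovski\u{\i} field of $\varrho_\epsilon-\langle\varrho_\epsilon\rangle$ on deterministic subintervals on $\Omega_R$, bound the remaining terms by the energy, absorb the $O(C_R\xi)$ freezing defect into the pressure integral, sum over the partition, and remove the localization through the polynomial $R$-dependence and the moments of $\Lambda$. Your pathwise bound followed by Cauchy--Schwarz is an explicit version of the paper's dyadic decomposition.
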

	
	\begin{proof}
		Fix $R\ge1$ and a frozen interval $I_i=(t_i,t_{i+1})$. Set
		\[
		g_\epsilon:=\varrho_\epsilon-
		\langle\varrho_\epsilon\rangle_{\mathcal O_\alpha},
		\qquad
		\mathbf q_{\epsilon,i}^\tau
		:=\chi_i\mathcal B_i^\phi[g_\epsilon^\tau],
		\]
		where $g_\epsilon^\tau$ is a backward time regularization. By
		\eqref{app:eq_Bi_negative},
		\[
		\sup_{\tau>0}
		\|\mathbf q_{\epsilon,i}^\tau\|_{L^q(I_i;W^{1,q})}
		\le C_R\|g_\epsilon\|_{L^q(I_i\times\mathcal O_\alpha)}.
		\]
		The pressure contribution is
		\begin{align}
		\int_{I_i}\!\!\int_{\mathcal O_\alpha}
		\chi_i p_\delta(\varrho_\epsilon)g_\epsilon\,\dd x\,\dd t
		&=
		\int_{I_i}\!\!\int_{\mathcal O_\alpha}
		\chi_i\left(a\varrho_\epsilon^{\gamma+1}
		+\delta\varrho_\epsilon^{\beta+1}\right)\,\dd x\,\dd t
		\notag\\
		&-
		\int_{I_i}\chi_i
		\langle\varrho_\epsilon\rangle_{\mathcal O_\alpha}
		\left(\int_{\mathcal O_\alpha}p_\delta(\varrho_\epsilon)\,\dd x\right)\dd t .
		\label{app:eq_higher_pressure_split}
		\end{align}
		The last term is bounded by $C_{R,\delta}$. Let
		$\mathcal R_{\epsilon,i}^\tau$ be the sum of the convective, viscous,
		interface, time-derivative, artificial-diffusion, and frozen-coefficient
		remainders. The energy and trace bounds,
		\eqref{app:eq_Bi_negative}, Lemma~\ref{app:lem_unweighted_diffusion}, and
		\eqref{app:eq_freezing_bound} give
		\begin{align}
		\mathbb E\!\left[
		\mathbf1_{\Omega_R}|\mathcal R_{\epsilon,i}^\tau|\right]
		&\le C_{R,\delta}
		+C_R\xi\,
		\mathbb E\!\left[
		\mathbf1_{\Omega_R}
		\int_{I_i}\!\!\int_{\mathcal O_\alpha}
		\left(a\varrho_\epsilon^{\gamma+1}
		+\delta\varrho_\epsilon^{\beta+1}\right)\,\dd x\,\dd t
		\right]
		+o_\tau(1).
		\label{app:eq_higher_remainder_bound}
		\end{align}
		Letting $\tau\to0$ and summing over the deterministic partition yield
		\begin{align}
		&\mathbb E\!\left[
		\mathbf1_{\Omega_R}
		\int_0^T\!\!\int_{\mathcal O_\alpha}
		\left(a\varrho_\epsilon^{\gamma+1}
		+\delta\varrho_\epsilon^{\beta+1}\right)\,\dd x\,\dd t
		\right]
		\notag\\
		&\qquad\le C_{R,\delta}
		+C_R\xi\,
		\mathbb E\!\left[
		\mathbf1_{\Omega_R}
		\int_0^T\!\!\int_{\mathcal O_\alpha}
		\left(a\varrho_\epsilon^{\gamma+1}
		+\delta\varrho_\epsilon^{\beta+1}\right)\,\dd x\,\dd t
		\right].
		\label{app:eq_higher_absorption}
		\end{align}
		Choose $\xi$ so that $C_R\xi<1/2$ and absorb the last term. Since the
		frozen constants grow at most polynomially in $R$ and
		$\mathbb E[\Lambda^q]<\infty$ for every finite $q$, a dyadic decomposition
		removes the localization and gives \eqref{app:eq_higher_integrability}.
	\end{proof}

	We next establish tightness and apply the Jakubowski--Skorokhod representation.
		The estimate \eqref{app:eq_higher_integrability}, together with the energy
	bounds and the equations, supplies the integrability used in the momentum
	time-regularity estimate. We use the path space
	
	\[
	\mathcal Z
	=
	\mathcal Z_\varrho
	\times
	\mathcal Z_{\mathbf v}
	\times
	\mathcal Z_{\mathbf m}
	\times
	\mathcal Z_\zeta
	\times
	\mathcal Z_{\zeta^*}
	\times
	\mathcal Z_w
	\times
	\mathcal Z_\phi
	\times
	\mathcal Z_W,
	\]
	where
	\begin{align*}        & \mathcal Z_\varrho
		:=
		C_w([0,T];L^\beta(\mathcal O_\alpha)), \;\; \mathcal Z_{\mathbf v}
		:=
		\bigl(
		L^2(0,T;H^1(\mathcal O_\alpha;\mathbb R^3)),w
		\bigr), \\
		& \mathcal Z_{\mathbf m}
		:=
		C_w([0,T];W^{-r,2}(\mathcal O_\alpha;\mathbb R^3)),
		\; r>\frac32, \\
		& \mathcal Z_\zeta
		:=
		C([0,T];H^s(\Gamma))
		\cap
		\bigl(
		L^\infty(0,T;H^2(\Gamma)),w^*
		\bigr), \\
		& \mathcal Z_{\zeta^*}
		:=
		C([0,T];H^s(\Gamma))
		\cap
		\bigl(
		L^\infty(0,T;H^2(\Gamma)),w^*
		\bigr), \\
		& \mathcal Z_w
		:=
		\bigl(
		L^2(0,T;H^1(\Gamma)),w
		\bigr)
		\cap
		L^2(0,T;L^2(\Gamma)), \\
		& \mathcal Z_\phi
		:=
		C^{\vartheta_0}
		([0,T];C^2(\overline{\mathcal O}_\alpha;
		\overline{\mathcal O}_\alpha)),
		\;
		0<\vartheta_0<\vartheta,\;\;\mathcal Z_W
		:=
		C([0,T];\mathbb R^K). 
	\end{align*}
	
	For the momentum component we take
	\[
	\mathbf m_\epsilon:=\varrho_\epsilon\mathbf v_\epsilon.
	\]
	The continuity equation gives
	\[
	\partial_t\varrho_\epsilon
	=
	-\Div^\phi\mathbf m_\epsilon
	+
	\epsilon\Delta^\phi\varrho_\epsilon,
	\]
	and the energy bounds give tightness of $\mathbf m_\epsilon$ in a
	sufficiently negative path space.
	
	\begin{lemma}[Tightness]
		
		The joint laws of
		$
		(
		\varrho_\epsilon,
		\mathbf v_\epsilon,
		\mathbf m_\epsilon,
		\zeta_\epsilon,
		\zeta_\epsilon^*,
		w_\epsilon,
		\phi,
		W
		)
		$
		are tight on $\mathcal Z$.
	\end{lemma}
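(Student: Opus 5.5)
The plan is to prove tightness componentwise and then combine: a product of tight families is tight, and on each factor it suffices to exhibit sets that are relatively compact in the corresponding (possibly non-metrizable, Jakubowski-type) topology and carry probability at least $1-\eta$ uniformly in $\epsilon$. The only inputs are the uniform energy moments \eqref{eq:eps_energy_moment}, the time bound \eqref{eq:eps_zeta_time}, the higher-integrability estimate of Proposition~\ref{app:prop_higher_integrability}, and Chebyshev's inequality.

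\textbf{Easy components.} The geometric and stochastic components are handled first.
\begin{enumerate}[label=(\roman*)]
\item For $\zeta_\epsilon$ and $\zeta_\epsilon^*$, we combine \eqref{eq:eps_energy_moment} with \eqref{eq:eps_zeta_time} and the compact embedding $L^\infty(0,T;H^2)\cap W^{1,2}(0,T;H^1)\subset C([0,T];H^s)$ for $s<2$. Banach--Alaoglu supplies the weak-$*$ part. Stopping does not increase either norm.
\item For $\phi$ and $W$, the laws do not depend on $\epsilon$. The $C^\vartheta_tC^3_x$ moments of the flow give compactness in $C^{\vartheta_0}_tC^2_x$, and the Wiener factor is trivially tight.
\item For $\mathbf v_\epsilon$, the uniform $L^2(0,T;H^1)$ bound (from the coercivity of $\mathbb S_\kappa^{\zeta^*_\epsilon}$, Korn's inequality and the uniform equivalence of pullback norms on $\Omega_R$) gives balls that are compact in the weak topology.
\end{enumerate}

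\textbf{Density.} For $\varrho_\epsilon$ in $C_w([0,T];L^\beta)$, the energy gives $\sup_t\|\varrho_\epsilon\|_{L^\beta}$ bounds with uniform moments. For equicontinuity in a negative norm, I would test the continuity equation with $\varphi\in W^{1,\infty}$ and bound the increment over $[t,t+h]$:
\[
\Big|\int(\varrho_\epsilon(t+h)-\varrho_\epsilon(t))\varphi\Big|\le \int_t^{t+h}\big(\|\varrho_\epsilon\mathbf v_\epsilon\|_{L^1}+\epsilon\|\nabla^\phi\varrho_\epsilon\|_{L^1}\big)\|\nabla^\phi\varphi\|_{\infty}.
\]
The first term is controlled by $\varrho\in L^\infty L^\beta$ and $\mathbf v\in L^2L^6$, giving $h^{1/2}$. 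The second is controlled by Lemma~\ref{app:lem_unweighted_diffusion} via $\epsilon^{1/2}(\epsilon\int\|\nabla^\phi\varrho\|^2)^{1/2}h^{1/2}$. This argument is carried out on $\Omega_R$, and since $\mathbb P(\Omega_R^c)\to0$, the usual compactness criterion in $C_w$ (a uniform $L^\beta$ bound plus equicontinuity in $W^{-1,\infty}$, or in a weaker space on a dense set of tests) gives tightness.

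\textbf{Momentum, the main obstacle.} The hardest part is the momentum $\mathbf m_\epsilon$ in $C_w([0,T];W^{-r,2})$. Here the coupled identity \eqref{eq:N_limit_coupled} involves $(\epsilon+[\varrho_\epsilon]_l)\mathbf v_\epsilon$ rather than $\varrho_\epsilon\mathbf v_\epsilon$, and it is tested only jointly with $\psi$. I would decompose
\[
\mathbf m_\epsilon=(\epsilon+[\varrho_\epsilon]_l)\mathbf v_\epsilon-\epsilon\mathbf v_\epsilon+(\varrho_\epsilon-[\varrho_\epsilon]_l)\mathbf v_\epsilon.
\]
\begin{enumerate}[label=(\roman*)]
\item For the first piece, test with $(\mathbf q,0)$ where $\mathbf q\in W^{r,2}_0$ (so $\mathbf q\in W^{1,\infty}$ since $r>5/2$ may be chosen). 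This yields equicontinuity in $W^{-r,2}$ with modulus $h^{1/2}$, after bounding the convective term by $L^\infty L^{\beta}\times L^2L^6\times L^2L^6$, the pressure by Proposition~\ref{app:prop_higher_integrability} (time integrability $L^{(\beta+1)/\beta}$ gives modulus $h^{1/(\beta+1)}$), the viscous and $\epsilon$-gradient terms by the energy, and the penalty and friction terms by the dissipation and trace bounds at fixed $\delta$.
\item The second piece tends to zero in $L^2(0,T;L^2)$.
\item The third piece is the delicate one. At fixed $l$ it is not small, so instead I would add the pair $(\varrho_\epsilon,\mathbf v_\epsilon)$-dependent piece separately, or simply choose the momentum variable as $(\epsilon+[\varrho_\epsilon]_l)\mathbf v_\epsilon$ if the paper's $\mathbf m$ is interpreted that way. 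If $\varrho_\epsilon\mathbf v_\epsilon$ itself is required, uniform $C_w$ bounds still hold in $L^{2\beta/(\beta+1)}$. Time-equicontinuity then follows from the above, together with the $C_w$ equicontinuity of $\varrho_\epsilon$ and the weak time continuity of $\mathbf v_\epsilon$ inherited from that of $(\epsilon+[\varrho_\epsilon]_l)\mathbf v_\epsilon$ divided by the positive weight. This is the step I expect to need care.
\end{enumerate}

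Finally, I would conclude by Arzelà--Ascoli in $C_w$ and Chebyshev on each factor.
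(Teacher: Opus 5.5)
Your componentwise plan is the paper's, and your treatment of $\zeta_\epsilon,\zeta_\epsilon^*,\phi,W,\mathbf v_\epsilon$ and $\varrho_\epsilon$ matches it, but two points do not close.

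\textbf{The component $w_\epsilon$ is missing.} You never treat $w_\epsilon$. The factor $\mathcal Z_w$ carries the strong $L^2(0,T;L^2(\Gamma))$ topology, while \eqref{eq:eps_energy_moment} gives only the weak $L^2(0,T;H^1(\Gamma))$ part. As the paper indicates (``the shell equation and \eqref{eq:eps_energy_moment} give the time compactness needed for $w_\epsilon$''), you have to do the following:
\begin{enumerate}
\item Test the coupled identity with $(0,\psi)$, which is admissible at the penalty level.
\item Bound the bending and damping terms by the energy, and the normal-penalty and friction terms, at fixed $\delta$, by the dissipation and the trace theorem.
\item Conclude that $\partial_tw_\epsilon$ is bounded in $L^2(0,T;H^{-2}(\Gamma))$, and apply Aubin--Lions--Simon as in \eqref{eq:m_w_time}--\eqref{eq:m_w_compact}.
\end{enumerate}

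\textbf{The momentum.} Your first piece is what the paper invokes (``the time regularity obtained from the momentum equation''), up to two repairs.
\begin{enumerate}
\item The bound $L^\infty L^\beta\times L^2L^6\times L^2L^6$ makes the convective integrand only $L^1$ in time, so it yields no modulus of continuity. Use instead $\bigl|\int[\varrho_\epsilon]_l\mathbf v_\epsilon\otimes\mathbf v_\epsilon:\nabla^\phi\mathbf q\,\dd\bm x\bigr|\le2\|\nabla^\phi\mathbf q\|_{L^\infty}\mathcal E_\epsilon(t)$, which is bounded uniformly in time.
\item Convergence of $\epsilon\mathbf v_\epsilon$ in $L^2_tL^2_x$ is the wrong topology for $C_w$. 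The energy does give what you need: $\sup_t\|\epsilon\mathbf v_\epsilon(t)\|_{L^2}\le(2\epsilon\sup_t\mathcal E_\epsilon(t))^{1/2}$.
\end{enumerate}

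The third piece, however, fails, for three reasons.
\begin{enumerate}
\item Your uniform $L^\infty_tL^{2\beta/(\beta+1)}$ bound on $\varrho_\epsilon\mathbf v_\epsilon$ requires $\sqrt{\varrho_\epsilon}\mathbf v_\epsilon\in L^\infty_tL^2$. At fixed $l$ the energy controls only $\int(\epsilon+[\varrho_\epsilon]_l)|\mathbf v_\epsilon|^2$, and there is no pointwise comparison between $\varrho_\epsilon$ and $[\varrho_\epsilon]_l$. The available bounds put $\varrho_\epsilon\mathbf v_\epsilon$ only in $L^2_tL^{6\beta/(\beta+6)}_x$. Consistently, in the $l$-limit the paper keeps $\mathbf j_l=\varrho_l\mathbf v_l$ only in weak $L^2_tL^{s_\beta}_x$.
\item Recovering time continuity of $\mathbf v_\epsilon$ by dividing by $\epsilon+[\varrho_\epsilon]_l$ produces constants that blow up as $\epsilon\to0$, since this weight is bounded below only by $\epsilon$.
\item Even uniform weak equicontinuity of both $\varrho_\epsilon$ and $\mathbf v_\epsilon$ would not give weak equicontinuity of their product unless one factor were strongly equicontinuous.
\end{enumerate}

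You were right to sense a discrepancy: the momentum equation that the paper's sketch relies on controls only $(\epsilon+[\varrho_\epsilon]_l)\mathbf v_\epsilon$. The consistent resolution is to take that quantity as the momentum component. Its limit is the $[\varrho]_l\mathbf v$ appearing in Proposition~\ref{thm:limit_eps}, and your first piece suffices for it. The limit of $\varrho_\epsilon\mathbf v_\epsilon$ is then identified only in $\mathcal D'$: use $\varrho_\epsilon\to\varrho$ in $C_w([0,T];L^\beta)$, hence in $C([0,T];H^{-1})$, against $\mathbf v_\epsilon\rightharpoonup\mathbf v$ in $L^2(0,T;H^1)$. Leaving the choice between the two readings open, as your proposal does, does not prove the lemma for $\mathbf m_\epsilon=\varrho_\epsilon\mathbf v_\epsilon$.
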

	
	\begin{proof}
		The energy bound \eqref{eq:eps_energy_moment} and the continuity equation give tightness in $\mathcal Z_\varrho$. The velocity is tight in the weak
		$L^2_tH^1_x$ topology by \eqref{eq:eps_energy_moment}. The momentum estimate, the pressure gain \eqref{app:eq_higher_integrability},
		and the time regularity obtained from the momentum equation give tightness
		in $\mathcal Z_{\mathbf m}$.
		
		For $\zeta_\epsilon$ and $\zeta_\epsilon^*$ we use
		\eqref{eq:eps_energy_moment}, \eqref{eq:eps_zeta_time}, and the compact
		embedding into $C_tH^s_y$ for $s<2$. The shell equation and
		\eqref{eq:eps_energy_moment} give the time compactness needed for $w_\epsilon$.
		The stochastic flow is tight in
		$C^{\vartheta_0}_tC^2_x$ by the stronger
		$C^\vartheta_tC^3_x$ moment estimates, and tightness of $W$ is standard.
	\end{proof}
	
	Define the joint variable
	\[
	\mathbf U_\epsilon
	:=
	(\varrho_\epsilon,\mathbf v_\epsilon,\mathbf m_\epsilon,
	\zeta_\epsilon,\zeta_\epsilon^*,w_\epsilon,\phi_\epsilon,W_\epsilon),
	\qquad
	\mathbf U
	:=
	(\varrho,\mathbf v,\mathbf m,\zeta,\zeta^*,w,\phi,W).
	\]
	Applying the Jakubowski--Skorokhod representation theorem, we obtain a
	new probability space
	$(\widehat\Omega,\widehat{\mathcal F},\widehat{\mathbb P})$
	and copies, again denoted without hats, such that
	\begin{equation*}		\mathbf U_\epsilon\to\mathbf U
		\qquad\widehat{\mathbb P}\text{-a.s. in }\mathcal Z.
	\end{equation*}
	Thus all component convergences are read directly from the definition of
	$\mathcal Z$: in particular, the density converges in $C_wL^\beta$, the velocity converges weakly in
	$L^2H^1$, the momentum converges in $C_wW^{-r,2}$, the two graph
	variables converge strongly in $C_tH^s$, and the flow and Wiener process
	converge in their strong path topologies.  As in
	Sections~\ref{s4}--\ref{s6}, the canonical martingale
	characterizations pass to the represented variables.  Hence $W$ is a
	Wiener process for the limiting canonical filtration and $\phi$ is the
	stochastic flow generated by $W$.
	
	The product is identified by the continuity equation:
	\begin{equation*}		\mathbf m=\varrho\mathbf v.
	\end{equation*}
	Indeed, the compactness of the momentum, the weak convergence of the
	velocity, and the strong compactness of the mollified density
	$[\varrho_\epsilon]_l$ give the standard Lions product identification at
	fixed $l>0$.

	\subsection{Effective viscous flux and density compactness}

	Let $K\subset {\mathcal O_\alpha}$ and choose
	\[
	\vartheta_0,\vartheta_1\in C_c^\infty({\mathcal O_\alpha}),
	\qquad
	\vartheta_0=1\ \text{on }K,
	\qquad
	\vartheta_1=1\ \text{on }\supp\vartheta_0 .
	\]
	For a frozen time $t_i$ define
	\begin{equation*}		\mathcal A_i[f]
		:=
		\left[
		\nabla_y(-\Delta_y)^{-1}
		\widetilde{
			(\vartheta_1f)\circ\phi_i^{-1}
		}
		\right]\circ\phi_i ,
	\end{equation*}
	where the tilde denotes extension by zero to $\mathbb R^3$.
	The usual Calder\'on--Zygmund estimates hold uniformly on
	$\Omega_R$.
	
	We shall also use the classical compact Riesz-commutator property in the
	frozen coordinates.  Namely, if
	\[
	b_n\to b
	\quad\text{in }C^1(\overline {\mathcal O_\alpha}),
	\qquad
	F_n\rightharpoonup F
	\quad\text{in }L^2({\mathcal O_\alpha}),
	\]
	with $b_n$ uniformly bounded in $C^2$, then
	\begin{equation}
		\label{app:eq_Riesz_commutator_compact}
		[
		\mathcal R_{ab}^{\,i},b_n
		]F_n
		\to
		[
		\mathcal R_{ab}^{\,i},b
		]F
		\qquad
		\text{strongly in }L^q(K),
		\quad 1\le q<6.
	\end{equation}
	This is the standard local Calder\'on--Zygmund commutator theorem after
	the volume-preserving change of variables $y=\phi_i(x)$.
	
	At fixed $\delta$, the regularized viscosity coefficients satisfy
	\begin{equation}
		\label{app:eq_viscosity_strong_C1}
		\mu_\kappa^{\zeta_\epsilon^*}
		\to
		\mu_\kappa^{\zeta^*},
		\qquad
		\lambda_\kappa^{\zeta_\epsilon^*}
		\to
		\lambda_\kappa^{\zeta^*}
		\quad
		\text{in }C([0,T];C^1(\overline {\mathcal O_\alpha})),
	\end{equation}
	and
	\begin{equation*}		\sup_\epsilon
		\left(
		\|\mu_\kappa^{\zeta_\epsilon^*}\|_{C_tC_x^2}
		+
		\|\lambda_\kappa^{\zeta_\epsilon^*}\|_{C_tC_x^2}
		\right)
		\le
		C_\delta .
	\end{equation*}
	
	Set
	$
	\mu_\epsilon:=\mu_\kappa^{\zeta_\epsilon^*},
	\lambda_\epsilon:=\lambda_\kappa^{\zeta_\epsilon^*},
	$
	and
	\[
	\mathbf S_\epsilon
	:=
	\mu_\epsilon
	\left(
	\nabla^{\phi_\epsilon}\mathbf v_\epsilon
	+
	(\nabla^{\phi_\epsilon}\mathbf v_\epsilon)^\top
	\right)
	+
	\lambda_\epsilon
	\Div^{\phi_\epsilon}\mathbf v_\epsilon\,\mathbb I .
	\]
	
	For
	\[
	T_k(z):=\min\{z,k\},
	\qquad
	\mathbf Q_{\epsilon,i,k}
	:=
	\mathcal A_i[T_k(\varrho_\epsilon)],
	\]
	the coefficient-freezing errors satisfy, on $\Omega_R$, the following
	representative estimates.  Writing
	$B_\epsilon=A_{\phi_\epsilon}$ and $B_i=A_{\phi_i}$, the freezing bound
	gives $\|B_\epsilon-B_i\|_{C^1}\le C_R\xi$.  Since
	$T_k(\varrho_\epsilon)$ is bounded and the frozen potentials satisfy the
	uniform Calder\'on--Zygmund estimates,
	\[
	\left|
	\int_{I_i}\!\!\int_{\mathcal O_\alpha}
	\mathbf S_\epsilon:
	\bigl(\nabla^{\phi_\epsilon}-\nabla^{\phi_i}\bigr)
	(\psi\mathbf Q_{\epsilon,i,k})
	\right|
	\le
	C_{R,k,\psi}\xi
	\left(1+\|\mathbf v_\epsilon\|_{L^2(I_i;H^1)}^2\right),
	\]
	\[
	\left|
	\int_{I_i}\!\!\int_{\mathcal O_\alpha}
	[\varrho_\epsilon]_l\mathbf v_\epsilon\otimes\mathbf v_\epsilon:
	\bigl(\nabla^{\phi_\epsilon}-\nabla^{\phi_i}\bigr)
	(\psi\mathbf Q_{\epsilon,i,k})
	\right|
	\le
	C_{R,k,\psi}\xi
	\left(1+\|\mathbf v_\epsilon\|_{L^2(I_i;H^1)}^2\right),
	\]
	and the analogous divergence error satisfies
	\[
	\left|
	\int_{I_i}\!\!\int_{\mathcal O_\alpha}
	p_\delta(\varrho_\epsilon)
	\bigl(\Div^{\phi_\epsilon}-\Div^{\phi_i}\bigr)
	(\psi\mathbf Q_{\epsilon,i,k})
	\right|
	\le C_{R,k,\psi}\xi
	\|p_\delta(\varrho_\epsilon)\|_{L^1(I_i\times {\mathcal O_\alpha})}.
	\]
	The terms in which the continuity equation is used to differentiate the
	density-dependent test are estimated in the same way with
	\eqref{app:eq_Bi_negative}.  Consequently, on $\Omega_R$,
	\begin{equation}
		\label{app:eq_flux_freezing_bound}
		|\mathfrak E_{\epsilon,i,k}^{\rm fr}|
		\le
		C_{R,k,\psi}\xi
		\left(
		1+
		\|\mathbf v_\epsilon\|_{L^2(I_i;H^1({\mathcal O_\alpha}))}^2
		\right),
	\end{equation}
	where $\mathfrak E_{\epsilon,i,k}^{\rm fr}$ denotes collectively the
	errors generated by
	\[
	\nabla^{\phi_\epsilon}-\nabla^{\phi_i},
	\qquad
	\Div^{\phi_\epsilon}-\Div^{\phi_i}.
	\]
	Thus all freezing errors vanish after $\xi\to0$.
	
	The time-derivative and convective terms are treated by the standard
	Lions commutator argument in the frozen coordinates.  At fixed $l>0$, spatial convolution converts the negative-Sobolev time
	control furnished by the continuity equation into equicontinuity in every
	fixed $C^m$ norm.  Indeed, after convolution,
	$\partial_t[\varrho_\epsilon]_l$ is bounded in
	$L^1(0,T;C^m(\overline {\mathcal O_\alpha}))$ with a constant depending on $l$ but not on
	$\epsilon$; the artificial-diffusion part vanishes by
	Lemma~\ref{app:lem_unweighted_diffusion}.  Arzel\`a--Ascoli and the weak
	density convergence therefore give
	\[
	[\varrho_\epsilon]_l
	\to
	[\varrho]_l
	\qquad
	\text{in }C([0,T];C^m(\overline {\mathcal O_\alpha}))
	\]
	for every fixed integer $m\ge0$.  Hence the mollified density is a
	strongly convergent multiplier in the inertial commutator identities.
	
	\begin{proposition}[Local weak continuity of the effective viscous flux]
		\label{app:prop_effective_flux}
		Let
		\[
		K\subset {\mathcal O_\alpha},
		\qquad
		\psi\in C_c^\infty((0,T)\times K),
		\qquad
		k\in\mathbb N .
		\]
		Then, almost surely along the represented subsequence,
		\begin{align}
			& \lim_{\epsilon\to0}
			\int_0^T\!\!\int_{\mathcal O_\alpha}
			\psi
			\left[
			p_\delta(\varrho_\epsilon)
			-
			(\lambda_\epsilon+2\mu_\epsilon)
			\Div^{\phi_\epsilon}\mathbf v_\epsilon
			\right]
			T_k(\varrho_\epsilon)
			\,\dd x\,\dd t
			\notag \\
			& \qquad=
			\int_0^T\!\!\int_{\mathcal O_\alpha}
			\psi
			\left[
			\overline{p_\delta(\varrho)}
			-
			(\lambda+2\mu)
			\Div^\phi\mathbf v
			\right]
			\overline{T_k(\varrho)}
			\,\dd x\,\dd t,
			\label{app:eq_effective_flux} 
		\end{align}
		where
		$
		\mu=\mu_\kappa^{\zeta^*},
		\lambda=\lambda_\kappa^{\zeta^*}.
		$
	\end{proposition}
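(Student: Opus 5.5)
The plan is the classical Lions--Feireisl test-function argument, carried out interval by interval on the frozen partition and localized to $\Omega_R$. On each $I_i$ and on $\Omega_R$, I would test the $\epsilon$-level coupled identity \eqref{eq:N_limit_coupled} with the fluid test $\psi\,\mathbf Q_{\epsilon,i,k}=\psi\,\mathcal A_i[T_k(\varrho_\epsilon)]$ and $\psi=0$ on the shell. Since $\psi$ is compactly supported in $K\subset\mathcal O_\alpha$, I would arrange $K$ to stay away from the interface tube, and otherwise use a further cut-off. Then the normal-penalty and Navier-friction terms vanish, or are lower order and compact by trace compactness. Density-dependent tests are admissible after time regularization, exactly as in the proof of Proposition~\ref{app:prop_higher_integrability}. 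The time derivative of $T_k(\varrho_\epsilon)$ comes from the renormalized $\epsilon$-continuity equation, which at $\epsilon>0$ holds with the extra terms $\epsilon\Delta^\phi$ and $-\epsilon T_k''(\varrho_\epsilon)|\nabla^\phi\varrho_\epsilon|^2$. I would also test the limit identity with $\psi\,\mathcal A_i[\overline{T_k(\varrho)}]$; the limit is a renormalized solution because of the $L^{\beta+1}$ bound and $\beta\ge2$ (DiPerna--Lions at fixed $\delta$). In both identities I would replace $\nabla^{\phi_\epsilon},\Div^{\phi_\epsilon}$ by the frozen operators $\nabla^{\phi_i},\Div^{\phi_i}$ and collect the resulting errors in $\mathfrak E^{\rm fr}_{\epsilon,i,k}$. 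By \eqref{app:eq_flux_freezing_bound} and the energy moments these errors are $O_R(\xi)$ in expectation.

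In the frozen coordinates $y=\phi_i(x)$ the change of variables is volume preserving, so $\Div^{\phi_i}\mathcal A_i[f]=\vartheta_1 f$ and $\nabla^{\phi_i}\mathcal A_i$ is a Riesz-type operator $\mathcal R^i$. The viscous term therefore produces $(\lambda_\epsilon+2\mu_\epsilon)\Div^{\phi_i}\mathbf v_\epsilon\,T_k(\varrho_\epsilon)$ plus two kinds of remainder. The first consists of commutators $[\mathcal R^i_{ab},\mu_\epsilon]$ and $[\mathcal R^i_{ab},\lambda_\epsilon]$ acting on $\nabla\mathbf v_\epsilon$. These converge strongly by \eqref{app:eq_Riesz_commutator_compact} together with \eqref{app:eq_viscosity_strong_C1}, and they pair against weakly convergent quantities. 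The second consists of terms involving $\nabla\psi$, which are compact because $\mathcal A_i$ gains a derivative. The pressure term gives $\psi\,p_\delta(\varrho_\epsilon)T_k(\varrho_\epsilon)$, and weak convergence of $p_\delta(\varrho_\epsilon)$ follows from \eqref{app:eq_higher_integrability} by equi-integrability. The artificial-diffusion contributions, namely $\frac\epsilon2[\nabla^\phi\varrho_\epsilon]_l$ and the $\epsilon$-terms in the renormalized equation, vanish by Lemma~\ref{app:lem_unweighted_diffusion}. The $\epsilon T_k''$ term requires care: its sign or boundedness must be handled using $|T_k''|$ supported on $\{\varrho\approx k\}$, with a smooth concave truncation used if needed.

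The main obstacle is the combination of the time-derivative and convective terms, which together produce
\[
\int\psi\,\bigl(T_k(\varrho_\epsilon)\mathcal R^i[\vartheta_1[\varrho_\epsilon]_l\mathbf v_\epsilon]-[\varrho_\epsilon]_l\mathbf v_\epsilon\cdot\mathcal R^i[\vartheta_1T_k(\varrho_\epsilon)\mathbf v_\epsilon]\bigr)
\]
up to compact terms. There are two ways to treat it, and I would try the simpler one first. Because $l>0$ is fixed, the mollified density converges strongly, $[\varrho_\epsilon]_l\to[\varrho]_l$ in $C_tC^m_x$, so the inertial weight is a strongly convergent multiplier. Combined with the momentum-type compactness $[\varrho_\epsilon]_l\mathbf v_\epsilon\to[\varrho]_l\mathbf v$ in $L^2_tH^{-\sigma}$, this may pass the terms directly. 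Failing that, I would invoke the div--curl/commutator lemma: $T_k(\varrho_\epsilon)\to\overline{T_k(\varrho)}$ in $C_wL^p$, and the commutator $[\mathcal R^i,\mathbf v_\epsilon]$ is compact in $W^{\theta,r}$. Either route yields convergence to the corresponding limit expression.

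Subtracting the two identities gives \eqref{app:eq_effective_flux} on each $I_i$, up to an $O_R(\xi)$ error. Summing over the deterministic partition, letting $\xi\to0$, and then releasing $\Omega_R$ (using $\mathbb E\Lambda^r<\infty$) completes the argument. Along the way the almost-sure statement is obtained from the representation, with the flow identity $\phi_\epsilon\to\phi$ used to replace $\Div^{\phi_\epsilon}\mathbf v_\epsilon$ by $\Div^\phi\mathbf v$ in the limit.
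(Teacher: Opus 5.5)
Your outline matches the paper's argument: the frozen partition on $\Omega_R$, and the test $\psi\,\mathcal A_i[T_k(\varrho_\epsilon)]$ with zero shell component. It also uses the Riesz--viscosity commutators via \eqref{app:eq_Riesz_commutator_compact}--\eqref{app:eq_viscosity_strong_C1}, and the strongly convergent multiplier $[\varrho_\epsilon]_l$ with the Lions commutator for the inertial terms. The freezing errors are $O_R(\xi)$, followed by $\xi\to0$ and $R\to\infty$.

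\textbf{The open step does not close.} The term you leave open does not close with either remedy you suggest.

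\emph{The $\epsilon$-level term.} The renormalized $\epsilon$-equation for $T_k(\varrho_\epsilon)$ contains $\nu_\epsilon:=-\epsilon T_k''(\varrho_\epsilon)|\nabla^\phi\varrho_\epsilon|^2$.
\begin{itemize}
\item Lemma~\ref{app:lem_unweighted_diffusion} only gives $\epsilon\nabla^\phi\varrho_\epsilon\to0$ in $L^2$. So $\nu_\epsilon$ is merely bounded in $L^1((0,T)\times\mathcal O_\alpha)$, and it need not vanish before strong convergence is known.
\item It enters the time-derivative term as $\int\psi\,\mathbf m_\epsilon\cdot\mathcal A_i[\nu_\epsilon]=\int\nu_\epsilon\,\mathcal A_i^*[\psi\mathbf m_\epsilon]$, where $\mathbf m_\epsilon=(\epsilon+[\varrho_\epsilon]_l)\mathbf v_\epsilon$.
\item A sign of $\nu_\epsilon$ is useless, because $\mathcal A_i^*[\psi\mathbf m_\epsilon]$ has no sign.
\item The $L^1$ bound is too weak as well. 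At fixed $l$, $\mathbf m_\epsilon$ is bounded in $L^\infty_tL^2\cap L^2_tH^1$. This puts $\mathcal A_i^*[\psi\mathbf m_\epsilon]$ in $L^p_tL^\infty_x$ only for $p<4$, not in $L^\infty_{t,x}$.
\end{itemize}
So with the available estimates this term is neither bounded nor compact.

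\emph{The limit test.} The same obstruction reappears there. Differentiating $\mathcal A_i[\overline{T_k(\varrho)}]$ in time requires the equation for the weak limit $\overline{T_k(\varrho)}$. That equation comes from passing to the limit in the $\epsilon$-renormalized equations, so it carries the weak-$*$ limit of $\nu_\epsilon$ as a measure. DiPerna--Lions applied to the limit continuity equation gives an equation for $T_k(\varrho)$, not for $\overline{T_k(\varrho)}$, so it does not remove this term. The paper's proof is silent here too: it attributes all diffusion remainders to Lemma~\ref{app:lem_unweighted_diffusion}.

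\textbf{A standard way out.} At fixed $\delta$, do not truncate at this level.
\begin{itemize}
\item By \eqref{app:eq_higher_integrability}, $\varrho_\epsilon$ is bounded in $L^{\beta+1}$, so your frozen argument runs verbatim with the test $\psi\,\mathcal A_i[\varrho_\epsilon]$.
\item The diffusion then enters only through $\epsilon\,\mathcal A_i[\Delta^\phi\varrho_\epsilon]$. Since $\det\nabla\phi=1$, the Piola identity writes $\Div^\phi$ in divergence form. This makes the term a zero-order operator applied to $\epsilon\nabla^\phi\varrho_\epsilon$, which vanishes by Lemma~\ref{app:lem_unweighted_diffusion}.
\item The limit test needs only the linear limit continuity equation.
\item The resulting untruncated identity gives strong convergence of $\varrho_\epsilon$ together with two further ingredients. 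One is the $z\log z$ renormalization at the $\epsilon$-level, whose diffusion defect has the favourable sign once integrated over $\mathcal O_\alpha$. The other is DiPerna--Lions for the limit, where your observation $\varrho\in L^2$ is exactly what is needed.
\end{itemize}
After that, \eqref{app:eq_effective_flux} is immediate. Indeed, $T_k(\varrho_\epsilon)\to T_k(\varrho)=\overline{T_k(\varrho)}$ and $p_\delta(\varrho_\epsilon)\to p_\delta(\varrho)$ strongly, while $\Div^{\phi_\epsilon}\mathbf v_\epsilon\rightharpoonup\Div^\phi\mathbf v$ weakly.
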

	
	\begin{proof}
		Fix $R$ and $\xi$. On each frozen interval $I_i$, use a time-regularized
		test of
		\[
		\psi\,\mathcal A_i[T_k(\varrho_\epsilon)].
		\]
		The frozen pressure--viscosity term gives the effective-flux expression.
		The remaining remainders satisfy
		\[
		\mathcal R_{\epsilon,i}^{\rm visc}\to0,
		\qquad
		\mathcal R_{\epsilon,i}^{\rm diff}\to0,
		\qquad
		\mathcal R_{\epsilon,i}^{\rm inert}\to0,
		\]
		by \eqref{app:eq_Riesz_commutator_compact}--%
		\eqref{app:eq_viscosity_strong_C1},
		Lemma~\ref{app:lem_unweighted_diffusion}, and the Lions commutator
		argument. At fixed $\delta$, the interface terms converge by strong geometry
		and trace compactness, while
		\[
		|\mathcal R_{\epsilon,i}^{\rm fr}|
		\le C_{R,k,\psi}\xi
		\]
		by \eqref{app:eq_flux_freezing_bound}. Therefore
		\[
		\limsup_{\epsilon\to0}
		\left|
		\mathrm{LHS}\bigl(\eqref{app:eq_effective_flux}\bigr)
		-
		\mathrm{RHS}\bigl(\eqref{app:eq_effective_flux}\bigr)
		\right|
		\le C_{R,k,\psi}\xi .
		\]
		Letting $\xi\to0$ and then $R\to\infty$ gives
		\eqref{app:eq_effective_flux}.
	\end{proof}

	We next combine renormalization with the effective-viscous-flux identity to obtain strong convergence of the density.

	At the artificial-viscosity level the continuity equation satisfies the
	usual renormalized identity.  Namely, if
	$b\in C^2([0,\infty))$ has bounded $b'$ and compactly supported $b''$,
	then
	\begin{align*}		& \partial_t b(\varrho_\epsilon)
		+
		\Div^\phi
		\left(
		b(\varrho_\epsilon)\mathbf v_\epsilon
		\right)
		+
		\left(
		b'(\varrho_\epsilon)\varrho_\epsilon
		-
		b(\varrho_\epsilon)
		\right)
		\Div^\phi\mathbf v_\epsilon
		\notag \\
		& =
		\epsilon
		\Delta^\phi b(\varrho_\epsilon)
		-
		\epsilon
		b''(\varrho_\epsilon)
		|\nabla^\phi\varrho_\epsilon|^2 
	\end{align*}
	in distributions. At fixed $\epsilon>0$, this follows from the standard
	parabolic renormalization argument by spatial regularization and passage to
	the limit.
	
	For $q>1$ define
	\begin{equation*}		\operatorname{osc}_q
		[\varrho_\epsilon\to\varrho]
		:=
		\sup_{k\ge1}
		\limsup_{\epsilon\to0}
		\int_0^T\!\!\int_{\mathcal O_\alpha}
		|T_k(\varrho_\epsilon)-T_k(\varrho)|^q
		\,\dd x\,\dd t .
	\end{equation*}
	
	\begin{proposition}[Oscillation-defect bound]
		\label{app:prop_oscillation_defect}
		At fixed $\delta>0$,
		\begin{equation}
			\label{app:eq_osc_finite}
			\operatorname{osc}_{\beta+1}
			[\varrho_\epsilon\to\varrho]
			<\infty
			\qquad
			\text{almost surely}.
		\end{equation}
		Consequently, the limit continuity equation is renormalized in the
		DiPerna--Lions sense.
	\end{proposition}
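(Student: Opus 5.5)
The plan follows the classical Feireisl oscillation-defect argument. The only changes are that everything is done in the frozen coordinates and on the events $\Omega_R$. The key input is Proposition~\ref{app:prop_effective_flux}, which gives weak continuity of the effective viscous flux.

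\emph{Step 1: a pointwise comparison.} By convexity of $z\mapsto p_\delta(z)$ and monotonicity of $T_k$, Feireisl's elementary inequality gives
\[
\bigl(p_\delta(z)-p_\delta(y)\bigr)\bigl(T_k(z)-T_k(y)\bigr)\ge \min\{a,\delta\}\,|T_k(z)-T_k(y)|^{\beta+1}
\]
on the relevant range. Using $\delta z^\beta$ handles the exponent $\beta+1$. Passing to weak limits then yields
\[
\min\{a,\delta\}\,\limsup_{\epsilon\to0}\int\psi\,|T_k(\varrho_\epsilon)-T_k(\varrho)|^{\beta+1}
\le \int\psi\,\bigl(\overline{p_\delta(\varrho)T_k(\varrho)}-\overline{p_\delta(\varrho)}\,\overline{T_k(\varrho)}\bigr),
\]
with $0\le\psi\le1$ localized in $K$. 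The terms containing $T_k(\varrho)$ cancel in the standard way.

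\emph{Step 2: bounding the right-hand side.} Proposition~\ref{app:prop_effective_flux} converts the right-hand side into
\[
\int\psi\,(\lambda+2\mu)\bigl(\overline{T_k(\varrho)\Div^\phi\mathbf v}-\overline{T_k(\varrho)}\,\Div^\phi\mathbf v\bigr).
\]
Here $\lambda+2\mu$ is bounded by a constant $C_\delta$ that does not depend on $\epsilon$, because at fixed $\delta$ the coefficients converge in $C^1$ by \eqref{app:eq_viscosity_strong_C1}.

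Write this difference as the weak limit of $(T_k(\varrho_\epsilon)-T_k(\varrho))\Div^{\phi_\epsilon}\mathbf v_\epsilon$ plus a correction term. The correction vanishes because $\Div^{\phi_\epsilon}\mathbf v_\epsilon\rightharpoonup\Div^\phi\mathbf v$ weakly and $T_k(\varrho)$ is fixed. Hölder's inequality then bounds the whole expression by
\[
C\,\sup_\epsilon\|\Div^{\phi_\epsilon}\mathbf v_\epsilon\|_{L^2}\,\limsup_\epsilon\|T_k(\varrho_\epsilon)-T_k(\varrho)\|_{L^2}.
\]
We also use $\|T_k(\varrho_\epsilon)-\overline{T_k(\varrho)}\|\le\|T_k(\varrho_\epsilon)-T_k(\varrho)\|$ up to lower semicontinuity. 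Since $\beta+1>2$, the $L^2$ norm is controlled by the $L^{\beta+1}$ norm on the bounded cylinder. Absorbing with Young's inequality gives a bound on the local oscillation in terms of $\sup_\epsilon\|\mathbf v_\epsilon\|^2_{L^2H^1}$ alone, uniformly in $k$.

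This velocity norm is almost surely finite along the represented sequence. The reason is that a.s. weak convergence in $L^2H^1$ implies a.s. boundedness. Exhausting $\mathcal O_\alpha$ by compact sets $K$ then gives \eqref{app:eq_osc_finite}.

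\emph{Step 3: renormalization of the limit.} This follows the standard DiPerna--Lions/Feireisl route. First, pass to the limit in the renormalized $\epsilon$-equation with $b=T_k$ (smoothed). The term $\epsilon b''|\nabla^\phi\varrho_\epsilon|^2$ has a sign, and $\epsilon\Delta^\phi b\to0$ by Lemma~\ref{app:lem_unweighted_diffusion}. This gives an equation for $\overline{T_k(\varrho)}$ with a defect $\overline{(T_k'(\varrho)\varrho-T_k(\varrho))\Div^\phi\mathbf v}$.

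Second, regularize that equation by spatial convolution in the frozen coordinates. The pullback operators are only $C^\vartheta$ in time, but they are smooth in space, so the Friedrichs commutators still vanish. This renormalizes $\overline{T_k(\varrho)}$.

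Third, let $k\to\infty$. The defect is controlled by
\[
\|\Div^\phi\mathbf v\|_{L^2}\,\|T_k(\varrho)-\overline{T_k(\varrho)}\|_{L^2(\{\varrho\ge k\})}.
\]
Interpolating with the $L^{\beta+1}$ oscillation bound of Step 2 and the higher integrability \eqref{app:eq_higher_integrability} shows that this tends to $0$. The limit $\varrho$ is therefore a renormalized solution.

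The main obstacle is Step 2: making the Hölder absorption rigorous pathwise, uniformly in $k$, when only the represented a.s. convergence is available. My first attempt would be to do the absorption on $\Omega_R$ with $\psi$ fixed, and release $R$ and $K$ only at the very end.
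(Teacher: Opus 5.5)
Your Steps~1--2 are the classical Lions--Feireisl oscillation estimate, which is all the paper's proof invokes, and they hold up once two points are stated correctly.

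\begin{itemize}
\item In Step~1 the terms containing $T_k(\varrho)$ do not cancel. The leftover cross term $(\overline{p_\delta(\varrho)}-p_\delta(\varrho))(\overline{T_k(\varrho)}-T_k(\varrho))$ is nonpositive, by convexity of $p_\delta$ and concavity of $T_k$, and that sign is what you need.
\item In Step~2 the ``correction'' $(T_k(\varrho)-\overline{T_k(\varrho)})\Div^\phi\mathbf v$ in your decomposition does not vanish. The clean route is to write the difference as the weak limit of $(T_k(\varrho_\epsilon)-\overline{T_k(\varrho)})\Div^{\phi_\epsilon}\mathbf v_\epsilon$, which has no correction term. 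Then use the exact Hilbert-space inequality $\limsup_\epsilon\|T_k(\varrho_\epsilon)-\overline{T_k(\varrho)}\|_{L^2}\le\limsup_\epsilon\|T_k(\varrho_\epsilon)-T_k(\varrho)\|_{L^2}$.
\end{itemize}

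The absorption you single out as the main obstacle is harmless. For fixed $k$ the oscillation is a priori at most $k^{\beta+1}T|\mathcal O_\alpha|$. The resulting constant is uniform in $k$ and in the cutoff, which is what makes the exhaustion work. Finally, $\sup_\epsilon\|\mathbf v_\epsilon\|_{L^2H^1}<\infty$ a.s., since weakly convergent sequences are bounded.

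At fixed $\delta$ the bound is in fact nearly trivial: $|T_k(\varrho_\epsilon)-T_k(\varrho)|^{\beta+1}\le\varrho_\epsilon^{\beta+1}+\varrho^{\beta+1}$, and Proposition~\ref{app:prop_higher_integrability} controls $\varrho_\epsilon$ in $L^{\beta+1}$. That control is only in expectation, however, whereas your route needs only the pathwise bound on $\mathbf v_\epsilon$.

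The genuine gap is in Step~3, the ``consequently'' part. There are two problems.

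\begin{itemize}
\item \textbf{The parabolic defect.} Passing $\epsilon\to0$ in the parabolic renormalized equation does not give an equation for $\overline{T_k(\varrho)}$. The term $-\epsilon b''(\varrho_\epsilon)|\nabla^\phi\varrho_\epsilon|^2\ge0$ converges only to a nonnegative measure $\mu_k$. Its sign yields an inequality, whereas renormalization is an identity. You must show $\mu_k\to0$ as $k\to\infty$. This is true if you take $T_k(z)=kT(z/k)$ with $T$ smooth and concave: then $|T_k''|\le C/k$, and the mass of $\mu_k$ is at most $Ck^{-1}\sup_\epsilon\epsilon\|\nabla^\phi\varrho_\epsilon\|_{L^2((0,T)\times\mathcal O_\alpha)}^2$ by Lemma~\ref{app:lem_unweighted_diffusion}. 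But this has to be argued.
\item \textbf{The defect at $k\to\infty$ is misidentified.} The term $\overline{(T_k'(\varrho)\varrho-T_k(\varrho))\Div^\phi\mathbf v}$ is the weak limit of a product of two weakly convergent sequences. It is not bounded by $\|T_k(\varrho)-\overline{T_k(\varrho)}\|_{L^2(\{\varrho\ge k\})}$. The correct bound is $\sup_\epsilon\|\Div^{\phi_\epsilon}\mathbf v_\epsilon\|_{L^2}\,\sup_\epsilon\|T_k'(\varrho_\epsilon)\varrho_\epsilon-T_k(\varrho_\epsilon)\|_{L^2}$. Since $|T_k'(z)z-T_k(z)|\le Cz\mathbf 1_{\{z\ge k\}}$, Chebyshev's inequality and the $L^\infty_tL^\beta_x$ bound give $Ck^{1-\beta/2}\to0$, with no use of the oscillation bound.
\end{itemize}

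In fact, at fixed $\delta$ all of Step~3 can be replaced by one observation. The limit density lies in $L^\infty(0,T;L^\beta)\subset L^2$ with $\beta>6$, $\mathbf v\in L^2(0,T;H^1)$, and the pullback coefficients are smooth in space. So the DiPerna--Lions commutator lemma renormalizes the limit equation directly. The oscillation-defect route is genuinely needed only in the limit $\delta\to0$.
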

	
	\begin{proof}
		Combine the weak continuity of the effective viscous flux
		\eqref{app:eq_effective_flux} with the renormalized equations for the
		truncations $T_k$.  The monotonicity of
		$p_\delta(z)=az^\gamma+\delta z^\beta$ gives the usual nonnegative
		defect, while the freezing and commutator errors have already been
		removed.  The classical Lions--Feireisl oscillation-defect estimate then
		gives \eqref{app:eq_osc_finite}; see
		\cite{Lions1998,Feireisl2001}.
	\end{proof}
	
	\begin{proposition}[Strong convergence and pressure identification]
		\label{app:thm_strong_density}
		Almost surely,
		\begin{equation}
			\label{app:eq_rho_strong}
			\varrho_\epsilon
			\to
			\varrho
			\qquad
			\text{strongly in }
			L^1((0,T)\times {\mathcal O_\alpha}).
		\end{equation}
		Moreover,
		\begin{align}
			& \varrho_\epsilon^\gamma
			\to
			\varrho^\gamma,\;\; \varrho_\epsilon^\beta
			\to
			\varrho^\beta,
			\qquad\text{strongly in }L^1((0,T)\times {\mathcal O_\alpha}),
			\label{app:eq_beta_strong} 
		\end{align}
		and hence
		\begin{equation*}			\overline{p_\delta(\varrho)}
			=
			a\varrho^\gamma+\delta\varrho^\beta .
		\end{equation*}
	\end{proposition}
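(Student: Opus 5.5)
The proof follows the classical Lions--Feireisl scheme, run on the represented probability space and pathwise for a.e.\ $\omega$. I would use the renormalized identity with $b=L_k$, where $L_k(z):=z\int_1^z T_k(s)s^{-2}\,\dd s$, so that $zL_k'(z)-L_k(z)=T_k(z)$. At level $\epsilon$ the term $-\epsilon L_k''(\varrho_\epsilon)|\nabla^\phi\varrho_\epsilon|^2$ has a sign, since $L_k$ is convex, and can be dropped. The term $\epsilon\Delta^\phi L_k(\varrho_\epsilon)$ vanishes in distributions by Lemma~\ref{app:lem_unweighted_diffusion}.

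Passing $\epsilon\to0$ gives, with $\overline{L_k(\varrho)}$ the weak limit,
\[
\int_{\mathcal O_\alpha}\overline{L_k(\varrho)}(t)\,\dd x+\int_0^t\!\!\int_{\mathcal O_\alpha}\overline{T_k(\varrho)\Div^\phi\mathbf v}\,\dd x\,\dd s\le\int_{\mathcal O_\alpha}L_k(\varrho_{0,\delta})\,\dd x.
\]
Here the divergence term integrates to zero by volume preservation, $\int\Div^\phi(\cdot)\,\dd x=0$, and the Neumann condition. On the limit side, Proposition~\ref{app:prop_oscillation_defect} says $\varrho$ is a renormalized solution. Applying this with $b=L_k$ (after a standard truncation, justified because $\varrho\in L^{\beta+1}$ and $L_k$ grows like $z\log z$) gives the corresponding equality with $L_k(\varrho)$ and $T_k(\varrho)\Div^\phi\mathbf v$. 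The initial data coincide, since at this level the initial density is fixed, $\varrho_{0,\delta,\epsilon}\to\varrho_{0,\delta}$ strongly in $L^\beta$.

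Subtracting the two relations yields
\[
\int_{\mathcal O_\alpha}\bigl(\overline{L_k(\varrho)}-L_k(\varrho)\bigr)(t)\,\dd x\le\int_0^t\!\!\int_{\mathcal O_\alpha}\bigl(T_k(\varrho)\Div^\phi\mathbf v-\overline{T_k(\varrho)\Div^\phi\mathbf v}\bigr)\,\dd x\,\dd s.
\]
Next I would insert Proposition~\ref{app:prop_effective_flux}, using test functions $\psi\uparrow1$ on compacts exhausting $(0,T)\times\mathcal O_\alpha$; the viscosity coefficients converge strongly by \eqref{app:eq_viscosity_strong_C1}. This turns $\overline{T_k(\varrho)\Div^\phi\mathbf v}$ into $\overline{p_\delta(\varrho)T_k(\varrho)}$ minus $\overline{p_\delta(\varrho)}\,\overline{T_k(\varrho)}$, divided by $\lambda+2\mu$, plus $\Div^\phi\mathbf v\,\overline{T_k(\varrho)}$.

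Because $p_\delta$ and $T_k$ are both nondecreasing, $\overline{p_\delta T_k}\ge\overline{p_\delta}\,\overline{T_k}$ (Feireisl's monotonicity lemma). The right-hand side is therefore bounded by $\int(T_k(\varrho)-\overline{T_k(\varrho)})\Div^\phi\mathbf v$. By Hölder,
\[
\|T_k(\varrho)-\overline{T_k(\varrho)}\|_{L^2}\le\|T_k(\varrho)-\varrho\|_{L^2}+\|\overline{T_k(\varrho)}-\varrho\|_{L^2}.
\]
Both terms tend to $0$ as $k\to\infty$, using the uniform $L^{\beta+1}$ bound from \eqref{app:eq_higher_integrability} and weak lower semicontinuity. (The oscillation bound \eqref{app:eq_osc_finite} is what permits interchanging limits near the truncation level.) Hence $\overline{L_k(\varrho)}-L_k(\varrho)\to0$ in $L^1$ as $k\to\infty$. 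Since $L_k\to z\log z$, which is strictly convex, we get $\overline{\varrho\log\varrho}=\varrho\log\varrho$, and strict convexity gives \eqref{app:eq_rho_strong}.

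Finally, strong $L^1$ convergence together with uniform $L^{\beta+1}$ bounds yields \eqref{app:eq_beta_strong} by Vitali's theorem (equi-integrability of $\varrho_\epsilon^\beta$). This identifies $\overline{p_\delta(\varrho)}$.

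The main obstacle is the boundary of $\mathcal O_\alpha$. The effective flux identity is only local, so passing $\psi\uparrow1$ requires that the defect measure not concentrate near $\partial\mathcal O_\alpha$ or near $t=0$. I would handle this with the equi-integrability from \eqref{app:eq_higher_integrability}: the contribution of a boundary layer of width $h$ is $O(h^{\theta})$ uniformly in $\epsilon$, so the cutoff can be removed after $\epsilon\to0$. A second, probabilistic point is that the a.s.\ statements must be made along the represented subsequence on $\Omega_R$ and then taken over $R\to\infty$.
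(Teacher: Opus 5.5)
Your proposal is correct and follows essentially the same route as the paper: renormalization with $L_k$, dropping the signed term $-\epsilon L_k''(\varrho_\epsilon)|\nabla^\phi\varrho_\epsilon|^2$, the local effective viscous flux combined with the monotonicity of $p_\delta$ and $T_k$, then $k\to\infty$ to obtain $\overline{\varrho\log\varrho}=\varrho\log\varrho$, and finally Vitali via the $L^{\beta+1}$ bound. The boundary-layer step is not needed: the global higher-integrability bound makes all weak limits $L^1$ functions, so the local flux identity already holds a.e.\ on all of $(0,T)\times\mathcal O_\alpha$; likewise, at fixed $\delta$ your H\"older estimate with the uniform $L^{\beta+1}$ bound makes the oscillation-defect bound superfluous.
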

	
	\begin{proof}
		For $k\ge1$ set
		\[
		T_k(z):=\min\{z,k\},
		\qquad
		L_k(z):=z\int_1^z\frac{T_k(s)}{s^2}\,\dd s,
		\qquad
		zL_k'(z)-L_k(z)=T_k(z).
		\]
		The renormalized equation for $L_k(\varrho_\epsilon)$ contains
		\[
		-\epsilon L_k''(\varrho_\epsilon)
		|\nabla^\phi\varrho_\epsilon|^2\le0,
		\]
		and all remaining artificial-diffusion terms vanish by
		Lemma~\ref{app:lem_unweighted_diffusion}. Thus the only possible defect is
		$T_k(\varrho_\epsilon)\Div^\phi\mathbf v_\epsilon$.
		Proposition~\ref{app:prop_effective_flux} and
		\[
		\bigl(p_\delta(z_1)-p_\delta(z_2)\bigr)
		\bigl(T_k(z_1)-T_k(z_2)\bigr)\ge0
		\]
		give \eqref{app:eq_osc_finite}. Letting $k\to\infty$ yields
		\[
		\overline{\varrho\log\varrho}
		=\varrho\log\varrho
		\qquad\text{a.e.},
		\]
		and hence
		\[
		\varrho_\epsilon\to\varrho
		\qquad\text{a.e. on }(0,T)\times\mathcal O_\alpha.
		\]
		Proposition~\ref{app:prop_higher_integrability} gives uniform integrability of
		$
		\varrho_\epsilon,
		\varrho_\epsilon^\gamma,
			\varrho_\epsilon^\beta.
		$
		Vitali's theorem gives \eqref{app:eq_rho_strong}--%
		\eqref{app:eq_beta_strong} and therefore
		\[
		\overline{p_\delta(\varrho)}
		=a\varrho^\gamma+\delta\varrho^\beta.
		\]
	\end{proof}

	\subsection{Limit passage and pressure identification}
	
	After the Skorokhod representation, all terms except the pressure can be
	passed to the limit directly.  We write
	\begin{equation}
		\label{eq:pressure_bar}
		p_\delta(\varrho_\epsilon)
		\rightharpoonup
		\overline{p_\delta(\varrho)}
		\qquad
		\text{in }
		L^{\frac{\beta+1}{\beta}}
		((0,T)\times\mathcal O_\alpha).
	\end{equation}
	
	\begin{proposition}[Preliminary limit $\epsilon\to0$]
		\label{thm:limit_eps_preliminary}
		The limiting variables satisfy, $\widehat{\mathbb P}$-almost surely,
		\[
		\partial_t\zeta=w,
		\qquad
		\zeta^*=\zeta
		\quad\text{on }[0,\tau^\zeta),
		\]
		and the continuity equation
		\begin{equation*}			\int_{\mathcal O_\alpha}
			\varrho(t)\varphi\,\dd\bm x
			=
			\int_{\mathcal O_\alpha}
			\varrho_{0,\delta}\varphi\,\dd\bm x
			+
			\int_0^t
			\int_{\mathcal O_\alpha}
			\varrho\mathbf v\cdot\nabla^\phi\varphi
			\,\dd\bm x\dd s
		\end{equation*}
		for every smooth $\varphi$.
		
		Moreover, the coupled momentum--structure formulation obtained in
		Proposition~\ref{thm:limit_m} passes to the limit with
		$
		\epsilon=0,
		p_\delta(\varrho)
		\ \text{replaced by}\
		\overline{p_\delta(\varrho)}.
		$
		In particular, the exact transformed Newtonian stress
		$
		\mathbb S_\kappa^{\zeta^*}(\nabla^\phi\mathbf v)
		$
		is retained, the interface penalty remains normal-only, and the Navier
		friction term retains the full transformed normal, tangential projection,
		and surface Jacobian.
		
		Finally,
		\begin{equation*}			\widehat{\mathbb E}
			\left[
			\sup_{0\le t\le T}
			\mathcal E_\delta(t)^p
			+
			\mathcal D_\delta(T)^p
			\right]
			\le
			C_{p,T,l,\delta}.
		\end{equation*}
	\end{proposition}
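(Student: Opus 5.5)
The plan is to pass to the limit term by term in the Galerkin-level identities of Proposition~\ref{thm:limit_m}, on the represented space, using the a.s. convergences in $\mathcal Z$. Pressure identification is \emph{not} used here; that comes later from Proposition~\ref{app:thm_strong_density}.

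\textbf{Kinematic relation and stopping time.} The relation $\partial_t\zeta=w$ passes to the limit because $\zeta_\epsilon\to\zeta$ in $C_tH^s$ and $w_\epsilon\rightharpoonup w$ weakly in $L^2_tH^1$. The equality $\zeta^*=\zeta$ on $[0,\tau^\zeta)$ is argued as in Proposition~\ref{thm:limit_N}. Since $\zeta_\epsilon^*=\zeta_\epsilon$ before $\tau^\zeta_\epsilon$, and the stopping criteria are open and closed conditions in $C_tH^s$, strong convergence gives $\liminf_\epsilon\tau_\epsilon^\zeta\ge\tau^\zeta$ pathwise. Then $\zeta^*=\lim\zeta_\epsilon^*=\zeta$ on $[0,\tau^\zeta)$. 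Positivity of $\tau^\zeta$ follows from \eqref{eq:initial_localization_margin}.

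\textbf{Continuity equation.} The term $\epsilon\int\nabla^\phi\varrho_\epsilon\cdot\nabla^\phi\varphi$ vanishes by Lemma~\ref{app:lem_unweighted_diffusion}. The flux $\varrho_\epsilon\mathbf v_\epsilon\cdot\nabla^{\phi_\epsilon}\varphi$ converges to $\mathbf m\cdot\nabla^\phi\varphi=\varrho\mathbf v\cdot\nabla^\phi\varphi$. This uses the convergence of $\mathbf m_\epsilon$ in $C_wW^{-r,2}$, the $C^{\vartheta_0}_tC^2_x$ convergence of $\phi_\epsilon$, and the product identification $\mathbf m=\varrho\mathbf v$ obtained above. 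The initial data $\varrho_{0,\delta,\epsilon}\to\varrho_{0,\delta}$ in $L^\beta$ by construction. Since $\varrho_\epsilon\to\varrho$ in $C_wL^\beta$, the identity holds for every $t$.

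\textbf{Momentum--structure identity.} We fix smooth $(\mathbf q,\psi)$ and treat each term as follows.
\begin{itemize}
\item The $\epsilon$-weighted terms $\epsilon\mathbf v_\epsilon\cdot\mathbf q$ and $\frac\epsilon2[\nabla^\phi\varrho_\epsilon]_l\cdot\nabla^\phi\mathbf v_\epsilon\cdot\mathbf q$ vanish. This uses the energy bound \eqref{eq:eps_energy_moment} and Lemma~\ref{app:lem_unweighted_diffusion}; note that $\epsilon[\nabla\varrho_\epsilon]_l$ is small in $L^2$.
\item The inertial terms $[\varrho_\epsilon]_l\mathbf v_\epsilon$ and $[\varrho_\epsilon]_l\mathbf v_\epsilon\otimes\mathbf v_\epsilon$ converge as follows. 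We use $[\varrho_\epsilon]_l\to[\varrho]_l$ in $C_tC^m_x$, established before Proposition~\ref{app:prop_effective_flux}. Combined with the time compactness of the momentum, Lions' product lemma upgrades the weak $L^2_tH^1_x$ convergence of $\mathbf v_\epsilon$ to convergence of $[\varrho_\epsilon]_l\mathbf v_\epsilon\otimes\mathbf v_\epsilon$ in distributions. This is the step that requires the most care: the coefficient is $[\varrho_\epsilon]_l$ rather than $\varrho_\epsilon$, so the compact quantity I would use is $([\varrho_\epsilon]_l+\epsilon)\mathbf v_\epsilon$. Its time derivative is bounded in a negative space by the momentum equation (the estimate behind $\mathcal Z_{\mathbf m}$). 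The Aubin--Lions/Lions lemma then gives $([\varrho_\epsilon]_l+\epsilon)\mathbf v_\epsilon\otimes\mathbf v_\epsilon\to[\varrho]_l\mathbf v\otimes\mathbf v$ weakly.
\item The pressure $p_\delta(\varrho_\epsilon)$ converges weakly to $\overline{p_\delta(\varrho)}$ by \eqref{eq:pressure_bar}, using Proposition~\ref{app:prop_higher_integrability}. It is tested against $\Div^{\phi_\epsilon}\mathbf q\to\Div^\phi\mathbf q$ uniformly.
\item The stress term converges by the strong $C_tC^1$ convergence \eqref{app:eq_viscosity_strong_C1} of the viscosity coefficients together with weak convergence of $\nabla^{\phi_\epsilon}\mathbf v_\epsilon$. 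The exact form $\mathbb S_\kappa^{\zeta^*}(\nabla^\phi\mathbf v)$ is therefore retained.
\item The shell bending and damping terms are linear in $(\zeta,w)$ with coefficients $\Delta^{\chi_\epsilon}_\Gamma$ converging uniformly, so they pass weakly.
\end{itemize}

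\textbf{Interface terms and energy bound.} For the normal penalty, the domains $T^\delta_{\zeta_\epsilon^*}$ and normals $\mathbf n^{\zeta^*_\epsilon,\phi_\epsilon}$ converge strongly, since $\zeta^*_\epsilon\to\zeta^*$ in $C_tC^1$ by $s>3/2$. The integrand is linear in $(\mathbf v_\epsilon,w_\epsilon)$, so weak convergence suffices. The Navier friction term is handled the same way. Pulled back to $\Gamma$, traces of $\mathbf v_\epsilon$ converge weakly in $L^2_tL^2(\Gamma)$, while $J_\Gamma^\phi$, $\mathbf n$ and the tangential projection converge uniformly; so the product with fixed smooth tests converges. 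Finally, the moment bound follows from \eqref{eq:eps_energy_moment}, weak lower semicontinuity of the convex energy and dissipation, and Fatou's lemma. The artificial-diffusion contribution is simply dropped, and the penalty and friction parts are handled by lower semicontinuity of the quadratic forms under the above convergences.

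The main obstacle I expect is the inertial term: identifying $[\varrho]_l\mathbf v\otimes\mathbf v$ using only weak $H^1$ convergence of $\mathbf v_\epsilon$. This is handled via the compact weighted momentum at fixed $l$.
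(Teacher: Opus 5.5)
Your proposal is correct and follows the same route as the paper's much terser proof. The artificial-diffusion terms vanish by Lemma~\ref{app:lem_unweighted_diffusion} and the energy bound; the mollified density is strongly compact at fixed $l$ and the flow and graph coefficients converge strongly, so the remaining terms pass by strong--weak convergence; the pressure is deliberately kept as $\overline{p_\delta(\varrho)}$; and the moment bound follows from lower semicontinuity and Fatou. Your choice of $([\varrho_\epsilon]_l+\epsilon)\mathbf v_\epsilon$ as the compact quantity for the convective term is in fact more accurate than the paper's appeal to $\mathbf m_\epsilon=\varrho_\epsilon\mathbf v_\epsilon$, since it is the former whose time derivative the momentum equation controls.

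One slip should be corrected. For $s<2$ the space $H^s(\mathbb T^2)$ does not embed into $C^1$, so $\zeta_\epsilon^*$ does not converge in $C_tC^1$. Consequently the normals, tangential projections and surface weights (the latter contain the possibly unbounded $J_\zeta$) do not converge uniformly; they converge strongly in every finite $L^q$ by Lemmas~\ref{app:lem_graph_compactness}--\ref{app:lem_transformed_geometry}. Because the penalty and friction terms are linear in $(\mathbf v_\epsilon,w_\epsilon)$, this strong $L^q$ convergence of the multipliers is all your strong--weak argument needs.
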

	
	\begin{proof}
		The purpose of this preliminary passage is to isolate the pressure defect
		before trying to identify it. The artificial diffusion in the continuity
		equation and its companion correction in the momentum equation vanish by
		Lemma~\ref{app:lem_unweighted_diffusion} and the energy estimate. Since
		$l$ and $\delta$ are fixed, the mollified density occurring in the
		inertial terms is compact, while the flow and graph coefficients converge
		strongly. Hence the convective, viscous, shell, penalty, and Navier terms
		pass by strong--weak convergence in their natural dualities. No such
		argument is available for $p_\delta(\varrho_\epsilon)$ under weak density
		convergence, so it is deliberately retained as
		$\overline{p_\delta(\varrho)}$ in \eqref{eq:pressure_bar}. The
		energy-moment bound follows from Fatou's lemma and weak lower
		semicontinuity. The next step identifies precisely this single remaining
		defect.
	\end{proof}

	The only nonlinear defect left by the preliminary passage is the weak
	pressure limit $\overline{p_\delta(\varrho)}$. Propositions
	\ref{app:prop_effective_flux} and \ref{app:prop_oscillation_defect}, followed
	by Proposition~\ref{app:thm_strong_density}, give
	\[
	\varrho_\epsilon\to\varrho
	\quad\text{strongly in }L^1((0,T)\times\mathcal O_\alpha),
	\]
	together with strong $L^1$ convergence of $\varrho_\epsilon^\gamma$ and
	$\varrho_\epsilon^\beta$. Hence
	\begin{equation}
		\label{eq:eps_pressure_identified}
		\overline{p_\delta(\varrho)}
		=p_\delta(\varrho)=a\varrho^\gamma+\delta\varrho^\beta.
	\end{equation}
	
	We now collect the identified limits and record the $\epsilon\to0$ system.
	
	Combining Proposition~\ref{thm:limit_eps_preliminary} with
	\eqref{eq:eps_pressure_identified} gives the desired limit statement.
	
	\begin{proposition}[Limit $\epsilon\to0$]
		\label{thm:limit_eps}
		The limiting variables
		$
		(
		\varrho,\mathbf v,\zeta,\zeta^*,w,\phi,W
		)
		$
		satisfy, $\widehat{\mathbb P}$-almost surely,
		
		\begin{enumerate}[label=(\arabic*)]
			
			\item
			\[
			\partial_t\zeta=w,
			\qquad
			\zeta^*=\zeta
			\quad\text{on }[0,\tau^\zeta).
			\]
			
			\item
			For every
			$\varphi\in C^\infty(\overline{\mathcal O}_\alpha)$,
			\[
			\int_{\mathcal O_\alpha}
			\varrho(t)\varphi\,\dd\bm x
			=
			\int_{\mathcal O_\alpha}
			\varrho_{0,\delta}\varphi\,\dd\bm x
			+
			\int_0^t
			\int_{\mathcal O_\alpha}
			\varrho\mathbf v\cdot\nabla^\phi\varphi
			\,\dd\bm x\dd s.
			\]
			
			\item
			The coupled momentum--structure identity is
			\begin{align*}				& 
				\int_{\mathcal O_\alpha}[\varrho]_l\mathbf v(t)\cdot\mathbf q(t)\,\dd\bm x+\int_\Gamma w(t)\psi(t)\,\dd\bm y
				=\int_{\mathcal O_\alpha}[\varrho_{0,\delta}]_l\mathbf v_{0,\delta}\cdot\mathbf q(0)\,\dd\bm x+\int_\Gamma w_0\psi(0)\,\dd\bm y \notag \\
				& \quad+\int_0^t\!\!\int_{\mathcal O_\alpha}\Bigl([\varrho]_l\mathbf v\cdot\partial_t\mathbf q
				+[\varrho]_l\mathbf v\otimes\mathbf v:\nabla^\phi\mathbf q+p_\delta(\varrho)\Div^\phi\mathbf q
				-\mathbb S_\kappa^{\zeta^*}(\nabla^\phi\mathbf v):\nabla^\phi\mathbf q\Bigr)\,\dd\bm x\dd s \notag \\
				& \quad+\int_0^t\!\!\int_\Gamma\Bigl(w\partial_t\psi-\Delta_\Gamma^\chi\zeta\,\Delta_\Gamma^\chi\psi
				-\nu_s\nabla_\Gamma^\chi w\cdot\nabla_\Gamma^\chi\psi\Bigr)\,\dd\bm y\dd s \notag \\
				& \quad-\frac1\delta\int_0^t\!\!\int_{T^\delta_{\zeta^*}}
				((\mathbf v-w\mathbf e_z)\cdot\mathbf n^{\zeta^*,\phi}) ((\mathbf q-\psi\mathbf e_z)\cdot\mathbf n^{\zeta^*,\phi})\,\dd\bm x\dd s \notag \\
				& \quad-\iota\int_0^t\!\!\int_{\Gamma_{\zeta^*}}J_\Gamma^\phi[\zeta^*] \left(\mathbf v-w\mathbf e_z\circ\Phi_{\zeta^*}^{-1}\right)_{\tau^{\zeta^*,\phi}} \cdot\left(\mathbf q-\psi\mathbf e_z\circ\Phi_{\zeta^*}^{-1}\right)_{\tau^{\zeta^*,\phi}}\,\dd S\dd s. 
			\end{align*}
			for all smooth admissible test pairs $(\mathbf q,\psi)$.
			
			\item
			For every $p\ge1$,
			\[
			\widehat{\mathbb E}\left[
			\sup_{0\le t\le T}\mathcal E_\delta(t)^p
			+\mathcal D_\delta(T)^p\right]
			\le C_{p,T,l,\delta}.
			\]
			The normal-penalty and Navier-friction bounds are the corresponding
			terms of $\mathcal D_\delta$.
		\end{enumerate}
	\end{proposition}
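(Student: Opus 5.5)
The plan is to assemble Proposition~\ref{thm:limit_eps} from Proposition~\ref{thm:limit_eps_preliminary} and the pressure identification \eqref{eq:eps_pressure_identified}. No new compactness is needed. The only work is to check that replacing $\overline{p_\delta(\varrho)}$ by $p_\delta(\varrho)$ is legitimate in every place it appears, and that the remaining items are inherited unchanged.

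Items (1), (2) and (4) are read off directly from Proposition~\ref{thm:limit_eps_preliminary}.
\begin{itemize}
\item[(1)] The relation $\partial_t\zeta=w$ passes to the limit in the distributional sense, since $\zeta_\epsilon\to\zeta$ in $C_tH^s$ and $w_\epsilon\rightharpoonup w$ in $L^2_tH^1$. The identity $\zeta^*=\zeta$ on $[0,\tau^\zeta)$ follows as in Proposition~\ref{thm:limit_N}, using the strong $C_tH^s$ convergence and \eqref{eq:initial_localization_margin}.
\item[(2)] The continuity equation is already stated there with $\epsilon=0$. The diffusive term vanishes by \eqref{app:eq_epsilon_grad_vanish}. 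The flux term passes because $\mathbf m=\varrho\mathbf v$ and the pullback coefficients converge in $C^{\vartheta_0}_tC^2_x$.
\item[(4)] The energy-moment bound is the one stated there. The normal-penalty and Navier-friction terms are lower semicontinuous components of $\mathcal D_\delta$: they are convex quadratic forms in $(\mathbf v,w)$ with strongly converging coefficients $\mathbf n^{\zeta^*,\phi}$, $J_\Gamma^\phi$ and $T^\delta_{\zeta^*}$. Fatou's lemma then gives their moment bounds.
\end{itemize}

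For item (3) I would start from the coupled identity of Proposition~\ref{thm:limit_eps_preliminary}, which contains $\overline{p_\delta(\varrho)}\Div^\phi\mathbf q$. By Proposition~\ref{app:thm_strong_density}, $\varrho_\epsilon^\gamma\to\varrho^\gamma$ and $\varrho_\epsilon^\beta\to\varrho^\beta$ strongly in $L^1$. The weak $L^{(\beta+1)/\beta}$ limit in \eqref{eq:pressure_bar} must coincide with this strong limit, so $\overline{p_\delta(\varrho)}=p_\delta(\varrho)$ a.e. Substituting gives the stated pressure term.

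Two remaining terms need a short check.
\begin{itemize}
\item The inertial term changes from $(\epsilon+[\varrho_\epsilon]_l)\mathbf v_\epsilon$ to $[\varrho]_l\mathbf v$. This holds because $\epsilon\mathbf v_\epsilon\to0$ in $L^\infty_tL^2_x$ by the energy bound. In addition, $[\varrho_\epsilon]_l\to[\varrho]_l$ in $C_tC^m_x$ and $\mathbf v_\epsilon\rightharpoonup\mathbf v$, which identifies the product.
\item The corrector $-\tfrac\epsilon2[\nabla^\phi\varrho_\epsilon]_l\cdot\nabla^\phi\mathbf v_\epsilon\cdot\mathbf q$ vanishes. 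Lemma~\ref{app:lem_unweighted_diffusion} gives $\sqrt\epsilon\,\nabla^\phi\varrho_\epsilon$ bounded in $L^2$ on $\Omega_R$, hence $\epsilon[\nabla^\phi\varrho_\epsilon]_l\to0$ in $L^2_tL^\infty_x$ at fixed $l$. Pairing this against the $L^2_tH^1_x$ bound on $\mathbf v_\epsilon$ sends the term to zero; letting $R\to\infty$ removes the localization.
\end{itemize}

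The time-dependent tests $\mathbf q(t),\psi(t)$ are handled by the usual integration by parts in time at the approximate level. The endpoint term $\int[\varrho]_l\mathbf v(t)\cdot\mathbf q(t)$ is understood for a.e.\ $t$, or for every $t$ through the weakly continuous representative of the momentum.

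The main obstacle is the endpoint and initial-data terms. The initial momentum $[\varrho_{0,\delta,\epsilon}]_l\mathbf v_{0,\delta,\epsilon}$ must converge to $[\varrho_{0,\delta}]_l\mathbf v_{0,\delta}$. I would obtain this from the convergences $\varrho_{0,\delta,\epsilon}\to\varrho_{0,\delta}$ in $L^\beta$ and $\mathbf m_{0,\delta,\epsilon}\to\mathbf m_{0,\delta}$, together with the convergence of the kinetic energies. Together these give strong convergence of $\sqrt{\varrho_{0,\delta,\epsilon}}\,\mathbf v_{0,\delta,\epsilon}$ in $L^2$, which combined with the smooth multiplier $[\cdot]_l$ passes the limit.
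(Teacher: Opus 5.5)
Your route is the paper's. The paper proves this proposition in one line, by combining Proposition~\ref{thm:limit_eps_preliminary} with the identification \eqref{eq:eps_pressure_identified}. Your treatment of the pressure and your checks of the $\epsilon$-dependent terms are correct, for instance $\|\epsilon\mathbf v_\epsilon\|_{L^2}^2\le 2\epsilon\,\mathcal E_\epsilon$ and $\|\epsilon[\nabla^\phi\varrho_\epsilon]_l\|_{L^2_tL^\infty_x}\le C_{l,R}\sqrt\epsilon$ on $\Omega_R$.

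The step you single out as the main obstacle, the initial momentum, fails as argued. Strong $L^2$ convergence of $\sqrt{\varrho_{0,\delta,\epsilon}}\,\mathbf v_{0,\delta,\epsilon}$ does follow from the listed data convergences by the convexity argument. But it gives no control of $[\varrho_{0,\delta,\epsilon}]_l\mathbf v_{0,\delta,\epsilon}$. The mollified density does not factor through $\sqrt\varrho$, and $[\varrho]_l/\sqrt\varrho$ is unbounded wherever $\varrho$ is small but has mass within distance $l$. This happens in particular on the vacuum set just outside $\supp\varrho_{0,\delta}\subset\overline{\mathcal O_{\zeta_0}}$. There $[\varrho_{0,\delta}]_l\ge c>0$, while only $\epsilon|\mathbf v_{0,\delta,\epsilon}|^2$ is controlled.

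Concretely, take admissible data with $\varrho_{0,\delta,\epsilon}=\epsilon$ on a fixed ball $B$ in that region. Add to $\mathbf v_{0,\delta,\epsilon}$ a smooth bump of height $\epsilon^{-1/4}$ supported in $B$.
\begin{itemize}
\item The momentum changes by $O(\epsilon^{3/4})$ and the kinetic energy by $o(1)$, so every convergence the paper lists for the data still holds.
\item However, $\int_B[\varrho_{0,\delta,\epsilon}]_l\,\mathbf v_{0,\delta,\epsilon}\cdot\mathbf q\gtrsim\epsilon^{-1/4}$ for suitable $\mathbf q$.
\end{itemize}
Even the limit $[\varrho_{0,\delta}]_l\mathbf v_{0,\delta}$ need not be integrable if one only knows $\sqrt{\varrho_{0,\delta}}\mathbf v_{0,\delta}\in L^2$.

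The missing ingredient is control of the velocity itself, and it has to be built into the initial-data construction. For instance, choose $\mathbf m_{0,\delta}=\varrho_{0,\delta}\mathbf v_{0,\delta}$ with a truncated, hence bounded, $\mathbf v_{0,\delta}$; this is compatible with \eqref{eq:delta_initial_kinetic}. Then require $\mathbf v_{0,\delta,\epsilon}\to\mathbf v_{0,\delta}$ in $L^2(\mathcal O_\alpha)$. With this, the argument closes:
\begin{itemize}
\item $[\varrho_{0,\delta,\epsilon}]_l\to[\varrho_{0,\delta}]_l$ uniformly at fixed $l$, so the product converges.
\item $\epsilon\mathbf v_{0,\delta,\epsilon}\to0$ in $L^2$, because $\epsilon\le\varrho_{0,\delta,\epsilon}$ gives $\|\epsilon\mathbf v_{0,\delta,\epsilon}\|_{L^2}^2\le\epsilon\int\varrho_{0,\delta,\epsilon}|\mathbf v_{0,\delta,\epsilon}|^2$.
\end{itemize}
The paper does not treat this point either. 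It is tacitly absorbed into Proposition~\ref{thm:limit_eps_preliminary} and the data construction, so that is where the repair belongs.
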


	\medskip

	\section{Spatial-regularization limit \texorpdfstring{$l\to0$}{l to 0}}
	
	This section removes the spatial mollification in the inertial coefficient at fixed $\delta>0$, identifies the physical mass flux, and then passes the effective-viscous-flux argument to the limit $l\to0$.

	We next let $l\to0$ with $\delta>0$ fixed.  The only $l$-dependence is in the
	inertial factors $[\varrho_l]_l\mathbf v_l$ and
	$[\varrho_l]_l\mathbf v_l\otimes\mathbf v_l$, whereas the continuity equation
	contains the physical mass flux $\varrho_l\mathbf v_l$.  The corresponding
	Friedrichs commutators vanish in a negative Sobolev space; the remaining
	pressure argument is the same localized effective-flux mechanism as in the
	previous section.  The mollifier estimates and commutator argument are given
	below in Subsection~\ref{app:spatial_reg}.
	
	\subsection{Uniform estimates and compactness}
	
	Although the statement of Proposition~\ref{thm:limit_eps} allows the constant
	to depend on $l$, inspection of the energy proof shows that the
	approximation-level energy constants are uniform in the mollification
	scale.  Indeed, spatial convolution is $L^q$-contractive and all
	mollified inertial terms were introduced in the energy-consistent form.
	Hence the following estimate is uniform in $l$.
	
	\begin{lemma}[Uniform $l$-independent estimates]
		
		For every $p\ge1$ there exists
		$C=C(p,T,\delta,\alpha)$, independent of $l$, such that
		\begin{equation}
			\label{eq:l_energy_moment}
			\sup_{l>0}
			\mathbb E_l\left[
			\sup_{0\le t\le T}\mathcal E_l(t)^p
			+
			\mathcal D_l(T)^p
			\right]
			\le C.
		\end{equation}
		Here $\mathcal E_l$ and $\mathcal D_l$ are the quantities in
		\eqref{eq:N_limit_energy_def}--\eqref{eq:N_limit_dissipation_def} after
		the $\epsilon\to0$ passage, so the artificial-density diffusion terms
		are absent while the artificial pressure, exterior viscosity, normal
		penalty, and Navier friction are retained.  In particular,
		\eqref{eq:l_energy_moment} controls uniformly in $l$ the density in
		$L^\infty_tL^\beta_x$, the velocity in $L^2_tH^1_x$, the shell variables
		in their energy spaces, and both interface dissipations.
	\end{lemma}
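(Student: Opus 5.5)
The plan is not to reuse the $l$-dependent constant of Proposition~\ref{thm:limit_eps}, but to go back to the discrete energy inequality \eqref{energy} and check that every constant in the chain leading to \eqref{eq:approx_energy_moment} is independent of $l$. The bound then passes unchanged through the limits $N\to\infty$, $m\to\infty$ and $\epsilon\to0$. Those limits use only Fatou's lemma and weak lower semicontinuity, which introduce no new constants, provided the pre-limit bound is already $l$-uniform.

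\textbf{Step 1: the mollified inertial terms in the energy identity.} Test \eqref{spmom} with $\mathbf v_N$. The time-derivative term $\partial_t\bigl((\epsilon+[\varrho_N]_l)\mathbf v_N\bigr)\cdot\mathbf v_N$ produces $\frac12\frac{\dd}{\dd t}\int(\epsilon+[\varrho_N]_l)|\mathbf v_N|^2$ plus $\frac12\int\partial_t[\varrho_N]_l|\mathbf v_N|^2$. Mollifying \eqref{spcon} gives
\[
\partial_t[\varrho_N]_l=-[\Div^\phi(\varrho_N\mathbf v_N)]_l+\epsilon[\Delta^\phi\varrho_N]_l .
\]
The convective term $[\varrho_N]_l\mathbf v_N\otimes\mathbf v_N:\nabla^\phi\mathbf v_N$ and the correction $-\frac\epsilon2[\nabla^\phi\varrho_N]_l\cdot\nabla^\phi\mathbf v_N\cdot\mathbf v_N$ are designed so that these contributions cancel up to one mismatch: the mass flux that appears is $[\varrho_N]_l\mathbf v_N$ rather than $[\varrho_N\mathbf v_N]_l$. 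I would verify this cancellation explicitly. Wherever it is not exact, the mismatch must be bounded without any constant $l^{-1}$, using only $\|[f]_l\|_{L^q}\le\|f\|_{L^q}$ and $\|[\nabla f]_l\|_{L^q}\le\|\nabla f\|_{L^q}$.

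The pullback structure needs separate care. $\Div^\phi$ does not commute with mollification. The commutator $[\kappa_l*,\Div^\phi]$, however, involves only first derivatives of the coefficients $\mathbf A_\phi$ multiplied by undifferentiated fields. On $\Omega_R$ it is bounded in $L^q$ by $C_R$, uniformly in $l$, so it contributes a Gronwall-type term with an $l$-independent constant.

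\textbf{Step 2: the remaining terms.} The pressure, viscous, penalty, friction, and shell terms do not involve $l$. The shell martingale $M_N^{\rm sh}$ and the correction $\mathcal R_k$ are controlled by Lemma~\ref{lem:shell_energy_commutator} and Lemma~\ref{lem:approx_bending_energy} with constants $c_k$ depending only on $\mathcal Y_k$. The splitting remainder vanishes by Lemma~\ref{lem:N_splitting_remainder} before $l$ plays any role. Lemma~\ref{lem:N_splitting_remainder} is only used as $N\to\infty$, so its $l$-dependent rate is harmless. The BDG and Gronwall argument of Proposition~\ref{thm:approx_existence} then gives \eqref{eq:approx_energy_moment} with $C_{p,T,\delta}$ and the flow moments only.

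For the initial energy, note that $\int[\varrho_{0}]_l|\mathbf v_0|^2$ is not a priori comparable to $\int\varrho_0|\mathbf v_0|^2$. I would handle this with the boundedness and smoothness of the approximate data at fixed $\delta$, or by choosing the data so that this quantity is bounded independently of $l$.

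\textbf{Step 3: passage through the limits.} Carry the bound through $N$, $m$ and $\epsilon$ by lower semicontinuity, and remove the localization on $\Omega_R$ with the moment bound $\mathbb E[\Lambda^r]<\infty$, as in Proposition~\ref{app:prop_higher_integrability}. This yields \eqref{eq:l_energy_moment}. The listed controls then follow from coercivity of $\mathbb S_\kappa^{\zeta^*}$, Korn's inequality on $\mathcal O_\alpha$, and equivalence of the pullback norms.

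\textbf{Main obstacle.} I expect Step 1, the exactness of the energy-consistent cancellation, to be the hardest part. If $[\varrho_N]_l\mathbf v_N$ and $[\varrho_N\mathbf v_N]_l$ do not cancel exactly, a Friedrichs-type commutator remains. It is uniformly bounded only through $\|\mathbf v_N\|_{W^{1,\infty}}$, which is finite at level $m$ but not uniformly in $m$. My first attempt would therefore be to show that the scheme's choice of inertial coefficient makes the identity exact at the level of the momentum equation, using $\frac12\partial_t[\varrho]_l$ paired with the mollified continuity equation.
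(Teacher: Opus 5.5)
Your overall route is the paper's. Its proof is just the remark that convolution is $L^q$-contractive and that the mollified inertial terms ``were introduced in the energy-consistent form''. Hence the constants in \eqref{eq:approx_energy_moment} never see $l$, and they survive the $N$, $m$, $\epsilon$ limits by Fatou and lower semicontinuity. Everything therefore hinges on an \emph{exact} cancellation, and your suspicion about it is justified. For \eqref{spcon}--\eqref{spmom} as displayed, test with $\mathbf v_N$ and insert $\partial_t[\varrho_N]_l=-[\Div^\phi(\varrho_N\mathbf v_N)]_l+\epsilon[\Delta^\phi\varrho_N]_l$. Apart from the artificial-diffusion contributions, this leaves the residual
\[
\frac12\int_{\mathcal O_\alpha}\Bigl([\Div^\phi(\varrho_N\mathbf v_N)]_l-\Div^\phi\bigl([\varrho_N]_l\mathbf v_N\bigr)\Bigr)|\mathbf v_N|^2\,\dd\bm x ,
\]
which is nonzero in general. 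So the ``first attempt'' in your last paragraph, proving that the identity is exact for this scheme, cannot succeed, and the proposal offers no working alternative.

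Neither piece of this residual can be estimated away at the energy level. Split it into the commutator $[\kappa_l*,\Div^\phi]$ acting on $\varrho_N\mathbf v_N$, and $\Div^\phi\bigl([\varrho_N\mathbf v_N]_l-[\varrho_N]_l\mathbf v_N\bigr)$.

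You call the first piece a Gronwall-type term. Paired with $|\mathbf v_N|^2$, however, your $L^q$ commutator bound only gives $C_R\|\varrho_N\|_{L^\beta}\|\mathbf v_N\|_{L^6}^3$, which is cubic in the velocity. It can neither be absorbed into $\int|\nabla^\phi\mathbf v_N|^2$ nor closed by Gronwall, and an extra factor $l$ in front would not change this. Even a genuinely linear term with constant $C_R$ would produce a factor $e^{C_RT}$. The polynomial moments $\mathbb E[\Lambda^r]<\infty$ you invoke in Step~3 cannot remove that localization.

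The second piece is cubic as well, as you observe yourself, and it has no $m$-uniform bound.

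The missing ingredient is structural rather than an estimate: energy consistency has to be built into the approximate momentum equation. One way is to add a correction term whose pairing with $\mathbf v_N$ cancels the residual exactly, in the spirit of the $\epsilon$-correction. You would then show that this correction vanishes against fixed tests as $l\to0$, using \eqref{app:eq_l_Friedrichs} and the energy-level integrability of $\varrho_N$ and $\mathbf v_N$. Only then does $l$-independence reduce to $L^q$-contractivity, which is what the paper's argument takes for granted.

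Your point about the initial kinetic term $\int[\varrho_{0,\delta}]_l|\mathbf v_{0,\delta}|^2$ is fair, and the paper does not address it. Choosing $\mathbf v_{0,\delta}$ bounded in the data construction settles it.
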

	
	\begin{proof}
		The bound follows from the  energy estimate after the
		$\epsilon\to0$ passage.  The only $l$-dependent operation is spatial
		convolution.  Since
		\[
		\|[\varrho_l]_l\|_{L^\beta}
		\le
		\|\varrho_l\|_{L^\beta},
		\]
		all constants in the energy estimate are independent of $l$.  The
		interface terms retain the normal-only penalty and the Navier friction
		dissipation.
	\end{proof}
	
	Since $\partial_t\zeta_l=w_l$, the estimate
	\eqref{eq:l_energy_moment} also yields
	$\sup_l\mathbb E_l\|\zeta_l\|_{W^{1,2}(0,T;H^1(\Gamma))}^{2p}\le C$.
	
	We first quantify the mollification defect and identify the momentum variable used in the $l\to0$ passage.
		Set
	\[
	\mathbf m_l
	:=
	[\varrho_l]_l\mathbf v_l,
	\qquad
	\mathbf j_l
	:=
	\varrho_l\mathbf v_l.
	\]
	The two quantities need not coincide for fixed $l$, but their difference
	vanishes in a negative Sobolev norm.

	\begin{lemma}[Vanishing mollification defect]
		\label{lem:l_momentum_defect}
		Let
		$
		r_\beta
		:=
		\frac{2\beta}{\beta+2}>1.
		$ Then, we have
		\begin{equation*}			\mathbf m_l-\mathbf j_l
			\to0
			\qquad
			\text{in }
			L^2(0,T;W^{-1,r_\beta}(\mathcal O_\alpha))
		\end{equation*}
		in probability, and almost surely after the Skorokhod representation.
	\end{lemma}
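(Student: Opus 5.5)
The plan is to write the defect as a mollification error on the density, multiplied by the velocity:
\[
\mathbf m_l-\mathbf j_l=([\varrho_l]_l-\varrho_l)\mathbf v_l,
\]
and to estimate this in $W^{-1,r_\beta}$ by duality. Fix $\bm\varphi\in W^{1,r_\beta'}_0(\mathcal O_\alpha)$; after extending by zero, the mollifier $\kappa_l$ can be assumed even. Then
\[
\int([\varrho_l]_l-\varrho_l)\mathbf v_l\cdot\bm\varphi\,\dd\bm x
=\int\varrho_l\bigl([\mathbf v_l\cdot\bm\varphi]_l-\mathbf v_l\cdot\bm\varphi\bigr)\,\dd\bm x .
\]
This moves the mollification onto the product $\mathbf v_l\cdot\bm\varphi$, which has one derivative. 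The standard translation estimate gives
\[
\|[f]_l-f\|_{L^{\beta'}}\le Cl\,\|\nabla f\|_{L^{\beta'}} .
\]
We apply it with $f=\mathbf v_l\cdot\bm\varphi$ and bound $\nabla f$ by H\"older:
\[
\|\nabla(\mathbf v_l\cdot\bm\varphi)\|_{L^{\beta'}}\le\|\nabla\mathbf v_l\|_{L^2}\|\bm\varphi\|_{L^{q_1}}+\|\mathbf v_l\|_{L^6}\|\nabla\bm\varphi\|_{L^{q_2}},
\]
where $1/\beta'=1/2+1/q_1=1/6+1/q_2$. The exponent $r_\beta=2\beta/(\beta+2)$ satisfies $1/r_\beta=1/2+1/\beta$, so $1/r_\beta'=1/2-1/\beta$. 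We need $q_2\le r_\beta'$ and $q_1\le (r_\beta')^*$.

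Checking $q_2$: we have $1/q_2=1-1/\beta-1/6=5/6-1/\beta$. Since $\beta>6$, this gives $1/q_2>1/2-1/\beta$, so $q_2<r_\beta'$ as needed. The $q_1$ condition is weaker, because $1/q_1=1/2-1/\beta$ equals $1/r_\beta'$ and Sobolev gives $\bm\varphi\in L^{r_\beta'}$. If the $l$-gain is not needed at full strength, one can instead interpolate $\|[f]_l-f\|\lesssim l^\theta$ and keep some slack. Combining these bounds yields, for a.e. $t$,
\[
\|\mathbf m_l-\mathbf j_l\|_{W^{-1,r_\beta}}\le C\,l\,\|\varrho_l\|_{L^\beta}\|\mathbf v_l\|_{H^1}.
\]
Here we used $H^1\subset L^6$ on $\mathcal O_\alpha$.

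Next we square and integrate in time. Taking expectations and using the uniform bound \eqref{eq:l_energy_moment} for $\sup_t\|\varrho_l\|_{L^\beta}$ and $\|\mathbf v_l\|_{L^2H^1}$ (with Cauchy--Schwarz in $\omega$) gives
\[
\mathbb E_l\|\mathbf m_l-\mathbf j_l\|_{L^2(0,T;W^{-1,r_\beta})}\le Cl .
\]
Chebyshev then gives convergence to zero in probability. For the almost sure statement after the Skorokhod representation, the pathwise bound $Cl\,\sup_t\|\varrho_l\|_{L^\beta}\|\mathbf v_l\|_{L^2H^1}$ is available. Almost sure convergence of the represented variables (weak $L^2H^1$ for the velocity, $C_wL^\beta$ for the density) makes these norms a.s. bounded along the sequence, since weakly convergent sequences are bounded. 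Hence the right-hand side tends to zero almost surely.

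The main obstacle is the exponent bookkeeping in the duality step: making sure the product $\mathbf v_l\cdot\bm\varphi$ lies in $W^{1,\beta'}$ for $\bm\varphi\in W^{1,r_\beta'}$. This is exactly where $\beta>6$ from \eqref{eq:global_beta_choice} enters. Boundary effects of the mollification near $\partial\mathcal O_\alpha$ must also be handled. I would treat them by zero extension, relying on the fact that $\bm\varphi$ vanishes on the boundary. If the exterior cylinder boundary causes trouble, I would fall back on periodic horizontal directions plus a reflection in $z$.
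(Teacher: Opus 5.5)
Your argument is correct, but it takes a different route from the paper's. The paper splits
$\mathbf m_l-\mathbf j_l=\bigl([\varrho_l]_l\mathbf v_l-\mathcal J_l(\varrho_l\mathbf v_l)\bigr)+\bigl(\mathcal J_l(\varrho_l\mathbf v_l)-\varrho_l\mathbf v_l\bigr)$ and bounds the two pieces separately:
\begin{itemize}
\item the first piece by the Friedrichs commutator estimate \eqref{app:eq_l_Friedrichs}, in the \emph{strong} norm $L^{r_\beta}$ (this is where $1/r_\beta=1/\beta+1/2$ comes from);
\item the second piece by the negative-norm mollifier bound \eqref{app:eq_l_mollifier_negative}.
\end{itemize}

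You never commute $\mathcal J_l$ past $\mathbf v_l$. Instead you transpose the mollifier onto $\mathbf v_l\cdot\bm\varphi$ and use only the translation bound $\|[f]_l-f\|_{L^{\beta'}}\le l\|\nabla f\|_{L^{\beta'}}$. In effect you prove the paper's negative-norm estimate with the velocity absorbed into the test function, so no commutator lemma is needed.

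What your route buys:
\begin{itemize}
\item It is more elementary.
\item It handles the boundary cleanly. Since $\bm\varphi\in W^{1,r_\beta'}_0$, the zero extension $\widetilde g$ of $\mathbf v_l\cdot\bm\varphi$ lies in $W^{1,\beta'}$ of the whole space.
\item It works for whatever fixed bounded extension $E$ the paper uses to define $[\cdot]_l$, not just zero extension. The density only enters as $E\varrho_l$ paired against $[\widetilde g]_l-\widetilde g$.
\end{itemize}

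What the paper's decomposition buys: strong $L^{r_\beta}$ smallness of the commutator part. The same splitting is then reused for the inertial remainder in the effective-flux passage (Proposition~\ref{app:prop_l_effective_flux}).

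One correction to your bookkeeping: the estimate does not use $\beta>6$. The condition $1/q_2\ge 1/r_\beta'$ reads $5/6-1/\beta\ge 1/2-1/\beta$, which holds for every $\beta$. The term with $\nabla\mathbf v_l$ closes exactly with $q_1=r_\beta'$ by H\"older alone; no Sobolev embedding is needed. So the lemma only requires $\beta>2$, which is what makes $r_\beta>1$. The restriction $\beta>6$ is used elsewhere in the paper, namely in the exterior boundary-pressure estimate.
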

	
	\begin{proof}
		The point is to separate a genuine Friedrichs commutator from the ordinary
		mollifier error:
		\[
		\mathbf m_l-\mathbf j_l
		=
		\bigl([\varrho_l]_l\mathbf v_l-\mathcal J_l(\varrho_l\mathbf v_l)\bigr)
		+
		\bigl(\mathcal J_l(\varrho_l\mathbf v_l)-\varrho_l\mathbf v_l\bigr).
		\]
		The first term is $O(l)$ in $L^{r_\beta}$ by the standard Friedrichs
		commutator estimate, using $\varrho_l\in L^\beta$ and
		$\nabla\mathbf v_l\in L^2$. The second is $O(l)$ in
		$W^{-1,r_\beta}$ because mollification approximates the identity in that
		negative norm. The energy bounds make both estimates integrable in time,
		which proves the stated convergence.
	\end{proof}

	The following pressure gain is uniform in $l$ and is used in the momentum
	time-regularity estimate entering the tightness argument. Its proof uses
	the scale-invariant contraction
	$\|[\varrho_l]_l\|_{L^\beta}\le\|\varrho_l\|_{L^\beta}$ rather than
	an $l$-dependent $L^\infty$ estimate.
	
	\begin{proposition}[Higher integrability uniform in $l$]
		\label{app:prop_l_higher_integrability}
		There exists $C_\delta$, independent of $l$, such that
		\begin{equation}
			\label{app:eq_l_higher}
			\sup_{l>0}
			\mathbb E
			\int_0^T\!\!\int_{\mathcal O_\alpha}
			\left(
			a\varrho_l^{\gamma+1}
			+
			\delta\varrho_l^{\beta+1}
			\right)
			\,\dd x\,\dd t
			\le C_\delta .
		\end{equation}
	\end{proposition}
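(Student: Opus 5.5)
The plan follows the proof of Proposition~\ref{app:prop_higher_integrability}. The new point is that every constant must be kept independent of $l$. I fix $R\ge1$, work on $\Omega_R=\{\Lambda\le R\}$, and choose a deterministic partition $0=t_0<\dots<t_M=T$ satisfying \eqref{app:eq_freezing_bound} with a small parameter $\xi$. On each $I_i$ I set
\[
g_l:=\varrho_l-\langle\varrho_l\rangle_{\mathcal O_\alpha},
\qquad
\mathbf q_{l,i}^\tau:=\chi_i\,\mathcal B_i^\phi[g_l^\tau],
\]
where $g_l^\tau$ is a backward time regularization and $\chi_i$ is a time cutoff on $I_i$. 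This pair $(\mathbf q_{l,i}^\tau,0)$ is inserted into the coupled identity of Proposition~\ref{thm:limit_eps}. Because the shell component is zero, the shell terms drop out. What remains are the normal-penalty and Navier-friction traces of $\mathbf q$. At fixed $\delta$ these are bounded by the dissipation in \eqref{eq:l_energy_moment} together with the trace theorem and $\|\mathbf q_{l,i}^\tau\|_{W^{1,q}}\le C_R\|g_l\|_{L^q}$.

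The pressure term splits exactly as in \eqref{app:eq_higher_pressure_split}. Its main part is $\int\chi_i(a\varrho_l^{\gamma+1}+\delta\varrho_l^{\beta+1})$. The mean-value part is bounded by $C_{R,\delta}$ using the $L^\infty_tL^\beta_x$ bound. The remaining terms are estimated with $l$-uniform bounds only, as follows.
\begin{itemize}
\item \emph{Viscous term.} It is bounded by $\|\mathbf v_l\|_{L^2H^1}\|g_l\|_{L^2}$. Since $\beta>6$, this is bounded by energy quantities.
\item \emph{Convective term.} I use $\int[\varrho_l]_l|\mathbf v_l|^2|\nabla\mathbf q|$. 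The contraction $\|[\varrho_l]_l\|_{L^\beta}\le\|\varrho_l\|_{L^\beta}$, the embedding $H^1\subset L^6$, and $\nabla\mathbf q\in L^\beta_x$ (by Bogovski\u{\i} with $q=\beta$) control it, because $1/\beta+1/3+1/\beta\le1$.
\item \emph{Time-derivative term.} It moves onto $\partial_t g_l^\tau=-\Div^\phi(\varrho_l\mathbf v_l)^\tau$, which is legitimate since no $\epsilon$-diffusion is present. Then \eqref{app:eq_Bi_negative} gives $\|\mathcal B_i^\phi[\Div^{\phi_i}(\varrho_l\mathbf v_l)]\|_{L^{r}}\le C_R\|\varrho_l\mathbf v_l\|_{L^{r}}$ with $r=6\beta/(\beta+6)$. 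This is paired against the inertial momentum $[\varrho_l]_l\mathbf v_l\in L^\infty_tL^{2\beta/(\beta+1)}_x$. Here I must check Hölder exponents: $(\beta+1)/(2\beta)+(\beta+6)/(6\beta)\le1$, i.e. $\beta\ge 9/2$, which holds since $\beta>6$.
\item \emph{Freezing commutator.} The mismatch $\Div^\phi-\Div^{\phi_i}$ contributes $C_R\xi$ times the $(\beta+1)$-integral, and this is absorbed.
\item \emph{Endpoint terms.} The terms at $t_i$ and $t_{i+1}$ are bounded by $\sup_t\mathcal E_l$.
\end{itemize}

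Letting $\tau\to0$ and summing over $i$ gives the analogue of \eqref{app:eq_higher_absorption}:
\[
\mathbb E\bigl[\mathbf1_{\Omega_R}P_l\bigr]\le C_{R,\delta}+C_R\xi\,\mathbb E\bigl[\mathbf1_{\Omega_R}P_l\bigr],
\qquad
P_l:=\int_0^T\!\!\int_{\mathcal O_\alpha}\bigl(a\varrho_l^{\gamma+1}+\delta\varrho_l^{\beta+1}\bigr)\,\dd x\,\dd t.
\]
I then choose $\xi$ so that $C_R\xi<1/2$. For the absorption to be legitimate, the left side must be finite a priori for each $l$; this holds because at fixed $l$ the quantity was already bounded via Proposition~\ref{app:prop_higher_integrability} and Fatou. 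Finally, the localization is removed by a dyadic decomposition in $R$. This needs the polynomial growth of $C_{R,\delta}$ in $R$, the moments $\mathbb E\Lambda^r<\infty$, and Hölder in $\omega$ against the $L^p(\Omega)$ energy moments, which gives \eqref{app:eq_l_higher}.

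The main obstacle is the time-derivative and inertial pairing. The momentum multiplying $\partial_t\mathbf q$ is the mollified $[\varrho_l]_l\mathbf v_l$, while the continuity equation differentiates $g_l$ through the unmollified flux $\varrho_l\mathbf v_l$. No $L^\infty$ bound on $[\varrho_l]_l$ may be used, since that would bring in $l$-dependence. I would first try the direct Hölder pairing above, relying only on $L^q$-contractivity of convolution. If the exponents turn out too tight near the boundary of the admissible range, the fallback is to use Lemma~\ref{lem:l_momentum_defect} to replace $\mathbf m_l$ by $\mathbf j_l$ up to an error that is uniformly bounded in $l$.
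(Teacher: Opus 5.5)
Your proposal is correct and follows the paper's own route. The paper's proof consists only of reusing the frozen Bogovski\u{\i} test of Proposition~\ref{app:prop_higher_integrability} and noting that the single $l$-dependent coefficient $[\varrho_l]_l$ enters through the $L^\beta$-contractive convolution. Your H\"older estimates for the convective and time-derivative terms spell this out explicitly, and your remarks on a priori finiteness for the absorption step and on the dyadic removal of the localization supply details the paper leaves implicit.
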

	
	\begin{proof}
		Use the frozen Bogovski\u{\i} test from
		Proposition~\ref{app:prop_higher_integrability}.
		The only modified inertial coefficient is $[\varrho_l]_l$.
		Since $\mathcal J_l$ is an $L^\beta$ contraction, all estimates are
		uniform in $l$, and \eqref{app:eq_l_higher} follows.
	\end{proof}

	We next establish tightness and apply the Jakubowski--Skorokhod representation.
	
	Let
	\[
	s_\beta:=\frac{6\beta}{\beta+6}>2,
	\qquad
	p_\beta:=\frac{2\beta}{\beta+1}>\frac65.
	\]
	The energy bounds imply
	\[
	\mathbf j_l
	\ \text{bounded in }\
	L^2(0,T;L^{s_\beta}(\mathcal O_\alpha)),
	\]
	whereas
	\[
	\mathbf m_l
	\ \text{bounded in }\
	L^\infty(0,T;L^{p_\beta}(\mathcal O_\alpha)).
	\]
	The momentum equation, together with the pressure gain
	\eqref{app:eq_l_higher}, yields a bound for
	$\partial_t\mathbf m_l$ in a sufficiently negative Sobolev space,
	uniformly in $l$.  Since
	\[
	L^{p_\beta}(\mathcal O_\alpha)
	\subset
	H^{-1}(\mathcal O_\alpha)
	\]
	for $\beta>4$, Simon's compactness theorem yields
	\begin{equation}
		\label{eq:l_m_compact}
		\{\mathbf m_l\}
		\quad\text{relatively compact in}\quad
		L^2(0,T;H^{-1}(\mathcal O_\alpha)).
	\end{equation}

	We use the path space
	\[
	\mathcal W
	=
	\mathcal W_\varrho
	\times
	\mathcal W_{\mathbf v}
	\times
	\mathcal W_{\mathbf m}
	\times
	\mathcal W_{\mathbf j}
	\times
	\mathcal W_\zeta
	\times
	\mathcal W_{\zeta^*}
	\times
	\mathcal W_w
	\times
	\mathcal W_\phi
	\times
	\mathcal W_W,
	\]
	where
	\begin{align*}        & \mathcal W_\varrho
		:=
		C_w([0,T];L^\beta(\mathcal O_\alpha)), \;\; \mathcal W_{\mathbf v}
		:=
		\bigl(
		L^2(0,T;H^1(\mathcal O_\alpha)),w
		\bigr), \;\; \mathcal W_{\mathbf m}
		:=
		L^2(0,T;H^{-1}(\mathcal O_\alpha)), \\
		& \mathcal W_{\mathbf j}
		:=
		\bigl(
		L^2(0,T;L^{s_\beta}(\mathcal O_\alpha)),w
		\bigr), \;\; \mathcal W_\zeta
		:=
		C([0,T];H^s(\Gamma))
		\cap
		\bigl(
		L^\infty(0,T;H^2(\Gamma)),w^*
		\bigr), \\
		& \mathcal W_{\zeta^*}
		:=
		C([0,T];H^s(\Gamma))
		\cap
		\bigl(
		L^\infty(0,T;H^2(\Gamma)),w^*
		\bigr), \\
		& \mathcal W_w
		:=
		\bigl(
		L^2(0,T;H^1(\Gamma)),w
		\bigr)
		\cap
		L^2(0,T;L^2(\Gamma)), \\
		& \mathcal W_\phi
		:=
		C^{\vartheta_0}
		([0,T];C^2(\overline{\mathcal O}_\alpha;
		\overline{\mathcal O}_\alpha)), \;\; \mathcal W_W
		:=
		C([0,T];\mathbb R^K), 
	\end{align*}
	with
	$
	s\in(3/2,2),
	0<\vartheta_0<1/2.
	$
	
	\begin{lemma}[Tightness]
		
		The joint laws of
		$
		(
		\varrho_l,
		\mathbf v_l,
		\mathbf m_l,
		\mathbf j_l,
		\zeta_l,
		\zeta_l^*,
		w_l,
		\phi,
		W
		)
		$
		are tight on $\mathcal W$.
	\end{lemma}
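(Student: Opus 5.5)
The plan is to prove tightness componentwise and then use that a finite product of tight families is tight on the product space; joint tightness follows because each marginal family is tight. For every component we produce a sublevel set of a coercive functional that is compact in the corresponding factor of $\mathcal W$. We then bound the probability of leaving that set by Chebyshev's inequality, using the uniform moments \eqref{eq:l_energy_moment}, the higher-integrability bound \eqref{app:eq_l_higher}, and the time-derivative bounds obtained from the equations. Weak and weak-$*$ factors are handled through closed balls: these are compact in the weak topology by Banach--Alaoglu, and the Jakubowski framework allows this because the spaces have countable separating families of continuous functionals.

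We would treat the components in the following order.
\begin{itemize}
\item For $\mathbf v_l$ in $(L^2H^1,w)$ and $\mathbf j_l$ in $(L^2L^{s_\beta},w)$, balls are weakly compact. The required moments come from \eqref{eq:l_energy_moment}, Korn's inequality on $\mathcal O_\alpha$, and the Sobolev embedding combined with $\varrho_l\in L^\infty L^\beta$.
\item For $\varrho_l$ in $C_w([0,T];L^\beta)$, we use the uniform $L^\infty L^\beta$ bound together with equicontinuity in a negative space. The continuity equation gives $\partial_t\varrho_l=-\Div^\phi\mathbf j_l$, which is bounded in $L^2(0,T;W^{-1,s_\beta})$ on $\Omega_R$. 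We then apply the standard Arzel\`a--Ascoli criterion for $C_w$ spaces and remove the localization $\Omega_R$ using $\mathbb P(\Omega_R^c)\le R^{-1}\mathbb E\Lambda$.
\item For $\mathbf m_l$ in $L^2H^{-1}$, tightness is exactly the compactness \eqref{eq:l_m_compact}, made quantitative. The set $\{\|\mathbf m\|_{L^\infty L^{p_\beta}}\le M,\ \|\partial_t\mathbf m\|_{L^1 H^{-r}}\le M\}$ is compact in $L^2H^{-1}$ by Aubin--Lions--Simon, since $L^{p_\beta}\Subset H^{-1}$ for $\beta>4$. Its complement is controlled by the moment of $\partial_t\mathbf m_l$.
\item For $\zeta_l,\zeta_l^*$, we use the bounds in $L^\infty H^2$ and in $W^{1,2}H^1$ together with the compact embedding into $C_tH^s$, exactly as in Section~\ref{s6}. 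Stopping preserves both bounds.
\item For $w_l$, we use the $L^2H^1$ bound and a bound on $\partial_tw_l$ in $L^2H^{-2}$, obtained as in \eqref{eq:m_w_time} by testing the shell equation.
\item For $\phi$ and $W$, the laws do not depend on $l$, so tightness follows from the $C^\vartheta_tC^3_x$ moments and the compact embedding into $C^{\vartheta_0}_tC^2_x$.
\end{itemize}

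The main obstacle is the uniform-in-$l$ bound on $\partial_t\mathbf m_l$. We would test the momentum equation with $\mathbf q\in W^{r,2}_0$ for $r$ large, so that $\nabla^\phi\mathbf q\in L^\infty$ on $\Omega_R$. The terms are then bounded as follows.
\begin{itemize}
\item The convective term $[\varrho_l]_l\mathbf v_l\otimes\mathbf v_l$ lies in $L^1L^{q}$ by the $L^\beta$ contraction of the mollifier together with $\mathbf v_l\in L^2L^6$.
\item The pressure $p_\delta(\varrho_l)$ lies in $L^{(\beta+1)/\beta}$ by \eqref{app:eq_l_higher}.
\item The viscous stress is in $L^2$.
\item The penalty and friction terms are controlled by $\delta^{-1/2}$ times the square root of the dissipation, together with the trace theorem; $\delta$ is fixed here.
\end{itemize}
This gives $\mathbb E[\mathbf 1_{\Omega_R}\|\partial_t\mathbf m_l\|_{L^1W^{-r,2}}]\le C_{R,\delta}$, and a Chebyshev argument combined with $R\to\infty$ concludes the proof.
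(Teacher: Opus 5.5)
Your proposal is correct and follows essentially the paper's argument. Both prove componentwise tightness from the uniform energy moments. The density is handled through $\partial_t\varrho_l=-\Div^\phi\mathbf j_l$, the momentum through the Simon compactness \eqref{eq:l_m_compact}, $\mathbf v_l$ and $\mathbf j_l$ through weak tightness, and the shell and flow components as in Sections~\ref{s6}--\ref{s7}. The only difference is that you write out the Chebyshev/bounded-flow localization and the uniform-in-$l$ estimate for $\partial_t\mathbf m_l$, which the paper states with less detail.
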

	
	\begin{proof}
		The density tightness follows from \eqref{eq:l_energy_moment} and
		\[
		\partial_t\varrho_l
		=
		-\Div^\phi\mathbf j_l.
		\]
		The velocity is tight in the weak $L^2_tH^1_x$ topology.
		The momentum compactness is
		\eqref{eq:l_m_compact}; the physical mass flux is weakly tight in
		$L^2_tL^{s_\beta}_x$.  The structure and stochastic-flow components are
		handled exactly as in Sections~\ref{s6}--\ref{s7}.
	\end{proof}
	
	Define
	\[
	\mathbf U_l
	:=
	(\varrho_l,\mathbf v_l,\mathbf m_l,\mathbf j_l,
	\zeta_l,\zeta_l^*,w_l,\phi_l,W_l),
	\qquad
	\mathbf U
	:=
	(\varrho,\mathbf v,\mathbf m,\mathbf j,
	\zeta,\zeta^*,w,\phi,W).
	\]
	After a Jakubowski--Skorokhod representation and extraction of a
	subsequence,
	\[
	\mathbf U_l\to\mathbf U
	\qquad\text{almost surely in }\mathcal W.
	\]
	We record only the three component convergences used explicitly below:
	\begin{equation}
		\label{eq:l_m_strong}
		\mathbf m_l\to\mathbf m
		\qquad\text{in }L^2(0,T;H^{-1}(\mathcal O_\alpha)),
	\end{equation}
	\begin{equation*}		\zeta_l^*\to\zeta^*
		\qquad\text{in }C([0,T];H^s(\Gamma)),
	\end{equation*}
	\begin{equation*}		\phi_l\to\phi
		\qquad\text{in }C^{\vartheta_0}([0,T];C^2).
	\end{equation*}
	All remaining component convergences are part of the convergence in
	$\mathcal W$ and will be invoked without separate numbering.
	The canonical martingale characterization is passed as in the previous
	sections, so $W$ remains Wiener and $\phi$ is the stochastic flow generated
	by $W$.
	
	\begin{lemma}[Identification of momentum and convection]
		\label{lem:l_momentum_identification}
		We have
		\begin{equation*}			\mathbf j=\mathbf m=\varrho\mathbf v,
		\end{equation*}
		and
		\begin{equation}
			\label{eq:l_convection_identification}
			\mathbf m_l\otimes\mathbf v_l
			\rightharpoonup
			\varrho\mathbf v\otimes\mathbf v
			\qquad
			\text{in }\mathcal D'((0,T)\times\mathcal O_\alpha).
		\end{equation}
	\end{lemma}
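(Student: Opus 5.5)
We first identify $\mathbf j$ and $\mathbf m$ with one another, and then with $\varrho\mathbf v$. By Lemma~\ref{lem:l_momentum_defect}, after the representation we have $\mathbf m_l-\mathbf j_l\to0$ in $L^2(0,T;W^{-1,r_\beta}(\mathcal O_\alpha))$ almost surely. On the other hand, $\mathbf m_l\to\mathbf m$ strongly in $L^2_tH^{-1}_x$ by \eqref{eq:l_m_strong}, and $\mathbf j_l\rightharpoonup\mathbf j$ weakly in $L^2_tL^{s_\beta}_x$. Testing with $C_c^\infty$ functions therefore gives $\mathbf j=\mathbf m$ in $\mathcal D'$.

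To show $\mathbf j=\varrho\mathbf v$ we use the standard Lions product argument at the level of the physical flux. Since $\varrho_l\to\varrho$ in $C_w([0,T];L^\beta)$ and $\beta>6$, the compact embedding $L^\beta\hookrightarrow H^{-1}$ upgrades this to $\varrho_l\to\varrho$ in $C([0,T];H^{-1}(\mathcal O_\alpha))$. This uses the time equicontinuity coming from $\partial_t\varrho_l=-\Div^\phi\mathbf j_l$ together with the $C^{\vartheta_0}_tC^2_x$ convergence of $\phi_l$, which keeps the pullback divergence uniformly controlled. Pairing this with $\mathbf v_l\rightharpoonup\mathbf v$ weakly in $L^2_tH^1_x$ gives $\varrho_l\mathbf v_l\rightharpoonup\varrho\mathbf v$ in $\mathcal D'$, hence $\mathbf j=\varrho\mathbf v$.

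For the convective term we write $\mathbf m_l\otimes\mathbf v_l$ and pair it with a test $\Psi\in C_c^\infty((0,T)\times\mathcal O_\alpha)$. The key fact is $\mathbf m_l\to\mathbf m=\varrho\mathbf v$ strongly in $L^2_tH^{-1}_x$, while $\Psi\mathbf v_l\rightharpoonup\Psi\mathbf v$ weakly in $L^2_tH^1_0$, because multiplication by a smooth compactly supported $\Psi$ preserves the weak $H^1$ convergence. The pairing $\langle\mathbf m_l,\Psi\mathbf v_l\rangle_{H^{-1},H^1_0}$ is then a strong--weak product and converges to $\int\varrho\mathbf v\otimes\mathbf v:\Psi$. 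We must also check that the distributional pairing agrees with the $L^1$ integral. This holds because $\mathbf m_l\otimes\mathbf v_l$ is bounded in $L^2_tL^{q}_x$ with $1/q=1/p_\beta+1/6<1$: indeed $p_\beta>6/5$, so $1/p_\beta+1/6<1$. Hence the integral is genuinely a product of integrable functions, and the $H^{-1}$--$H^1_0$ duality coincides with it for each $l$. This yields \eqref{eq:l_convection_identification}.

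The main obstacle is the first step: the time-compactness of $\varrho_l$ in a negative space is needed uniformly in $l$ even though the pullback operator $\Div^{\phi_l}$ is random and depends on $l$ after representation. We handle it by writing $\Div^{\phi_l}\mathbf j_l=[\Div(\mathbf j_l\circ\phi_l^{-1})]\circ\phi_l$ and using volume preservation. This reduces the equicontinuity to a bound on $\mathbf j_l$ in $L^2_tL^{s_\beta}_x$ multiplied by $C^1$ norms of $\phi_l^{\pm1}$, which are almost surely bounded along the convergent sequence. If that route turns out awkward, we would instead apply the Lions argument with $\mathbf m$: combine $\mathbf m_l=[\varrho_l]_l\mathbf v_l$ with the convergence $[\varrho_l]_l\to\varrho$ in $C_wL^\beta$, since mollification at a vanishing scale preserves weak limits.
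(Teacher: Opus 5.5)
Your proposal is correct and follows the paper's own argument: Lemma~\ref{lem:l_momentum_defect} gives $\mathbf m=\mathbf j$, and negative-Sobolev compactness of $\varrho_l$ tested against the weak $L^2_tH^1_x$ convergence of $\mathbf v_l$ gives $\mathbf j=\varrho\mathbf v$. The strong convergence \eqref{eq:l_m_strong} paired with $\Psi\mathbf v_l\rightharpoonup\Psi\mathbf v$ in $L^2_tH^1_0$ then identifies the convective tensor; the step you call the main obstacle is already settled by the representation, because a bounded sequence converging in $C_w([0,T];L^\beta)$ converges in $C([0,T];H^{-1})$ by the compact embedding $L^\beta\hookrightarrow H^{-1}$ alone, so $\Div^{\phi_l}$ need not be revisited.
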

	
	\begin{proof}
		The point is to compare two quantities before passing to the nonlinear
		product. The continuity equation gives time control of the physical mass
		flux $\mathbf j_l=\varrho_l\mathbf v_l$, while the momentum equation gives
		time control of $\mathbf m_l=[\varrho_l]_l\mathbf v_l$. Together with the
		energy bounds, this yields compactness of $\mathbf m_l$ in a negative
		Sobolev space. On the other hand, the basic mollifier estimate shows that
		$[\varrho_l]_l-\varrho_l$ is $O(l)$ in a negative norm; paired with the
		$H^1$ velocity bound this is exactly the defect controlled in
		Lemma~\ref{lem:l_momentum_defect}. Hence $\mathbf m_l$ and
		$\mathbf j_l$ have the same distributional limit $\varrho\mathbf v$.
		The strong negative-Sobolev convergence of the momentum can then be paired
		with the weak $H^1$ convergence of $\mathbf v_l$ to identify the
		convective tensor, which proves
		\eqref{eq:l_convection_identification}.
	\end{proof}

	\subsection{Spatial-regularization commutators and effective flux}
	\label{app:spatial_reg}
	
	Here $\delta>0$ is fixed and
	\[
	\mathcal J_l f=[f]_l
	\]
	denotes convolution after a fixed bounded extension from
	$\mathcal O_\alpha$ to $\mathbb R^3$.
	The localization, frozen transformed operators, and effective-flux
	argument are those of Subsection~\ref{app:effective_flux}.
	The only additional issue is the replacement of the mollified inertial
	flux
	\[
	\mathbf m_l:=(\mathcal J_l\varrho_l)\mathbf v_l
	\]
	by the physical mass flux
	\[
	\mathbf j_l:=\varrho_l\mathbf v_l .
	\]
	
	We use the standard mollifier estimates
	\begin{align}
		& \|\mathcal J_l f\|_{L^p}
		\le C\|f\|_{L^p}, \notag \\
		& \|\mathcal J_l f-f\|_{W^{-1,p}}
		\le Cl\|f\|_{L^p},
		\label{app:eq_l_mollifier_negative} 
	\end{align}
	and the Friedrichs commutator bound
	\begin{equation}
		\label{app:eq_l_Friedrichs}
		\|
		(\mathcal J_l f)g-\mathcal J_l(fg)
		\|_{L^r}
		\le
		Cl\|f\|_{L^p}\|\nabla g\|_{L^q},
		\qquad
		\frac1r=\frac1p+\frac1q\le1.
	\end{equation}
	
	Set
	$
	r_\beta:=\frac{2\beta}{\beta+2}.
	$
	Then the energy bounds and
	\eqref{app:eq_l_mollifier_negative}--\eqref{app:eq_l_Friedrichs} give
	\begin{equation}
		\label{app:eq_l_momentum_defect}
		\|\mathbf m_l-\mathbf j_l\|_
		{L^2(0,T;W^{-1,r_\beta}(\mathcal O_\alpha))}
		\to0
	\end{equation}
	in probability as $l\to0$.

	Let
	$
	p_\beta:=\frac{2\beta}{\beta+1}.
	$
	The kinetic-energy estimate yields
	\begin{equation*}		\sup_{l>0}
		\|\mathbf m_l\|_{L^\infty(0,T;L^{p_\beta}(\mathcal O_\alpha))}
		\le C.
	\end{equation*}
	Since $\beta>4$, one has $p_\beta>6/5$ and hence
	\[
	L^{p_\beta}(\mathcal O_\alpha)
	\subset
	H^{-1}(\mathcal O_\alpha).
	\]
	Moreover, the momentum equation and
	\eqref{app:eq_l_higher} give, for some $R_0>3$,
	\begin{equation*}		\sup_{l>0}
		\mathbb E
		\|
		\partial_t\mathbf m_l
		\|_{L^1(0,T;W^{-R_0,1}(\mathcal O_\alpha))}
		<\infty.
	\end{equation*}
	Therefore, by Simon's compactness theorem,
	\begin{equation*}		\{\mathbf m_l\}_l
		\quad\text{is relatively compact in}\quad
		L^2(0,T;H^{-1}(\mathcal O_\alpha)).
	\end{equation*}
	
	After the Skorokhod representation, assume
	\[
	\varrho_l\to\varrho
	\quad\text{in }C_w([0,T];L^\beta),
	\qquad
	\mathbf v_l\rightharpoonup\mathbf v
	\quad\text{in }L^2(0,T;H^1),
	\]
	and
	\[
	\mathbf m_l\to\mathbf m
	\quad\text{strongly in }L^2(0,T;H^{-1}).
	\]
	The continuity equation gives compactness of $\varrho_l$ in a negative
	Sobolev space.  Together with
	\eqref{app:eq_l_momentum_defect}, this identifies
	\begin{equation*}		\mathbf m=\varrho\mathbf v.
	\end{equation*}
	Consequently,
	\begin{equation*}		\mathbf m_l\otimes\mathbf v_l
		\rightharpoonup
		\varrho\mathbf v\otimes\mathbf v
		\qquad
		\text{in }\mathcal D'.
	\end{equation*}

	We now remove the inertial mollification in the effective-viscous-flux argument and then recover strong convergence of the density.
	
	The only modification of the effective-flux argument is the inertial
	commutator caused by the factor $\mathcal J_l\varrho_l$.
	By \eqref{app:eq_l_Friedrichs} and
	\eqref{app:eq_l_momentum_defect}, this error vanishes in the negative
	Sobolev spaces required by the frozen local-potential test.  Hence the
	argument of Proposition~\ref{app:prop_effective_flux} applies without
	further change.
	
	\begin{proposition}[Weak continuity of the effective viscous flux]
		\label{app:prop_l_effective_flux}
		For every
		\[
		K\subset\mathcal O_\alpha,
		\qquad
		\psi\in C_c^\infty((0,T)\times K),
		\qquad
		k\in\mathbb N,
		\]
		one has
		\begin{align}
			& \lim_{l\to0}
			\int_0^T\!\!\int_{\mathcal O_\alpha}
			\psi
			\left[
			p_\delta(\varrho_l)
			-
			\bigl(
			\lambda_\kappa^{\zeta_l^*}
			+
			2\mu_\kappa^{\zeta_l^*}
			\bigr)
			\Div^{\phi_l}\mathbf v_l
			\right]
			T_k(\varrho_l)
			\,\dd x\,\dd t
			\notag \\
			& \qquad=
			\int_0^T\!\!\int_{\mathcal O_\alpha}
			\psi
			\left[
			\overline{p_\delta(\varrho)}
			-
			\bigl(
			\lambda_\kappa^{\zeta^*}
			+
			2\mu_\kappa^{\zeta^*}
			\bigr)
			\Div^\phi\mathbf v
			\right]
			\overline{T_k(\varrho)}
			\,\dd x\,\dd t .
			\label{app:eq_l_effective_flux} 
		\end{align}
	\end{proposition}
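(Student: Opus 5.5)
We will follow the scheme of Proposition~\ref{app:prop_effective_flux} and change only the inertial part, which is where $l$ enters. Fix $R\ge1$, $\xi\in(0,1)$, and the deterministic partition $\{t_i\}$ from \eqref{app:eq_freezing_bound}, and work on $\Omega_R$. On each $I_i$ we insert into the $l$-level coupled identity of Proposition~\ref{thm:limit_eps} the time-regularized test $\psi\,\mathcal A_i[T_k(\varrho_l)]$, with zero shell component. Since $\psi$ is compactly supported in $K\subset\mathcal O_\alpha$ and $K$ may be taken away from the interface tube $T^\delta_{\zeta_l^*}$ and $\Gamma_{\zeta_l^*}$, the penalty and Navier terms either vanish or converge by the strong $C_tH^s$ graph convergence and trace compactness. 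We do the same in the limit identity with $\psi\,\mathcal A_i[\overline{T_k(\varrho)}]$. For $\overline{T_k(\varrho)}$ we need the renormalized continuity equation for $T_k(\varrho_l)$ at level $l$. This follows from Proposition~\ref{app:prop_oscillation_defect}, since each $l$-level solution is a DiPerna--Lions renormalized solution. The defect $\overline{(T_k'(\varrho)\varrho-T_k(\varrho))\Div^\phi\mathbf v}$ gives $\partial_t\overline{T_k(\varrho)}$ in a negative space.

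Next we subtract the two identities and sort the differences. The frozen pressure--viscosity part produces the effective-flux expression, using the pointwise identity $\Div^{\phi_i}\mathcal A_i[f]=\vartheta_1 f$ and the symmetric-gradient rearrangement. The coefficient-freezing errors are handled by \eqref{app:eq_flux_freezing_bound}, which gives $C_{R,k,\psi}\xi$ uniformly in $l$. The viscous commutators, which carry the variable coefficients $\mu_\kappa^{\zeta_l^*},\lambda_\kappa^{\zeta_l^*}$, converge by \eqref{app:eq_Riesz_commutator_compact} together with the $C_tC^1_x$ convergence analogous to \eqref{app:eq_viscosity_strong_C1}, which holds because $\zeta_l^*\to\zeta^*$ in $C_tH^s$. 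The pressure terms have uniform integrability from \eqref{app:eq_l_higher}, so weak limits are legitimate.

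The main obstacle is the inertial and time-derivative terms. These are $[\varrho_l]_l\mathbf v_l\cdot\partial_t\mathcal A_i[T_k(\varrho_l)]$ and $[\varrho_l]_l\mathbf v_l\otimes\mathbf v_l:\nabla^{\phi_i}\mathcal A_i[T_k(\varrho_l)]$, and neither factor is individually compact.

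\begin{enumerate}
\item We rewrite $[\varrho_l]_l\mathbf v_l=\mathbf j_l+(\mathbf m_l-\mathbf j_l)$.
\item The defect $\mathbf m_l-\mathbf j_l$ vanishes in $L^2W^{-1,r_\beta}$ by \eqref{app:eq_l_momentum_defect}. We pair it with $\partial_t\mathcal A_i[T_k]$, which the renormalized equation and the frozen Calder\'on--Zygmund bounds control in $L^2W^{1,r_\beta'}$ locally, since $T_k$ is bounded and $\mathbf v_l$ is in $L^2H^1$. In the convective term the defect carries an extra factor $\mathbf v_l$. There we use instead the Friedrichs estimate \eqref{app:eq_l_Friedrichs} directly, splitting $[\varrho_l]_l\mathbf v_l\otimes\mathbf v_l-\varrho_l\mathbf v_l\otimes\mathbf v_l$ into a commutator, which is $O(l)$ in $L^1_tL^{r}$ with $r>1$ because $\beta>6$, plus a mollifier error. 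The mollifier error is tested against the bounded $\nabla\mathcal A_i[T_k]$, and we move one derivative onto $\mathbf v_l\otimes\nabla\mathcal A_i$. If that derivative loss is too costly, we first mollify $T_k(\varrho_l)$ at a separate scale.
\item With $\mathbf j_l$ in place, the term reduces to the classical Lions commutator $\mathbf v_l\cdot\bigl(T_k\,\mathcal R^i[\varrho_l\mathbf v_l]-\varrho_l\mathbf v_l\,\mathcal R^i[T_k]\bigr)$ in frozen coordinates. The div--curl/commutator lemma gives compactness of this quantity, using $\varrho_l\in L^\beta$ with $\beta>6$ and $T_k\in L^\infty$.
\item The remaining limit $\mathbf v_l\otimes\mathbf m_l\rightharpoonup\varrho\mathbf v\otimes\mathbf v$ is Lemma~\ref{lem:l_momentum_identification}.
\end{enumerate}

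Collecting terms, on $\Omega_R$ the limsup of $|\mathrm{LHS}-\mathrm{RHS}|$ in \eqref{app:eq_l_effective_flux} is at most $C_{R,k,\psi}\xi$. We let $\xi\to0$ and then $R\to\infty$, using $\mathbb P(\Omega_R)\to1$ and the almost-sure convergence along the represented subsequence. This gives the claim.
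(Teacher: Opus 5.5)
Your frame (localization on $\Omega_R$, the deterministic frozen partition, the test $\psi\,\mathcal A_i[T_k(\varrho_l)]$, the $O(\xi)$ freezing errors \eqref{app:eq_flux_freezing_bound}, the variable-viscosity Riesz commutators, the interface terms at fixed $\delta$, and the order $\xi\to0$, $R\to\infty$) is the paper's. The gap is Step~2, where you remove the defect $\mathbf m_l-\mathbf j_l$ separately from the time-derivative term and from the convective term.

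The claim that $\partial_t\mathcal A_i[T_k(\varrho_l)]$ is bounded in $L^2W^{1,r_\beta'}$ is false. $\mathcal A_i[\partial_tT_k(\varrho_l)]$ contains $-\mathcal A_i[\Div^{\phi_l}(T_k(\varrho_l)\mathbf v_l)]$. This is a zero-order Riesz transform of $T_k(\varrho_l)\mathbf v_l$: it is bounded in $L^2_tL^6_x$, but its gradient would require $\nabla T_k(\varrho_l)$, and no such bound survives $\epsilon\to0$. Since $[\varrho_l]_l-\varrho_l$ tends to zero only in negative norms, \eqref{app:eq_l_momentum_defect} gives no control of $\int(\mathbf m_l-\mathbf j_l)\cdot\psi\,\mathcal R^{\,i}[T_k(\varrho_l)\mathbf v_l]$.

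The convective term fails in the same way. Your Friedrichs piece is fine, but the ordinary mollifier error lands on $\psi\nabla\mathcal A_i[T_k(\varrho_l)]$, which has no spatial compactness uniformly in $l$, and moving a derivative onto it costs $\nabla T_k(\varrho_l)$. Your fallback of mollifying $T_k(\varrho_l)$ at a second scale $\ell'$ does not help: at fixed $\ell'$ the identity becomes trivial, and removing $\ell'$ uniformly in $l$ is exactly the density compactness being proved.

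In fact neither defect term is small on its own. Take $\mathbf v_l$ a constant vector and $\varrho_l$ oscillating on scales below $l$. Then each term has a nonzero limit, and they cancel only against each other, because multiplication by a constant commutes with $\mathcal R^{\,i}_{ab}$. (The paper's proof states this step as one remainder $\mathcal R^{\rm in}_l$ bounded by negative norms of the defects; an estimate of that type can only hold for the combined term below.)

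The missing idea is to keep the two inertial terms together. Both carry the same factor $[\varrho_l]_l$. The renormalized equation writes $\partial_tT_k(\varrho_l)$ as $-\Div^{\phi_l}(T_k(\varrho_l)\mathbf v_l)$ plus the $L^2$ term $-(T_k'(\varrho_l)\varrho_l-T_k(\varrho_l))\Div^{\phi_l}\mathbf v_l$, with no mass flux involved. Using the self-adjointness of $\mathcal R^{\,i}_{ab}$, the time-derivative and convective contributions therefore combine into
\[
\int_{I_i}\!\!\int_{\mathcal O_\alpha}\psi\,v_{l,a}\Bigl(T_k(\varrho_l)\,\mathcal R^{\,i}_{ab}[m_{l,b}]-m_{l,b}\,\mathcal R^{\,i}_{ab}[T_k(\varrho_l)]\Bigr)\,\dd x\,\dd t .
\]
This holds modulo freezing errors and terms that converge by \eqref{eq:l_m_strong} and the compactness of $\mathcal A_i$ (those coming from $\partial_t\psi$, the commutators with $\psi$ and $\vartheta_1$, and the $L^2$ term).

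The display is the Lions commutator with the mollified momentum $\mathbf m_l$ itself. Equivalently, it is $\int\psi\,\mathbf m_l\cdot[\mathcal R^{\,i},\mathbf v_l]T_k(\varrho_l)$, where $([\mathcal R^{\,i},\mathbf v_l]f)_a:=\mathcal R^{\,i}_{ab}[v_{l,b}f]-v_{l,b}\mathcal R^{\,i}_{ab}[f]$ gains a fractional derivative from $\mathbf v_l\in L^2_tH^1_x$. That is the only form in which negative-norm information on the momentum, or on $\mathbf m_l-\mathbf j_l$, can legitimately be used.

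Passing to the limit then works as follows:
\begin{itemize}
\item $\mathbf m_l$ is bounded in $L^\infty_tL^{p_\beta}_x$ with $p_\beta>6/5$, and $\mathbf m_l(t)\rightharpoonup\mathbf m(t)$ in $L^{p_\beta}$ for a.e.\ $t$ by \eqref{eq:l_m_strong}.
\item $T_k(\varrho_l)\to\overline{T_k(\varrho)}$ in $C_w([0,T];L^q)$ for every finite $q$.
\item The commutator lemma then gives strong $L^2_tH^{-1}_x$ convergence of the bracket, which pairs with $\mathbf v_l\rightharpoonup\mathbf v$ in $L^2_tH^1_x$.
\end{itemize}
The mollification enters only through $\mathbf m=\varrho\mathbf v$ (Lemma~\ref{lem:l_momentum_identification}). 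This makes the limit coincide with the commutator generated by the limiting identity, so no inertial remainder needs estimating.

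This also replaces your Step~3, which should not be run with $\mathbf j_l$. That flux satisfies no evolution equation and has no $L^\infty_t$ bound at this level, since the energy controls $[\varrho_l]_l|\mathbf v_l|^2$, not $\varrho_l|\mathbf v_l|^2$. It therefore lacks the pointwise-in-time weak convergence that the slice-wise commutator lemma uses.
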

	
	\begin{proof}
		Fix a bounded-flow localization and the deterministic frozen partition of
		Subsection~\ref{app:effective_flux}. The pressure, viscous, interface, and
		freezing remainders satisfy the estimates of
		Proposition~\ref{app:prop_effective_flux}. The only new term is the inertial
		mollification remainder $\mathcal R_l^{\rm in}$, and
		\[
		|\mathcal R_l^{\rm in}|
		\le C
		\left(
		\|\mathcal J_l(\varrho_l\mathbf v_l)-\varrho_l\mathbf v_l\|_{L^2_tH^{-1}_x}
		+
		\|\mathbf m_l-\varrho_l\mathbf v_l\|_{L^2_tH^{-1}_x}
		\right)
		\longrightarrow0
		\]
		by \eqref{app:eq_l_Friedrichs} and
		\eqref{app:eq_l_momentum_defect}. Thus the frozen effective-flux identity
		passes to $l\to0$; removing the freezing scale and the bounded-flow
		localization yields \eqref{app:eq_l_effective_flux}.
	\end{proof}
	
	At the $l$-level the density satisfies the renormalized continuity
	equation
	\[
	\partial_t b(\varrho_l)
	+
	\Div^{\phi_l}
	\bigl(
	b(\varrho_l)\mathbf v_l
	\bigr)
	+
	\bigl(
	b'(\varrho_l)\varrho_l-b(\varrho_l)
	\bigr)
	\Div^{\phi_l}\mathbf v_l
	=
	0
	\]
	for the usual class of renormalizations.
	
	\begin{proposition}[Strong convergence of the density as $l\to0$]
		\label{app:thm_l_strong_density}
		Almost surely,
		\begin{equation*}			\varrho_l
			\to
			\varrho
			\qquad
			\text{strongly in }
			L^1((0,T)\times\mathcal O_\alpha).
		\end{equation*}
		Moreover,
		\[
		\varrho_l\to\varrho
		\qquad
		\text{strongly in }
		L^p((0,T)\times\mathcal O_\alpha)
		\quad
		\text{for every }1\le p<\beta+1,
		\]
		and
		\begin{equation}
			\label{app:eq_l_pressure_identification}
			\overline{p_\delta(\varrho)}
			=
			a\varrho^\gamma+\delta\varrho^\beta.
		\end{equation}
	\end{proposition}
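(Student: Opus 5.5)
The plan is to run the Lions--Feireisl argument exactly as in Proposition~\ref{app:thm_strong_density}, now driven by the $l$-level effective-flux identity of Proposition~\ref{app:prop_l_effective_flux} and by the renormalized equation valid at the $l$-level. First fix the renormalization $L_k(z)=z\int_1^z T_k(s)s^{-2}\,\dd s$, so that $zL_k'(z)-L_k(z)=T_k(z)$.

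The $l$-level density satisfies the renormalized equation stated just above. Applying it with $b=L_k$ and integrating against a cut-off in space (the volume-preserving flow leaves $\int \Div^{\phi_l}(\cdot)$ harmless on $\mathcal O_\alpha$) gives
\[
\int_{\mathcal O_\alpha}L_k(\varrho_l(t))\,\dd x+\int_0^t\!\!\int_{\mathcal O_\alpha}T_k(\varrho_l)\Div^{\phi_l}\mathbf v_l\,\dd x\,\dd s=\int_{\mathcal O_\alpha}L_k(\varrho_{0,\delta})\,\dd x .
\]

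Next I show that the limit is itself a renormalized solution. Because $\beta>6$, we have $\varrho\in L^2_{t,x}$ and $\mathbf v\in L^2_tH^1_x$, so the DiPerna--Lions commutator lemma applies after the frozen change of variables $y=\phi_i(x)$ on each deterministic interval of the partition of Subsection~\ref{app:effective_flux}. The freezing error is $O_R(\xi)$ and is removed by letting $\xi\to0$, then $R\to\infty$. This yields the same identity for $L_k(\varrho)$ with $T_k(\varrho)\Div^\phi\mathbf v$.

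Subtracting the two identities and passing $l\to0$ (with weak limits $\overline{L_k(\varrho)}$, $\overline{T_k(\varrho)\Div\mathbf v}$) gives
\[
\int\bigl(\overline{L_k(\varrho)}-L_k(\varrho)\bigr)(t)+\int_0^t\!\!\int\bigl(\overline{T_k(\varrho)\Div^\phi\mathbf v}-T_k(\varrho)\Div^\phi\mathbf v\bigr)=0 .
\]
The next step is to use Proposition~\ref{app:prop_l_effective_flux} with $\psi\uparrow1$. Here $\lambda_\kappa^{\zeta^*}+2\mu_\kappa^{\zeta^*}$ is bounded below and converges in $C_tC^1_x$. The proposition rewrites the second integral as
\[
\int\frac{\overline{p_\delta(\varrho)T_k(\varrho)}-\overline{p_\delta(\varrho)}\,\overline{T_k(\varrho)}}{\lambda+2\mu}\ge0,
\]
up to the term $(\overline{T_k(\varrho)}-T_k(\varrho))\Div^\phi\mathbf v$. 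Two facts are then used: the monotonicity of $p_\delta$ gives the nonnegative product $(p_\delta(z_1)-p_\delta(z_2))(T_k(z_1)-T_k(z_2))\ge0$, and the pressure gain \eqref{app:eq_l_higher} allows weak limits in $L^{(\beta+1)/\beta}$.

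The remaining term is controlled by the oscillation defect. The bound $\operatorname{osc}_{\beta+1}[\varrho_l\to\varrho]<\infty$ is proved as in Proposition~\ref{app:prop_oscillation_defect}, and it gives $\|\overline{T_k(\varrho)}-T_k(\varrho)\|_{L^2}\to0$ as $k\to\infty$. Letting $k\to\infty$ then yields $\overline{\varrho\log\varrho}=\varrho\log\varrho$. By strict convexity this gives a.e. convergence, and hence strong $L^1$ convergence.

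Finally, for $p<\beta+1$ strong $L^p$ convergence follows from a.e. convergence plus the uniform $L^{\beta+1}$ bound \eqref{app:eq_l_higher} via Vitali. Since $\gamma,\beta<\beta+1$, this gives $\varrho_l^\gamma\to\varrho^\gamma$ and $\varrho_l^\beta\to\varrho^\beta$ in $L^1$, and hence \eqref{app:eq_l_pressure_identification}.

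I expect the main obstacle to be the renormalization of the limit, i.e. the DiPerna--Lions step with coefficients only H\"older in time. There the commutator must be taken in frozen coordinates and the resulting freezing errors must be absorbed uniformly in $k$. I would first try to reuse the freezing bound \eqref{app:eq_freezing_bound} together with the bounded-flow localization $\Omega_R$, exactly as in Proposition~\ref{app:prop_effective_flux}.
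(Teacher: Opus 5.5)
Your proposal is correct and is the same Lions--Feireisl argument the paper uses: the effective-flux identity of Proposition~\ref{app:prop_l_effective_flux}, the renormalized equations with $L_k$, and the monotonicity of $p_\delta$ give the finite oscillation defect and $\overline{\varrho\log\varrho}=\varrho\log\varrho$, hence a.e.\ and strong $L^1$ convergence, and Vitali with \eqref{app:eq_l_higher} then gives the $L^p$ convergence and the pressure identification; you simply spell out steps the paper compresses. The DiPerna--Lions step you flag as the main obstacle is actually harmless: by the Piola identity for the volume-preserving flow, $\Div^\phi(\varrho\mathbf v)=\Div(\varrho A_\phi\mathbf v)$ with $A_\phi\mathbf v\in L^2(0,T;H^1(\mathcal O_\alpha))$ pathwise, and since the commutator lemma mollifies only in space, no coefficient freezing (and no uniformity in $k$) is needed there.
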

	
	\begin{proof}
		From \eqref{app:eq_l_effective_flux}, the renormalized continuity equation,
		and monotonicity of $p_\delta$,
		\[
		\operatorname{osc}_{\beta+1}[\varrho_l\to\varrho]<\infty,
		\qquad
		\overline{\varrho\log\varrho}=\varrho\log\varrho .
		\]
		Consequently,
		\[
		\varrho_l\to\varrho
		\qquad\text{a.e. and strongly in }L^1.
		\]
		The bound \eqref{app:eq_l_higher} gives uniform integrability in every
		$L^p$, $1\le p<\beta+1$. Vitali's theorem yields the stated $L^p$
		convergence and \eqref{app:eq_l_pressure_identification}.
	\end{proof}
	
	\subsection{Limit passage and pressure identification}
	
	Propositions~\ref{app:prop_l_effective_flux} and
	\ref{app:thm_l_strong_density} give, almost surely,
	\[
	\varrho_l\to\varrho
	\quad\text{strongly in }L^1((0,T)\times\mathcal O_\alpha),
	\]
	and in $L^p$ for every $1\le p<\beta+1$. In particular,
	\[
	\overline{p_\delta(\varrho)}
	=p_\delta(\varrho)
	=a\varrho^\gamma+\delta\varrho^\beta .
	\]
	The remaining inertial, viscous, shell and interface terms pass by
	\eqref{eq:l_m_strong}--\eqref{eq:l_convection_identification} together
	with the strong convergence of the graph and flow coefficients.

	
	\begin{proposition}[Limit $l\to0$]
		
		The limiting variables
		$
		(
		\varrho,\mathbf v,\zeta,\zeta^*,w,\phi,W
		)
		$
		satisfy, almost surely,
		
		\begin{enumerate}[label=(\arabic*)]
			
			\item
			\[
			\partial_t\zeta=w,
			\qquad
			\zeta^*=\zeta
			\quad\text{on }[0,\tau^\zeta).
			\]
			
			\item
			For every smooth scalar test function $\varphi$,
			\begin{equation*}				\int_{\mathcal O_\alpha}
				\varrho(t)\varphi\,\dd\bm x
				=
				\int_{\mathcal O_\alpha}
				\varrho_{0,\delta}\varphi\,\dd\bm x
				+
				\int_0^t
				\int_{\mathcal O_\alpha}
				\varrho\mathbf v\cdot\nabla^\phi\varphi
				\,\dd\bm x\dd s.
			\end{equation*}
			
			\item
			For every smooth admissible test pair
			$(\mathbf q,\psi)$,
			\begin{align*}				& \int_{\mathcal O_\alpha}\varrho\mathbf v(t)\cdot\mathbf q(t)\,\dd\bm x+\int_\Gamma w(t)\psi(t)\,\dd\bm y
				=\int_{\mathcal O_\alpha}\varrho_{0,\delta}\mathbf v_{0,\delta}\cdot\mathbf q(0)\,\dd\bm x+\int_\Gamma w_0\psi(0)\,\dd\bm y \notag \\
				& \quad+\int_0^t\!\!\int_{\mathcal O_\alpha}\Bigl(\varrho\mathbf v\cdot\partial_t\mathbf q
				+\varrho\mathbf v\otimes\mathbf v:\nabla^\phi\mathbf q+p_\delta(\varrho)\Div^\phi\mathbf q
				-\mathbb S_\kappa^{\zeta^*}(\nabla^\phi\mathbf v):\nabla^\phi\mathbf q\Bigr)\,\dd\bm x\dd s \notag \\
				& \quad+\int_0^t\!\!\int_\Gamma\Bigl(w\partial_t\psi-\Delta_\Gamma^\chi\zeta\,\Delta_\Gamma^\chi\psi
				-\nu_s\nabla_\Gamma^\chi w\cdot\nabla_\Gamma^\chi\psi\Bigr)\,\dd\bm y\dd s \notag \\
				& \quad-\frac1\delta\int_0^t\!\!\int_{T^\delta_{\zeta^*}}
				((\mathbf v-w\mathbf e_z)\cdot\mathbf n^{\zeta^*,\phi}) ((\mathbf q-\psi\mathbf e_z)\cdot\mathbf n^{\zeta^*,\phi})\,\dd\bm x\dd s \notag \\
				& \quad-\iota\int_0^t\!\!\int_{\Gamma_{\zeta^*}}J_\Gamma^\phi[\zeta^*] \left(\mathbf v-w\mathbf e_z\circ\Phi_{\zeta^*}^{-1}\right)_{\tau^{\zeta^*,\phi}}
				\cdot\left(\mathbf q-\psi\mathbf e_z\circ\Phi_{\zeta^*}^{-1}\right)_{\tau^{\zeta^*,\phi}}\,\dd S\dd s. 
			\end{align*}
			
			\item
			For every $p\ge1$,
			\begin{equation}
				\label{eq:l_limit_energy_moment}
				\mathbb E\left[
				\sup_{0\le t\le T}\mathcal E_\delta(t)^p
				+\mathcal D_\delta(T)^p\right]
				\le C_{p,T,\delta}.
			\end{equation}
			Here the kinetic part is
			$\frac12\int_{\mathcal O_\alpha}\varrho|\mathbf v|^2\,\dd\bm x$;
			the normal-penalty and Navier-friction bounds are contained in
			$\mathcal D_\delta$.
		\end{enumerate}
	\end{proposition}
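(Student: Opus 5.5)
The plan is to assemble the statement from the convergences already established in this section, working on the represented probability space. Item (1) is immediate. The identity $\partial_t\zeta_l=w_l$ passes to the limit in distributions, using the weak convergence of $w_l$ in $L^2_tH^1$ and the strong convergence of $\zeta_l$ in $C_tH^s$. The stopping relation $\zeta^*=\zeta$ on $[0,\tau^\zeta)$ follows as in Proposition~\ref{thm:limit_N}: we use the $C([0,T];H^s(\Gamma))$ convergence of $\zeta_l$ and $\zeta_l^*$, the margin \eqref{eq:initial_localization_margin}, and the fact that the stopping thresholds are open conditions in $C_tH^s\subset C_tC^1$.

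For item (2), we start from the $l$-level continuity identity with flux $\mathbf j_l=\varrho_l\mathbf v_l$ and the coefficient $\nabla^{\phi_l}\varphi$. Since $\phi_l\to\phi$ in $C^{\vartheta_0}_tC^2_x$, the coefficients converge uniformly. The flux converges weakly in $L^2_tL^{s_\beta}_x$ to $\mathbf j$, and Lemma~\ref{lem:l_momentum_identification} gives $\mathbf j=\varrho\mathbf v$. The left-hand side converges because $\varrho_l\to\varrho$ in $C_w([0,T];L^\beta)$.

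For item (3), we pass term by term in the identity of Proposition~\ref{thm:limit_eps}, now written at level $l$ with the test pair fixed.
\begin{itemize}
\item The inertial terms $[\varrho_l]_l\mathbf v_l\cdot\partial_t\mathbf q$ and the endpoint term at time $t$ converge by \eqref{eq:l_m_strong} and $\mathbf m=\varrho\mathbf v$. At time $t$ we use weak continuity in time after choosing representatives.
\item The convective term converges by \eqref{eq:l_convection_identification}, combined with the uniform convergence of $\nabla^{\phi_l}\mathbf q$.
\item The pressure term converges to $p_\delta(\varrho)\Div^\phi\mathbf q$ by Proposition~\ref{app:thm_l_strong_density} and \eqref{app:eq_l_pressure_identification}. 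Here $\beta<\beta+1$ gives strong $L^1$ convergence of $\varrho_l^\beta$.
\item The viscous term converges by weak $L^2$ convergence of $\nabla^{\phi_l}\mathbf v_l$ against $\mu_\kappa^{\zeta_l^*}\nabla^{\phi_l}\mathbf q$, which converges strongly by \eqref{app:eq_viscosity_strong_C1}.
\item The shell terms are linear in $(\zeta_l,w_l)$, with coefficients of $\Delta^{\chi_l}_\Gamma$ and $\nabla^{\chi_l}_\Gamma$ converging uniformly, so weak convergence suffices.
\item The initial data are $l$-independent apart from $[\varrho_{0,\delta}]_l\to\varrho_{0,\delta}$ in $L^\beta$.
\end{itemize}

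The main obstacle is the pair of interface terms, which couple a moving geometry with traces. For the normal penalty we pull the tube $T^\delta_{\zeta_l^*}$ back to the fixed set $\Gamma\times(0,h_\delta)$ by the vertical shift. We then use $\zeta^*_l\to\zeta^*$ in $C_tC^1$, which controls $\mathbf n^{\zeta_l^*,\phi_l}$ uniformly, and the weak $L^2$ convergence of $\mathbf v_l$. The product is linear in $\mathbf v_l$, so weak convergence suffices. For the Navier term we need trace convergence of $\mathbf v_l$ on $\Gamma_{\zeta_l^*}$. Since only weak $L^2_tH^1$ convergence of $\mathbf v_l$ is available, we again exploit linearity. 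After pulling back by $\Phi_{\zeta_l^*}$, the trace map from $H^1(\mathcal O_\alpha)$ into $L^2(\Gamma)$ depends continuously on the graph in $C^1$. Hence the traces converge weakly in $L^2_tL^2(\Gamma)$, while $J_\Gamma^{\phi_l}$, the tangential projectors, and the test traces converge uniformly. No strong trace compactness is needed, because only the identity (not the quadratic dissipation) must be passed.

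For item (4), we use weak lower semicontinuity of the convex energy and dissipation functionals. The kinetic part is handled through $\varrho|\mathbf v|^2=|\mathbf m|^2/\varrho$ and convexity. The penalty and friction parts are handled by the weak convergences described above. Fatou's lemma then transfers the $l$-uniform bound \eqref{eq:l_energy_moment} to \eqref{eq:l_limit_energy_moment}.
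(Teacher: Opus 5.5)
Your proposal follows the paper's route. You pass items (1)--(3) term by term using $\mathbf m_l\to\varrho\mathbf v$ in $L^2(0,T;H^{-1})$, the convective identification of Lemma~\ref{lem:l_momentum_identification}, strong density convergence (Proposition~\ref{app:thm_l_strong_density}) for the pressure, and strong convergence of the flow and graph coefficients. You obtain item (4) from lower semicontinuity of $(\rho,\mathbf m)\mapsto|\mathbf m|^2/\rho$ and Fatou's lemma. The paper's written proof addresses only this kinetic-energy step and leaves the rest to the listed convergences, so your version is more detailed.

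One justification is wrong, however. On the two-dimensional torus, $H^s(\Gamma)\not\hookrightarrow C^1(\Gamma)$ for $s\in(3/2,2)$; one only has $H^s(\Gamma)\hookrightarrow C^{0,\lambda}(\Gamma)$ for $\lambda<s-1<1$. So $\zeta_l^*\to\zeta^*$ in $C_tH^s$ does not give convergence in $C_tC^1$. Consequently $\mathbf n^{\zeta_l^*,\phi_l}$, $J_\Gamma^{\phi_l}[\zeta_l^*]$ and $P_\tau^{\zeta_l^*,\phi_l}$ converge only in $L^q(\Gamma)$, $q<\infty$ (Lemmas~\ref{app:lem_graph_compactness}--\ref{app:lem_transformed_geometry}), not uniformly. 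Moreover, the area factor $J_{\zeta_l^*}$ hidden in $\dd S$ is not even uniformly bounded, as the paper stresses.

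The argument survives, because, as you say, the interface terms are linear in $\mathbf v_l$:
\begin{itemize}
\item The stopping thresholds are continuous functionals on $C_tH^s$ directly, so no $C^1$ is needed for item (1).
\item The traces need only uniform convergence of the graphs, since $\|\Tr_{\zeta_l^*}\mathbf v_l-\Tr_{\zeta^*}\mathbf v_l\|_{L^2(\Gamma)}^2\le\|\zeta_l^*-\zeta^*\|_{L^\infty}\|\partial_z\mathbf v_l\|_{L^2}^2$. The same bound applies to the vertical pullback of the tube $T^\delta_{\zeta_l^*}$. This gives weak convergence of the fluid traces in $L^2_tL^2(\Gamma)$.
\item The test-side factors converge strongly in $L^2$. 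For the Navier term this is $\mathcal J^{\zeta_l^*,\phi_l}P_\tau^{\zeta_l^*,\phi_l}\Tr_{\zeta_l^*}\mathbf q$; for the penalty it is $(\mathbf n^{\zeta_l^*,\phi_l}\otimes\mathbf n^{\zeta_l^*,\phi_l})(\mathbf q-\psi\mathbf e_z)$. This follows from the $L^q$ convergence for all finite $q$ together with $|\mathbf n|=1$.
\end{itemize}
Replace ``uniformly'' by ``in every $L^q$, $q<\infty$'' throughout your interface paragraph and the weak--strong pairing goes through. The same applies to the weak lower semicontinuity of the penalty and friction dissipations in item (4).
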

	
	\begin{proof}
		The only remaining point is the kinetic energy.  By Proposition~\ref{app:thm_l_strong_density},
		\[
		[\varrho_l]_l
		\to
		\varrho
		\qquad
		\text{strongly in }L^1,
		\]
		while
		\[
		\mathbf m_l
		=
		[\varrho_l]_l\mathbf v_l
		\to
		\varrho\mathbf v
		\]
		in the momentum topology.  The convex functional
		\[
		(\rho,\mathbf m)
		\longmapsto
		\begin{cases}
			|\mathbf m|^2/\rho,&\rho>0,\\
			0,&\rho=0,\ \mathbf m=0,\\
			+\infty,&\text{otherwise},
		\end{cases}
		\]
		is lower semicontinuous.  Hence
		\[
		\int
		\varrho|\mathbf v|^2
		\le
		\liminf_{l\to0}
		\int
		[\varrho_l]_l|\mathbf v_l|^2.
		\]
		All other energy and dissipation terms are lower semicontinuous under the
		convergences already established, so
		\eqref{eq:l_limit_energy_moment} follows from
		\eqref{eq:l_energy_moment} and Fatou's lemma.
	\end{proof}

	\medskip

\section{Artificial-pressure and penalty limit \texorpdfstring{$\delta\to0$}{delta to 0}}
	\label{s9}

	This final approximation section removes the artificial pressure, the exterior-viscosity regularization, and the normal interface penalty, and identifies the limiting moving-domain equations term by term.
	The artificial-pressure exponent
	$\beta>\max\{6,\gamma\}$ and $a_\beta=\frac12-\frac1\beta$ are fixed in
	\eqref{eq:global_beta_choice}. The exterior-viscosity scaling is needed only
	at this final stage. Choose $p_{\rm ext}\in(3/2,\gamma)$ and set
	\[
	m_{\rm ext}:=\min\left\{a_\beta^2,\,
	a_\beta\left(1-\frac1\gamma\right)\right\},\qquad
	\vartheta_{\rm ext}:=
	\frac{\frac1{p_{\rm ext}}-\frac1\gamma}{1-\frac1\gamma}.
	\]
	Then fix
	\begin{equation}
		\label{eq:global_nu_choice}
		0<\nu_\kappa<
		\nu_*:=\min\left\{a_\beta,\,m_{\rm ext}\vartheta_{\rm ext}\right\},
		\qquad
		\kappa_\delta:=\delta^{\nu_\kappa},\qquad
		h_\delta:=\delta^{a_\beta}.
	\end{equation}
	The restriction $\beta>6$ is used in the exterior boundary-pressure estimate,
	whereas the second restriction in \eqref{eq:global_nu_choice} enters the
	exterior convective estimates in Subsection~\ref{app:delta_limit}. 
	The exterior viscosity now degenerates, the artificial pressure
	must vanish, and the normal penalty becomes singular. The proof combines the
	exterior-mass estimate, normal-trace recovery, pressure equi-integrability and
	local effective flux, and geometry-adapted tests. 

	\subsection{Uniform estimates and compactness}
	
	The stochastic energy estimate obtained after the $l\to0$ passage is
	uniform in $\delta$.
	
	\begin{lemma}[Uniform estimates independent of $\delta$]
		\label{lem:delta_uniform_bounds}
		
		For every $p\ge1$ there is a constant
		$
		C=C(p,T,\alpha)
		$
		independent of $\delta$ such that
		\begin{align}
			& \sup_{\delta>0}
			\mathbb E_\delta
			\left[
			\sup_{0\le t\le T}
			\left(
			\|\sqrt{\varrho_\delta}\mathbf v_\delta\|_{L^2}^2
			+
			\|\varrho_\delta\|_{L^\gamma}^{\gamma}
			+
			\delta\|\varrho_\delta\|_{L^\beta}^{\beta}
			+
			\|\zeta_\delta\|_{H^2(\Gamma)}^2
			+
			\|w_\delta\|_{L^2(\Gamma)}^2
			\right)^p
			\right]
			\le C, \notag \\
			& \sup_{\delta>0}
			\mathbb E_\delta
			\left[
			\left(
			\int_0^T
			\int_{\mathcal O_\alpha}
			\mathbb S_{\kappa_\delta}^{\zeta_\delta^*}
			(\nabla^\phi\mathbf v_\delta):
			\nabla^\phi\mathbf v_\delta
			\,\dd\bm x\dd t
			\right)^p
			\right]
			\le C, \notag \\
			& \sup_{\delta>0}
			\mathbb E_\delta
			\left[
			\|w_\delta\|_{L^2(0,T;H^1(\Gamma))}^{2p}
			\right]
			\le C, \notag \\
			& \sup_{\delta>0}
			\mathbb E_\delta
			\left[
			\left(
			\frac1\delta
			\int_0^T
			\int_{T^\delta_{\zeta_\delta^*}}
			\left|
			(
			\mathbf v_\delta-w_\delta\mathbf e_z
			)
			\cdot
			\mathbf n^{\zeta_\delta^*,\phi}
			\right|^2
			\,\dd\bm x\dd t
			\right)^p
			\right]
			\le C,
			\label{eq:delta_normal_penalty} \\
			& \sup_{\delta>0}
			\mathbb E_\delta
			\left[
			\left(
			\iota
			\int_0^T
			\int_{\Gamma_{\zeta_\delta^*}}
			J_\Gamma^\phi[\zeta_\delta^*]
			\left|
			\left(
			\mathbf v_\delta
			-
			w_\delta\mathbf e_z\circ\Phi_{\zeta_\delta^*}^{-1}
			\right)_{\tau^{\zeta_\delta^*,\phi}}
			\right|^2
			\,\dd S\dd t
			\right)^p
			\right]
			\le C. \notag 
		\end{align}
		Furthermore,
		\begin{equation*}			\sup_{\delta>0}
			\mathbb E_\delta
			\|\zeta_\delta\|_{W^{1,2}(0,T;H^1(\Gamma))}^{2p}
			\le C.
		\end{equation*}
	\end{lemma}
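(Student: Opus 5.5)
The bounds in Lemma~\ref{lem:delta_uniform_bounds} should follow from the stochastic energy inequality at the $\delta$-level, obtained after the $l\to0$ passage. The task is to check that every constant is independent of $\delta$. The starting point is the inequality
\[
\mathcal E_\delta(t)+\mathcal D_\delta(t)\le \mathcal E_\delta(0)+M^{\rm sh}_\delta(t)+\frac12\sum_{k=1}^K\int_0^t\mathcal R_k(\widehat\eta_\delta)\,\dd s,
\]
inherited through the limits $N,m,\epsilon,l$ by Fatou and lower semicontinuity. It has the structure of \eqref{energy} with the splitting remainder removed. I would first check that this inequality holds with a right-hand side whose only data dependence is $\mathcal E_\delta(0)$. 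By \eqref{eq:delta_initial_density}--\eqref{eq:delta_initial_kinetic}, $\mathcal E_\delta(0)$ is bounded uniformly in $\delta$: the kinetic part has controlled $\limsup$, the $\gamma$-part converges, and $\delta\int\varrho_{0,\delta}^\beta\to0$.

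Next I would control the stochastic terms. By Lemma~\ref{lem:shell_energy_commutator}, $|\mathcal G_k(\widehat\eta_\delta)|+|\mathcal R_k(\widehat\eta_\delta)|\le c_k\|\widehat\eta_\delta\|_{H^2}^2$. On the bounded-flow event $\Omega_R$, area preservation of $\chi$ gives $\|\widehat\eta_\delta\|_{H^2}\le C(R)\|\zeta_\delta\|_{H^2}$, which is bounded by $C(R)\bigl(\|\Delta_\Gamma^\chi\zeta_\delta\|_{L^2}+|\langle\zeta_\delta\rangle|\bigr)$. The mean $\langle\zeta_\delta\rangle$ is controlled via $\partial_t\zeta_\delta=w_\delta$ and the $L^2$ bound on $w_\delta$. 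Raising to the power $p$, taking the supremum in time and expectation, and using BDG on $M^{\rm sh}_\delta$ gives
\[
\mathbb E\sup_{s\le t}|M^{\rm sh}_\delta|^p\le C\,\mathbb E\Bigl(\int_0^t\mathcal E_\delta^2\Bigr)^{p/2}\le \tfrac12\mathbb E\sup_{s\le t}\mathcal E_\delta^p+C\int_0^t\mathbb E\sup_{r\le s}\mathcal E_\delta^p\,\dd s.
\]
Gronwall then yields the bound for $\mathbb E[\sup_t\mathcal E_\delta^p+\mathcal D_\delta(T)^p]$.

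The main obstacle is the dependence of these constants on the flow norm $\Lambda$, which is random and unbounded. I would handle it as in Proposition~\ref{app:prop_higher_integrability}: work on $\Omega_R$, where constants grow polynomially in $R$, and remove the localization with the moment bounds $\mathbb E[\Lambda^r]<\infty$ and a dyadic decomposition together with H\"older's inequality. Alternatively, one can note that the coefficients $c_k$ depend only on $\mathcal Y_k$ and that $\|\Delta_\Gamma\widehat\eta\|=\|\Delta_\Gamma^\chi\zeta\|$ exactly, by the area-preserving identity used before Lemma~\ref{lem:approx_bending_energy}; this avoids $\Lambda$ in the shell noise entirely. In either case I must also check that none of the constants degenerates as $\delta\to0$. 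The exterior viscosity $\kappa_\delta\to0$, but it appears only inside $\mathcal D_\delta$ on the left-hand side, so it causes no loss. The penalty width $h_\delta$ likewise enters only through the left-hand dissipation.

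Finally, the individual bounds are read off from $\mathcal E_\delta$ and $\mathcal D_\delta$:
\begin{itemize}
\item kinetic, $\gamma$-, $\beta$-, shell-velocity and bending energies come from $\sup_t\mathcal E_\delta$;
\item the $\mathbb S_{\kappa_\delta}$ dissipation, the $\nu_s\|\nabla^\chi_\Gamma w_\delta\|^2$ term, the normal penalty \eqref{eq:delta_normal_penalty} and the Navier friction are components of $\mathcal D_\delta(T)$;
\item $\|\zeta_\delta\|_{H^2}$ follows from the bending term plus the mean as above, and $\|w_\delta\|_{L^2_tH^1}$ from damping plus the $L^\infty_tL^2$ bound, using uniform equivalence of the $\chi$-pullback norms.
\end{itemize}
The $W^{1,2}(0,T;H^1)$ bound on $\zeta_\delta$ then follows from $\partial_t\zeta_\delta=w_\delta$ together with the $L^2_tH^1$ bound on $w_\delta$ and the $L^\infty_tH^2$ bound on $\zeta_\delta$.
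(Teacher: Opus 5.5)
Your overall strategy matches the paper's. Its proof simply takes these bounds as the $\delta$-uniform part of the moment estimate \eqref{eq:l_limit_energy_moment}, noting that $\kappa_\delta$ only weights the exterior coercivity on the left and that the penalty is normal-only. Your BDG--Gronwall computation usefully explains why that constant does not depend on $\delta$.

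\textbf{The gap is where you run it.} You start from a pathwise $\delta$-level inequality containing $M^{\rm sh}_\delta$ and $\frac12\sum_k\int_0^t\mathcal R_k(\widehat\eta_\delta)\,\dd s$, ``inherited by Fatou and lower semicontinuity''. Those tools only control the nonnegative terms on the left. The martingale and the signed It\^o correction on the right must actually converge. Both are quadratic forms at the $H^2$ level, e.g.\ $\mathcal G_k(f)=\langle\Delta_\Gamma f,C_kf\rangle$ with $C_k$ of second order, and they are not weakly continuous. For instance, modulated oscillations $f_n=n^{-2}\theta(\bm y)\sin(n\,\bm e\cdot\bm y)\rightharpoonup0$ in $H^2(\Gamma)$ give $\mathcal G_k(f_n)\to|\bm e|^2\int_\Gamma\theta^2\,\bm e\cdot\nabla_\Gamma\mathcal Y_k\,\bm e\,\dd\bm y$, which is nonzero for suitable $\theta,\bm e$ unless $\mathcal Y_k$ is Killing. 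Strong $H^2$ convergence of $\widehat\eta$ is available in the $N\to\infty$ step only because $X_m^{st}$ is finite dimensional. After $m\to\infty$ the graphs converge only in $C_tH^s$, $s<2$, and weakly-$*$ in $L^\infty_tH^2$. This is why Section~\ref{s6} explicitly declines to identify the shell martingale and carries only moment bounds, and why an energy inequality is rebuilt only for the final solution in Lemma~\ref{app:lem_total_energy_recovery}. At the $\delta$-level there is therefore no pathwise inequality for your Gronwall step to act on.

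\textbf{The repair.} Perform your computation on \eqref{eq:N_limit_energy} at fixed $m$, where the splitting remainder has already vanished and the shell martingale is identified. Use your $\Lambda$-free bound $|\mathcal G_k(\widehat\eta)|+|\mathcal R_k(\widehat\eta)|\le c_k\|\Delta_\Gamma^\chi\zeta\|_{L^2}^2$, which follows from exact area preservation with $c_k$ depending only on $\mathcal Y_k$. BDG and Gronwall (with a priori finiteness from \eqref{eq:N_limit_energy_moment}) then give $\widetilde{\mathbb E}[\sup_t\mathcal E(t)^p+\mathcal D(T)^p]\le C(p,T,c_k)(1+\mathcal E(0)^p)$, uniformly in $m,\epsilon,l,\delta$. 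It is this moment bound, not the pathwise inequality, that you carry through $m\to\infty$, $\epsilon\to0$, $l\to0$ by weak lower semicontinuity and Fatou, along the chain \eqref{eq:m_limit_energy_moment}, \eqref{eq:eps_energy_moment}, \eqref{eq:l_energy_moment}, \eqref{eq:l_limit_energy_moment}.

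This relocation has two consequences.
\begin{enumerate}
\item Drop your $\Omega_R$ variant for the noise terms. The event $\Omega_R=\{\Lambda\le R\}$ involves the H\"older norm of the flow on all of $[0,T]$, so it is not adapted and $\mathbf 1_{\Omega_R}$ cannot be inserted before BDG. Moments of $\Lambda$ should enter only afterwards, pathwise plus H\"older, when you convert $\|\Delta_\Gamma^\chi\zeta_\delta\|$ and $\|\nabla_\Gamma^\chi w_\delta\|$ into $H^2$ and $H^1$ norms.
\item The initial datum in Gronwall is now the approximation-level energy, whose kinetic part is $\frac12\int(\epsilon+[\varrho_{0,\delta,\epsilon}]_l)|\mathbf v_{0,\delta,\epsilon}|^2\,\dd\bm x$. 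You need its upper limit along $\epsilon\to0$, $l\to0$ to be bounded uniformly in $\delta$. This is the approximation-level initial-energy bound asserted at the end of the data construction in Section~\ref{sec:approx}. It is not the same as the $\delta$-level bound you extract from \eqref{eq:delta_initial_density}--\eqref{eq:delta_initial_kinetic}, since $[\varrho]_l|\mathbf v|^2$ is not controlled by $|\mathbf m|^2/\varrho$ near vacuum.
\end{enumerate}
With these changes, your read-off of the individual estimates goes through as you describe.
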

	
	\begin{proof}
		These estimates are the $\delta$-uniform part of
		\eqref{eq:l_limit_energy_moment}.  On the physical fluid region the
		extended viscosities equal the physical constants $\mu,\lambda$; outside,
		the coercivity is weighted by $\kappa_\delta$.  The interface penalty in
		\eqref{eq:delta_normal_penalty} is the normal-only penalty inherited from
		the approximation scheme.
	\end{proof}
	
	For the final $\delta$-limit we do not require a separate strong
	time-compactness estimate for $w_\delta$. The energy bounds give
	\[
	w_\delta\quad\text{bounded in }L^2(0,T;H^1(\Gamma))
	\cap L^\infty(0,T;L^2(\Gamma)),
	\]
	and we retain the corresponding weak/weak-star topology in the
	Jakubowski representation. This is sufficient for the shell terms and for
	the later trace identification.
	
	We use a Jakubowski path space for
	$
	\varrho_\delta,
	\varrho_\delta\mathbf v_\delta,
	\zeta_\delta,
	\zeta_\delta^*,
	w_\delta,
	\phi,
	W,
	$
	with the weak or weakly continuous topologies dictated by the uniform
	estimates and the equations. Local velocity compactness is not built into
	this representation; it is derived after the geometry has converged, in
	Proposition~\ref{app:prop_delta_time_compactness}.
	
	\begin{lemma}[Tightness and representation]
		\label{lem:delta_tightness}
		The corresponding joint laws are tight.  Set
		\[
		\mathbf U_\delta
		:=
		(\varrho_\delta,\varrho_\delta\mathbf v_\delta,
		\zeta_\delta,\zeta_\delta^*,w_\delta,\phi_\delta,W_\delta),
		\]
		and let $\mathbf U$ denote the corresponding limiting tuple.  After a
		Jakubowski--Skorokhod representation and subsequence extraction,
		$\mathbf U_\delta\to\mathbf U$ almost surely in the joint path space
		described above.  We record the component convergences used later:
		\begin{equation}
			\label{eq:delta_zeta_strong}
			\zeta_\delta\to\zeta
			\qquad\text{in }C([0,T];H^s(\Gamma)),\qquad s\in(3/2,2),
		\end{equation}
		\begin{equation}
			\label{eq:delta_w_weak}
			w_\delta\rightharpoonup w
			\qquad\text{in }L^2(0,T;H^1(\Gamma)),
		\end{equation}
		and
		\[
		\varrho_\delta\to\varrho\quad\text{in }C_w([0,T];L^\gamma(\mathcal O_\alpha)),
		\qquad
		\varrho_\delta\mathbf v_\delta\rightharpoonup\varrho\mathbf v
		\quad\text{in }\mathcal D'.
		\]
		The limiting process $W$ is a Wiener process for the canonical limiting
		filtration, and $\phi$ is the stochastic flow generated by $W$.
	\end{lemma}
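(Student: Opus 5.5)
We prove Lemma~\ref{lem:delta_tightness} in three steps: tightness of each component from the $\delta$-uniform moment bounds of Lemma~\ref{lem:delta_uniform_bounds}, the Jakubowski--Skorokhod representation on the product space, and identification of the stochastic structure of the limit exactly as in Sections~\ref{s4}--\ref{s7}.

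\emph{Tightness.} For the geometry, Lemma~\ref{lem:delta_uniform_bounds} bounds $\zeta_\delta$ in $L^p(\Omega;L^\infty(0,T;H^2(\Gamma))\cap W^{1,2}(0,T;H^1(\Gamma)))$. The compact embedding into $C([0,T];H^s(\Gamma))$ for $s<2$ used in Section~\ref{s6}, together with Chebyshev's inequality, gives tightness of $\zeta_\delta$ and $\zeta_\delta^*$ in $C_tH^s\cap(L^\infty H^2,w^*)$. The shell velocity $w_\delta$ is bounded in $L^2(0,T;H^1(\Gamma))\cap L^\infty(0,T;L^2(\Gamma))$, and closed balls there are metrizable compacts in the weak and weak-$*$ topologies, so it is tight. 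The flow and the Wiener process do not depend on $\delta$, and their tightness in $C^{\vartheta_0}_tC^2_x$ and $C_t\mathbb R^K$ follows as before.

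For the density, the bound on $\sup_t\|\varrho_\delta\|_{L^\gamma}^\gamma$ and the transformed continuity equation $\partial_t\varrho_\delta=-\Div^\phi(\varrho_\delta\mathbf v_\delta)$ control $\partial_t\varrho_\delta$ in $L^2(0,T;W^{-1,\frac{2\gamma}{\gamma+1}})$ on bounded-flow events $\Omega_R$. Here $\varrho_\delta\mathbf v_\delta=\sqrt{\varrho_\delta}\,\sqrt{\varrho_\delta}\mathbf v_\delta$ is bounded in $L^\infty_tL^{2\gamma/(\gamma+1)}_x$. Arzelà--Ascoli in $C_w$ on bounded balls of $L^\gamma$ then gives tightness in $C_w([0,T];L^\gamma)$.

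For the momentum, the same bound places $\varrho_\delta\mathbf v_\delta$ in $L^\infty_tL^{2\gamma/(\gamma+1)}_x$, so weak tightness in that space, or in $\mathcal D'$, is immediate. If a $C_w$ topology is wanted for the momentum, the coupled identity tested with compactly supported $\mathbf q$, $\psi=0$, gives a negative-Sobolev time derivative bound. This needs only an $L^1$ pressure bound, which comes from the energy ($\delta\varrho^\beta$ in $L^\infty L^1$), and not the pressure gain. Localization on $\Omega_R$ is removed by the moment bounds on $\Lambda$, as in Proposition~\ref{app:prop_higher_integrability}.

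\emph{Representation and limit.} The product of these spaces is a Jakubowski space: each factor carries a countable family of continuous separating functionals, because weak topologies on balls of separable reflexive spaces and $C_w$ spaces admit such families. Jakubowski's theorem then yields the new probability space, the copies with the same laws, and a.s. convergence along a subsequence. This gives \eqref{eq:delta_zeta_strong}, \eqref{eq:delta_w_weak}, and the density and momentum convergences. The limit of the momentum is a priori some $\mathbf m$. The identification $\mathbf m=\varrho\mathbf v$ would need velocity compactness, which is postponed to Proposition~\ref{app:prop_delta_time_compactness}. Since the lemma states it here, it can be read as a distributional statement once the strong $C_w$ density and weak velocity limits are combined with the momentum time regularity, as in the Lions product argument of Section~\ref{s7}.

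\emph{Stochastic structure.} For $W$, the martingale identities for increments and for $W_iW_j-\delta_{ij}t$, tested against bounded continuous functionals of the past, pass to the limit by a.s. convergence and uniform integrability. Lévy's characterization then shows that $W$ is Wiener for the canonical filtration, and pathwise uniqueness of \eqref{phiv} identifies $\phi$ as the flow generated by $W$.

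The main obstacle is the density and momentum tightness. The velocity is only weakly bounded in $H^1$ on the physical part: outside it the viscosity $\kappa_\delta\to0$ degenerates, so the $\mathbf v_\delta$ bound there is $\kappa_\delta^{-1/2}$-weighted. This is why velocity is not included in the representation, and I would rely only on the $\sqrt{\varrho}\mathbf v$ and $\varrho$ bounds, which are uniform on all of $\mathcal O_\alpha$.
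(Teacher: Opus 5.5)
Your proposal is correct and follows essentially the paper's own argument. The shared route is:
\begin{itemize}
\item componentwise tightness from the $\delta$-uniform moments: Aubin--Lions--Simon into $C_tH^s$ for the graphs, weak/weak-$*$ balls for $w_\delta$ and the momentum, and the continuity equation for $C_w(L^\gamma)$ tightness of the density;
\item the Jakubowski--Skorokhod representation;
\item passage of the martingale characterizations for $W$ and $\phi$;
\item keeping the velocity out of the representation.
\end{itemize}

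Like the paper, you can only identify the momentum limit as $\varrho\mathbf v$ once Propositions~\ref{app:prop_delta_time_compactness} and~\ref{app:prop_exterior_mass} are available. The mechanism at that point is not momentum time regularity, as you suggest. It is $C_w(L^\gamma)$ convergence together with the compact embedding $L^\gamma\subset\subset H^{-1}$ (valid since $\gamma>6/5$), paired with the local weak $H^1$ velocity limits, while the exterior and boundary-layer contributions are small.

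One caution about your optional $C_w$-momentum remark. Against a test field $\mathbf q$ that meets the tube $T_\delta$, the normal penalty is only bounded by $C\delta^{-1/2}h_\delta^{1/2}\|\mathbf q\|_{L^\infty}$, which diverges as $\delta\to0$. So that time-derivative bound holds only for tests supported away from the interface, or for normal-compatible tests as in Proposition~\ref{app:prop_delta_test_construction}. Your main weak-$*$ route does not need it.
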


	\begin{proof}
		The energy estimate, the continuity equation, and the momentum equation
		give tightness of the density and momentum in the standard weak and
		negative-Sobolev topologies. The shell energy bound and
		$\partial_t\zeta_\delta=w_\delta$ give compactness of $\zeta_\delta$
		in $C([0,T];H^s(\Gamma))$, $s<2$. The bounds for $w_\delta$ give
		tightness in the weak $L^2_tH^1_\Gamma$ and weak-star
		$L^\infty_tL^2_\Gamma$ topologies. The flow estimates give tightness of $\phi_\delta$,
		while tightness of $W_\delta$ is standard. These bounds close the product
		Jakubowski space without using local $H^1$ compactness of the velocity.
		The latter is recovered only after representation, once the limiting graph
		is available; see Proposition~\ref{app:prop_delta_time_compactness}. The
		Wiener property and the stochastic-flow equation follow by passing the
		canonical martingale characterizations as in the preceding limits.
	\end{proof}
	
	From now on we work on the represented probability space and omit hats.

	\subsection{Geometry and traces for \texorpdfstring{$H^2$}{H2} graph interfaces}
	\label{app:geometry}
	
	For fixed $M,h_-,h_+>0$, define
	\begin{equation*}		\mathfrak G(M,h_-,h_+)
		:=
		\left\{
		\zeta\in H^2(\Gamma):
		\|\zeta\|_{H^2}\le M,\quad
		h_-\le1+\zeta\le h_+
		\right\},
	\end{equation*}
	where $0<h_-<h_+<2+\alpha^{-1}$. On bounded-flow localization events we
	also assume
	\begin{equation}
		\label{app:eq_D_flow_class}
		\|\phi\|_{C^2(\overline{\mathcal O}_\alpha)}
		+
		\|\phi^{-1}\|_{C^2(\overline{\mathcal O}_\alpha)}
		\le R.
	\end{equation}
	All constants below are uniform on these classes.

	\subsubsection{Graph geometry}
	
	For $\zeta\in\mathfrak G(M,h_-,h_+)$ we use the graph notation
	$\Phi_\zeta$, $\mathbf N^\zeta$, $J_\zeta$, and $\mathbf n^\zeta$
	introduced in Subsection~\ref{sec:physical_model}.
	
	\begin{lemma}[Compactness of the graph geometry]
		\label{app:lem_graph_compactness}
		Let
		$\zeta_n,\zeta\in\mathfrak G(M,h_-,h_+)$ and assume, for some
		$s\in(3/2,2)$,
		\begin{equation}
			\label{app:eq_D_zeta_Hs_conv}
			\zeta_n\to\zeta
			\qquad
			\text{strongly in }H^s(\Gamma).
		\end{equation}
		Then
		\begin{align*}			& \zeta_n\to\zeta
			\qquad\text{uniformly on }\Gamma, \\
			& \nabla_\Gamma\zeta_n
			\to\nabla_\Gamma\zeta, \;\; J_{\zeta_n}
			\to J_\zeta, \;\;\mathbf n^{\zeta_n}
			\to \mathbf n^\zeta,
			\;\;\text{strongly in }L^q(\Gamma),
		\end{align*}
		with $1\le q<\infty$. Moreover,
		\begin{equation*}			\sup_n
			\left(
			\|J_{\zeta_n}\|_{L^q}
			+
			\|\mathbf n^{\zeta_n}\|_{L^\infty}
			\right)
			\le C(M,q).
		\end{equation*}
	\end{lemma}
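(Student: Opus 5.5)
We argue via Sobolev embeddings on the two-dimensional torus $\Gamma$. Since $s\in(3/2,2)$ and $\dim\Gamma=2$, we have $s>1=\dim\Gamma/2$, so $H^s(\Gamma)\hookrightarrow C^{0,\theta}(\Gamma)$ for some $\theta>0$. The convergence \eqref{app:eq_D_zeta_Hs_conv} therefore immediately gives $\zeta_n\to\zeta$ uniformly on $\Gamma$. For the gradients we use $\nabla_\Gamma\zeta_n\to\nabla_\Gamma\zeta$ in $H^{s-1}(\Gamma)$ with $s-1\in(1/2,1)$, together with the embedding $H^{s-1}(\Gamma)\hookrightarrow L^{q_s}(\Gamma)$, where $\frac1{q_s}=\frac12-\frac{s-1}{2}$. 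Thus $q_s=\frac{2}{2-s}>4$, which gives strong convergence of the gradients in $L^{q_s}$ only, not yet in every $L^q$.

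To reach every $q<\infty$ we interpolate with the uniform bound in the class $\mathfrak G(M,h_-,h_+)$. There $\|\nabla_\Gamma\zeta_n\|_{H^1(\Gamma)}\le M$, and in two dimensions $H^1(\Gamma)\hookrightarrow L^r(\Gamma)$ for every $r<\infty$, so $\sup_n\|\nabla_\Gamma\zeta_n\|_{L^r}\le C(M,r)$. For $q<\infty$ pick $r>q$ and use H\"older interpolation between $L^{2}$ and $L^r$:
\[
\|\nabla_\Gamma\zeta_n-\nabla_\Gamma\zeta\|_{L^q}\le \|\nabla_\Gamma\zeta_n-\nabla_\Gamma\zeta\|_{L^2}^{1-\theta}\,(2C(M,r))^{\theta},
\]
where the first factor tends to zero by the $H^s$ convergence. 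This settles the gradient convergence for all $1\le q<\infty$, and the case $q<q_s$ also follows directly by H\"older on the compact torus.

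We then transfer to $J$ and $\mathbf n$ through pointwise Lipschitz bounds. The map $\mathbf a\mapsto|(-\mathbf a,1)|$ is $1$-Lipschitz, so $|J_{\zeta_n}-J_\zeta|\le|\nabla_\Gamma\zeta_n-\nabla_\Gamma\zeta|$ pointwise, and $J_{\zeta_n}\to J_\zeta$ in $L^q$. The map $\mathbf a\mapsto(-\mathbf a,1)/|(-\mathbf a,1)|$ is globally Lipschitz on $\mathbb R^2$, since its derivative is bounded because the denominator is at least $1$. Hence $|\mathbf n^{\zeta_n}-\mathbf n^\zeta|\le C|\nabla_\Gamma\zeta_n-\nabla_\Gamma\zeta|$ and the normals converge in $L^q$. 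For the uniform bounds, $|\mathbf n^{\zeta_n}|=1$ identically gives the $L^\infty$ bound, and $J_{\zeta_n}\le1+|\nabla_\Gamma\zeta_n|$ together with $\|\nabla_\Gamma\zeta_n\|_{L^q}\le C(M,q)$ from the $H^1\hookrightarrow L^q$ embedding gives $\sup_n\|J_{\zeta_n}\|_{L^q}\le C(M,q)$.

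The only delicate step is the gradient convergence for large $q$. In two dimensions $H^2$ graphs are not Lipschitz, since $H^1\not\hookrightarrow L^\infty$, so uniform convergence of gradients is unavailable. The interpolation between strong $L^2$ convergence and the uniform $L^r$ bound coming from $H^1\hookrightarrow L^r$ is what produces convergence in all finite $L^q$ but not in $L^\infty$, which is consistent with the statement.
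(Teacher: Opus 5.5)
Your proposal is correct and follows essentially the paper's route: interpolate the strong $H^s$ convergence against the uniform $H^2$ bound, use two-dimensional Sobolev embeddings to get gradient convergence in every finite $L^q$, and transfer this to $J_{\zeta_n}$ and $\mathbf n^{\zeta_n}$ through the maps $p\mapsto\sqrt{1+|p|^2}$ and $p\mapsto(-p,1)/\sqrt{1+|p|^2}$. The differences are cosmetic. You interpolate in Lebesgue spaces, between $L^2$ convergence and a uniform $L^r$ bound, whereas the paper interpolates in the Sobolev scale to get $H^{s'}$ convergence for all $s<s'<2$. Your observation that both maps are globally Lipschitz makes explicit the step the paper states only as ``continuity''.
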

	
	\begin{proof}
		By interpolation with the uniform $H^2$ bound,
		$\zeta_n\to\zeta$ strongly in $H^{s'}(\Gamma)$ for every
		$s<s'<2$.  The two-dimensional Sobolev embeddings then give uniform
		convergence and strong convergence in $W^{1,q}$ for every finite $q$.
		The remaining statements follow from the continuity of
		$
		p\mapsto\sqrt{1+|p|^2}$ and $
		p\mapsto\frac{(-p,1)}{\sqrt{1+|p|^2}}.
		$
	\end{proof}

	\begin{lemma}[Uniform transformed geometry]
		\label{app:lem_transformed_geometry}
		Under \eqref{app:eq_D_flow_class},
	$			0<c_R
			\le
			J_\Gamma^\phi[\zeta]
			\le
			C_R
		$
		for   $\zeta\in\mathfrak G(M,h_-,h_+)$.  If
		$\phi_n\to\phi$ in $C^1(\overline{\mathcal O}_\alpha)$ and
		\eqref{app:eq_D_zeta_Hs_conv} holds, then for every finite $q$,
		\begin{align*}			& n^{\zeta_n,\phi_n}
			\to
			n^{\zeta,\phi},
			\;\; J_\Gamma^{\phi_n}[\zeta_n]
			\to
			J_\Gamma^\phi[\zeta], \;\; P_\tau^{\zeta_n,\phi_n}
			\to
			P_\tau^{\zeta,\phi},
			\qquad\text{strongly in }L^q(\Gamma). 
		\end{align*}
	\end{lemma}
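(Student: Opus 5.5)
The plan is to reduce both claims to pointwise algebra in the $3\times3$ matrix $\nabla\phi$ and the graph slope $\nabla_\Gamma\zeta$, and then use the $L^q$-convergence of Lemma~\ref{app:lem_graph_compactness} together with dominated convergence. All the maps involved are smooth, and nonsingular on the relevant compact sets.

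\emph{Two-sided bound.} Recall that $J_\Gamma^\phi[\zeta]=|\operatorname{cof}(\nabla\phi)\mathbf n^\zeta|$, evaluated at $\Phi_\zeta(\bm y)$, and that $|\mathbf n^\zeta|=1$. Since $\det\nabla\phi=1$, we have $\operatorname{cof}(\nabla\phi)=(\nabla\phi)^{-T}$. Hence
\[
|\operatorname{cof}(\nabla\phi)\mathbf n|\le|(\nabla\phi)^{-1}|\le C_R .
\]
For the lower bound, write $\mathbf n=(\nabla\phi)^T(\nabla\phi)^{-T}\mathbf n$, so that $1\le|\nabla\phi|\,|\operatorname{cof}(\nabla\phi)\mathbf n|$. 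This gives $J_\Gamma^\phi\ge 1/R=:c_R$. Here I use that $\nabla(\phi^{-1})\circ\phi=(\nabla\phi)^{-1}$, which is bounded by \eqref{app:eq_D_flow_class}. These bounds are pointwise and uniform over all unit vectors, so they hold for every $\zeta$ in the class, independently of $M,h_\pm$.

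\emph{Convergence.} Consider the map
\[
F(A,p):=\frac{\operatorname{cof}(A)(-p,1)}{\sqrt{1+|p|^2}}
\]
on the set $\{|A|,|A^{-1}|\le R'\}\times\mathbb R^2$. This map is continuous, and in fact locally Lipschitz in $A$ uniformly in $p$, because $(-p,1)/\sqrt{1+|p|^2}$ is a smooth bounded function of $p$. On $\Gamma$ we have
\[
J_\Gamma^{\phi_n}[\zeta_n]=|F(\nabla\phi_n(\Phi_{\zeta_n}(\bm y)),\nabla_\Gamma\zeta_n(\bm y))|,
\]
and $n^{\zeta_n,\phi_n}$ is $F/|F|$. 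By the lower bound above, $|F|\ge c_R$, so the normalization is Lipschitz. Likewise
\[
P_\tau^{\zeta_n,\phi_n}=\mathbb I-n\otimes n
\]
is a Lipschitz function of $n$.

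It therefore suffices to show that the arguments of $F$ converge in measure on $\Gamma$. The slopes satisfy $\nabla_\Gamma\zeta_n\to\nabla_\Gamma\zeta$ in $L^q$ by Lemma~\ref{app:lem_graph_compactness}. For the matrix argument, split
\[
\nabla\phi_n\circ\Phi_{\zeta_n}-\nabla\phi\circ\Phi_\zeta
=(\nabla\phi_n-\nabla\phi)\circ\Phi_{\zeta_n}+\bigl(\nabla\phi\circ\Phi_{\zeta_n}-\nabla\phi\circ\Phi_\zeta\bigr).
\]
The first term tends to $0$ uniformly since $\phi_n\to\phi$ in $C^1$. The second is bounded by $\|\nabla^2\phi\|_\infty\|\zeta_n-\zeta\|_{L^\infty}$, which tends to $0$ by the uniform convergence of the graphs. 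Here I use the $C^2$ bound of \eqref{app:eq_D_flow_class}; if only $C^1$ convergence is available for the limit flow, uniform continuity of $\nabla\phi$ is enough.

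The uniform invertibility of $\nabla\phi_n$ holds eventually, since $\phi_n^{-1}$ is controlled by the $C^1$ convergence and $\det\nabla\phi_n=1$.

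\emph{Passing to $L^q$.} All three quantities are uniformly bounded in $L^\infty$: the normal and the projection have modulus at most $1$ or $\sqrt 3$, and $J\le C_R$. So convergence a.e. along subsequences (equivalently, in measure) together with dominated convergence gives strong $L^q$ convergence for every finite $q$. The subsequence principle then removes the need to pass to subsequences.

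The only mildly delicate point is the composition with the moving graph $\Phi_{\zeta_n}$. It is handled by uniform graph convergence plus spatial regularity of $\nabla\phi$. The nondegeneracy that makes the normalization Lipschitz is exactly the lower bound $c_R$ from the first step.
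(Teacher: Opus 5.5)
Your proof is correct and follows the same route as the paper. The two-sided bound on $J_\Gamma^\phi[\zeta]$ is exactly the paper's observation that the singular values of $\operatorname{cof}\nabla\phi=(\nabla\phi)^{-T}$ are bounded above and below under \eqref{app:eq_D_flow_class}, and the convergence is reduced, through continuity of the normalization map, to Lemma~\ref{app:lem_graph_compactness}; your explicit treatment of the moving evaluation point $\nabla\phi_n\circ\Phi_{\zeta_n}$, via uniform graph convergence and regularity of $\nabla\phi$, fills in a step the paper leaves implicit.
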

	
	\begin{proof}
		The singular values of $\operatorname{cof}\nabla\phi$ are uniformly
		bounded above and below under \eqref{app:eq_D_flow_class}.  The result
		therefore follows from Lemma~\ref{app:lem_graph_compactness}.
	\end{proof}
	
	Set
	\begin{equation*}		\mathcal J^{\zeta,\phi}
		:=
		J_\zeta J_\Gamma^\phi[\zeta].
	\end{equation*}
	Then
	\begin{equation*}		\int_{\Gamma_\zeta}
		J_\Gamma^\phi[\zeta]F\,\dd S
		=
		\int_\Gamma
		\mathcal J^{\zeta,\phi}
		(F\circ\Phi_\zeta)\,\dd\bm y.
	\end{equation*}
	
	\begin{corollary}[Convergence of surface weights]
		
		Under the assumptions of
		Lemma~\ref{app:lem_transformed_geometry},
		\begin{equation}
			\label{app:eq_D_total_weight_conv}
			\mathcal J^{\zeta_n,\phi_n}
			\to
			\mathcal J^{\zeta,\phi}
			\qquad
			\text{strongly in }L^q(\Gamma)
		\end{equation}
		for every finite $q$.
	\end{corollary}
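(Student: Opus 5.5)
The corollary is a product of two factors, each of which converges in every finite $L^q$. We will therefore prove it by factorizing $\mathcal J^{\zeta_n,\phi_n}=J_{\zeta_n}J_\Gamma^{\phi_n}[\zeta_n]$ and applying H\"older's inequality to the difference of the products. We write
\[
\mathcal J^{\zeta_n,\phi_n}-\mathcal J^{\zeta,\phi}
=
(J_{\zeta_n}-J_\zeta)\,J_\Gamma^{\phi_n}[\zeta_n]
+
J_\zeta\bigl(J_\Gamma^{\phi_n}[\zeta_n]-J_\Gamma^\phi[\zeta]\bigr),
\]
and estimate each term in $L^q(\Gamma)$.

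For the first term we use the uniform bound $0<c_R\le J_\Gamma^{\phi_n}[\zeta_n]\le C_R$ from Lemma~\ref{app:lem_transformed_geometry}. This bound applies because $\phi_n\to\phi$ in $C^1$, so the $\phi_n$ satisfy \eqref{app:eq_D_flow_class}, possibly with $R$ enlarged to a uniform $R'$, for $n$ large. It gives $\|(J_{\zeta_n}-J_\zeta)J_\Gamma^{\phi_n}[\zeta_n]\|_{L^q}\le C_R\|J_{\zeta_n}-J_\zeta\|_{L^q}\to0$ by Lemma~\ref{app:lem_graph_compactness}.

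For the second term we apply H\"older's inequality with exponents $2q,2q$:
\[
\|J_\zeta(J_\Gamma^{\phi_n}[\zeta_n]-J_\Gamma^\phi[\zeta])\|_{L^q}
\le
\|J_\zeta\|_{L^{2q}}\,
\|J_\Gamma^{\phi_n}[\zeta_n]-J_\Gamma^\phi[\zeta]\|_{L^{2q}}.
\]
The first factor is finite by the uniform bound in Lemma~\ref{app:lem_graph_compactness}; in fact $J_\zeta\in L^\infty$, since $H^2(\Gamma)\subset W^{1,q}$ for every finite $q$. The second factor tends to zero by Lemma~\ref{app:lem_transformed_geometry} with the finite exponent $2q$. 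Summing the two estimates gives \eqref{app:eq_D_total_weight_conv}.

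The only point requiring care is that the uniform flow class must be valid along the sequence $\phi_n$. Convergence in $C^1$ alone does not control the $C^2$ norms in \eqref{app:eq_D_flow_class}. However, the proof of Lemma~\ref{app:lem_transformed_geometry} only uses bounds on the singular values of $\operatorname{cof}\nabla\phi_n$, which need just $C^1$ control, and these follow from $C^1$ convergence to a diffeomorphism. No other obstacle is expected.
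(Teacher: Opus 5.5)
Your proof is correct and is the intended argument. The paper gives no separate proof and treats the corollary as immediate from three ingredients: the factorization $\mathcal J^{\zeta,\phi}=J_\zeta J_\Gamma^\phi[\zeta]$, the $L^q$ convergences and bounds in Lemmas~\ref{app:lem_graph_compactness} and~\ref{app:lem_transformed_geometry}, and H\"older's inequality. Your worry about the flow class is moot, since the standing assumption \eqref{app:eq_D_flow_class} of that subsection already applies uniformly along the sequence.

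One correction: your parenthetical claim $J_\zeta\in L^\infty$ is false. $H^2(\Gamma)$ does not embed into $W^{1,\infty}$ in two dimensions, and the paper itself notes that $J_\zeta$ need not be bounded for an $H^2$ graph. You only use $J_\zeta\in L^{2q}$, however, so the argument is unaffected.
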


	\subsubsection{Moving graph traces}
	
	For smooth $u$ define
	\begin{equation*}		\Tr_\zeta u(\bm y)
		:=
		u(\bm y,1+\zeta(\bm y)).
	\end{equation*}
	
	The one-dimensional trace theorem in the vertical variable gives,
	uniformly in $\zeta\in\mathfrak G(M,h_-,h_+)$,
	\begin{equation*}		\|\Tr_\zeta u\|_{L^2(\Gamma)}
		\le
		C_\theta\|u\|_{H^\theta(\mathcal O_\alpha)},
		\qquad
		\theta>\frac12.
	\end{equation*}
	Moreover, for every $2\le q<4$,
	\begin{equation*}		\|\Tr_\zeta u\|_{L^q(\Gamma)}
		\le
		C_q\|u\|_{H^1(\mathcal O_\alpha)}.
	\end{equation*}
	Interpolation yields, for every $\theta>1/2$, some $q_\theta>2$ such
	that
	\begin{equation}
		\label{app:eq_D_fractional_trace_Lq}
		\|\Tr_\zeta u\|_{L^q(\Gamma)}
		\le
		C_{\theta,q}\|u\|_{H^\theta(\mathcal O_\alpha)},
		\qquad
		2\le q<q_\theta.
	\end{equation}
	
	Since $J_\zeta$ is bounded in every finite $L^p(\Gamma)$, these estimates
	also imply
	\begin{equation*}		\int_{\Gamma_\zeta}|u|^2\,\dd S
		\le
		C(M,\theta)\|u\|_{H^\theta(\mathcal O_\alpha)}^2,
		\qquad
		\theta>\frac12,
	\end{equation*}
	and likewise with the additional factor
	$J_\Gamma^\phi[\zeta]$.
	
	\begin{proposition}[Continuity of moving graph traces]
		\label{app:prop_moving_trace_convergence}
		
		Let $\theta>1/2$, assume
		\eqref{app:eq_D_zeta_Hs_conv}, and let
		\[
		u_n\to u
		\qquad
		\text{strongly in }H^\theta(\mathcal O_\alpha).
		\]
		Then, for some $q>2$,
		\begin{equation*}			\Tr_{\zeta_n}u_n
			\to
			\Tr_\zeta u
			\qquad
			\text{strongly in }L^q(\Gamma).
		\end{equation*}
		If also $\phi_n\to\phi$ in $C^1(\overline{\mathcal O}_\alpha)$, then
		\begin{equation*}			\int_\Gamma
			\mathcal J^{\zeta_n,\phi_n}
			\left|
			\Tr_{\zeta_n}u_n-\Tr_\zeta u
			\right|^2
			\,\dd\bm y
			\to0.
		\end{equation*}
	\end{proposition}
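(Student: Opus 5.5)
We split the difference into a fixed-function part and a moving-graph part:
\[
\Tr_{\zeta_n}u_n-\Tr_\zeta u
=\Tr_{\zeta_n}(u_n-u)+\bigl(\Tr_{\zeta_n}u-\Tr_\zeta u\bigr).
\]
Fix $\theta'\in(1/2,\theta)$ and the exponent $q_\theta>2$ from \eqref{app:eq_D_fractional_trace_Lq}, and choose $q\in(2,q_\theta)$. The bound \eqref{app:eq_D_fractional_trace_Lq} is uniform over $\mathfrak G(M,h_-,h_+)$. Hence
\[
\|\Tr_{\zeta_n}(u_n-u)\|_{L^q(\Gamma)}\le C_{\theta,q}\|u_n-u\|_{H^\theta(\mathcal O_\alpha)}\to0 .
\]
This settles the first term.

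For the second term we argue by density, since $\zeta_n\to\zeta$ uniformly by Lemma~\ref{app:lem_graph_compactness}. Given $\varepsilon>0$, pick $u^\varepsilon\in C^\infty(\overline{\mathcal O}_\alpha)$ with $\|u-u^\varepsilon\|_{H^\theta}\le\varepsilon$. By the uniform trace bound, $\Tr_{\zeta_n}(u-u^\varepsilon)$ and $\Tr_\zeta(u-u^\varepsilon)$ are both $O(\varepsilon)$ in $L^q(\Gamma)$, uniformly in $n$. For the smooth function, the mean value theorem in the vertical variable gives
\[
|\Tr_{\zeta_n}u^\varepsilon-\Tr_\zeta u^\varepsilon|(\bm y)\le\|\partial_zu^\varepsilon\|_{L^\infty}|\zeta_n-\zeta|(\bm y),
\]
which tends to zero uniformly on $\Gamma$. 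Combining the three pieces gives $\limsup_n\|\Tr_{\zeta_n}u-\Tr_\zeta u\|_{L^q}\le C\varepsilon$. Letting $\varepsilon\to0$ proves the first claim.

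The weighted statement follows by H\"older's inequality. Write $\int_\Gamma\mathcal J^{\zeta_n,\phi_n}|f_n|^2\le\|\mathcal J^{\zeta_n,\phi_n}\|_{L^{r}}\|f_n\|_{L^q}^2$ with $f_n=\Tr_{\zeta_n}u_n-\Tr_\zeta u$ and $r=q/(q-2)<\infty$. The weight $\mathcal J^{\zeta_n,\phi_n}=J_{\zeta_n}J_\Gamma^{\phi_n}[\zeta_n]$ is bounded in every finite $L^r(\Gamma)$: Lemma~\ref{app:lem_graph_compactness} bounds $J_{\zeta_n}$, and Lemma~\ref{app:lem_transformed_geometry} gives $J_\Gamma^{\phi_n}\le C_R$ once $\phi_n$ is bounded in $C^1$, which holds since $\phi_n\to\phi$ in $C^1$. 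Convergence of the weights, \eqref{app:eq_D_total_weight_conv}, is not even needed, only the uniform bound. Since $\|f_n\|_{L^q}\to0$, the weighted integral tends to zero.

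The main obstacle is the strict gain $q>2$. It is needed to absorb the weight $J_{\zeta_n}$, which is only in $L^r$ for finite $r$ and not uniformly in $L^\infty$ because $\nabla\zeta_n$ is merely in $W^{1,2}$. This gain rests on the interpolated estimate \eqref{app:eq_D_fractional_trace_Lq}, which must hold uniformly over the class $\mathfrak G$. If that uniformity were doubtful, I would reprove it by flattening via $(\bm y,z)\mapsto(\bm y,z/(1+\zeta(\bm y)))$. This map is bi-Lipschitz with constants depending only on $M,h_\pm$, and then one applies the fixed-boundary fractional trace and interpolates between $\theta$ and $1$.
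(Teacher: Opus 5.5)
Your proposal is correct and follows the paper's proof essentially verbatim. It uses the same splitting into $\Tr_{\zeta_n}(u_n-u)$ and $\Tr_{\zeta_n}u-\Tr_\zeta u$, the uniform bound \eqref{app:eq_D_fractional_trace_Lq} for the first piece, smooth approximation plus uniform convergence of $\zeta_n$ for the second, and H\"older against the weight for the weighted claim, where you rightly note that only the uniform $L^r(\Gamma)$ bound on $\mathcal J^{\zeta_n,\phi_n}$ is needed, not its convergence \eqref{app:eq_D_total_weight_conv}.

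One correction to your fallback remark: the flattening map $(\bm y,z)\mapsto(\bm y,z/(1+\zeta(\bm y)))$ is not bi-Lipschitz uniformly on $\mathfrak G(M,h_-,h_+)$. The reason is that $\nabla_\Gamma\zeta\in H^1(\Gamma)\not\subset L^\infty(\Gamma)$ in two dimensions, as you yourself observe. That argument would therefore have to rely only on $\nabla_\Gamma\zeta\in L^p$ for every finite $p$, which yields $H^1\to W^{1,r}$ for $r<2$, as in the proof of Lemma~\ref{app:lem_subgraph_subcritical_korn}.
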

	
	\begin{proof}
		Split
		\[
		\Tr_{\zeta_n}u_n-\Tr_\zeta u
		=
		\Tr_{\zeta_n}(u_n-u)
		+
		\bigl(\Tr_{\zeta_n}u-\Tr_\zeta u\bigr).
		\]
		The first term is controlled by
		\eqref{app:eq_D_fractional_trace_Lq}; the second follows by approximation
		of $u$ by smooth functions and the uniform convergence of $\zeta_n$.
		The weighted convergence follows from
		\eqref{app:eq_D_total_weight_conv}.
	\end{proof}
	
	Consequently,
	\begin{equation*}		P_\tau^{\zeta_n,\phi_n}
		\Tr_{\zeta_n}u_n
		\to
		P_\tau^{\zeta,\phi}
		\Tr_\zeta u
	\end{equation*}
	strongly in the corresponding weighted $L^2$ interface space.

	\subsubsection{Subcritical estimates on graph subdomains}
	
	\begin{lemma}[Uniform subcritical Korn--Sobolev estimate]
		\label{app:lem_subgraph_subcritical_korn}
		Let
		$\zeta\in\mathfrak G(M,h_-,h_+)$ and
		$\mathbf u\in H^1(\mathcal O_\zeta;\mathbb R^3)$ have zero trace on the
		fixed bottom.  For every $q\in(1,2)$,
		\begin{equation}
			\label{app:eq_subcritical_korn}
			\|\mathbf u\|_{W^{1,q}(\mathcal O_\zeta)}
			\le
			C
			\|\mathbb D\mathbf u\|_{L^2(\mathcal O_\zeta)},
			\qquad
			\mathbb D\mathbf u
			:=
			\frac12(\nabla\mathbf u+\nabla\mathbf u^\top).
		\end{equation}
		Consequently, with
		$q^\#=3q/(3-q)$,
		\begin{equation}
			\label{app:eq_subcritical_korn_sobolev}
			\|\mathbf u\|_{L^{q^\#}(\mathcal O_\zeta)}
			\le
			C
			\|\mathbb D\mathbf u\|_{L^2(\mathcal O_\zeta)}.
		\end{equation}
		In particular, every exponent $2\le r<6$ is admissible by choosing
		$q<2$ sufficiently close to $2$.
	\end{lemma}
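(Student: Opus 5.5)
The plan is to flatten $\mathcal O_\zeta$ onto the fixed Lipschitz cylinder $\mathcal O_1:=\Gamma\times(0,1)$ and prove a perturbed Korn inequality there. I expect the subcriticality $q<2$ to be forced by the fact that an $H^2$ graph in two horizontal variables has gradient only in $L^p(\Gamma)$ for all finite $p$, not in $L^\infty$; so $\mathcal O_\zeta$ need not be Lipschitz or even John, and the classical $L^2$ Korn inequality is not available with uniform constants.

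\emph{Step 1: regularized flattening.} Let $\widetilde\zeta(\bm y,z):=(\zeta*\varphi_{1-z})(\bm y)$ be the standard smoothing extension of $\zeta$ into the cylinder, with scale $1-z$, and set $\Psi_\zeta(\bm y,z):=(\bm y,\,z(1+\widetilde\zeta(\bm y,z)))$. Since $\zeta\in H^2(\Gamma)$, one has $\widetilde\zeta\in H^{5/2}(\mathcal O_1)$, with bounds uniform on $\mathfrak G(M,h_-,h_+)$. Hence $\nabla\Psi_\zeta\in H^{3/2}(\mathcal O_1)\subset L^p\cap\mathrm{VMO}$. Moreover $\det\nabla\Psi_\zeta\ge c(h_-)>0$ after possibly shrinking the mollifier scale by a fixed factor; this uses $1+\zeta\ge h_-$ and the uniform continuity of $\zeta$. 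The key point is that the VMO modulus of $\nabla\Psi_\zeta$ is uniform on $\mathfrak G(M,h_-,h_+)$. I would prove this from the compactness of $\mathfrak G$ in $H^s$ (Lemma~\ref{app:lem_graph_compactness}) together with the continuity of $\zeta\mapsto\nabla\Psi_\zeta$ in $H^{3/2-}$, since a compact family in a space embedding into VMO has a common modulus.

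\emph{Step 2: local frozen Korn and absorption.} For $\mathbf v:=\mathbf u\circ\Psi_\zeta$ and $A:=(\nabla\Psi_\zeta)^{-1}$, the pulled-back symmetric gradient is $\operatorname{sym}(\nabla\mathbf v\,A)=(\mathbb D\mathbf u)\circ\Psi_\zeta$. Cover $\overline{\mathcal O_1}$ by small balls $B$ and freeze $A_B:=\langle A\rangle_B$. On each ball the map $x\mapsto A_B^{-1}x$ is affine, so the classical $L^q$ Korn inequality holds on its image of $B\cap\mathcal O_1$, which is a Lipschitz set, with constants depending only on $R,M,h_\pm$. Applying it to $\varpi_B\mathbf v$, where $\varpi_B$ is a cutoff adapted to $B$, gives
\[
\|\nabla(\varpi_B\mathbf v)\|_{L^q}
\le C\bigl(\|\operatorname{sym}(\nabla\mathbf v A)\|_{L^q}
+\|(A-A_B)\nabla\mathbf v\|_{L^q(B)}+\|\mathbf v\|_{L^q}\bigr).
\]
By H\"older with $\frac1q=\frac1{q_1}+\frac1r$, where $q<q_1<2$, the middle term is bounded by $\|A-A_B\|_{L^r(B)}\|\nabla\mathbf v\|_{L^{q_1}(B)}$. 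The uniform VMO smallness makes this term small. I would run the absorption on a finite chain of exponents $q<q_1<\dots\le2$, using Step 1 to keep $A\in L^r$ for every finite $r$; the change of variables converts $L^2$ norms of $\mathbb D\mathbf u$ on $\mathcal O_\zeta$ into $L^{q_j}$ norms on $\mathcal O_1$ with only an $L^p$-integrable Jacobian loss. Summing over the finite cover, and transforming back with the same H\"older loss, yields
\[
\|\mathbf u\|_{W^{1,q}(\mathcal O_\zeta)}\le C\bigl(\|\mathbb D\mathbf u\|_{L^2(\mathcal O_\zeta)}+\|\mathbf u\|_{L^q(\mathcal O_\zeta)}\bigr).
\]

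\emph{Step 3: removing the lower-order term uniformly.} Argue by contradiction. Suppose there are $\zeta_n\in\mathfrak G(M,h_-,h_+)$ and $\mathbf u_n$ with $\|\mathbf u_n\|_{W^{1,q}(\mathcal O_{\zeta_n})}=1$ and $\|\mathbb D\mathbf u_n\|_{L^2(\mathcal O_{\zeta_n})}\to0$. Pull back to $\mathcal O_1$ and extract subsequences with $\zeta_n\to\zeta$ in $H^s$ and $\mathbf u_n\circ\Psi_{\zeta_n}$ converging weakly in $W^{1,q}$ and strongly in $L^q$. The limit $\mathbf u$ satisfies $\mathbb D\mathbf u=0$ on $\mathcal O_\zeta$, so it is a rigid motion, and it vanishes on $\Gamma_b$, so $\mathbf u=0$. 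This contradicts Step 2, which forces a uniform positive lower bound on $\|\mathbf u_n\|_{L^q}$. This proves \eqref{app:eq_subcritical_korn}. Then \eqref{app:eq_subcritical_korn_sobolev} follows from the Sobolev embedding $W^{1,q}\subset L^{q^\#}$, applied after the same flattening, since the embedding constant on the Lipschitz cylinder is fixed and the H\"older loss is again absorbed by taking $q$ slightly smaller. Because $q^\#\uparrow6$ as $q\uparrow2$, every exponent $2\le r<6$ is admissible.

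The main obstacle is Step 2: making the coefficient-freezing error genuinely absorbable with constants uniform over the whole class, when $\nabla\Psi_\zeta$ is unbounded. If the VMO route proves too delicate, I would instead use the $H^{3/2}$ regularity of $\nabla\Psi_\zeta$ to pick the balls so small that $\|A-A_B\|_{L^r(B)}$ is below the Korn constant uniformly; this smallness again comes from the compactness of $\mathfrak G$ in $H^s$ and the equi-integrability it induces.
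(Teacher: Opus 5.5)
Your Step~2 fails, and the whole lemma rests on it. Two separate things go wrong.

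\emph{(i) The uniform VMO modulus is false.} $\mathfrak G(M,h_-,h_+)$ is bounded but not compact in $H^2(\Gamma)$. Hence $\nabla\Psi_\zeta$ is only bounded in the critical space $H^{3/2}(\mathcal O_1)$. Compactness in $H^s$, $s<2$, gives compactness only in $H^{s-1/2}$, and that space does not embed into VMO in three dimensions. For example, take $\zeta_n=c+\varepsilon n^{-1}f(n(\bm y-\bm y_0))$ with $f\in C_c^\infty(\mathbb R^2)$ and a suitable constant $c$. These functions lie in the class, since the $H^2$ seminorm is scale invariant in two dimensions. Yet the mean oscillation of $\nabla_\Gamma\zeta_n$ on $B_{1/n}(\bm y_0)$ does not depend on $n$. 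So you have a BMO bound of size $CM$, not smallness.

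\emph{(ii) Even granting smallness, the absorption cannot close.} The last row of $A=(\nabla\Psi_\zeta)^{-1}$ is $\bigl(-z\nabla_{\bm y}\widetilde\zeta,\,1\bigr)/(1+\widetilde\zeta+z\partial_z\widetilde\zeta)$. As $z\to1$, $\nabla_{\bm y}\widetilde\zeta\to\nabla_\Gamma\zeta$, which is unbounded for a general $H^2$ graph. So $\|A-A_B\|_{L^\infty(B)}$ is never small, and mean-oscillation smallness cannot absorb $\|\nabla\mathbf v\,(A-A_B)\|_{L^q(B)}$ into $\|\nabla(\varpi_B\mathbf v)\|_{L^q}$. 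Your H\"older step replaces this error by $\|A-A_B\|_{L^r(B)}\|\nabla\mathbf v\|_{L^{q_1}}$ with $q_1>q$, which is a stronger norm than the one on the left. A chain $q<q_1<\dots<q_J\le2$ only pushes the difficulty to the top exponent. There you would need a uniform bound on $\|\nabla\mathbf v\|_{L^{q_J}}$ by $\mathbb D\mathbf u$, which is a Korn inequality stronger than the one being proved (for $q_J=2$, the one you yourself expect to fail).

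Without Step~2, Step~3 only gives $\mathbf u_n\to0$ in $L^q$, which contradicts nothing. A smaller point: the lower bound on $\det\nabla\Psi_\zeta$ in Step~1 does not follow from uniform continuity. For $H^2$ graphs that only bounds $\partial_z\widetilde\zeta$ by a multiple of $\sqrt{\log(1/(1-z))}$. It needs a symmetric mollifier and the scale-invariant two-dimensional Poincar\'e inequality for $\nabla_\Gamma\zeta$.

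The paper avoids coefficient freezing entirely. Graphs in $\mathfrak G(M,h_-,h_+)$ are uniformly $C^{0,\alpha}$ for every $\alpha<1$. The weighted Korn inequality of Acosta--Dur\'an--Lombardi on H\"older-$\alpha$ domains then controls $d^{1-\alpha}\nabla\mathbf u$ in $L^2$ (modulo a rigid motion) by $\|\mathbb D\mathbf u\|_{L^2}$, where $d$ is the distance to the boundary. Next apply H\"older's inequality with $d^{-(1-\alpha)}\in L^{s}$ and $\frac1q=\frac1s+\frac12$; this requires $(1-\alpha)s<1$, which is exactly where $q<2$ enters. It yields $\|\nabla\mathbf u\|_{L^q}\le C(\|\mathbb D\mathbf u\|_{L^2}+\|\mathbf u\|_{L^p})$ for some $p\in(q,2)$. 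Your Step~3 matches the paper's contradiction argument, with one correction. Because the flattening has unbounded gradient, the pulled-back normalized sequence is bounded only in $W^{1,q_0}$ for some $q_0<q$, not in $W^{1,q}$. The paper takes $q_0\in(6/5,q)$, so that $q_0^\#>2$ and Rellich still gives strong convergence in the required $L^p$.
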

	
	\begin{proof}
		Fix $q<2$.  By the two-dimensional embedding
		$H^2(\Gamma)\hookrightarrow C^{0,\alpha}(\Gamma)$ for every
		$\alpha<1$, the graphs in $\mathfrak G(M,h_-,h_+)$ have a uniform
		$C^{0,\alpha}$ character, uniform height bounds, and a uniform finite
		covering by graph charts.  We choose $\alpha<1$ sufficiently close to
		one, depending only on $q$.  The weighted Korn inequality on H\"older
		domains \cite{AcostaDuranLombardi2006} then gives a bound for
		$\nabla\mathbf u$ with a boundary-distance weight whose constant depends
		only on these uniform graph characteristics.  The reciprocal weight is
		uniformly integrable to the exponent required by H\"older's inequality;
		hence, for some $p\in(q,2)$,
		\[
		\|\nabla\mathbf u\|_{L^q(\mathcal O_\zeta)}
		\le
		C
		\left(
		\|\mathbb D\mathbf u\|_{L^2(\mathcal O_\zeta)}
		+
		\|\mathbf u\|_{L^p(\mathcal O_\zeta)}
		\right),
		\]
		with $C$ independent of the particular graph.
		
		It remains to remove the lower-order term uniformly.  Otherwise there
		would exist $\zeta_n\in\mathfrak G(M,h_-,h_+)$ and
		$\mathbf u_n$ with zero bottom trace such that
		$\|\mathbf u_n\|_{W^{1,q}}=1$ while
		$\|\mathbb D\mathbf u_n\|_2\to0$.  After extraction,
		$\zeta_n\to\zeta$ uniformly and strongly in $W^{1,r}$ for every finite
		$r$. It is enough to treat $q>6/5$, since smaller exponents follow by
		H\"older on the uniformly bounded domains. Choose $q_0\in(6/5,q)$ and
		$r<\infty$ so that $1/q_0=1/q+1/r$. Pulling back by
		$(\bm y,s)\mapsto(\bm y,s(1+\zeta_n(\bm y)))$, the uniform
		$L^r$ bound for $\nabla_\Gamma\zeta_n$ and the normalized
		$W^{1,q}$ bound give a uniform $W^{1,q_0}$ bound on the fixed cylinder.
		Since $q_0^\#>2$, Rellich compactness gives strong convergence in the
		$L^p$ space appearing above. The limit has zero symmetric gradient and
		zero trace on the fixed bottom, hence is the zero rigid motion,
		contradicting the normalization. This proves
		\eqref{app:eq_subcritical_korn}; the fixed-cylinder Sobolev embedding
		gives \eqref{app:eq_subcritical_korn_sobolev}.
	\end{proof}

	\subsubsection{Tubular neighborhoods and normal traces}

	For $h>0$ define
	\begin{equation*}		T_\zeta^h
		:=
		\left\{
		(\bm y,z)\in\mathcal O_\alpha:
		0<z-1-\zeta(\bm y)<h
		\right\},
	\end{equation*}
	with parametrization
	\begin{equation*}		\Psi_{\zeta,h}(\bm y,r)
		=
		(\bm y,1+\zeta(\bm y)+r),
		\qquad
		(\bm y,r)\in\Gamma\times(0,h).
	\end{equation*}
	Its Jacobian equals one, hence
	\begin{equation*}		|T_\zeta^h|
		=
		h|\Gamma|.
	\end{equation*}
	
	For $a$ on $\Gamma$, define its vertical extension by
	\begin{equation*}		(\mathcal E_\zeta^h a)
		\bigl(\Psi_{\zeta,h}(\bm y,r)\bigr)
		=
		a(\bm y).
	\end{equation*}
	Then
	\begin{equation*}		\|\mathcal E_\zeta^h a\|_{L^q(T_\zeta^h)}
		=
		h^{1/q}
		\|a\|_{L^q(\Gamma)}.
	\end{equation*}
	
	For measurable sets $A,B\subset\mathcal O_\alpha$, we write
	$A\triangle B:=(A\setminus B)\cup(B\setminus A)$ for their symmetric
	difference.
	If $\zeta_n\to\zeta$ uniformly and $h_n\to h\ge0$, then
	\begin{align*}		& |\mathcal O_{\zeta_n}\triangle\mathcal O_\zeta|
		\le
		\|\zeta_n-\zeta\|_{L^1(\Gamma)}
		\to0, \\
		& |T_{\zeta_n}^{h_n}\triangle T_\zeta^h|
		\le
		C
		\left(
		\|\zeta_n-\zeta\|_{L^1(\Gamma)}
		+
		|h_n-h|
		\right)
		\to0. 
	\end{align*}
	
	\begin{lemma}[Vertical trace inequality]
		
		Let
		$F,\partial_zF\in L^2(T_\zeta^h)$.  Then
		\begin{equation}
			\label{app:eq_D_vertical_tube_trace}
			\|F|_{\Gamma_\zeta}\|_{L^2(\Gamma)}^2
			\le
			\frac{2}{h}
			\|F\|_{L^2(T_\zeta^h)}^2
			+
			2h
			\|\partial_zF\|_{L^2(T_\zeta^h)}^2.
		\end{equation}
		More generally, for every nonnegative measurable $b=b(\bm y)$,
		\begin{align}
			& \int_\Gamma
			b|F|_{\Gamma_\zeta}|^2\,\dd\bm y
			\le{}
			\frac{2}{h}
			\int_{T_\zeta^h}
			(\mathcal E_\zeta^h b)|F|^2\,\dd\bm x
			+
			2h
			\int_{T_\zeta^h}
			(\mathcal E_\zeta^h b)|\partial_zF|^2\,\dd\bm x.
			\label{app:eq_D_weighted_vertical_tube_trace} 
		\end{align}
	\end{lemma}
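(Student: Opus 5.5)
The plan is to reduce the statement to a one-dimensional estimate along each vertical fiber of the tube and then integrate over $\Gamma$. The tube parametrization $\Psi_{\zeta,h}$ has unit Jacobian, so
\[
\int_{T_\zeta^h}G\,\dd\bm x=\int_\Gamma\int_0^h G(\bm y,1+\zeta(\bm y)+r)\,\dd r\,\dd\bm y .
\]
This change of variables will do all the bookkeeping. We first argue for smooth $F$ on $\overline{T_\zeta^h}$ and pass to the general case by density at the end.

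\textbf{Fiber estimate.} Fix $\bm y$ and set $f(r):=F(\bm y,1+\zeta(\bm y)+r)$ for $r\in(0,h)$, so that $f'(r)=\partial_zF(\bm y,1+\zeta(\bm y)+r)$. For every $r\in(0,h)$,
\[
f(0)=f(r)-\int_0^r f'(s)\,\dd s ,
\]
and hence, by $(a+b)^2\le2a^2+2b^2$ and Cauchy--Schwarz,
\[
|f(0)|^2\le 2|f(r)|^2+2h\int_0^h|f'(s)|^2\,\dd s .
\]
Averaging in $r$ over $(0,h)$ then gives
\[
|f(0)|^2\le\frac2h\int_0^h|f|^2\,\dd r+2h\int_0^h|f'|^2\,\dd r .
\]
Since $f(0)=F|_{\Gamma_\zeta}(\bm y)$, this is the pointwise-in-$\bm y$ form of both inequalities.

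\textbf{Integration over $\Gamma$.} Multiply the fiber inequality by $b(\bm y)\ge0$ and integrate over $\Gamma$. By construction $\mathcal E_\zeta^h b$ is constant along fibers, equal to $b(\bm y)$ on $\{\bm y\}\times(0,h)$. The change of variables above therefore turns the right-hand side into
\[
\frac2h\int_{T_\zeta^h}(\mathcal E_\zeta^h b)|F|^2\,\dd\bm x+2h\int_{T_\zeta^h}(\mathcal E_\zeta^h b)|\partial_zF|^2\,\dd\bm x ,
\]
which is \eqref{app:eq_D_weighted_vertical_tube_trace}. Taking $b\equiv1$ gives \eqref{app:eq_D_vertical_tube_trace}. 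Tonelli's theorem justifies the interchanges, since all integrands are nonnegative.

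\textbf{Extension beyond smooth $F$.} The only technical point is the meaning of the trace when $F,\partial_zF\in L^2(T_\zeta^h)$ only. In the straightened coordinates $(\bm y,r)$, the function $\widetilde F=F\circ\Psi_{\zeta,h}$ lies in $L^2(\Gamma;H^1(0,h))$. This holds because $\partial_r\widetilde F=(\partial_zF)\circ\Psi_{\zeta,h}$, a consequence of the map being a pure vertical shift in each fiber. For a.e.\ $\bm y$, $\widetilde F(\bm y,\cdot)$ is then absolutely continuous on $[0,h]$, so its value at $r=0$ defines the trace $F|_{\Gamma_\zeta}$ a.e. The fiber argument applies verbatim for a.e.\ $\bm y$. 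This route avoids regularity of $\zeta$ beyond measurability and boundedness, so the inequality holds uniformly for $\zeta\in\mathfrak G(M,h_-,h_+)$ and with constants independent of $\zeta$.
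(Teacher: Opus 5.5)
Your proof is correct and follows the paper's argument: apply the fundamental theorem of calculus along each vertical fiber, square, use Cauchy--Schwarz, average over $r\in(0,h)$, and integrate in $\bm y$ against the fiber-constant weight $\mathcal E_\zeta^h b$ via the unit-Jacobian parametrization $\Psi_{\zeta,h}$. Your justification of the trace through fiberwise absolute continuity is a useful addition; note, however, that it uses that $T_\zeta^h$ is open, which holds here because $\zeta\in H^2(\Gamma)$ is continuous, so your remark that only measurability of $\zeta$ is needed should be read with that caveat.
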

	
	\begin{proof}
		For almost every $\bm y$, apply the one-dimensional fundamental theorem
		of calculus on $(0,h)$, square, average in the vertical variable, and
		use Cauchy--Schwarz inequality.
	\end{proof}
	
	\begin{corollary}[Normal trace from a tubular penalty]
		\label{app:cor_normal_penalty_trace}
		Let
		\[
		F_\delta
		:=
		(\mathbf v_\delta-w_\delta\mathbf e_z)
		\cdot
		\mathbf n^{\zeta_\delta,\phi_\delta}
		\qquad
		\text{in }T_{\zeta_\delta}^{h_\delta},
		\]
		where
		$\mathbf n^{\zeta_\delta,\phi_\delta}$ and $w_\delta$
		are extended constantly along the vertical fibers. Assume
		\begin{align*}
			\frac1\delta
			\int_0^T
			\|F_\delta\|_{L^2(T_{\zeta_\delta}^{h_\delta})}^2
			\,\dd t
			&\le C,\;\;
			\kappa_\delta
			\int_0^T
			\|\nabla\mathbf v_\delta\|_
			{L^2(T_{\zeta_\delta}^{h_\delta})}^2
			\,\dd t
			\le C.
		\end{align*}
		Then
		\begin{equation}
			\int_0^T
			\left\|
			F_\delta|_{\Gamma_{\zeta_\delta}}
			\right\|_{L^2(\Gamma)}^2
			\,\dd t
			\le
			C
			\left(
			\frac{\delta}{h_\delta}
			+
			\frac{h_\delta}{\kappa_\delta}
			\right).
			\label{app:eq_D_penalty_trace}
		\end{equation}
	\end{corollary}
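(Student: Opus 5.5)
The plan is to apply the weighted vertical trace inequality \eqref{app:eq_D_vertical_tube_trace} pointwise in time with $F=F_\delta(t)$, $h=h_\delta$, and the tube $T_{\zeta_\delta}^{h_\delta}$, and then integrate in time. This gives
\[
\int_0^T\|F_\delta|_{\Gamma_{\zeta_\delta}}\|_{L^2(\Gamma)}^2\,\dd t
\le\frac{2}{h_\delta}\int_0^T\|F_\delta\|_{L^2(T_{\zeta_\delta}^{h_\delta})}^2\,\dd t
+2h_\delta\int_0^T\|\partial_zF_\delta\|_{L^2(T_{\zeta_\delta}^{h_\delta})}^2\,\dd t .
\]
The first term is at most $2C\delta/h_\delta$ by the assumed penalty bound, which yields the contribution $\delta/h_\delta$.

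For the second term, the key observation is that $\mathbf n^{\zeta_\delta,\phi_\delta}$ and $w_\delta\mathbf e_z$ are extended constantly along vertical fibers. The extended normal depends only on $\bm y$, because it is built from $\operatorname{cof}\nabla\phi_\delta$ evaluated on the graph and from $\mathbf n^{\zeta_\delta}$. Hence $\partial_z(w_\delta\mathbf e_z\cdot\mathbf n)=0$, and
\[
\partial_zF_\delta=\partial_z\mathbf v_\delta\cdot\mathbf n^{\zeta_\delta,\phi_\delta},\qquad
|\partial_zF_\delta|\le|\nabla\mathbf v_\delta|
\]
since $|\mathbf n|=1$. The second term is therefore bounded by $2h_\delta\kappa_\delta^{-1}\cdot\kappa_\delta\int_0^T\|\nabla\mathbf v_\delta\|^2_{L^2(T)}\,\dd t\le 2Ch_\delta/\kappa_\delta$.

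One technical check is that $F_\delta(t)$ and $\partial_zF_\delta(t)$ belong to $L^2$ of the tube for a.e.\ $t$, so that the one-dimensional fundamental-theorem argument applies on a.e.\ fiber. This holds because $\mathbf v_\delta\in H^1(\mathcal O_\alpha)$, so its restriction to a.e.\ vertical line is absolutely continuous. Another point is that the trace $F_\delta|_{\Gamma_{\zeta_\delta}}$ is the limit $r\to0^+$ along the fibers of the parametrization $\Psi_{\zeta,h}$; this matches the trace from inside $\mathcal O_{\zeta_\delta}$ because $\mathbf v_\delta\in H^1$ across the graph.

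The main obstacle is to justify that the extended normal really carries no $z$-dependence. If instead the normal were understood as $\operatorname{cof}\nabla\phi$ evaluated at $(\bm y,z)$, an extra term $\mathbf v_\delta\cdot\partial_z\mathbf n$ bounded by $R|\mathbf v_\delta|$ would appear. I would handle it with the bound $\kappa_\delta^{-1}$ via Korn--Poincaré on $\mathcal O_\alpha$ with zero bottom trace, which costs only a harmless $h_\delta$ term. In the fiber-constant convention stated in the scheme, however, this term is absent. The constant $C$ depends only on the two assumed bounds.
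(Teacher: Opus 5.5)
Your proposal is correct and is essentially the paper's own proof. Like the paper, you apply the vertical trace inequality \eqref{app:eq_D_vertical_tube_trace} with $h=h_\delta$, bound the first term by the penalty assumption, and use $\partial_zF_\delta=(\partial_z\mathbf v_\delta)\cdot\mathbf n^{\zeta_\delta,\phi_\delta}$, $|\partial_zF_\delta|\le|\nabla\mathbf v_\delta|$ with the $\kappa_\delta$ bound for the second term. Your fallback for a $z$-dependent normal is unnecessary even without the fiber-constant convention: since $\phi(t,\bm y,z)=(\chi(t,\bm y),z)$, the matrix $\operatorname{cof}\nabla\phi$ does not depend on $z$.
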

	
	\begin{proof}
		Since the vertical extensions satisfy
		\[
		\partial_z w_\delta=0,
		\qquad
		\partial_z
		\mathbf n^{\zeta_\delta,\phi_\delta}=0,
		\]
		we have
		\[
		\partial_zF_\delta
		=
		(\partial_z\mathbf v_\delta)
		\cdot
		\mathbf n^{\zeta_\delta,\phi_\delta},
		\qquad
		|\partial_zF_\delta|
		\le
		|\nabla\mathbf v_\delta|.
		\]
		By \eqref{app:eq_D_vertical_tube_trace},
		\begin{align*}
			\int_0^T
			\left\|
			F_\delta|_{\Gamma_{\zeta_\delta}}
			\right\|_{L^2(\Gamma)}^2
			\,\dd t
			&\le
			\frac{2}{h_\delta}
			\int_0^T
			\|F_\delta\|_{L^2(T_{\zeta_\delta}^{h_\delta})}^2
			\,\dd t+
			2h_\delta
			\int_0^T
			\|\partial_zF_\delta\|_
			{L^2(T_{\zeta_\delta}^{h_\delta})}^2
			\,\dd t
			\\
			&\le
			C
			\left(
			\frac{\delta}{h_\delta}
			+
			\frac{h_\delta}{\kappa_\delta}
			\right).
		\end{align*}
The proof is complete.
	\end{proof}
	
	The estimate is stated in the reference measure $\dd\bm y$.
	Surface-measure estimates follow from
	\eqref{app:eq_D_weighted_vertical_tube_trace}; this distinction is
	relevant because $J_\zeta$ need not be uniformly bounded for an
	$H^2$ graph.

	\subsubsection{Graph-normal multipliers}
	
	Define
	\begin{equation*}		d_\zeta(\bm y,z)
		:=
		z-1-\zeta(\bm y).
	\end{equation*}
	Since $\det\nabla\phi=1$,
	\begin{equation*}		\nabla^\phi d_\zeta
		=
		\operatorname{cof}(\nabla\phi)\mathbf N^\zeta,
	\end{equation*}
	and therefore
	\begin{equation*}		\nabla^\phi d_\zeta
		=
		J_\zeta
		J_\Gamma^\phi[\zeta]
		n^{\zeta,\phi}.
	\end{equation*}
	Because the flow is horizontal,
	\begin{equation}
		\label{app:eq_D_level_gradient_lower}
		|\nabla^\phi d_\zeta|
		\ge1.
	\end{equation}
	
	For a frozen flow $\phi_*$ set
	\begin{equation*}		\mathbf B_{\zeta,\phi_*}
		:=
		\frac{
			\nabla^{\phi_*}d_\zeta
		}{
			|\nabla^{\phi_*}d_\zeta|^2
		}.
	\end{equation*}
	
	\begin{lemma}[Graph-normal multiplier]
		\label{app:lem_graph_normal_multiplier}
		
		For $\zeta\in\mathfrak G(M,h_-,h_+)$,
		\begin{align*}			& \mathbf B_{\zeta,\phi_*}
			\cdot
			\nabla^{\phi_*}d_\zeta
			=1, \\
			& \|\mathbf B_{\zeta,\phi_*}\|_{L^\infty(\mathcal O_\alpha)}
			+
			\|\mathbf B_{\zeta,\phi_*}\|_{W^{1,2}(\mathcal O_\alpha)}
			\le
			C(M,R). 
		\end{align*}
		If
		$\partial_t\zeta=w\in L^2(0,T;H^1(\Gamma))$, then
		\begin{equation*}			\|\partial_t\mathbf B_{\zeta,\phi_*}\|_{L^2(\mathcal O_\alpha)}
			\le
			C(R)
			\|\nabla_\Gamma w\|_{L^2(\Gamma)}.
		\end{equation*}
	\end{lemma}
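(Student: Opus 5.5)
The identity $\mathbf B_{\zeta,\phi_*}\cdot\nabla^{\phi_*}d_\zeta=1$ is immediate from the definition, once we know $|\nabla^{\phi_*}d_\zeta|\ge1$ by \eqref{app:eq_D_level_gradient_lower}, so the quotient is well defined. The bounds will come from the explicit structure of $\nabla^{\phi_*}d_\zeta$.

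Since $\phi_*$ is horizontal, $\phi_*(\bm y,z)=(\chi_*(\bm y),z)$. Hence $\nabla\phi_*$ is block diagonal with entries $\nabla_\Gamma\chi_*$ and $1$. Using $\det\nabla_\Gamma\chi_*=1$, the cofactor matrix is $\operatorname{cof}(\nabla\phi_*)=\mathrm{diag}(\nabla_\Gamma\chi_*^{-T},1)$. This gives
\[
\nabla^{\phi_*}d_\zeta=\bigl(-(\nabla_\Gamma\chi_*)^{-T}\nabla_\Gamma\zeta,\,1\bigr)=:\mathbf G(\bm y),
\]
which is independent of $z$ and smooth in $\nabla_\Gamma\zeta$, with coefficients bounded in $C^1$ by $R$ via \eqref{app:eq_D_flow_class}.

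\emph{$L^\infty$ bound.} For $\mathbf B=\mathbf G/|\mathbf G|^2$ we have $|\mathbf B|=1/|\mathbf G|\le1$, because $|\mathbf G|\ge1$ from the vertical component.

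\emph{$W^{1,2}$ bound.} The map $F(p)=p/|p|^2$ on $\{|p|\ge1\}$ satisfies $|DF(p)|\le C|p|^{-2}\le C$. By the chain rule,
\[
|\nabla\mathbf B|\le C|\nabla\mathbf G|\le C(R)\bigl(|\nabla_\Gamma^2\zeta|+|\nabla_\Gamma\zeta|\bigr),
\]
where the second term comes from derivatives of $(\nabla_\Gamma\chi_*)^{-T}$, which are controlled by the $C^2$ bound on $\phi_*^{-1}$. The function is $z$-independent and the cylinder has bounded height, so $\|\nabla\mathbf B\|_{L^2(\mathcal O_\alpha)}\le C(R)\|\zeta\|_{H^2(\Gamma)}\le C(M,R)$. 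This is the point where the $H^2$ regularity of the graph is used exactly and no more: no $L^\infty$ control of $\nabla^2\zeta$ is needed, since $DF$ is bounded.

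\emph{Time derivative.} The flow $\phi_*$ is frozen, so only $\zeta$ depends on $t$. Then $\partial_t\mathbf G=(-(\nabla_\Gamma\chi_*)^{-T}\nabla_\Gamma w,0)$, and
\[
|\partial_t\mathbf B|\le C|\partial_t\mathbf G|\le C(R)|\nabla_\Gamma w|.
\]
Integrating over the bounded-height cylinder gives $\|\partial_t\mathbf B\|_{L^2}\le C(R)\|\nabla_\Gamma w\|_{L^2(\Gamma)}$. To justify the chain rule for $w\in L^2_tH^1$ only, first regularize $\zeta$ in time (or space), apply the pointwise estimate to the smooth approximants, and pass to the limit. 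This works because $F$ is globally Lipschitz on $\{|p|\ge1\}$.

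The only delicate step is the $W^{1,2}$ bound. It relies on the uniform boundedness of $DF$ on $\{|p|\ge1\}$, so that the second derivatives of $\zeta$ enter only linearly. The lower bound \eqref{app:eq_D_level_gradient_lower}, which follows from the horizontality of the flow, is what makes this work.
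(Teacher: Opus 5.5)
Your proposal is correct and follows the paper's argument: the identity is immediate, and both bounds come from the uniform boundedness of $p\mapsto p/|p|^2$ and its derivative on $\{|p|\ge1\}$, with the spatial derivative using $\nabla_\Gamma^2\zeta\in L^2$ and the time derivative using $\partial_t\nabla_\Gamma\zeta=\nabla_\Gamma w$. Your explicit formula $\nabla^{\phi_*}d_\zeta=\bigl(-(\nabla_\Gamma\chi_*)^{-T}\nabla_\Gamma\zeta,1\bigr)$ for the horizontal frozen flow and your justification of the chain rule fill in details that the paper's sketch leaves implicit.
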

	
	\begin{proof}
		The first identity is immediate.  By
		\eqref{app:eq_D_level_gradient_lower}, the map
		$p\mapsto p/|p|^2$ and its first derivative are uniformly bounded on
		the relevant range.  Spatial differentiation then uses
		$\nabla_\Gamma^2\zeta\in L^2$, while time differentiation uses
		$\partial_t\nabla_\Gamma\zeta=\nabla_\Gamma w$.
	\end{proof}
	
	For every $\Psi\in W^{1,\infty}(\mathbb R)$,
	\begin{equation}
		\label{app:eq_D_boundary_div_identity}
		\Div^{\phi_*}
		\left(
		\Psi(d_\zeta)\mathbf B_{\zeta,\phi_*}
		\right)
		=
		\Psi'(d_\zeta)
		+
		\Psi(d_\zeta)
		\Div^{\phi_*}\mathbf B_{\zeta,\phi_*},
	\end{equation}
	with
	\begin{equation}
		\label{app:eq_D_boundary_remainder_L2}
		\|
		\Div^{\phi_*}\mathbf B_{\zeta,\phi_*}
		\|_{L^2(\mathcal O_\alpha)}
		\le
		C(M,R).
	\end{equation}
	This is the multiplier used in the boundary-pressure argument: the
	first term generates the positive layer contribution and the second is
	an $L^2$ remainder.
	
	Since
	$\Psi(d_\zeta)\mathbf B_{\zeta,\phi_*}\in W^{1,2}(\mathcal O_\alpha)$, it can be
	approximated strongly in $W^{1,2}$ by deterministic spatial
	mollification.  In particular, the approximating fields can be chosen
	progressively measurable and their transformed divergences converge
	strongly to the right-hand side of
	\eqref{app:eq_D_boundary_div_identity}.

	\subsubsection{Interior localization}
	
	\begin{lemma}[Fixed cylinders inside converging graph domains]
		\label{lem:fixed_cylinders_graph}
		Assume
		\[
		\zeta_n\to\zeta
		\qquad
		\text{uniformly on }[0,T]\times\Gamma.
		\]
		If
		$
		K
		\subset
		\{
		(t,\bm x):
		0<z<1+\zeta(t,\bm y)
		\},
		$
		then $K$ can be covered by finitely many product cylinders
		$I_j\times U_j$ with smooth fixed $U_j\subset\mathcal O_\alpha$ such that, for all
		sufficiently large $n$,
		\[
		U_j
		\subset
		\mathcal O_{\zeta_n(t)}
		\qquad
		\text{for every }t\in I_j.
		\]
		Hence standard local Sobolev, Korn, trace, and Rellich estimates can be
		applied on the $U_j$ with constants independent of $n$.
	\end{lemma}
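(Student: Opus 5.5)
The plan is a compactness-and-margin argument. Throughout I read $K$ as a compact subset of the open space--time set $\mathcal U_\zeta:=\{(t,\bm x):0<z<1+\zeta(t,\bm y)\}$, with $\bm x=(\bm y,z)$, and I use that the limit $\zeta$ is continuous on $[0,T]\times\Gamma$. Continuity holds for the limit graph by \eqref{eq:delta_zeta_strong} and $H^s(\Gamma)\hookrightarrow C(\Gamma)$ for $s>3/2$; the uniform convergence of $\zeta_n$ is the hypothesis of the lemma.

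First I quantify how far $K$ stays from both boundary pieces. The function $(t,\bm y,z)\mapsto\min\{z,\,1+\zeta(t,\bm y)-z\}$ is continuous and strictly positive on $K$, so compactness gives some $\eta>0$ with
\[
z\ge 2\eta,\qquad 1+\zeta(t,\bm y)-z\ge 2\eta\qquad\text{for all }(t,\bm y,z)\in K .
\]
Since $\zeta$ is uniformly continuous on the compact set $[0,T]\times\Gamma$, I then choose $r\in(0,\eta/2)$ such that $|t-t'|+|\bm y-\bm y'|<2r$ implies $|\zeta(t,\bm y)-\zeta(t',\bm y')|<\eta/2$.

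Next I build the cover. For each point $(t_0,\bm y_0,z_0)\in K$ I take the product cylinder
\[
I\times U:=(t_0-r,t_0+r)\times\Bigl(B_\Gamma(\bm y_0,r)\times(z_0-r,z_0+r)\Bigr).
\]
If one wants $U$ with smooth boundary, I shrink the box slightly and round its corners; this does not affect the estimates below. By compactness, finitely many of these cylinders, say $I_j\times U_j$ with $j=1,\dots,J$, cover $K$.

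I then verify the inclusion. Take $(t,\bm y,z)$ with $t\in I_j$ and $(\bm y,z)\in U_j$. The lower bound is immediate: $z>z_j-r\ge 2\eta-\eta/2>0$. For the upper bound,
\[
z<z_j+r\le 1+\zeta(t_j,\bm y_j)-2\eta+\tfrac{\eta}{2}<1+\zeta(t,\bm y)-\eta ,
\]
where the last step uses the choice of $r$. Now take $n_0$ so that $\|\zeta_n-\zeta\|_{L^\infty([0,T]\times\Gamma)}<\eta/2$ for all $n\ge n_0$. Then $z<1+\zeta_n(t,\bm y)-\eta/2$, so $U_j\subset\mathcal O_{\zeta_n(t)}$ for every $t\in I_j$ and every $n\ge n_0$. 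Because $J$ is finite, a single $n_0$ works for all $j$ at once.

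Finally, the sets $U_j$ are fixed Lipschitz or smooth domains that do not depend on $n$ or $t$. Hence local Sobolev, Korn, trace and Rellich constants on $U_j$ are the classical fixed-domain constants, independent of $n$. On bounded-flow events, the pullback operators $\nabla^\phi$ and $\Div^\phi$ only change these constants by factors depending on $R$.

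The only delicate point is to obtain one margin $\eta$ that works uniformly in time. This is exactly where continuity of the limit graph in $(t,\bm y)$ is needed; with the $C([0,T];H^s)$ convergence already established, I expect no real obstacle.
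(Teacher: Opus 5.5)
Your argument is correct and follows the same route as the paper: compactness of $K$ gives a positive vertical margin from the fixed bottom and from the limiting graph, and uniform convergence of $\zeta_n$ preserves part of that margin for all large $n$. You additionally make explicit the step the paper leaves implicit, namely using uniform continuity of $\zeta$ in $(t,\bm y)$ to fit product cylinders $I_j\times U_j$ inside this margin.
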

	
	\begin{proof}
		A compact subset of the limiting fluid region has positive vertical
		distance from both the fixed bottom and the moving graph.  Uniform
		convergence of $\zeta_n$ preserves half of this distance for all large
		$n$.
	\end{proof}

		\subsection{Technical estimates for the final \texorpdfstring{$\delta$}{delta}-limit}
	\label{app:delta_limit}
	
	We use the exponent choice $(\beta,a_\beta)$ from
	\eqref{eq:global_beta_choice} and the final exterior-viscosity/tubular
	scaling $(\nu_\kappa,\kappa_\delta,h_\delta)$ from
	\eqref{eq:global_nu_choice}.  We set only
	\[
	D_\delta(t):=\mathcal O_{\zeta_\delta^*(t)},
	\qquad
	T_\delta(t):=T^\delta_{\zeta_\delta^*(t)}.
	\]
	For $\delta\to0$ we prove exterior-mass decay, recovery of the normal
	trace, pressure compactness, and convergence of the nonlinear terms.

	\subsubsection{Velocity estimates, exterior support, and normal trace}
	\label{app:subsec_delta_tightness}
	
	For $r>0$ define
	\[
	D_\delta^r(t)
	:=
	\left\{
	(\bm y,z):
	r<z<1+\zeta_\delta^*(t,\bm y)-r
	\right\}.
	\]
	For every fixed $r>0$, the corresponding limiting trimmed domain is
	eventually compactly contained in $D_\delta^{r/2}(t)$.
	
	\begin{proposition}[Subcritical and local velocity estimates]
		\label{app:prop_delta_time_compactness}
		For every $q\in(1,2)$ and $p\ge1$,
		\begin{equation}
			\label{app:eq_delta_subcritical_W1q}
			\sup_{\delta>0}
			\mathbb E
			\|\mathbf v_\delta\|_{L^2(0,T;W^{1,q}(D_\delta(t)))}^{2p}
			\le C_{p,q}.
		\end{equation}
		Moreover, for every compact space--time cylinder
		\[
		Q\subset\{(t,\bm x):\bm x\in\mathcal O_{\zeta^*(t)}\},
		\]
		there exists $\delta_Q>0$ such that
		\begin{equation}
			\label{app:eq_delta_local_H1}
			\sup_{0<\delta<\delta_Q}
			\mathbb E\|\mathbf v_\delta\|_{L^2_tH^1_x(Q)}^{2p}
			\le C_{p,Q}.
		\end{equation}
	\end{proposition}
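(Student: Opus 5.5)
The two estimates come from the $\delta$-uniform energy bounds of Lemma~\ref{lem:delta_uniform_bounds}, combined with the graph-domain Korn tools of Subsection~\ref{app:geometry}. The key point is that on the physical region $D_\delta(t)=\mathcal O_{\zeta_\delta^*(t)}$ the extended viscosities equal $\mu,\lambda$ exactly. There the dissipation therefore controls $\|\mathbb D^{\phi}\mathbf v_\delta\|_{L^2(D_\delta(t))}^2$ without any $\kappa_\delta$ weight. The relation $\lambda+\frac23\mu\ge0$ only provides coercivity of the trace-free symmetric part, so I would use the standard decomposition $\mathbb S:\nabla\mathbf v\ge 2\mu|\mathbb D\mathbf v-\frac13\Div\mathbf v\,\mathbb I|^2$ together with $\mathbb S:\nabla\mathbf v\ge c\mu|\mathbb D\mathbf v|^2$ when $\lambda+\frac23\mu>0$. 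In the borderline case I would instead invoke a trace-free Korn inequality on the uniformly Hölder graph domains; I expect this to fall under the same Acosta--Durán--Lombardi framework. For the first version of the plan I will assume the pointwise bound $\mathbb S_{\kappa_\delta}^{\zeta_\delta^*}(\nabla^\phi\mathbf v_\delta):\nabla^\phi\mathbf v_\delta\ge c\,|\mathbb D^\phi\mathbf v_\delta|^2$ on $D_\delta(t)$.

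The second step passes from $\mathbb D^\phi$ to the Euclidean $\mathbb D$. Writing $\nabla^\phi\mathbf v=\nabla\mathbf v\,A_\phi$, one has $\mathbb D^\phi\mathbf v=\mathbb D\mathbf v+\operatorname{sym}(\nabla\mathbf v(A_\phi-\mathbb I))$, which is not a small perturbation. It is cleaner to push forward: set $\mathbf u=\mathbf v_\delta\circ\phi^{-1}$ on $\phi(\mathcal O_{\zeta_\delta^*})=\mathcal O_{\eta_\delta^*}$, where $\eta_\delta^*=\zeta^*_\delta\circ\chi^{-1}$. This map is volume preserving and graph preserving because the flow is horizontal, and $\mathbb D\mathbf u=(\mathbb D^\phi\mathbf v_\delta)\circ\phi^{-1}$. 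On the bounded-flow event $\{\Lambda\le R\}$, $\eta_\delta^*$ lies in a class $\mathfrak G(M_R,\alpha,2+\alpha^{-1})$, since composition with a $C^2$ diffeomorphism preserves $H^2$ bounds and the stopped displacement keeps $1+\zeta^*_\delta\ge\alpha$. The field $\mathbf u$ vanishes on the fixed bottom: on the $\delta$-level this comes from the $H^1_0$ structure on $\mathcal O_\alpha$ inherited from the Galerkin basis, and $\phi$ maps $\Gamma_b$ to itself. Lemma~\ref{app:lem_subgraph_subcritical_korn} then gives $\|\mathbf u\|_{W^{1,q}(\mathcal O_{\eta^*_\delta})}\le C_R\|\mathbb D\mathbf u\|_{L^2}$ for each $q\in(1,2)$, and pulling back with $C^2$ control of $\phi$ transfers this to $\mathbf v_\delta$ on $D_\delta(t)$. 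Squaring, integrating in time, and taking the $p$-th moment yields \eqref{app:eq_delta_subcritical_W1q} on $\{\Lambda\le R\}$ with a constant polynomial in $R$. To remove the localization I would decompose dyadically in $R$, apply Hölder, and use the finite moments of $\Lambda$ and of the dissipation, exactly as in Proposition~\ref{app:prop_higher_integrability}.

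For \eqref{app:eq_delta_local_H1}, take $Q$ compact in the limiting fluid region. Since $\zeta^*_\delta\to\zeta^*$ uniformly by \eqref{eq:delta_zeta_strong} (pathwise after representation), Lemma~\ref{lem:fixed_cylinders_graph} covers $Q$ by finitely many fixed cylinders $I_j\times U_j$ with $U_j\subset D_\delta(t)$ for $t\in I_j$ and small $\delta$. On the $U_j$ the viscosity is the physical one, and the standard Korn inequality on a smooth fixed domain containing $U_j$ gives $H^1$ control. The global $W^{1,q}$ bound from the first part controls the lower-order term, since $W^{1,q}\subset L^2$ for $q>6/5$. A smooth cutoff between $U_j$ and a slightly larger fixed domain, still inside $D_\delta(t)$, supplies the localized Korn inequality $\|\nabla(\theta\mathbf v)\|_2\le C(\|\mathbb D\mathbf v\|_{L^2}+\|\mathbf v\|_{L^2})$.

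The main obstacle is that $\delta_Q$ is random, because it depends on the pathwise rate of uniform convergence of $\zeta^*_\delta$. I would handle this by reading the statement on the represented space as holding on the event where that rate is controlled, that is, localizing on sets $\{\sup_t\|\zeta^*_\delta-\zeta^*\|_\infty\le d_Q/2\ \forall\delta<\delta_Q\}$ whose probability tends to one. Expectations would be taken on these events, with the dissipation moments bounding the contributions.
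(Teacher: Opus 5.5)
Your plan follows the paper's proof. You push $\mathbf v_\delta$ forward by the horizontal, volume-preserving flow to $\phi_\delta(t,D_\delta(t))$, where the viscosities are physical, apply Lemma~\ref{app:lem_subgraph_subcritical_korn} using the zero bottom trace, and pull back. You then cover $Q$ by the fixed cylinders of Lemma~\ref{lem:fixed_cylinders_graph}, use local Korn there, and bound the lower-order term through \eqref{app:eq_delta_subcritical_W1q} with some $q_0>6/5$. The two points you add are genuine and are not treated in the paper's proof. First, for $\lambda+\frac23\mu=0$ the dissipation controls only the deviatoric part of $\mathbb D\mathbf u_\delta$, so that case needs a trace-free Korn inequality on these graph domains, which you leave as an expectation. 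Second, $\delta_Q$ depends on $\omega$ through the pathwise rate of $\zeta_\delta^*\to\zeta^*$. Your Egorov-type localization is therefore the correct reading of \eqref{app:eq_delta_local_H1}, at the price of stating it on events of probability close to one.

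One step, however, does not hold as you state it; the paper's proof passes over the same point by writing a generic constant.
\begin{itemize}
\item The event $\{\Lambda\le R\}$ controls only the flow, and the stopping time controls only $\inf_\Gamma(1+\zeta_\delta^*)$ and $\|\zeta_\delta^*\|_{H^s(\Gamma)}$ with $s<2$.
\item The quantity $M_\delta:=\|\zeta_\delta^*\|_{L^\infty(0,T;H^2(\Gamma))}$ is controlled only by the energy. By Lemma~\ref{lem:delta_uniform_bounds} it is a random variable with finite moments, not an essentially bounded one.
\item So on $\{\Lambda\le R\}$ the pushed-forward graph lies in $\mathfrak G(C_RM_\delta,\alpha,h_+)$ with a \emph{random} first parameter. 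The constant in Lemma~\ref{app:lem_subgraph_subcritical_korn} is then the random variable $C(C_RM_\delta)$.
\item The deterministic $H^s$ bound cannot replace $M_\delta$. For $q$ close to $2$ the proof of that lemma uses H\"older exponents close to one and $L^r$ bounds on $\nabla_\Gamma\zeta$ for large $r$. The $H^2$ bound supplies these; a fixed $H^s$ bound does not.
\end{itemize}

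Hence a dyadic decomposition in $R$ alone does not yield a deterministic $C_{p,q}$. You must also localize on $\{M_\delta\le M\}$, and summing over $M$ requires the Korn constant to grow at most polynomially in $M$. The lemma does not give this, because its lower-order term is removed by a contradiction argument.

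To close the proof you have two options. Either make the Korn constant quantitative in $M$, or state both estimates on the events $\{\Lambda\le R,\ M_\delta\le M\}$, whose probabilities tend to one uniformly in $\delta$ by Chebyshev's inequality. The second option gives bounds in probability. These suffice for the pathwise compactness after representation, provided the later expectation estimates are localized in the same way.
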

	
	\begin{proof}
		Set
		\[
		\widetilde D_\delta(t):=\phi_\delta(t,D_\delta(t)),
		\qquad
		\mathbf u_\delta:=\mathbf v_\delta\circ\phi_\delta^{-1}.
		\]
		On bounded-flow events,
		\[
		\|\mathbf v_\delta(t)\|_{W^{1,q}(D_\delta(t))}
		\le C
		\|\mathbf u_\delta(t)\|_{W^{1,q}(\widetilde D_\delta(t))},
		\qquad q<2.
		\]
		Since the viscosities are physical on $\widetilde D_\delta(t)$ and
		$\mathbf u_\delta|_{\Gamma_b}=0$,
		Lemma~\ref{app:lem_subgraph_subcritical_korn} gives
		\[
		\|\mathbf u_\delta(t)\|_{W^{1,q}(\widetilde D_\delta(t))}
		\le
		C
		\|\mathbb D\mathbf u_\delta(t)\|_{L^2(\widetilde D_\delta(t))}.
		\]
		Therefore
		\[
		\mathbb E
		\|\mathbf v_\delta\|_{L^2(0,T;W^{1,q}(D_\delta(t)))}^{2p}
		\le
		C_{p,q}\,
		\mathbb E
		\left(
		\int_0^T\!\!\int_{\widetilde D_\delta(t)}
		|\mathbb D\mathbf u_\delta|^2
		\right)^p
		\le C_{p,q},
		\]
		which proves \eqref{app:eq_delta_subcritical_W1q}.
		
		Let
		\[
		Q\subset
		\{(t,\bm x):\bm x\in\mathcal O_{\zeta^*(t)}\}.
		\]
		By \eqref{eq:delta_zeta_strong} and
		Lemma~\ref{lem:fixed_cylinders_graph},
		\[
		Q\subset\bigcup_{j=1}^J(I_j\times U_j),
		\qquad
		U_j\subset D_\delta(t)
		\quad(t\in I_j)
		\]
		for $0<\delta<\delta_Q$. Writing
		$\widetilde U_{\delta,j}(t):=\phi_\delta(t,U_j)$,
		uniform local Korn inequalities yield
		\[
		\|\mathbf v_\delta\|_{L^2(I_j;H^1(U_j))}
		\le
		C_Q
		\left(
		\|\mathbb D\mathbf u_\delta\|_
		{L^2(I_j\times\widetilde U_{\delta,j})}
		+
		\|\mathbf v_\delta\|_{L^2(I_j;L^2(U_j))}
		\right).
		\]
		Choose $q_0\in(6/5,2)$. Then
		$W^{1,q_0}\hookrightarrow L^2$ and
		\eqref{app:eq_delta_subcritical_W1q} give
		\[
		\sup_{0<\delta<\delta_Q}
		\mathbb E
		\|\mathbf v_\delta\|_{L^2_tL^2_x(Q)}^{2p}
		\le C_{p,Q}.
		\]
		Hence
		\[
		\sup_{0<\delta<\delta_Q}
		\mathbb E
		\|\mathbf v_\delta\|_{L^2_tH^1_x(Q)}^{2p}
		\le C_{p,Q},
		\]
		which is \eqref{app:eq_delta_local_H1}. Along a diagonal subsequence,
		\[
		\mathbf v_\delta\rightharpoonup\mathbf v
		\qquad
		\text{in }L^2_tH^1_x(Q).
		\]
	The proof is complete.
	\end{proof}
	
	Along the same subsequence,
	\[
	\varrho_\delta\mathbf v_\delta
	\rightharpoonup
	\varrho\mathbf v
	\qquad
	\text{in }\mathcal D'.
	\]
	
	Set
	\[
	g_\delta(t,\bm y,z)
	:=
	z-1-\zeta_\delta^*(t,\bm y),
	\]
	and choose a scalar cutoff $\Xi\in C^\infty(\mathbb R;[0,1])$ such that
	\[
	\Xi=0 \quad\text{on }(-\infty,0],
	\qquad
	\Xi=1 \quad\text{on }[1,\infty).
	\]
	Define
	$
	\Xi_\delta
	:=
	\Xi\left(\frac{g_\delta}{h_\delta}\right).
	$
	
	\begin{proposition}[Exterior mass estimate]
		\label{app:prop_exterior_mass}
		If $\varrho_{0,\delta}=0$ outside $\mathcal O_{\zeta_0}$, then
		\begin{equation}
			\label{app:eq_exterior_mass}
			\mathbb E
			\sup_{0\le t\le T}
			\int_{\mathcal O_\alpha}
			\varrho_\delta(t)\Xi_\delta(t)\,\dd\bm x
			\le
			C\delta^{a_\beta^2}.
		\end{equation}
	\end{proposition}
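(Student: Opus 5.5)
We test the $\delta$-level continuity equation with the moving cutoff $\Xi_\delta$, or more precisely with its renormalized version, and follow the mass in the exterior layer. Since $\varrho_{0,\delta}$ vanishes outside $\mathcal O_{\zeta_0}$ and $\Xi_\delta(0)=0$ there ($g_\delta(0)\le0$ on $\mathcal O_{\zeta_0}$), the initial term vanishes. The weak continuity equation, with the time-dependent test $\Xi_\delta$ (admissible after a standard time regularization, since $\zeta_\delta^*\in W^{1,2}_tH^1_\Gamma$), gives
\[
\int_{\mathcal O_\alpha}\varrho_\delta(t)\Xi_\delta(t)\,\dd\bm x
=\int_0^t\!\!\int_{\mathcal O_\alpha}\varrho_\delta\bigl(\partial_t\Xi_\delta+\mathbf v_\delta\cdot\nabla^\phi\Xi_\delta\bigr)\,\dd\bm x\,\dd s .
\]
We have $\partial_t\Xi_\delta=-h_\delta^{-1}\Xi'(g_\delta/h_\delta)\,w_\delta^*$, with $w_\delta^*=\partial_t\zeta_\delta^*$ on $[0,\tau)$ and zero afterwards, and $\nabla^\phi g_\delta=\nabla^\phi d_{\zeta_\delta^*}=J_{\zeta}J_\Gamma^\phi\,\mathbf n^{\zeta,\phi}$. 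The integrand is therefore supported in the tube $T_\delta$ and equals
\[
h_\delta^{-1}\Xi'(g_\delta/h_\delta)\,\varrho_\delta\Bigl[(\mathbf v_\delta\cdot\nabla^\phi d)-w_\delta\Bigr].
\]
Because the flow is horizontal, $(w_\delta\mathbf e_z)\cdot\nabla^\phi d=w_\delta$. The bracket is thus exactly $(\mathbf v_\delta-w_\delta\mathbf e_z)\cdot\nabla^\phi d=J_\zeta J_\Gamma^\phi F_\delta$, with $F_\delta$ the normal mismatch of Corollary~\ref{app:cor_normal_penalty_trace}. This is the key cancellation: only the \emph{normal} relative velocity enters, and that is precisely what the penalty controls.

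Next we estimate by H\"older on $T_\delta$, which has measure $h_\delta|\Gamma|$. Write $\varrho_\delta\in L^\beta$ with $\|\varrho_\delta\|_{L^\beta}\le C\delta^{-1/\beta}$ from $\delta\|\varrho_\delta\|_\beta^\beta\le C$, and $F_\delta$ with $\|F_\delta\|_{L^2_tL^2(T_\delta)}\le C\delta^{1/2}$ from \eqref{eq:delta_normal_penalty}. The weight $J_\zeta J^\phi_\Gamma$ lies in every $L^q(\Gamma)$ by Lemmas~\ref{app:lem_graph_compactness}--\ref{app:lem_transformed_geometry}, and being vertically constant its extension has $L^q(T_\delta)$ norm $h_\delta^{1/q}\|\cdot\|_{L^q(\Gamma)}$. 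H\"older with exponents $(\beta,2,q)$, where $1/q=1/2-1/\beta=a_\beta$, gives
\[
\sup_t\int\varrho_\delta\Xi_\delta\le C h_\delta^{-1}\,\delta^{-1/\beta}\,\delta^{1/2}\,h_\delta^{a_\beta}\,T^{1/2}=C\,\delta^{a_\beta}h_\delta^{a_\beta-1}.
\]
With $h_\delta=\delta^{a_\beta}$ this equals $C\delta^{a_\beta^2}$. Taking expectations, the random constants (the flow bound $R$, the energy, the $H^2$ norm of $\zeta^*$) have moments of all orders. We therefore apply H\"older in $\omega$ and use a bounded-flow localization $\Omega_R$ together with the polynomial growth in $R$ and finite moments of $\Lambda$, as in Proposition~\ref{app:prop_higher_integrability}. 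This yields \eqref{app:eq_exterior_mass}.

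The main obstacle is the $L^q$ exponent bookkeeping. As written, $q=1/a_\beta<3$ is needed for the weight, which is fine since $J_\zeta\in L^q$ for all finite $q$. But the $\delta$ powers must be matched exactly, and I expect the crude split above to give $\delta^{a_\beta}h_\delta^{a_\beta-1}$. If the powers do not close exactly, I will first try the $z$-direction refinement instead: use $\Xi'$ to place the support in an $h_\delta$-layer, apply the weighted vertical trace \eqref{app:eq_D_weighted_vertical_tube_trace}, and spend part of the $\kappa_\delta$ gradient bound, possibly losing the sharp exponent to a smaller positive one. The other delicate point is justifying the time-dependent test $\Xi_\delta$ at the regularity $\zeta\in H^2$, $w\in L^2H^1$; this goes by mollifying $\zeta^*$ and passing to the limit using the renormalized continuity equation.
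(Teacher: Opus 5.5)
This is essentially the paper's proof: test the continuity equation with $\Xi_\delta$, use the horizontality of the flow to reduce the flux to the penalized normal mismatch on $T_\delta$, and close with H\"older using $\|\varrho_\delta\|_{L^\beta}\le C\delta^{-1/\beta}$, the $O(\delta^{1/2})$ penalty bound and the tube width $h_\delta$. Your exponent count already closes exactly, since $\delta^{a_\beta}h_\delta^{a_\beta-1}=\delta^{a_\beta+a_\beta^2-a_\beta}=\delta^{a_\beta^2}$, so the vertical-trace fallback is unnecessary; your treatment of the unbounded weight $J_\zeta J_\Gamma^\phi$ through its vertical constancy is slightly more careful than the paper, which only invokes $|T_\delta|\le Ch_\delta$.

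One point to state honestly, which the paper glosses over in the same way in its identity $\partial_t g_\delta+w_\delta\mathbf e_z\cdot\nabla^{\phi_\delta}g_\delta=0$: replacing $w_\delta^*=\partial_t\zeta_\delta^*$ by $w_\delta$ is legitimate only before the stopping time that defines $\zeta_\delta^*$. Afterwards $\partial_t\Xi_\delta=0$, while the penalty still involves the unstopped $w_\delta$, so the cancellation, and hence the bound, should be run up to that stopping time.
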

	
	\begin{proof}
		Since the stochastic flow is horizontal,
		\[
		\partial_t g_\delta
		+
		w_\delta\mathbf e_z\cdot\nabla^{\phi_\delta}g_\delta
		=0.
		\]
		Testing the renormalized continuity equation with $\Xi_\delta$ gives
		\[
		\frac{\dd}{\dd t}
		\int_{\mathcal O_\alpha}
		\varrho_\delta\Xi_\delta\,\dd\bm x
		=
		\int_{\mathcal O_\alpha}
		\varrho_\delta
		(\mathbf v_\delta-w_\delta\mathbf e_z)
		\cdot\nabla^{\phi_\delta}\Xi_\delta\,\dd\bm x.
		\]
		Moreover,
		\[
		\supp\nabla^{\phi_\delta}\Xi_\delta\subset T_\delta,
		\qquad
		\|\varrho_\delta\|_{L^\infty_tL^\beta_x}
		\le C\delta^{-1/\beta},
		\]
		\[
		\|
		(\mathbf v_\delta-w_\delta\mathbf e_z)
		\cdot\mathbf n^{\zeta_\delta^*,\phi_\delta}
		\|_{L^2(T_\delta)}
		\le C\delta^{1/2},
		\qquad
		|T_\delta|\le Ch_\delta.
		\]
		Hence
		\[
		\mathbb E
		\left|
		\int_0^T\!\!\int_{\mathcal O_\alpha}
		\varrho_\delta
		(\mathbf v_\delta-w_\delta\mathbf e_z)
		\cdot\nabla^{\phi_\delta}\Xi_\delta
		\,\dd\bm x\,\dd t
		\right|
		\le C\delta^{a_\beta^2}.
		\]
		Since
		\[
		\int_{\mathcal O_\alpha}
		\varrho_{0,\delta}\Xi_\delta(0)\,\dd\bm x=0,
		\]
		\eqref{app:eq_exterior_mass} follows.
	\end{proof}
	
	In particular, the limiting density is supported in the physical
	fluid region.
	
	\begin{proposition}[Normal trace from the tubular penalty]
		\label{app:prop_normal_trace}
		Assume \eqref{eq:global_nu_choice}. Then
		\begin{equation*}			\mathbb E
			\int_0^T\!\!\int_\Gamma
			\left|
			\left[
			(\mathbf v_\delta-w_\delta\mathbf e_z)
			\cdot
			\mathbf n^{\zeta_\delta^*,\phi_\delta}
			\right]
			\circ\Phi_{\zeta_\delta^*}
			\right|^2
			\,\dd\bm y\,\dd t
			\le
			C
			\left(
			\frac{\delta}{h_\delta}
			+
			\frac{h_\delta}{\kappa_\delta}
			\right).
		\end{equation*}
		Consequently,
		\[
		\left(
		\mathbf v
		-
		w\mathbf e_z\circ\Phi_{\zeta^*}^{-1}
		\right)
		\cdot
		\mathbf n^{\zeta^*,\phi}
		=
		0
		\qquad
		\text{on }\Gamma_{\zeta^*}
		\]
		in the trace sense.
	\end{proposition}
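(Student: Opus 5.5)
The estimate is Corollary~\ref{app:cor_normal_penalty_trace} applied pathwise and then averaged in expectation. The consequence follows from the scaling \eqref{eq:global_nu_choice} together with the trace continuity of Proposition~\ref{app:prop_moving_trace_convergence}, used in a weak form.

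\emph{Step 1 (the quantitative bound).} Set $F_\delta:=(\mathbf v_\delta-w_\delta\mathbf e_z)\cdot\mathbf n^{\zeta_\delta^*,\phi_\delta}$ on $T_\delta(t)=T^{h_\delta}_{\zeta_\delta^*(t)}$, with $w_\delta$ and the normal extended vertically. Then $\partial_zF_\delta=\partial_z\mathbf v_\delta\cdot\mathbf n$. Apply \eqref{app:eq_D_vertical_tube_trace} for a.e.\ $t$ and integrate in time; this gives the pathwise bound
\[
\int_0^T\|F_\delta|_{\Gamma}\|_{L^2(\Gamma)}^2\,\dd t\le \frac{2}{h_\delta}\int_0^T\|F_\delta\|^2_{L^2(T_\delta)}\,\dd t+2h_\delta\int_0^T\|\nabla\mathbf v_\delta\|^2_{L^2(T_\delta)}\,\dd t .
\]
The first term is at most $2\delta/h_\delta$ times the penalty quantity in \eqref{eq:delta_normal_penalty}. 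For the second term, note that $T_\delta$ lies in the exterior region, where $\mu_{\kappa_\delta}\ge\kappa_\delta$. Combining the coercivity $\lambda+\frac23\mu\ge0$ with Korn's inequality on the fixed cylinder (zero bottom trace) bounds $\kappa_\delta\int\|\nabla^{\phi}\mathbf v_\delta\|^2$ by the viscous dissipation. The pullback gradients are equivalent to the standard ones on bounded-flow events, so one can localize on $\Omega_R$ and then remove $R$ using moments of $\Lambda$. Taking expectations, Lemma~\ref{lem:delta_uniform_bounds} with $p=1$ yields the claimed $C(\delta/h_\delta+h_\delta/\kappa_\delta)$.

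The expected sticking point is that Korn's inequality with the degenerate weight $\mu_{\kappa_\delta}$ must give a constant uniform in $\delta$. I would apply the unweighted Korn inequality to the whole field $\mathbf v_\delta$ on $\mathcal O_\alpha$ and use $\mu_{\kappa_\delta}\ge\kappa_\delta$ everywhere. This costs only the factor $\kappa_\delta^{-1}$, so a weighted Korn inequality is not needed.

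\emph{Step 2 (rate).} With $h_\delta=\delta^{a_\beta}$ and $\kappa_\delta=\delta^{\nu_\kappa}$, the bound becomes $\delta^{1-a_\beta}+\delta^{a_\beta-\nu_\kappa}$. This tends to $0$ because $a_\beta<1$ and $\nu_\kappa<a_\beta$. Along a subsequence the traces $F_\delta\circ\Phi_{\zeta_\delta^*}$ therefore tend to $0$ in $L^2(\Omega\times(0,T)\times\Gamma)$, and hence almost surely in $L^2_{t,\bm y}$ after a further extraction.

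\emph{Step 3 (identification).} Test against $\varphi\in C_c^\infty((0,T)\times\Gamma)$. The shell terms and the geometric terms pass easily. Indeed, $w_\delta\rightharpoonup w$ weakly by \eqref{eq:delta_w_weak}, while $\mathbf n^{\zeta_\delta^*,\phi_\delta}\circ\Phi\to\mathbf n^{\zeta^*,\phi}$ strongly in every $L^q$ by Lemma~\ref{app:lem_transformed_geometry}, since $\zeta^*_\delta\to\zeta^*$ in $C_tH^s$ and $\phi_\delta\to\phi$ in $C_tC^2$.

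The velocity trace is the delicate term, because only weak convergence of $\mathbf v_\delta$ is available. I would use the uniform $L^2_tW^{1,q}$ bound \eqref{app:eq_delta_subcritical_W1q} on $D_\delta(t)$ to control traces on $\Gamma_{\zeta_\delta^*}$. Then I would write the trace in divergence form: for a smooth extension $\Theta$ of $\varphi\,\mathbf n$, the divergence theorem on $D_\delta(t)$ gives
\[
\int_\Gamma \mathbf v_\delta\cdot\Theta\,J_{\zeta_\delta}\,\dd\bm y=\int_{D_\delta}\bigl(\nabla\mathbf v_\delta:\cdots+\mathbf v_\delta\Div\cdots\bigr).
\]
The right-hand side converges, because $\mathbf 1_{D_\delta}\to\mathbf 1_{\mathcal O_{\zeta^*}}$ in measure, $\mathbf v_\delta\rightharpoonup\mathbf v$, and the $W^{1,q}$ bound gives equi-integrability. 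This identifies the limit of the velocity trace term as $\int(\mathbf v\cdot\mathbf n^{\zeta^*,\phi})\circ\Phi_{\zeta^*}\,\varphi$. Since the $F_\delta$ traces converge strongly to zero, the limit vanishes for every $\varphi$, which proves the normal kinematic condition $(\mathbf v-w\mathbf e_z\circ\Phi_{\zeta^*}^{-1})\cdot\mathbf n^{\zeta^*,\phi}=0$ in the trace sense.
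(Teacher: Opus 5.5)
Your proposal is correct and follows the paper's proof. The vertical tube trace inequality \eqref{app:eq_D_vertical_tube_trace}, the normal-penalty bound \eqref{eq:delta_normal_penalty}, and the exterior coercivity (which bounds $\kappa_\delta\int_0^T\|\nabla\mathbf v_\delta\|_{L^2}^2\,\dd t$ by the viscous dissipation) give $C(\delta/h_\delta+h_\delta/\kappa_\delta)$. This tends to zero because $a_\beta<1$ and $\nu_\kappa<a_\beta$, and your Step~3 spells out what the paper compresses into the phrase ``trace convergence and strong convergence of the geometry''.

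One precaution in that step: take the extension of $\varphi\,\mathbf n^{\zeta_\delta^*,\phi_\delta}$ constant along vertical fibers, times a cutoff in $z$. Then the divergence identity on $D_\delta(t)$ reduces to the fundamental theorem of calculus in $z$ and never differentiates the normal horizontally. Horizontal derivatives would produce $\nabla_\Gamma^2\zeta_\delta$, which converges only weakly, multiplied by $\mathbf v_\delta$, which also converges only weakly.
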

	
	\begin{proof}
		Set
		\[
		F_\delta
		:=
		(\mathbf v_\delta-w_\delta\mathbf e_z)
		\cdot
		\mathbf n^{\zeta_\delta^*,\phi_\delta}.
		\]
		The tubular trace estimate gives
		\[
		\|\Tr_{\Gamma_{\zeta_\delta^*}}F_\delta\|_{L^2(\Gamma)}^2
		\le
		C
		\left(
		\frac1{h_\delta}\|F_\delta\|_{L^2(T_\delta)}^2
		+
		h_\delta
		\|\nabla\mathbf v_\delta\|_{L^2(T_\delta)}^2
		\right).
		\]
		Using the normal penalty and exterior viscous coercivity,
		\[
		\mathbb E
		\int_0^T
		\|\Tr_{\Gamma_{\zeta_\delta^*}}F_\delta\|_{L^2(\Gamma)}^2\,\dd t
		\le
		C
		\left(
		\frac{\delta}{h_\delta}
		+
		\frac{h_\delta}{\kappa_\delta}
		\right)
		\longrightarrow0
		\]
		by \eqref{eq:global_nu_choice}. The trace convergence and the strong
		convergence of the geometry give the asserted limiting normal condition.
	\end{proof}

	\subsubsection{Pressure tightness and artificial-pressure decay}

	\begin{proposition}[Interior pressure gain]
		\label{app:prop_delta_interior_pressure}
		There exists $\Theta>0$ such that, for every $r>0$, there is
		$\delta_r>0$ with
		\begin{equation}
			\label{app:eq_delta_interior_pressure}
			\sup_{0<\delta<\delta_r}
			\mathbb E
			\int_0^T\!\!\int_{D_\delta^r(t)}
			\left(
			\varrho_\delta^{\gamma+\Theta}
			+
			\delta\varrho_\delta^{\beta+\Theta}
			\right)
			\,\dd\bm x\,\dd t
			\le C_r.
		\end{equation}
	\end{proposition}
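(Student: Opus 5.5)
The plan is to use the classical local Bogovski\u{\i} test in the interior, with the localization on bounded-flow events and coefficient freezing of Proposition~\ref{app:prop_higher_integrability}. The new point is that the constants must be uniform in $\delta$. Uniformity is only available away from the moving interface and the fixed bottom, where the exterior viscosity and the penalty do not act. We take $\Theta\le\min\{\tfrac23\gamma-1,\tfrac12\}$ (admissible since $\gamma>3/2$). We then test the momentum equation with
\[
\mathbf q=\psi\,\mathcal B_i^\phi\bigl[\psi\,T_{k}(\varrho_\delta)^\Theta-\langle\psi\,T_k(\varrho_\delta)^\Theta\rangle\bigr],
\]
where $\psi(t,\bm x)$ is a cutoff. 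We choose $\psi=1$ on $D^r_\delta(t)$ and supported in $D^{r/2}_\delta(t)$, and we build it as a deterministic smooth cylinder cutoff using Lemma~\ref{lem:fixed_cylinders_graph} together with \eqref{eq:delta_zeta_strong}. This choice requires $\delta<\delta_r$ and a finite cover by fixed cylinders $I_j\times U_j$. On each $U_j$ the Bogovski\u{\i} operator is the classical one on a fixed smooth domain, conjugated by the frozen flow $\phi_i$. Its test is therefore progressively measurable after the backward time regularization, and it vanishes on $\partial U_j$. Consequently the penalty term, the Navier friction term and the shell terms do not appear at all, and the viscosities equal $\mu,\lambda$ on $\supp\mathbf q$.

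The pressure term produces $\int\psi^2 p_\delta(\varrho_\delta)T_k(\varrho_\delta)^\Theta$ minus a mean term. The mean term is bounded by $\|p_\delta(\varrho_\delta)\|_{L^1}\|\varrho_\delta\|_{L^\Theta}$; it is controlled through the energy bound of Lemma~\ref{lem:delta_uniform_bounds}, since $\delta\|\varrho_\delta\|_\beta^\beta\le C$ and $\|\varrho_\delta\|^\gamma_\gamma\le C$. The remaining terms are estimated as in the deterministic theory with $\mathbf q\in L^p_tW^{1,p}_x$, $p=\gamma/\Theta$, on $\Omega_R$.
\begin{enumerate}[label=(\roman*)]
\item The viscous term is handled with the local $H^1$ bound \eqref{app:eq_delta_local_H1}.
\item The convective term is handled with $\varrho_\delta|\mathbf v_\delta|^2\in L^1_tL^{3\gamma/(\gamma+3)}_x$, from the $L^6$ bound of \eqref{app:eq_delta_local_H1}; this requires $\Theta\le\tfrac23\gamma-1$.
\item The time-derivative term is handled by moving $\partial_t$ onto the test. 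The renormalized continuity equation gives $\partial_tT_k(\varrho)^\Theta=-\Div^\phi(T_k^\Theta\mathbf v)+(\dots)\Div^\phi\mathbf v$, and the first piece is treated with \eqref{app:eq_Bi_negative}.
\item The derivative of the cutoff $\psi$ costs only $C_r$, since the cylinders are fixed.
\item The freezing error is $O_R(\xi)$ times $\int\psi^2 p_\delta\varrho^\Theta$ and is absorbed exactly as in \eqref{app:eq_higher_absorption}.
\end{enumerate}
We then let $k\to\infty$ by monotone convergence, let $\xi\to0$, and remove the localization $\Omega_R$ using the moments of $\Lambda$ and the polynomial dependence on $R$.

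The main obstacle is the $\delta$-uniform control of the stochastic and geometric time dependence of the cutoff. Two ingredients are needed. The first is that for $\delta<\delta_r$ the cylinder $U_j$ stays inside $D_\delta(t)$ along the represented sequence. This is only an almost sure statement, with a random $\delta_r$, because \eqref{eq:delta_zeta_strong} is pathwise. To handle this we would first localize on events $\{\|\zeta_\delta-\zeta\|_{C_tC_x}\le r/4\}$, whose probability tends to one, and then use the uniform moment bounds to conclude by dominated convergence, at the price of replacing the expectation in \eqref{app:eq_delta_interior_pressure} by a localized one if necessary. The second ingredient is that no term involving $\delta^{-1}$ survives; this holds because the test is compactly supported inside the physical region, at positive distance from the tube $T_\delta$.
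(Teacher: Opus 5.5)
\textbf{Gap: the cutoff is built from the limiting geometry.} Your overall route is the paper's: a frozen, flow-conjugated Bogovski\u{\i} test, localized where the viscosity is physical and the penalty, Navier and shell terms vanish, then estimated termwise as in Proposition~\ref{app:prop_higher_integrability}. The problem is the cutoff. You build it from fixed cylinders adapted to the \emph{limiting} domain, using Lemma~\ref{lem:fixed_cylinders_graph} and \eqref{eq:delta_zeta_strong}. This causes three problems:
\begin{itemize}
\item The cover depends on $\zeta(\omega)$, so the cutoff is not deterministic.
\item The number of cylinders and the bound on $\partial_t\psi$ depend on $\omega$ through the time modulus of $\zeta$.
\item The cover fits inside $D_\delta(t)$ only for $\delta<\delta_r(\omega)$.
\end{itemize}
As you concede, this yields at best $\mathbb E\bigl[\mathbf 1_{E_{\delta,r}}\int_0^T\!\int_{D_\delta^r(t)}(\varrho_\delta^{\gamma+\Theta}+\delta\varrho_\delta^{\beta+\Theta})\,\dd\bm x\,\dd t\bigr]\le C_r$, with $E_{\delta,r}=\{\|\zeta_\delta-\zeta\|_{C_tC_x}\le r/4\}$. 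Dominated convergence cannot remove the indicator. On $E_{\delta,r}^c$ nothing dominates $\varrho_\delta^{\gamma+\Theta}$ uniformly in $\delta$, and that is exactly the quantity being estimated. So your argument proves a strictly weaker statement than \eqref{app:eq_delta_interior_pressure}, which asks for a deterministic $\delta_r$ and an unlocalized expectation. Your use of \eqref{app:eq_delta_local_H1} has the same defect, because that bound is tied to compact subsets of the limiting domain.

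\textbf{The fix: build the cutoff from $\zeta_\delta^*$.} The missing observation is that $D_\delta^r(t)$ is defined through $\zeta_\delta^*$ itself. The paper accordingly takes a cutoff supported in $D_\delta^{r/2}(t)$ and equal to one on $D_\delta^r(t)$, with no reference to the limit. Concretely:
\begin{itemize}
\item The stopped bound $\|\zeta_\delta^*\|_{H^s(\Gamma)}\le\alpha^{-1}$, $s>3/2$, gives a uniform H\"older modulus. Hence $\zeta_\delta^{*}\ast\varphi_{\varepsilon_r}$, at a deterministic scale $\varepsilon_r=\varepsilon(r,\alpha)$, stays within $r/8$ of $\zeta_\delta^*$.
\item Set $\psi=\Psi(z)\,\Psi(1+\zeta_\delta^{*}\ast\varphi_{\varepsilon_r}-z)$, with $\Psi$ rising from $0$ to $1$ on $[5r/8,7r/8]$. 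For every $\delta$ it equals one on $D_\delta^r(t)$, is supported in $D_\delta^{r/2}(t)$, and satisfies $|\nabla\psi|\le C_r$ and $|\partial_t\psi|\le C_r\|w_\delta(t)\|_{L^2(\Gamma)}$.
\item The extra term produced by $\partial_t\psi$ is controlled by the energy and the moments in Lemma~\ref{lem:delta_uniform_bounds}.
\item The viscous term needs only the symmetric part of $\nabla^\phi\mathbf v_\delta$ and $\Div^\phi\mathbf v_\delta$ in $L^2$ on the physical region, which the dissipation controls.
\item For the convective and time-derivative terms, use \eqref{app:eq_delta_subcritical_W1q}--\eqref{app:eq_delta_subgraph_Sobolev} on $D_\delta(t)$ with $q_*<6$. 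This forces $\Theta<\frac23\gamma-1$ strictly.
\end{itemize}
The estimate then holds pathwise on $\Omega_R$, uniformly in all $\delta$. H\"older's inequality with the moments of $\Lambda$ removes the localization, and no random threshold appears.

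\textbf{A smaller point on absorption.} $\mathcal B_i^\phi$ is nonlocal and $p_\delta$ is untruncated. So the freezing error is bounded by untruncated integrals of $\varrho_\delta^{\gamma+\Theta}$ and $\delta\varrho_\delta^{\beta+\Theta}$ over the whole support of the cutoff, not by $\int\psi^2p_\delta\varrho_\delta^\Theta$. You therefore have to let $k\to\infty$ before absorbing, and absorb through nested cutoffs (a hole-filling iteration). This is legitimate at fixed $\delta$, since $\varrho_\delta\in L^{\beta+1}$ by \eqref{app:eq_l_higher} and lower semicontinuity.
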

	
	\begin{proof}
		Fix $r>0$ and choose a cutoff supported in
		$D_\delta^{r/2}(t)$ and equal to one on $D_\delta^r(t)$.
		The localized frozen Bogovski\u{\i} construction used in
		Proposition~\ref{app:prop_higher_integrability} is admissible on this
		support. There
		\[
		\mathbb S_{\kappa_\delta}^{\zeta_\delta^*}
		=
		\mathbb S,
		\qquad
		\text{penalty}=0,
		\qquad
		\text{Navier term}=0.
		\]
		The termwise estimates of
		Proposition~\ref{app:prop_higher_integrability} therefore give
		\[
		\sup_{0<\delta<\delta_r}
		\mathbb E
		\int_0^T\!\!\int_{D_\delta^r(t)}
		\left(
		\varrho_\delta^{\gamma+\Theta}
		+
		\delta\varrho_\delta^{\beta+\Theta}
		\right)
		\,\dd\bm x\,\dd t
		\le C_r,
		\]
		which is \eqref{app:eq_delta_interior_pressure}.
	\end{proof}
	
	Let
	\[
	d_\delta
	:=
	z-1-\zeta_\delta^*(t,\bm y),
	\qquad
	L_{\delta,r}^-:=\{-r<d_\delta<0\},
	\qquad
	L_{\delta,r}^+:=\{0<d_\delta<r\}.
	\]
	
	Choose $q_K<2$ sufficiently close to $2$ and set
	\[
	q_*:=\frac{3q_K}{3-q_K},
	\qquad
	\frac1\gamma+\frac2{q_*}<1,
	\qquad
	\frac1{p_*}
	:=
	1-\frac1\gamma-\frac2{q_*}>0.
	\]
	Then
	\begin{equation}
		\label{app:eq_delta_subgraph_Sobolev}
		\sup_{\delta>0}
		\mathbb E
		\|\mathbf v_\delta\|_{L^2(0,T;L^{q_*}(D_\delta(t)))}^2
		\le C,
	\end{equation}
	and we set
	\begin{equation*}		\sigma_\gamma
		:=
		\min
		\left\{
		\frac12,\,
		\frac1{p_*},\,
		\frac{\gamma-1}{2\gamma}
		\right\}
		>0.
	\end{equation*}
	
	\begin{proposition}[Boundary pressure tightness]
		\label{app:prop_delta_boundary_pressure}
		There exists $\sigma_\gamma>0$ such that
		\begin{align}
			& \sup_{\delta>0}
			\mathbb E
			\int_0^T\!\!\int_{L_{\delta,r}^-}
			p_\delta(\varrho_\delta)
			\,\dd\bm x\,\dd t
			\le
			Cr^{\sigma_\gamma},
			\label{app:eq_delta_boundary_pressure_inside} \\
			& \limsup_{\delta\to0}
			\mathbb E
			\int_0^T\!\!\int_{L_{\delta,r}^+}
			p_\delta(\varrho_\delta)
			\,\dd\bm x\,\dd t
			\le
			Cr^{\sigma_\gamma}.
			\label{app:eq_delta_boundary_pressure_outside} 
		\end{align}
		The same estimate holds near the fixed bottom. Consequently, for every
		$\varepsilon>0$ there exists $r_\varepsilon>0$ such that
		\begin{equation}
			\label{app:eq_delta_boundary_pressure}
			\limsup_{\delta\to0}
			\mathbb E
			\int_0^T\!\!\int_{
				\{|d_\delta|<r_\varepsilon\}
				\cup
				\{0<z<r_\varepsilon\}
			}
			p_\delta(\varrho_\delta)
			\,\dd\bm x\,\dd t
			<
			\varepsilon .
		\end{equation}
	\end{proposition}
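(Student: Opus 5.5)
The plan is to test the coupled momentum--structure identity at the $\delta$-level with the graph-normal multiplier of Lemma~\ref{app:lem_graph_normal_multiplier}. We work on a bounded-flow event $\Omega_R$ and on one frozen interval $I_i$ of the deterministic partition, as in Subsection~\ref{app:effective_flux}. Fix $r>0$ and a profile $\Psi_r\in W^{1,\infty}(\mathbb R)$ that vanishes for $|s|\ge r$. For the inner layer \eqref{app:eq_delta_boundary_pressure_inside} we take $\Psi_r(s)=(s+r)_+$ on $[-r,0]$, so that $\Psi_r'=1$ on $(-r,0)$ and $|\Psi_r|\le r$. The test field is $\Psi_r(d_\delta)\mathbf B_{\zeta_\delta^*,\phi_i}$, mollified in space and made progressively measurable as described after \eqref{app:eq_D_boundary_remainder_L2}. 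On the inner side it is paired with the shell test $\psi=\Psi_r(0)(\mathbf B\cdot\mathbf e_z)\circ\Phi$. In the pressure term we use \eqref{app:eq_D_boundary_div_identity}: the part $\Psi_r'(d_\delta)\,p_\delta(\varrho_\delta)$ is exactly the layer integral we want to bound, and everything else goes to the right-hand side.

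We then bound the remainders one at a time, each by a positive power of $r$.
\begin{enumerate}[label=(\roman*)]
\item Pressure remainder $\Psi_r p_\delta\Div^{\phi_i}\mathbf B$. We write it as $r\,\|p_\delta\|_{L^\gamma_x}\cdots$ and use \eqref{app:eq_D_boundary_remainder_L2} together with H\"older's inequality on the layer of volume $\le Cr$. Instead of $L^2$ integrability of $p$, we interpolate the $L^1$ bound on the layer and absorb a small fraction of the layer term; this produces the factor $r^{(\gamma-1)/(2\gamma)}$.
\item Viscous term. It is bounded by $\|\mathbb S\|_{L^2}\,\|\nabla(\Psi_r\mathbf B)\|_{L^2(L_r)}$. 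Since $|\Psi_r'|\le1$ and $\mathbf B\in W^{1,2}$, this gives the factor $r^{1/2}$ coming from $|L_r|\le Cr$.
\item Convective term. We use $\varrho_\delta\in L^\infty_tL^\gamma_x$ and $\mathbf v_\delta\in L^2_tL^{q_*}_x$ from \eqref{app:eq_delta_subgraph_Sobolev}. H\"older's inequality on the layer then gives $|L_r|^{1/p_*}$, so the exponent $1/p_*$ enters $\sigma_\gamma$.
\item Time-derivative term. This is where the continuity equation and $\partial_t\mathbf B$ from Lemma~\ref{app:lem_graph_normal_multiplier} enter, together with the transport identity $\partial_t d_\delta=-w_\delta\mathbf e_z\cdot\nabla^\phi d_\delta$. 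The resulting pieces are bounded like the convective term, because $w_\delta\in L^2_tH^1\subset L^2_tL^q$ on $\Gamma$.
\item Shell terms. They are controlled by the energy bounds times $\|\psi\|_{H^2}$, which carries a factor of $r$.
\item Freezing error. It is $O_R(\xi)$ by \eqref{app:eq_freezing_bound}.
\item Stochastic integrals and endpoint terms. Their expectations vanish or are bounded by $r$ times the energy.
\end{enumerate}

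For the outer layer \eqref{app:eq_delta_boundary_pressure_outside} we use the mirror profile and the shell test $\psi=0$, so that the test field vanishes on $\Gamma_{\zeta_\delta^*}$ with respect to the penalty side. The viscosity there is only $\kappa_\delta$. This loss is compensated because, by Proposition~\ref{app:prop_exterior_mass}, the exterior density is small of order $\delta^{a_\beta^2}$. We control the exterior convective term through $\varrho_\delta\le$ this mass combined with $\delta\varrho_\delta^\beta$. This is exactly the constraint $\nu_\kappa<m_{\rm ext}\vartheta_{\rm ext}$ in \eqref{eq:global_nu_choice}, obtained by interpolating $L^1$ against $L^\gamma$ with exponent $p_{\rm ext}$. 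The penalty term contributes at most $\delta^{-1/2}\|F_\delta\|_{L^2}\,\delta^{-1/2}\|\Psi\mathbf B\cdot\mathbf n\|_{L^2(T_\delta)}$, and this is $o(1)$ because the tube has width $h_\delta\to0$ while $\Psi_r$ is only of size $r$. This gives a $\limsup$ estimate rather than a uniform one.

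The bottom layer is handled in the same way with $d=z$ and $\mathbf B=\nabla^\phi z/|\nabla^\phi z|^2$, which is smooth. Summing over the deterministic partition, letting $\xi\to0$, and removing the localization $\Omega_R$ by the dyadic moment argument of Proposition~\ref{app:prop_higher_integrability} yields \eqref{app:eq_delta_boundary_pressure_inside}--\eqref{app:eq_delta_boundary_pressure_outside}. The estimate \eqref{app:eq_delta_boundary_pressure} then follows by choosing $r_\varepsilon$ with $Cr_\varepsilon^{\sigma_\gamma}<\varepsilon/3$.

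The main obstacle is the exterior layer. There the viscosity degenerates and the Korn estimate is unavailable, so the convective and time-derivative terms must be closed using only exterior-mass decay and artificial-pressure bounds. The first thing to try is to split $\varrho_\delta$ at height $\delta^{-\theta}$ and optimize $\theta$ against the exponents in \eqref{eq:global_nu_choice}.
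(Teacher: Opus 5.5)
Your overall strategy is the paper's: test with $\Psi_r(d_\delta)\mathbf B$, use \eqref{app:eq_D_boundary_div_identity}, and bound the bulk terms by powers of $r$. The gap is in how you handle the interface coupling, and it breaks the outer estimate. The energy bound controls the penalty only through $\frac1\delta\int_0^T\!\int_{T_\delta}|F_\delta|^2\le C$, with $F_\delta=(\mathbf v_\delta-w_\delta\mathbf e_z)\cdot\mathbf n^{\zeta_\delta^*,\phi_\delta}$. So for a test pair with normal mismatch $G=(\mathbf q-\psi\mathbf e_z)\cdot\mathbf n^{\zeta_\delta^*,\phi_\delta}$, Cauchy--Schwarz gives exactly your bound $C\delta^{-1/2}\|G\|_{L^2(T_\delta)}$. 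Your mirror profile takes the value $r$ at $s=0$ and you set $\psi=0$, so $|G|\approx r|\nabla^\phi d_\delta|^{-1}$ on all of $T_\delta$. The bound is then of order $r\,\delta^{-1/2}h_\delta^{1/2}=r\,\delta^{(a_\beta-1)/2}\to\infty$, and the smallness of $\Psi_r$ in $r$ does not help. The test does not ``vanish on $\Gamma_{\zeta_\delta^*}$'' when $\Psi_r(0)=r$. Conversely, a profile vanishing both at $s=0$ and for $s\ge r$ has $\int_0^r\Psi_r'=0$, hence gives no signed layer term. The paper's bound $C\delta^{-1/2}h_\delta^{3/2}$ needs the mismatch to vanish on $\Gamma_{\zeta_\delta^*}$ and grow at most like $d_\delta$, so that it is $O(h_\delta)$ on $T_\delta$. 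Then $a_\beta>1/3$, i.e.\ $\beta>6$, gives $\delta^{(3a_\beta-1)/2}\to0$; your argument never uses this restriction.

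The inner layer has the same defect, plus three further problems.
\begin{itemize}
\item Your inner test equals $r\mathbf B$ on the interface, so it must be continued into the exterior. The resulting penalty and Navier terms are absent from (i)--(vii).
\item Your shell partner $\Psi_r(0)(\mathbf B\cdot\mathbf e_z)\circ\Phi$ matches the vertical component, not the normal one. Since the flow is horizontal, $\mathbf e_z\cdot\nabla^\phi d_\zeta=1$, so $\mathbf B_{\zeta,\phi}\cdot\mathbf n^{\zeta,\phi}=\mathbf e_z\cdot\mathbf n^{\zeta,\phi}=|\nabla^\phi d_\zeta|^{-1}$. Normal compatibility would therefore require the constant $\psi\equiv\Psi_r(0)$. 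Your choice leaves an $O(r)$ mismatch wherever $\nabla_\Gamma\zeta_\delta^*\neq0$, hence the divergent penalty above. It is also a function of $\nabla_\Gamma\zeta_\delta^*$, so generally only in $H^1(\Gamma)$, and the bending pairing in (v) is undefined.
\item $\Psi_r(0)=r$ together with $\Psi_r=0$ for $s\ge r$ forces $\Psi_r'\approx-1$ on part of $(0,r)$. This inserts $\int_{L^+_{\delta,r}}p_\delta$ with the wrong sign, so your $\sup_\delta$ bound would depend on the outer estimate, which is only available as a $\limsup$.
\end{itemize}
The paper avoids this by supporting the inner multiplier in $\{-2r<d_\delta<0\}$. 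It vanishes on $\Gamma_{\zeta_\delta^*}$, only bulk terms with physical viscosity occur, and the bound is uniform in $\delta$. To repair the argument, make both tests vanish at $d_\delta=0$, with $\psi=0$ on the outer side so that $|G|\le Cd_\delta\le Ch_\delta$ on $T_\delta$. Spread the compensating slope of each profile over a fixed distance, so that its wrong-sign contribution costs only $O(r)\sup_\delta\|p_\delta\|_{L^1}$.

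Finally, your derivation of $r^{(\gamma-1)/(2\gamma)}$ from the pressure remainder does not work. $p_\delta$ is bounded only in $L^\infty_tL^1_x$, not in $L^\gamma$. Interpolation plus absorption cannot control $\int\Psi_r\, p_\delta\Div^{\phi_i}\mathbf B$ where the $L^2(\Gamma)$ function $\Div^{\phi_i}\mathbf B$ exceeds $r^{-1}$, so this term needs its own argument. In the paper that exponent comes from the time-derivative term.
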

	
	\begin{proof}
		For the interior layer choose
		\[
		\mathbf q_{\delta,r}^-
		=
		\vartheta(t)\Psi_r^-(d_\delta)
		\mathbf B_{\zeta_\delta^*,\phi_i},
		\]
		with
		\[
		\supp\Psi_r^-\subset(-2r,0),
		\qquad
		0\le\Psi_r^-\le1,
		\qquad
		(\Psi_r^-)'=r^{-1}
		\quad\text{on }(-r,0).
		\]
		By \eqref{app:eq_D_boundary_div_identity},
		\[
		\Div^{\phi_i}\mathbf q_{\delta,r}^-
		=
		r^{-1}\vartheta
		+
		\mathcal R_{\delta,r}^-,
		\qquad
		\|\mathcal R_{\delta,r}^-\|_{L^2}
		\le C
		\quad\text{on }L_{\delta,r}^-,
		\]
		up to the frozen-coefficient error, which vanishes with the localization
		scale. After multiplication of the weak momentum identity by $r$, the
		time contribution $\mathcal T_{\delta,r}$ satisfies
		\[
		|\mathcal T_{\delta,r}|
		\le
		Cr^{(\gamma-1)/(2\gamma)},
		\]
		\[
		r\left|
		\int_{L_{\delta,2r}^-}
		\varrho_\delta
		\mathbf v_\delta\otimes\mathbf v_\delta:
		\nabla\mathbf q_{\delta,r}^-
		\right|
		\le
		Cr^{1/p_*},
		\]
		and
		\[
		r\left|
		\int_{L_{\delta,2r}^-}
		\mathbb S(\nabla^{\phi_\delta}\mathbf v_\delta):
		\nabla^{\phi_\delta}\mathbf q_{\delta,r}^-
		\right|
		\le
		Cr^{1/2}.
		\]
		The remainder $\mathcal R_{\delta,r}^-$ is controlled by the same
		estimates. Hence
		\[
		\sup_{\delta>0}
		\mathbb E
		\int_0^T\!\!\int_{L_{\delta,r}^-}
		p_\delta(\varrho_\delta)
		\,\dd\bm x\,\dd t
		\le
		C
		\left(
		r^{(\gamma-1)/(2\gamma)}
		+
		r^{1/p_*}
		+
		r^{1/2}
		\right)
		\le
		Cr^{\sigma_\gamma},
		\]
		which proves
		\eqref{app:eq_delta_boundary_pressure_inside}.
		
		For the exterior layer,
		Proposition~\ref{app:prop_exterior_mass} and H\"older's inequality give
		\[
		\mathbb E\sup_{t\le T}
		\int_{\mathcal O_\alpha\setminus D_\delta(t)}
		\varrho_\delta\,\dd\bm x
		\le
		C
		\left(
		\delta^{a_\beta^2}
		+
		h_\delta^{1-1/\gamma}
		\right)
		\le
		C\delta^{m_{\rm ext}}.
		\]
		Interpolation with the $L^\gamma$ bound yields
		\[
		\mathbb E
		\|
		\varrho_\delta
		\mathbf1_{\mathcal O_\alpha\setminus D_\delta}
		\|_{L^\infty(0,T;L^{p_{\rm ext}})}
		\le
		C\delta^{m_{\rm ext}\vartheta_{\rm ext}}.
		\]
		For
		\[
		q_{\rm ext}
		:=
		\frac{2p_{\rm ext}}{p_{\rm ext}-1}
		<6,
		\qquad
		\|\mathbf v_\delta\|_{L^2_tL^{q_{\rm ext}}_x}^2
		\le
		C\kappa_\delta^{-1},
		\]
		we obtain
		\[
		\mathbb E
		\int_0^T\!\!\int_{\mathcal O_\alpha\setminus D_\delta}
		\varrho_\delta|\mathbf v_\delta|^2
		\,\dd\bm x\,\dd t
		\le
		C
		\delta^{m_{\rm ext}\vartheta_{\rm ext}-\nu_\kappa}
		\longrightarrow0.
		\]
		Together with the finite-$L^r$ bounds of
		$\nabla_\Gamma\zeta_\delta^*$,
		\begin{equation}
		\label{app:eq_delta_exterior_conv_small}
		\mathbb E
		\int_0^T\!\!\int_{\mathcal O_\alpha\setminus D_\delta}
		\varrho_\delta|\mathbf v_\delta|^2
		\left(
		1+
		|\mathcal E_{\zeta_\delta^*}\nabla_\Gamma\zeta_\delta^*|
		\right)
		\,\dd\bm x\,\dd t
		\longrightarrow0.
		\end{equation}
		which is \eqref{app:eq_delta_exterior_conv_small}.
		
		Using the exterior multiplier $\Psi_r^+(d_\delta)$, the normal-penalty
		contribution satisfies
		\[
		|\mathcal P_{\delta,r}|
		\le
		C\delta^{-1/2}h_\delta^{3/2}
		\longrightarrow0
		\qquad(a_\beta>1/3).
		\]
		The remaining terms obey the preceding $r$-bounds, hence
		\[
		\limsup_{\delta\to0}
		\mathbb E
		\int_0^T\!\!\int_{L_{\delta,r}^+}
		p_\delta(\varrho_\delta)
		\,\dd\bm x\,\dd t
		\le
		Cr^{\sigma_\gamma}.
		\]
		The fixed-bottom cutoff gives the same estimate near $z=0$.
		Letting $r\to0$ proves
		\eqref{app:eq_delta_boundary_pressure}.
	\end{proof}
	
	\begin{proposition}[Pressure equi-integrability and artificial-pressure decay]
		\label{app:prop_artificial_pressure_decay}
		For some $\Theta>0$ and every fixed $r>0$, there exists
		$\delta_r>0$ such that
		\begin{equation}
			\label{app:eq_delta_exterior_higher_gain}
			\sup_{0<\delta<\delta_r}
			\mathbb E
			\int_0^T\!\!\int_{\{d_\delta>r\}}
			\left(
			\varrho_\delta^{\gamma+\Theta}
			+
			\delta\varrho_\delta^{\beta+\Theta}
			\right)
			\,\dd\bm x\,\dd t
			\le C_r.
		\end{equation}
		Consequently,
		$\{p_\delta(\varrho_\delta)\}_{\delta>0}$ is uniformly integrable and
		\begin{equation}
			\label{app:eq_delta_artificial_decay}
			\delta\varrho_\delta^\beta
			\to0
			\qquad
			\text{strongly in }
			L^1(\Omega\times(0,T)\times\mathcal O_\alpha).
		\end{equation}
	\end{proposition}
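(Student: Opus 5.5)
The estimate \eqref{app:eq_delta_exterior_higher_gain} will be proved by the localized frozen Bogovski\u{\i} argument of Proposition~\ref{app:prop_delta_interior_pressure}, now transplanted to the exterior region. Fix $r>0$ and a bounded-flow event $\Omega_R$. For $\delta<\delta_r$ we have $h_\delta<r/4$, so on $\{d_\delta>r/2\}$ the tube $T_\delta$ is absent. There the penalty and the Navier term vanish identically, and the viscosity is the degenerate exterior value $\kappa_\delta$.

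We would take a cutoff $\chi_r(d_\delta)$, equal to one on $\{d_\delta>r\}$ and supported in $\{d_\delta>r/2\}$. With it we form the test
\[
\mathbf q=\chi_r\,\mathcal B_i^\phi\bigl[\chi_r\varrho_\delta^\Theta-\langle\chi_r\varrho_\delta^\Theta\rangle\bigr],
\]
time-regularized, on deterministic frozen intervals. The time-derivative term is handled with the renormalized continuity equation and \eqref{app:eq_Bi_negative}. The convective term is bounded by $\varrho_\delta|\mathbf v_\delta|^2$ on the exterior, which tends to zero by \eqref{app:eq_delta_exterior_conv_small}. The viscous term is bounded by $\kappa_\delta\|\nabla\mathbf v_\delta\|_{L^2}\|\nabla\mathbf q\|_{L^2}\le C\kappa_\delta^{1/2}\|\varrho_\delta^\Theta\|_{L^2}$. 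The moving cutoff produces derivatives of $d_\delta$ that involve $\nabla_\Gamma\zeta^*_\delta$ and $w_\delta$, and these are bounded in every $L^q$.

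Choosing $\Theta\le\min\{\gamma/2,\,2\gamma/3-1\}$ keeps every power of the density inside the energy class $L^\infty_tL^\gamma_x$. The frozen errors are then absorbed exactly as in \eqref{app:eq_higher_absorption}. We let $\xi\to0$, and the dyadic decomposition in $R$ removes the localization.

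Uniform integrability follows from three pieces. \eqref{app:eq_delta_interior_pressure} covers $D^r_\delta$, \eqref{app:eq_delta_exterior_higher_gain} covers $\{d_\delta>r\}$, and \eqref{app:eq_delta_boundary_pressure} makes the layers $\{|d_\delta|<r\}\cup\{z<r\}$ uniformly small as $r\to0$. Given $\varepsilon$, we pick $r=r_\varepsilon$. On the complement, the $L^{1+\Theta/\gamma}$ bound gives equi-integrability.

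For the decay \eqref{app:eq_delta_artificial_decay}, on the good set we write
\[
\delta\varrho_\delta^\beta\le\delta^{\Theta/(\beta+\Theta)}\bigl(\delta\varrho_\delta^{\beta+\Theta}\bigr)^{\beta/(\beta+\Theta)},
\]
which tends to zero in $L^1(\Omega\times Q)$ by H\"older. On the thin layers the contribution is at most $\varepsilon$, uniformly in $\delta$.

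The main obstacle is the exterior viscous and convective terms, where only the degenerate coercivity $\kappa_\delta$ is available. The gradient of the Bogovski\u{\i} field costs a power of $\varrho_\delta$, and we must check that $\kappa_\delta^{-1/2}$ is not needed. If the direct bound fails, the first fallback is to take $\Theta$ smaller and use the exterior mass decay of Proposition~\ref{app:prop_exterior_mass}, interpolated as in the proof of Proposition~\ref{app:prop_delta_boundary_pressure}. This gives a positive power of $\delta$ that absorbs the factor $\delta^{-\nu_\kappa/2}$, using $\nu_\kappa<m_{\rm ext}\vartheta_{\rm ext}$ from \eqref{eq:global_nu_choice}.
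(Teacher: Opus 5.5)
Your route is the paper's: a frozen Bogovski\u{\i} test localized to $\{d_\delta>r/2\}$, with $\Theta>0$ chosen so that the exterior H\"older exponents stay inside the margin $\nu_\kappa<m_{\rm ext}\vartheta_{\rm ext}$ of \eqref{eq:global_nu_choice}, then the same interior/exterior/boundary-layer splitting for equi-integrability and the same H\"older interpolation $\delta\varrho_\delta^\beta=\delta^{\Theta/(\beta+\Theta)}(\delta\varrho_\delta^{\beta+\Theta})^{\beta/(\beta+\Theta)}$ for the decay. The one correction is that your ``direct'' convective bound fails: $\nabla\mathbf q$ is only bounded in $L^\infty_tL^{\gamma/\Theta}_x$, so the $L^1$ smallness \eqref{app:eq_delta_exterior_conv_small} of $\varrho_\delta|\mathbf v_\delta|^2$ does not control $\int\varrho_\delta\mathbf v_\delta\otimes\mathbf v_\delta:\nabla\mathbf q$. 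Your fallback is therefore the actual argument, and it is exactly the paper's margin condition: interpolate the exterior mass decay of Proposition~\ref{app:prop_exterior_mass} into $L^p$ with $1/p=2/3-\Theta/\gamma$ (so $p<p_{\rm ext}$ for small $\Theta$), and pair it with $\|\mathbf v_\delta\|_{L^2_tL^6_x}^2\lesssim\kappa_\delta^{-1}$. It must also be applied to the quadratic-in-$\mathbf v_\delta$ time-derivative terms, such as $\int\varrho_\delta\mathbf v_\delta\cdot\chi_r\mathcal B_i^\phi[\Div^{\phi_i}(\chi_r\varrho_\delta^\Theta\mathbf v_\delta)]$.
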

	
	\begin{proof}
		Choose $\Theta>0$ so that the exterior H\"older exponents remain within
		the margin
		\[
		\nu_\kappa
		<
		m_{\rm ext}\vartheta_{\rm ext}.
		\]
		A frozen Bogovski\u{\i} test supported in
		$\{d_\delta>r/2\}$ gives
		\[
		\sup_{0<\delta<\delta_r}
		\mathbb E
		\int_0^T\!\!\int_{\{d_\delta>r\}}
		\left(
		\varrho_\delta^{\gamma+\Theta}
		+
		\delta\varrho_\delta^{\beta+\Theta}
		\right)
		\,\dd\bm x\,\dd t
		\le C_r.
		\]
		Together with
		\eqref{app:eq_delta_interior_pressure} and
		\eqref{app:eq_delta_boundary_pressure}, this yields uniform
		integrability of
		$\{p_\delta(\varrho_\delta)\}_{\delta>0}$.
		
		On every region $Q$ for which
		\[
		\sup_\delta
		\int_Q
		\delta\varrho_\delta^{\beta+\Theta}
		<\infty,
		\]
		H\"older's inequality gives
		\[
		\int_Q
		\delta\varrho_\delta^\beta
		\le
		\delta^{\Theta/(\beta+\Theta)}
		|Q|^{\Theta/(\beta+\Theta)}
		\left(
		\int_Q
		\delta\varrho_\delta^{\beta+\Theta}
		\right)^{\beta/(\beta+\Theta)}
		\longrightarrow0.
		\]
		Thus the artificial pressure vanishes on the trimmed interior and on
		the exterior region at positive distance from the interface. On the
		remaining boundary layers,
		\[
		0
		\le
		\delta\varrho_\delta^\beta
		\le
		p_\delta(\varrho_\delta),
		\]
		and \eqref{app:eq_delta_boundary_pressure} applies. Therefore
		\[
		\delta\varrho_\delta^\beta
		\longrightarrow0
		\qquad
		\text{in }
		L^1(\Omega\times(0,T)\times\mathcal O_\alpha),
		\]
		which is \eqref{app:eq_delta_artificial_decay}.
	\end{proof}

	\subsubsection{Effective viscous flux and strong density convergence}

	\begin{proposition}[Effective viscous flux in the interior]
		\label{app:prop_delta_effective_flux}
		Let
		$
		K\subset
		\{(t,\bm x):\bm x\in\mathcal O_{\zeta^*(t)}\}.
		$
		For every $\psi\in C_c^\infty(K)$ and $k\ge1$,
		\begin{align*}			& \lim_{\delta\to0}
			\int
			\psi
			\left[
			a\varrho_\delta^\gamma
			+
			\delta\varrho_\delta^\beta
			-
			(\lambda+2\mu)
			\Div^{\phi_\delta}\mathbf v_\delta
			\right]
			T_k(\varrho_\delta)
			\notag \\
			& \qquad=
			\int
			\psi
			\left[
			\overline p
			-
			(\lambda+2\mu)
			\Div^\phi\mathbf v
			\right]
			\overline{T_k(\varrho)}. 
		\end{align*}
	\end{proposition}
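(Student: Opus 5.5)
The plan is to repeat the localized frozen-coefficient argument of Proposition~\ref{app:prop_effective_flux}. The difference is that we now work on fixed interior cylinders, where the geometry is physical, the penalty vanishes and the Navier term is absent.

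\emph{Localization.} Given $K$ compactly contained in the limiting fluid region, Lemma~\ref{lem:fixed_cylinders_graph} and \eqref{eq:delta_zeta_strong} cover $\supp\psi$ by finitely many cylinders $I_j\times U_j$ with $U_j\subset D_\delta(t)$ for $t\in I_j$ and all small $\delta$. By a partition of unity we may assume $\supp\psi\subset I_j\times U_j$. On $U_j$ the extended stress equals $\mathbb S$ with the physical constants, since $\vartheta_\kappa^{\zeta}=1$ there. The tubular set $T^\delta_{\zeta_\delta^*}$ and the moving interface are disjoint from $U_j$, so the penalty and friction terms vanish identically for test fields supported in $U_j$.

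\emph{Frozen tests and the two limits.} Fix $R$, restrict to $\Omega_R$, and choose the deterministic partition of \eqref{app:eq_freezing_bound} with parameter $\xi$. On each $I_i$ we test the $\delta$-level momentum identity with a time regularization of $\psi\,\mathcal A_i[T_k(\varrho_\delta)]$, where $\vartheta_1$ is supported in $U_j$. We also test the limit identity with $\psi\,\mathcal A_i[\overline{T_k(\varrho)}]$. For the limit identity we need the limiting momentum equation in the interior. It follows from the convergences already available: local $L^2_tH^1_x$ weak convergence by Proposition~\ref{app:prop_delta_time_compactness}, and $\delta\varrho_\delta^\beta\to0$ together with uniform integrability of $p_\delta(\varrho_\delta)$ by Proposition~\ref{app:prop_artificial_pressure_decay}, so that $p_\delta(\varrho_\delta)\rightharpoonup\overline p$ in $L^1$. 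The time-derivative terms are rewritten through the renormalized continuity equation for $T_k(\varrho_\delta)$ and its limit version; this is where the local pressure gain \eqref{app:eq_delta_interior_pressure} is used to keep $(T_k(\varrho_\delta)-\varrho_\delta T_k'(\varrho_\delta))\Div^{\phi_\delta}\mathbf v_\delta$ and the pressure pairings bounded. The freezing errors are $O_R(\xi)$ by \eqref{app:eq_flux_freezing_bound}, whose proof is local and independent of $\delta$. The viscous term yields the $(\lambda+2\mu)\Div\mathbf v\,T_k$ expression after the frozen Riesz identity, up to Riesz commutators, which converge by \eqref{app:eq_Riesz_commutator_compact}. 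The flow coefficients converge in $C^{\vartheta_0}_tC^2_x$.

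\emph{Main obstacle: the convective term.} We must show that
\[
\int\psi\,\varrho_\delta\mathbf v_\delta\otimes\mathbf v_\delta:\nabla\mathcal A_i[T_k(\varrho_\delta)]
-\int\psi\,\varrho_\delta\mathbf v_\delta\cdot\mathcal A_i[\partial_t T_k(\varrho_\delta)\text{-part}]
\]
converges to its limit counterpart. This is a Lions-type commutator
\[
\mathbf v_\delta\cdot\bigl(T_k(\varrho_\delta)\mathcal R^i[\varrho_\delta\mathbf v_\delta]-\varrho_\delta\mathbf v_\delta\cdot\mathcal R^i[T_k(\varrho_\delta)]\bigr),
\]
handled by the div--curl/commutator lemma in frozen coordinates. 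The inputs are $\varrho_\delta\mathbf v_\delta\to\varrho\mathbf v$ in $C_w(L^{2\gamma/(\gamma+1)})$ locally, $T_k(\varrho_\delta)\to\overline{T_k(\varrho)}$ in $C_w L^p$, and $\mathbf v_\delta$ locally compact in $L^2_tL^q_x$ by Aubin--Lions on $U_j$. Since $\gamma>3/2$ makes $\frac{\gamma+1}{2\gamma}+\frac16<1$, the commutator lemma applies with compactness in a negative space. The fine point is that the momentum time-regularity must be established locally on $U_j$, using only interior estimates so that no penalty contributes.

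Finally we let $\delta\to0$, so the two identities differ by at most $C_{R,k,\psi}\xi$. Then $\xi\to0$ and $R\to\infty$, using $\mathbb E\Lambda^r<\infty$ to remove the localization on $\Omega_R$.
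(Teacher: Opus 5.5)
Your proposal follows the same route as the paper's proof. You localize $\psi$ to fixed interior cylinders where $\mathbb S_{\kappa_\delta}^{\zeta_\delta^*}=\mathbb S$ and the penalty and friction terms vanish. You test with the frozen local potential $\psi\,\mathcal A_i[T_k(\varrho_\delta)]$, and the limit equation with $\psi\,\mathcal A_i[\overline{T_k(\varrho)}]$. You close the inertial terms with the Lions commutator, and remove the $O_R(\xi)$ freezing errors before releasing the localization. Your remark that the momentum time regularity must come from tests supported away from the tube, so that the singular penalty never enters, is a point the paper leaves implicit.

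One input you list is false and must be dropped. At this level there is no bound on $\partial_t\mathbf v_\delta$; only $\varrho_\delta\mathbf v_\delta$ has time regularity. So Aubin--Lions gives no strong local $L^2_tL^q_x$ compactness of the velocity. It is also not needed. For each $t$, the div--curl lemma gives weak convergence of
\[
T_k(\varrho_\delta)\mathcal R^i[\varrho_\delta\mathbf v_\delta]-\varrho_\delta\mathbf v_\delta\cdot\mathcal R^i[T_k(\varrho_\delta)]
\]
in some $L^r$ with $r>6/5$, which is possible precisely because $\gamma>3/2$. The compact embedding $L^r\subset\subset H^{-1}$ then gives strong convergence in $L^2_tH^{-1}_x$. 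This pairs with the weak $L^2_tH^1_x$ convergence of $\mathbf v_\delta$ from Proposition~\ref{app:prop_delta_time_compactness}. With that replacement your argument matches the paper's.

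A minor point: $(T_k(\varrho_\delta)-\varrho_\delta T_k'(\varrho_\delta))\Div^{\phi_\delta}\mathbf v_\delta$ is bounded by $k|\Div^{\phi_\delta}\mathbf v_\delta|$ without any pressure gain. The gain is what makes the pressure pairings converge.
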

	
	\begin{proof}
		For sufficiently small $\delta$,
		\[
		\supp\psi
		\subset
		\{(t,\bm x):\bm x\in D_\delta(t)\},
		\]
		and on this support
		$
		\mathbb S_{\kappa_\delta}^{\zeta_\delta^*}
		=
		\mathbb S.
		$
		Apply the frozen local-potential construction of
		Subsection~\ref{app:effective_flux}. The Lions commutator treats the
		inertial terms, while the frozen-coefficient remainders vanish with the
		localization scale. Passing $\delta\to0$ gives the stated effective-flux
		identity.
	\end{proof}
	
	\begin{proposition}[Global strong convergence of the density]
		\label{app:prop_delta_global_density}
		Almost surely,
		\begin{equation}
			\label{app:eq_delta_rho_strong}
			\varrho_\delta
			\to
			\varrho
			\qquad
			\text{strongly in }
			L^1((0,T)\times\mathcal O_\alpha).
		\end{equation}
		Moreover,
		\[
		p_\delta(\varrho_\delta)
		\to
		a\varrho^\gamma
		\qquad\text{strongly in }
		L^1(\Omega\times(0,T)\times\mathcal O_\alpha).
		\]
	\end{proposition}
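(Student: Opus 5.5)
The argument follows the Lions--Feireisl scheme, run in three regions: the interior effective-flux region, the boundary layers, and the exterior. First, by Proposition~\ref{app:prop_artificial_pressure_decay} the family $p_\delta(\varrho_\delta)$ is uniformly integrable and $\delta\varrho_\delta^\beta\to0$ in $L^1$. Hence, up to a subsequence, $a\varrho_\delta^\gamma\rightharpoonup\overline p$ weakly in $L^1$, and $\overline p$ is also the weak limit of $p_\delta(\varrho_\delta)$. Proposition~\ref{app:prop_exterior_mass} gives $\varrho=0$ outside $\mathcal O_{\zeta^*}$. Since $\varrho_\delta\Xi_\delta\to0$ in $L^1$ in expectation, the exterior part of the density converges strongly to zero. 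It therefore suffices to prove $\varrho_\delta\to\varrho$ a.e. on the limiting fluid region $\{\bm x\in\mathcal O_{\zeta^*(t)}\}$, and then conclude with Vitali's theorem using the equi-integrability of $\varrho_\delta^\gamma$.

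On the fluid region I would use the truncations $T_k$ and $L_k$ exactly as in Proposition~\ref{app:thm_strong_density}. At fixed $\delta$ the renormalized equation for $L_k(\varrho_\delta)$ holds by Proposition~\ref{app:thm_l_strong_density}. The limit $\varrho$ satisfies the renormalized equation by the oscillation-defect bound. This requires $\operatorname{osc}_{\gamma+1}[\varrho_\delta\to\varrho]<\infty$, which comes from the interior effective flux, Proposition~\ref{app:prop_delta_effective_flux}, together with the monotonicity of $z\mapsto az^\gamma$; the $\delta\varrho_\delta^\beta$ part of the flux is nonnegative and does not harm the inequality. Subtracting the two renormalized identities and testing with a cutoff $\psi$ supported in $K$ gives
\[
\int\psi\bigl(\overline{L_k(\varrho)}-L_k(\varrho)\bigr)(t)\le\frac1{\lambda+2\mu}\int_0^t\!\!\int\psi\bigl(\overline{p}\,\overline{T_k(\varrho)}-\overline{p\,T_k(\varrho)}\bigr)+\int_0^t\!\!\int\psi\bigl(T_k(\varrho)-\overline{T_k(\varrho)}\bigr)\Div^\phi\mathbf v+\mathcal R_\psi .
\]
The first term on the right is nonpositive. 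The second tends to zero as $k\to\infty$ by the oscillation bound. The remainder $\mathcal R_\psi$ collects the terms $\overline{L_k}\,(\partial_t\psi+\mathbf v\cdot\nabla^\phi\psi)$.

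The main obstacle is removing the spatial cutoff $\psi\uparrow\mathbf 1_{\mathcal O_{\zeta^*}}$ near the moving interface and the bottom, where the effective flux is unavailable. Here I would take $\psi=\psi_r(d)$ with a layer of width $r$. The dangerous term $\overline{L_k}\,\mathbf v\cdot\nabla^\phi\psi_r$ combines with $\partial_t\psi_r$ into $\overline{L_k}\,(\mathbf v-w\mathbf e_z)\cdot\nabla^\phi d\,\psi_r'(d)$, by the transport identity for $d$ used in Proposition~\ref{app:prop_exterior_mass}. This relative normal velocity vanishes on $\Gamma_{\zeta^*}$ by Proposition~\ref{app:prop_normal_trace}, so it is $O(r)$ times $\nabla\mathbf v$ in a Hardy-type sense on the layer. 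Hence the remainder is bounded by $\int_{\{|d|<r\}}\overline{L_k}\,|\nabla\mathbf v|$, which tends to zero as $r\to0$ thanks to the $L^\gamma$ bound on $\overline{L_k}$ (or $\varrho$), using $\gamma>3/2$ so that $\varrho|\nabla\mathbf v|\in L^1$. The pressure-flux terms in the layer are controlled uniformly by the boundary-pressure tightness of Proposition~\ref{app:prop_delta_boundary_pressure}. The bottom is handled the same way, using the no-slip condition there.

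Gronwall, or simply the sign at $t=0$ where both sides vanish by the strong convergence of the initial data in \eqref{eq:delta_initial_density}, then gives $\overline{\varrho\log\varrho}=\varrho\log\varrho$ and hence a.e. convergence. Strong convergence of $p_\delta(\varrho_\delta)\to a\varrho^\gamma$ in $L^1(\Omega\times(0,T)\times\mathcal O_\alpha)$ follows by Vitali's theorem on the product space, combining the a.s. convergence, the equi-integrability from Proposition~\ref{app:prop_artificial_pressure_decay}, and the decay \eqref{app:eq_delta_artificial_decay}.
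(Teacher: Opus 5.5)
Your outer skeleton matches the paper's: the exterior density via Proposition~\ref{app:prop_exterior_mass}, convergence on the fluid region, and Vitali with Proposition~\ref{app:prop_artificial_pressure_decay} and \eqref{app:eq_delta_artificial_decay} for the pressure. You also correctly single out removal of the spatial cutoff as the delicate point. The estimate you propose for that step, however, fails for $3/2<\gamma\le2$. With $\psi=\psi_r(d)$ the remainder is
\[
\mathcal R_\psi=\int_0^t\!\!\int\bigl(\overline{L_k(\varrho)}-L_k(\varrho)\bigr)\,\psi_r'(d)\,F\,\dd\bm x\,\dd s,
\qquad
F:=(\mathbf v-w\mathbf e_z)\cdot\nabla^\phi d,
\]
with $|\psi_r'|\sim r^{-1}$ on a layer of width $r$.

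The normal trace gives $F=0$ on $\Gamma_{\zeta^*}$, and $\partial_zF=\partial_z\mathbf v\cdot\nabla^\phi d$ along vertical fibres. This does not give $|F|\lesssim r|\nabla\mathbf v|$ pointwise; it only gives $\|F/d\|_{L^p}\lesssim\|\partial_z\mathbf v\cdot\nabla^\phi d\|_{L^p}$ by a one-dimensional Hardy inequality. Two facts limit $p$:
\begin{itemize}
\item $\nabla^\phi d=\operatorname{cof}(\nabla\phi)\mathbf N^{\zeta^*}$ contains $\nabla_\Gamma\zeta^*$, which for an $H^2$ graph lies in every $L^q(\Gamma)$ but not in $L^\infty$;
\item Lemma~\ref{app:lem_subgraph_subcritical_korn} controls $\nabla\mathbf v$ only in $L^q$, $q<2$, up to the graph.
\end{itemize}
So $F/d\in L^2_tL^p_x$ only for $p<2$, and you need $\overline{L_k(\varrho)}-L_k(\varrho)\in L^{p'}$ with $p'>2$ near $\Gamma_{\zeta^*}$.

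Since $L_k$ grows linearly, $\overline{L_k(\varrho)}$ is no more integrable than $\varrho$, i.e.\ $L^\infty_tL^\gamma_x$. At the moving interface no gain in the exponent is available: Proposition~\ref{app:prop_delta_interior_pressure} lives on $D_\delta^r$, and Proposition~\ref{app:prop_delta_boundary_pressure} only gives $L^1$-smallness of $p_\delta(\varrho_\delta)$ in the layers. The justification you give is incorrect. $\gamma>3/2$ makes $\varrho|\mathbf v|^2$ integrable, whereas $\varrho|\nabla\mathbf v|\in L^1$ from $L^\gamma\cdot L^2$ needs $\gamma\ge2$. Hence $\mathcal R_\psi\to0$ is justified only for $\gamma>2$. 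In the remaining range the cutoff is not removed, and $\overline{\varrho\log\varrho}=\varrho\log\varrho$ on the fluid region is not obtained. The bottom layer has the same defect: there $\nabla^\phi d$ is bounded, but you still need $\varrho\in L^2$.

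The paper never estimates a flux through the layers. It takes convergence on the trimmed regions $D_\delta^r$ from Proposition~\ref{app:prop_delta_effective_flux}, monotonicity and the renormalized equation. It then treats $\{|d_\delta|<r\}\cup\{0<z<r\}$ only through their mass, at most $C\|\varrho_\delta\|_{L^\infty_tL^\gamma_x}r^{1-1/\gamma}$, and the pure exterior through Proposition~\ref{app:prop_exterior_mass}, letting $\delta\to0$ and then $r\to0$. That mass bound suffices to assemble the global $L^1$ convergence. The interior identity itself, though, still requires closing the transport inequality for $\overline{L_k(\varrho)}-L_k(\varrho)$ without influx across $\Gamma_{\zeta^*}$, which is exactly what you tried to supply.

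One way to do this without any $L^2$ bound on $\varrho$ at the interface is to avoid spatial cutoffs altogether:
\begin{itemize}
\item At level $\delta$, the renormalized identity for $L_k(\varrho_\delta)$ holds on the whole cylinder $\mathcal O_\alpha$, since $\mathbf v_\delta$ vanishes on $\partial\mathcal O_\alpha$.
\item Its exterior part $\int T_k(\varrho_\delta)\Div^{\phi_\delta}\mathbf v_\delta$ is bounded by $\bigl(k\sup_t\int_{\rm ext}\varrho_\delta\bigr)^{1/2}$ times the exterior $L^2$ norm of $\Div^{\phi_\delta}\mathbf v_\delta$, which is $O(\kappa_\delta^{-1/2})$. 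This vanishes by the bound $C\delta^{m_{\rm ext}}$ from the proof of Proposition~\ref{app:prop_delta_boundary_pressure} and $\nu_\kappa<m_{\rm ext}$.
\item For the limit, $\varrho$ and $\overline{T_k(\varrho)}$ vanish outside $\mathcal O_{\zeta^*}$, so $\mathbf v$ may be replaced by a $W^{1,q}$ extension across $\Gamma_{\zeta^*}$. The bounded functions $\overline{T_k(\varrho)}$ can then be renormalized by DiPerna--Lions through the interface.
\item Finally, $k\to\infty$ is handled by the oscillation bound on the whole fluid region. This rests only on the a.e.\ effective-flux identity and on $\Div^{\phi_\delta}\mathbf v_\delta$ being bounded in $L^2$ on the physical region.
\end{itemize}
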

	
	\begin{proof}
		Fix $r>0$. On the trimmed physical region,
		Proposition~\ref{app:prop_delta_effective_flux}, the monotonicity of
		$z\mapsto az^\gamma$, and the renormalized continuity equation give
		\[
		\int_0^T\!\!\int_{D_\delta^r(t)}
		|\varrho_\delta-\varrho|
		\,\dd\bm x\,\dd t
		\longrightarrow0.
		\]
		For the moving and fixed boundary layers,
		\[
		\int_0^T\!\!\int_{\{|d_\delta|<r\}\cup\{0<z<r\}}
		\varrho_\delta
		\,\dd\bm x\,\dd t
		\le
		C
		\|\varrho_\delta\|_{L^\infty_tL^\gamma_x}
		r^{1-1/\gamma},
		\]
		while Proposition~\ref{app:prop_exterior_mass} controls the pure exterior.
		Thus, first $\delta\to0$ and then $r\to0$ give
		\[
		\varrho_\delta
		\to
		\varrho
		\qquad
		\text{strongly in }
		L^1((0,T)\times\mathcal O_\alpha)
		\quad\text{a.s.}
		\]
		
		Moreover,
		\[
		\delta\varrho_\delta^\beta\to0
		\quad\text{in }L^1
		\]
		by \eqref{app:eq_delta_artificial_decay}, and
		$\{p_\delta(\varrho_\delta)\}_{\delta>0}$ is uniformly integrable by
		Proposition~\ref{app:prop_artificial_pressure_decay}. Since
		\[
		p_\delta(\varrho_\delta)
		\to
		a\varrho^\gamma
		\qquad\text{in measure},
		\]
		Vitali's theorem yields
		\[
		p_\delta(\varrho_\delta)
		\to
		a\varrho^\gamma
		\qquad
		\text{strongly in }
		L^1(\Omega\times(0,T)\times\mathcal O_\alpha).
		\]
The proof is complete.
	\end{proof}

	\subsubsection{Adapted tests and nonlinear limits}

	\begin{proposition}[Adapted normal-compatible approximation]
		\label{app:prop_delta_test_construction}
		Let $(\mathbf q,\psi)\in\mathcal T_{\rm ad}(\zeta,\phi)$ and
		$T'<\tau^\zeta$. After bounded-flow localization there exist
		progressively measurable It\^o-semimartingale pairs
		$(\mathbf q_\delta,\psi_\delta)$, adapted to the $\delta$-filtration,
		\[
		\dd\mathbf q_\delta
		=
		\mathbf q_{\delta,0}\,\dd t
		+
		\sum_{k=1}^K\mathbf q_{\delta,k}\,\dd W_{\delta,k},
		\qquad
		\dd\psi_\delta
		=
		\psi_{\delta,0}\,\dd t
		+
		\sum_{k=1}^K\psi_{\delta,k}\,\dd W_{\delta,k},
		\]
		which are admissible for the $\delta$-problem and satisfy
		\begin{equation}
			\label{app:eq_delta_test_exact_normal}
			(\mathbf q_\delta-\psi_\delta\mathbf e_z)
			\cdot\mathbf n^{\zeta_\delta^*,\phi_\delta}
			=0
			\qquad\text{throughout }T_\delta .
		\end{equation}
		Moreover,
		\[
		(\mathbf q_\delta,\psi_\delta)
		\longrightarrow
		(\mathbf q,\psi)
		\]
		in every non-pressure test duality occurring in
		Definition~\ref{def:trans_solution}, including the drift, trace, and
		quadratic-variation pairings. Along the same deterministic diagonal, the
		complete $\delta$-level coupled residual converges in probability in
		$C([0,T'])$ to the fixed causal residual
		$\mathfrak C(\varrho,\mathbf v,\zeta,w;\mathbf q,\psi)$.
		Consequently, the $\delta$-level identities imply
		$
		\mathfrak C(\varrho,\mathbf v,\zeta,w;\mathbf q,\psi)=0.
		$
	\end{proposition}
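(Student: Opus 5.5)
The plan is to build $(\mathbf q_\delta,\psi_\delta)$ by the same tubular projection that defines the core $\mathcal T_{\rm ad}(\zeta,\phi)$, but performed in the $\delta$-geometry $(\zeta_\delta^*,\phi_\delta)$ and on the whole tube $T_\delta$. Start from the spatially smooth precursor pair $(\widetilde{\bm q},\psi)$ of the given core element, which is progressively measurable with respect to the limiting filtration. Use the represented processes and a deterministic time mollification, applied backward so that it stays causal, to transfer it to the $\delta$-filtration; call the result $(\widetilde{\bm q}_\delta,\psi_\delta)$. Keep the shell component unchanged, $\psi_\delta:=\psi$ (transferred). For the fluid component, set
\[
\mathbf q_\delta:=\widetilde{\bm q}_\delta-\Xi^\flat_\delta\,\bigl[(\widetilde{\bm q}_\delta-\psi_\delta\mathbf e_z)\cdot\mathbf n^{\zeta_\delta^*,\phi_\delta}\bigr]\,\mathbf n^{\zeta_\delta^*,\phi_\delta}.
\]
Here the normal and $\psi_\delta\mathbf e_z$ are extended constantly along vertical fibers as in Corollary~\ref{app:cor_normal_penalty_trace}. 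The cutoff $\Xi^\flat_\delta$ equals one on $\{-h'\le d_\delta\le h_\delta\}$ and vanishes for $d_\delta\le-2h'$, where $h'>0$ is a fixed scale. This gives \eqref{app:eq_delta_test_exact_normal} identically on $T_\delta$, so the penalty term drops out of the $\delta$-identity. It also gives $\mathbf q_\delta|_{\Gamma_b}=0$, since $h'$ is smaller than the height lower bound $\alpha$.

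\textbf{Regularity and semimartingale structure.} Since $\zeta_\delta^*\in H^2$ and $\partial_t\zeta_\delta^*=w_\delta\in L^2H^1$, Lemma~\ref{app:lem_graph_normal_multiplier} (with $\mathbf n=\nabla^\phi d/|\nabla^\phi d|$) gives $\mathbf q_\delta\in H^1$ uniformly on bounded-flow events $\Omega_R$ and on $\mathfrak G(M,h_-,h_+)$. The drift $\mathbf q_{\delta,0}$ then contains $\partial_t\mathbf n^{\zeta_\delta^*,\phi_\delta}$, which lies in $L^2_tL^2_x$. Note that $\phi_\delta$ is only $C^\vartheta$ in time, not a semimartingale in the needed sense at this regularity. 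To handle this, freeze $\phi_\delta$ on the deterministic partition of Subsection~\ref{app:effective_flux}: replace $\phi_\delta$ by $\phi_\delta(t_i)$ on $I_i$, glued by smooth deterministic interpolation. Doing the same for the limit makes both pairs It\^o semimartingales with diffusion coefficients coming only from the precursor. The normal condition is then imposed with the frozen normal, and the mismatch with the true normal is $O_R(\xi)$ in $C^1$. I would absorb this by redefining the projection with the true $\mathbf n^{\zeta^*_\delta,\phi_\delta}$ but the frozen-time derivative, accepting a remainder bounded by $C_R\xi$ times energy quantities.

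\textbf{Convergence of the residual.} Split the $\delta$-residual into three groups. The bulk group (inertial, convective, viscous and pressure terms) is handled as follows: on $D_\delta^r$ use Proposition~\ref{app:prop_delta_time_compactness} and the strong convergence $\mathbf q_\delta\to\mathbf q$ in $L^2H^1$, which follows from \eqref{eq:delta_zeta_strong}, Lemma~\ref{app:lem_transformed_geometry} and the flow convergence. The pressure converges by Proposition~\ref{app:prop_delta_global_density} against $\Div^{\phi_\delta}\mathbf q_\delta$. The exterior and boundary layers vanish by \eqref{app:eq_delta_exterior_conv_small}, Proposition~\ref{app:prop_delta_boundary_pressure} and the $\kappa_\delta$-weighted viscosity bound. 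The shell group passes weakly via \eqref{eq:delta_w_weak}. The Navier group uses Proposition~\ref{app:prop_moving_trace_convergence}. For the stochastic integrals, the quadratic-variation pairings converge, so the martingale parts converge in probability in $C([0,T'])$ by the standard lemma on convergence of stochastic integrals with $W_\delta\to W$. Finally let $\delta\to0$, then $\xi\to0$, then $R\to\infty$, using $\mathbb P(\Omega_R^c)\to0$.

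\textbf{Main obstacle.} The hardest term is the pressure tested against $\Div^{\phi_\delta}\mathbf q_\delta$ inside the projection layer. There the divergence contains $\nabla^2\zeta_\delta^*$, which is only $L^2$, while $p_\delta(\varrho_\delta)$ is only uniformly integrable in $L^1$. I would first try to use the boundary-pressure smallness \eqref{app:eq_delta_boundary_pressure} together with the interior gain \eqref{app:eq_delta_interior_pressure} for $\{d_\delta<-r\}$. If that gives only an $L^1$–$L^2$ mismatch, I would instead match the $\delta$-residual with the fixed causal residual $\mathfrak C$ directly, using smooth representatives at a regularization scale tied to $\delta$ along a diagonal. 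This avoids defining the pressure term separately, as the definition of $\mathfrak C$ intends.
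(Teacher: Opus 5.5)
\textbf{There is a genuine gap in the convergence step.} You project the precursor on a layer of fixed width $h'$, using the rough normal of the $\delta$-geometry. The precursor mismatch $a=(\widetilde{\bm q}_\delta-\psi_\delta\mathbf e_z)\cdot\mathbf n^{\zeta_\delta^*,\phi_\delta}$ is of order one. Consequently, on that layer $\nabla\mathbf q_\delta$ contains $a\,\nabla\mathbf n^{\zeta_\delta^*,\phi_\delta}\sim\nabla_\Gamma^2\zeta_\delta^*$, and the drift $\mathbf q_{\delta,0}$ contains $a\,\partial_t\mathbf n^{\zeta_\delta^*,\phi_\delta}\sim\nabla_\Gamma w_\delta$, both with $O(1)$ amplitude.

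The geometric compactness you cite does not control these terms. \eqref{eq:delta_zeta_strong} holds only with $s<2$, and Lemma~\ref{app:lem_transformed_geometry} gives strong convergence of the normals, Jacobians and projections themselves, not of their gradients. The quantities $\nabla_\Gamma^2\zeta_\delta^*$ and $\nabla_\Gamma w_\delta$ converge only weakly. So your claimed strong convergence $\mathbf q_\delta\to\mathbf q$ in $L^2H^1$ does not follow; you get weak convergence only. On a layer that does not shrink, the following terms then become weak--weak products with no mechanism to pass to the limit:
\begin{itemize}
\item the viscous pairing $\int\mathbb S(\nabla^{\phi_\delta}\mathbf v_\delta):\nabla^{\phi_\delta}\mathbf q_\delta$;
\item the convective pairing;
\item the drift pairing $\int\varrho_\delta\mathbf v_\delta\cdot\mathbf q_{\delta,0}$.
\end{itemize}
The pressure difficulty you flagged is one instance of this. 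Your diagonal fallback with smooth representatives addresses only the pressure, not these terms.

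\textbf{The missing idea is to make the rough, $\delta$-dependent correction vanish strongly; this is what the paper does.} Start from causal approximations of the \emph{already projected} limit pair. Their normal mismatch $a_\delta$ in the $\delta$-geometry then satisfies $\omega_\delta=\|a_\delta\|_{L^2(\Omega\times(0,T')\times\Gamma)}\to0$, while staying bounded in $L^2(0,T';H^1(\Gamma))$. Correct this mismatch only in a layer of width $r_\delta=h_\delta^{1/2}+\omega_\delta$. Then:
\begin{itemize}
\item $h_\delta/r_\delta\to0$, so the layer covers $T_\delta$ and \eqref{app:eq_delta_test_exact_normal} holds exactly there;
\item \eqref{app:eq_projection_correction} gives $\mathbb E\|R_{\delta,r_\delta}\|_{L^2H^1}^2\le C(r_\delta+\omega_\delta^2/r_\delta)\to0$.
\end{itemize}
Every remaining pairing is then weak--strong.

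\textbf{Two smaller points.} First, freezing $\phi_\delta$ is unnecessary. After representation, $\phi_\delta$ is the strong solution of the flow SDE driven by $W_\delta$. Because the flow is horizontal, $\mathbf n^{\zeta,\phi}$ is a smooth function of $(\nabla_\Gamma\zeta,\nabla_\Gamma\chi)$, so It\^o's formula applies directly.

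Second, the freezing is harmful if the frozen normal enters the constraint. An $O(\xi)$ violation of \eqref{app:eq_delta_test_exact_normal} on $T_\delta$ is not an $O(\xi)$ remainder, because the penalty carries the factor $\delta^{-1}$. The only available bound is
\[
\Bigl(\tfrac1\delta\textstyle\int|F_\delta|^2\Bigr)^{1/2}\Bigl(\tfrac1\delta\,\xi^2|T_\delta|\Bigr)^{1/2}\sim\xi\,\delta^{(a_\beta-1)/2},
\]
which blows up as $\delta\to0$.
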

	
	\begin{proof}
		Let $(\widetilde{\mathbf q},\psi)$ be the smooth precursor associated with
		$(\mathbf q,\psi)$. Approximation of its initial value and It\^o
		characteristics by bounded simple predictable cylinder functionals gives
		\begin{align*}
			\widetilde{\mathbf q}_\delta(t)
			&=
			\widetilde{\mathbf q}_\delta(0)
			+
			\int_0^t
			\widetilde{\mathbf q}_{\delta,0}\,\dd s
			+
			\sum_{k=1}^K
			\int_0^t
			\widetilde{\mathbf q}_{\delta,k}\,\dd W_{\delta,k},
			\\
			\widetilde\psi_\delta(t)
			&=
			\widetilde\psi_\delta(0)
			+
			\int_0^t
			\widetilde\psi_{\delta,0}\,\dd s
			+
			\sum_{k=1}^K
			\int_0^t
			\widetilde\psi_{\delta,k}\,\dd W_{\delta,k},
		\end{align*}
		and, along a deterministic diagonal,
		$
		(\widetilde{\mathbf q}_\delta,\widetilde\psi_\delta)
		\longrightarrow
		(\widetilde{\mathbf q},\psi)
		$
		in the corresponding drift and martingale topologies.
		
		Let $a_\delta$ be the normal mismatch and set
		\[
		\omega_\delta
		:=
		\|a_\delta\|_{L^2(\Omega\times(0,T')\times\Gamma)},
		\qquad
		r_\delta
		:=
		h_\delta^{1/2}+\omega_\delta.
		\]
		Then
		\[
		\sup_\delta
		\mathbb E
		\|a_\delta\|_{L^2(0,T';H^1(\Gamma))}^2
		<\infty,
		\qquad
		\omega_\delta\to0,
		\]
		and, after a deterministic subsequence,
		\[
		r_\delta\to0,
		\qquad
		\frac{h_\delta}{r_\delta}\to0,
		\qquad
		\frac{\omega_\delta^2}{r_\delta}\to0.
		\]
		Define
		\[
		\mathbf q_\delta
		:=
		\widetilde{\mathbf q}_\delta
		-
		R_{\delta,r_\delta},
		\qquad
		\psi_\delta
		:=
		\widetilde\psi_\delta .
		\]
		By \eqref{app:eq_projection_correction},
		\[
		\mathbb E
		\|R_{\delta,r_\delta}\|_{L^2(0,T';H^1)}^2
		\le
		C
		\left(
		r_\delta
		+
		\frac{\omega_\delta^2}{r_\delta}
		\right)
		\longrightarrow0,
		\]
		while
		\[
		(\mathbf q_\delta-\psi_\delta\mathbf e_z)
		\cdot
		\mathbf n^{\zeta_\delta^*,\phi_\delta}
		=0
		\qquad\text{in }T_\delta.
		\]
		It\^o's formula for the regularized projection and BDG give the required
		drift and quadratic-variation convergences.
		
		For the complete residual, the exact normal constraint gives
		\[
		\frac1\delta
		\int_{T_\delta}
		\bigl(
		(\mathbf v_\delta-w_\delta\mathbf e_z)
		\cdot\mathbf n^{\zeta_\delta^*,\phi_\delta}
		\bigr)
		\bigl(
		(\mathbf q_\delta-\psi_\delta\mathbf e_z)
		\cdot\mathbf n^{\zeta_\delta^*,\phi_\delta}
		\bigr)
		\,\dd\bm x
		=0.
		\]
		For each fixed smooth representative,
		Proposition~\ref{app:prop_delta_global_density} identifies the pressure,
		and the preceding convergences yield
		\[
		\mathfrak C_\delta
		(\varrho_\delta,\mathbf v_\delta,\zeta_\delta,w_\delta;
		\mathbf q_\delta,\psi_\delta)
		\longrightarrow
		\mathfrak C
		(\varrho,\mathbf v,\zeta,w;\mathbf q,\psi)
		\]
		in probability in $C([0,T'])$. Since the left-hand side vanishes,
		\[
		\mathfrak C
		(\varrho,\mathbf v,\zeta,w;\mathbf q,\psi)
		=0.
		\]
	\end{proof}
	
	\begin{proposition}[Trace identification for the Navier friction]
		\label{app:prop_delta_trace_compactness}
		If $\iota>0$, then
		\[
		\left(
		\mathbf v_\delta
		-
		w_\delta\mathbf e_z\circ\Phi_{\zeta_\delta^*}^{-1}
		\right)_{\tau^{\zeta_\delta^*,\phi_\delta}}
		\rightharpoonup
		\left(
		\mathbf v
		-
		w\mathbf e_z\circ\Phi_{\zeta^*}^{-1}
		\right)_{\tau^{\zeta^*,\phi}}
		\]
		weakly in the weighted interface $L^2$ space.
		Consequently, the Navier-friction term passes to the limit against the
		test traces of Proposition~\ref{app:prop_delta_test_construction}.
	\end{proposition}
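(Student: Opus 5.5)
The plan is to split the relative tangential trace into its fluid and shell parts and pull both back to the reference torus. Writing
\[
\mathbf a_\delta:=\bigl(\mathbf v_\delta-w_\delta\mathbf e_z\circ\Phi_{\zeta_\delta^*}^{-1}\bigr)_{\tau^{\zeta_\delta^*,\phi_\delta}}\circ\Phi_{\zeta_\delta^*}
=P_\tau^{\zeta_\delta^*,\phi_\delta}\bigl(\Tr_{\zeta_\delta^*}\mathbf v_\delta-w_\delta\mathbf e_z\bigr),
\]
the weighted interface norm becomes $\int_0^T\!\int_\Gamma\mathcal J^{\zeta_\delta^*,\phi_\delta}|\mathbf a_\delta|^2\,\dd\bm y\,\dd t$. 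This quantity is bounded in $L^p(\Omega)$ uniformly in $\delta$ by the Navier-friction part of Lemma~\ref{lem:delta_uniform_bounds}. By Lemma~\ref{app:lem_transformed_geometry}, $J_\Gamma^\phi$ is bounded above and below on bounded-flow events, so $\sqrt{\mathcal J^{\zeta_\delta^*,\phi_\delta}}\,\mathbf a_\delta$ is bounded in $L^2((0,T)\times\Gamma)$. After bounded-flow localization, a subsequence therefore converges weakly to some $\mathbf b$. The task is to identify $\mathbf b=\sqrt{\mathcal J^{\zeta^*,\phi}}\,\mathbf a$, where $\mathbf a$ is the limiting relative tangential trace. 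I will do this by testing against smooth $\mathbf g\in C_c^\infty((0,T)\times\Gamma;\mathbb R^3)$.

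For the shell part, $w_\delta\rightharpoonup w$ in $L^2_tH^1_\Gamma$ by \eqref{eq:delta_w_weak}. The weights $\sqrt{\mathcal J^{\zeta_\delta^*,\phi_\delta}}$ and the projections $P_\tau^{\zeta_\delta^*,\phi_\delta}$ converge strongly in every finite $L^q(\Gamma)$, uniformly in time, by \eqref{eq:delta_zeta_strong}, \eqref{app:eq_D_total_weight_conv} and Lemma~\ref{app:lem_transformed_geometry}. Moreover $w_\delta$ is bounded in $L^2_tL^q_\Gamma$ for all finite $q$, and the projection is self-adjoint, so strong--weak pairing gives
\[
\int\sqrt{\mathcal J_\delta}\,P_\tau^\delta(w_\delta\mathbf e_z)\cdot\mathbf g\to\int\sqrt{\mathcal J}\,P_\tau(w\mathbf e_z)\cdot\mathbf g .
\]

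The fluid trace is the main obstacle. The velocity is only bounded in $L^2_tW^{1,q}(D_\delta(t))$ for $q<2$, by \eqref{app:eq_delta_subcritical_W1q}, and the domains move. No strong time compactness of $\mathbf v_\delta$ is available, so Proposition~\ref{app:prop_moving_trace_convergence} cannot be applied directly. I would instead flatten the geometry. Set $\widehat{\mathbf v}_\delta(t,\bm y,s):=\mathbf v_\delta(t,\bm y,s(1+\zeta_\delta^*(t,\bm y)))$ on the fixed cylinder $\Gamma\times(0,1)$. As in the proof of Lemma~\ref{app:lem_subgraph_subcritical_korn}, the $L^r$ bounds on $\nabla_\Gamma\zeta_\delta^*$ give a uniform bound in $L^2_tW^{1,q_0}$ for some $q_0\in(6/5,2)$ close to $2$. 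The trace onto $\{s=1\}$ is then a fixed bounded linear map $W^{1,q_0}\to W^{1-1/q_0,q_0}(\Gamma)\subset L^{2}(\Gamma)$ when $q_0$ is close to $2$. Hence $\Tr_{\zeta_\delta^*}\mathbf v_\delta=\widehat{\mathbf v}_\delta|_{s=1}$ converges weakly in $L^2_tL^{q_1}_\Gamma$ for some $q_1>1$ along a subsequence, provided $\widehat{\mathbf v}_\delta\rightharpoonup\widehat{\mathbf v}$ weakly in $L^2_tW^{1,q_0}$.

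Two things remain to check for the fluid part. First, the weak limit $\widehat{\mathbf v}$ must be the flattening of $\mathbf v$. I would verify this by pairing with compactly supported interior test functions, using Proposition~\ref{app:prop_delta_time_compactness}, the local weak $H^1$ convergence, and the uniform convergence of $\zeta_\delta^*$; the pulled-back test functions then converge strongly, so the limit is identified. Second, the pairing with $\mathbf g$ involves the product $\sqrt{\mathcal J_\delta}\,P_\tau^\delta\mathbf g$. This converges strongly in $L^2_tL^{q_1'}_\Gamma$ for every finite $q_1'$, so the product of a strongly and a weakly converging factor passes to the limit.

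Summing the fluid and shell parts identifies $\mathbf b$. This yields the claimed weak convergence in the weighted interface $L^2$ space, first on each bounded-flow event and then globally by letting $R\to\infty$. For the friction term, the test traces $(\mathbf q_\delta-\psi_\delta\mathbf e_z\circ\Phi^{-1})_\tau$ of Proposition~\ref{app:prop_delta_test_construction} converge strongly in the weighted space, because of the $H^1$ convergence of the correction and Proposition~\ref{app:prop_moving_trace_convergence}. Weak--strong pairing then passes the Navier term to the limit.
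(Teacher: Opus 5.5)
Your proposal is correct, but it identifies the fluid trace by a different mechanism from the paper's.

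\textbf{The paper's route.} The paper never passes to the limit in the boundary trace itself. It works with offset traces $\Tr_{\delta,r}\mathbf v_\delta$ taken at a fixed distance $r>0$ inside the fluid. There the local bound \eqref{app:eq_delta_local_H1} gives $\Tr_{\delta,r}\mathbf v_\delta\rightharpoonup\Tr_r\mathbf v$. The offset is then removed by the one-dimensional layer estimate $\|\Tr_{\zeta_\delta^*}\mathbf v_\delta-\Tr_{\delta,r}\mathbf v_\delta\|_{L^2_tL^q_\Gamma}\le Cr^{1-1/q}$, $q<2$, which comes from the subcritical bound \eqref{app:eq_delta_subcritical_W1q}. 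The identification is therefore a double limit: $\delta\to0$ at fixed $r$, then $r\to0$.

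\textbf{Your route.} You flatten the whole moving domain onto $\Gamma\times(0,1)$. You transfer the subcritical bound to a uniform $L^2_tW^{1,q_0}$ bound using the $L^r$-integrability of $\nabla_\Gamma\zeta_\delta^*$, which is exactly the mechanism in the proof of Lemma~\ref{app:lem_subgraph_subcritical_korn}. You then use weak continuity of a single fixed trace operator, and identify the weak limit of $\widehat{\mathbf v}_\delta$ with the flattened $\mathbf v$ through interior test functions.

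\textbf{Comparison.} Your argument avoids the second limit and gives convergence of the true trace directly. It also gives the integrability $L^2_tL^p_\Gamma$ with $p=2q_0/(3-q_0)>2$ once $q_0>3/2$, which is convenient against the weight $\mathcal J^{\zeta,\phi}$, since that weight is only $L^q$-integrable. The paper's argument stays in physical coordinates and uses only interior $H^1$ information plus a normal-direction estimate, and it comes with an explicit rate in $r$. However, it still tacitly needs weak convergence of traces on the moving offset surfaces $\{z=1+\zeta_\delta^*-r\}$, and that requires a local shift or flattening argument of the kind you use globally. Your remark that $J_\Gamma^\phi$ is bounded below is not needed: the friction bound already controls $\sqrt{\mathcal J^{\zeta_\delta^*,\phi_\delta}}\,\mathbf a_\delta$ in $L^2$.

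\textbf{Shared looseness.} Both arguments are loose about random subsequences in the same way. The velocity is not part of the representation in Lemma~\ref{lem:delta_tightness}, and the bounds hold only in expectation. The weak limits should therefore be taken in $L^2(\Omega\times(0,T);\cdot)$, or the relevant quantities added to the Jakubowski space. This is not specific to your proposal.
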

	
	\begin{proof}
		After pullback to $\Gamma$, the geometric weights and tangential
		projections converge strongly, while the friction bound gives weak
		$L^2$ compactness. For every fixed offset $r>0$,
		\eqref{app:eq_delta_local_H1} gives
		\[
		\Tr_{\delta,r}\mathbf v_\delta
		\rightharpoonup
		\Tr_r\mathbf v
		\]
		in the corresponding trace space. Moreover,
		\[
		\|
		\Tr_{\zeta_\delta^*}\mathbf v_\delta
		-
		\Tr_{\delta,r}\mathbf v_\delta
		\|_{L^2_tL^q_\Gamma}
		\le
		Cr^{1-1/q},
		\qquad q<2.
		\]
		Hence
		\[
		\lim_{r\to0}
		\limsup_{\delta\to0}
		\|
		\Tr_{\zeta_\delta^*}\mathbf v_\delta
		-
		\Tr_{\delta,r}\mathbf v_\delta
		\|_{L^2_tL^q_\Gamma}
		=0.
		\]
		Combining this with
		\[
		w_\delta\rightharpoonup w
		\qquad\text{in }L^2_tH^1_\Gamma
		\]
		and the strong convergence of the geometry identifies the weak limit of
		the relative tangential velocity and proves the claim.
	\end{proof}
	
	\begin{proposition}[Convective limit]
		\label{app:prop_delta_convection}
		For every smooth compactly supported test tensor field in the physical fluid
		region,
		\[
		\varrho_\delta
		\mathbf v_\delta\otimes\mathbf v_\delta
		\rightharpoonup
		\varrho\mathbf v\otimes\mathbf v
		\qquad
		\text{in }\mathcal D'.
		\]
		The same convergence holds against the adapted test fields of
		Proposition~\ref{app:prop_delta_test_construction}.
	\end{proposition}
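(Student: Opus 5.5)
The plan splits the test region into three parts: a compact interior, the moving and fixed boundary layers, and the exterior strip. Fix a smooth test tensor $\Psi$ supported in a compact set $K\subset\{(t,\bm x):\bm x\in\mathcal O_{\zeta^*(t)}\}$.

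\emph{Interior.} By Lemma~\ref{lem:fixed_cylinders_graph}, $K$ is covered by finitely many cylinders $I_j\times U_j$ with $U_j\subset D_\delta(t)$ for small $\delta$. On each cylinder, Proposition~\ref{app:prop_delta_time_compactness} gives a uniform $L^2_tH^1_x$ bound on $\mathbf v_\delta$. The key step is strong compactness of the momentum $\varrho_\delta\mathbf v_\delta$ in $L^2(I_j;H^{-1}(U_j))$. To get it, test the $\delta$-level momentum equation with fields supported in $U_j$. There the penalty and Navier terms vanish and the stress is the physical $\mathbb S$. The pressure term is bounded in $L^1$ (indeed in $L^{1+\Theta/\gamma}$ by Proposition~\ref{app:prop_delta_interior_pressure}), and the convective term is bounded because $\varrho_\delta\in L^\infty_tL^\gamma$, $\gamma>3/2$, and $\mathbf v_\delta\in L^2_tL^6$ locally. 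Together these give a bound on $\partial_t(\varrho_\delta\mathbf v_\delta)$ in $L^1_tW^{-R_0,1}$. The time regularity has to be formulated with the flow-frozen coefficients exactly as in Subsection~\ref{app:effective_flux}, on bounded-flow events. Since $\varrho_\delta\mathbf v_\delta$ is bounded in $L^\infty_tL^{2\gamma/(\gamma+1)}$ and $2\gamma/(\gamma+1)>6/5$, Simon's lemma (applied after Jakubowski representation, or via an Aubin--Lions type lemma on the represented sequence) gives strong convergence in $L^2_tH^{-1}(U_j)$. The limit is identified as $\varrho\mathbf v$ by Lemma~\ref{lem:delta_tightness}. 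Pairing this strong negative-norm convergence with $\mathbf v_\delta\rightharpoonup\mathbf v$ weakly in $L^2_tH^1(U_j)$ gives the distributional convergence against $\Psi$.

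\emph{Adapted tests: splitting.} For the fields $\mathbf q_\delta$ of Proposition~\ref{app:prop_delta_test_construction}, which reach up to the interface, split $\nabla^{\phi_\delta}\mathbf q_\delta$ with a cutoff $\Xi_r$ into three pieces. The interior piece on $D_\delta^r$ is handled as above, using the strong convergence $\mathbf q_\delta\to\mathbf q$ in the non-pressure dualities. The layer piece on $\{|d_\delta|<r\}\cup\{0<z<r\}$ is estimated next, and the exterior piece is killed by \eqref{app:eq_delta_exterior_conv_small}.

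\emph{Layer estimate (the main obstacle).} Here we need
\[
\int_{\text{layer}}\varrho_\delta|\mathbf v_\delta|^2|\nabla\mathbf q_\delta|\le C r^{\sigma}
\]
uniformly in $\delta$. Since the projected test is only $H^1$ across the $H^2$ interface, $\nabla\mathbf q_\delta$ is merely $L^2$, which is the difficulty. I would use H\"older with $\varrho_\delta\in L^\infty_tL^\gamma$, $\mathbf v_\delta\in L^2_tL^{q_*}$ from \eqref{app:eq_delta_subgraph_Sobolev}, and the remaining integrability of $\nabla\mathbf q_\delta$ in $L^{p_*}$ on the layer. This matches the exponent $1/p_*=1-1/\gamma-2/q_*$ already used in Proposition~\ref{app:prop_delta_boundary_pressure}, but $\nabla\mathbf q_\delta$ has to lie in $L^\infty_t L^{p_*}$ with $p_*$ possibly larger than $2$, which is exactly where it could fail. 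If only $\nabla\mathbf q_\delta\in L^2$ is available, I would fall back on the layer's small measure $|\{|d_\delta|<r\}|\le Cr$ applied to the smooth precursor $\widetilde{\mathbf q}$. I would then control the correction $R_{\delta,r_\delta}$ separately, through its vanishing $L^2_tH^1$ norm, against the uniformly integrable convective density; this requires $\varrho_\delta|\mathbf v_\delta|^2$ in some $L^1_tL^{p}$ with $p>1$, which follows from the same H\"older triple. Letting $\delta\to0$ and then $r\to0$ concludes.
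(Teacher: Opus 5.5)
Your interior argument is sound, and it differs from the paper's. The paper's proof cites only the strong density convergence \eqref{app:eq_delta_rho_strong} and the local bound \eqref{app:eq_delta_local_H1}. Because the convective term is quadratic in $\mathbf v_\delta$, some time compactness is also needed. Your strong $L^2_tH^{-1}(U_j)$ convergence of $\varrho_\delta\mathbf v_\delta$ (via $2\gamma/(\gamma+1)>6/5$), paired with weak $L^2_tH^1(U_j)$ convergence of $\mathbf v_\delta\boldsymbol\Psi$, supplies exactly that, so on this part your route is the more complete one.

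One caveat. The cylinders $U_j$ depend on the random limit graph, and your time-derivative bound holds only in expectation. It therefore cannot be fed pathwise into Simon's lemma on the represented space. It has to enter through the momentum component of the Jakubowski space in Lemma~\ref{lem:delta_tightness}; the compact embedding $L^{2\gamma/(\gamma+1)}(U_j)\hookrightarrow H^{-1}(U_j)$ then gives the strong convergence.

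The genuine gap is in the adapted-test part, at the step you flag yourself, and your fallback does not close it.

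\emph{Why the pairing fails.} Uniformly in $\delta$, on the inner layer you only know $\varrho_\delta|\mathbf v_\delta|^2\in L^1_tL^s_x$ with $1/s=1/\gamma+2/q_*$, and $1/s>1/2$ for every $\gamma\le 6$. Writing $\varrho_\delta|\mathbf v_\delta|^2=(\varrho_\delta\mathbf v_\delta)\cdot\mathbf v_\delta$ raises the time exponent to $2$. The space exponent $m$ still satisfies $1/m=\frac{\gamma+1}{2\gamma}+\frac1{q_*}>\frac12$. So the convective density never lies in $L^2_tL^2_x$, which is the dual of the only space where $\nabla R_{\delta,r_\delta}$ is known to be small.

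\emph{Why the correction cannot be improved.} Its gradient contains $\nabla_\Gamma a_\delta$, hence $\nabla_\Gamma^2\zeta_\delta^*$, which for an $H^2$ graph is only in $L^2(\Gamma)$. It also contains $r_\delta^{-1}a_\delta$, which is not uniformly bounded. Uniform integrability and small layer measure give smallness only against uniformly bounded multipliers, or multipliers bounded in $L^\infty_tL^{p_*}_x$. That is precisely your precursor case.

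\emph{Exterior tube.} The same objection applies to your use of \eqref{app:eq_delta_exterior_conv_small}. The correction also lives there, since the normal constraint is imposed throughout $T_\delta$. That estimate is an $L^1$ statement with a weight involving only first derivatives of the graph.

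\emph{What the paper does.} It does not attempt this pairing. Its layer estimate (subcritical Korn--Sobolev \eqref{app:eq_delta_subgraph_Sobolev} plus vanishing layer measure) is stated only for bounded test tensors. The adapted-test claim is referred to Proposition~\ref{app:prop_delta_test_construction}. There a projected core pair is handled at the level of the complete residual, through the fixed causal smooth representatives of Appendix~\ref{app:semimartingale_tests} and the closure defining $\mathfrak C$.

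To complete your argument you need one of two things. Either obtain an $L^\infty_tL^{p_*}_x$ bound on the layer gradient of the correction, which the $H^2$ geometry does not provide for $\gamma\le 6$. Or route the rough test through smooth representatives instead of estimating the convective term against it term by term.
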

	
	\begin{proof}
		Let $Q$ be compactly contained in the limiting physical fluid region.
		By \eqref{app:eq_delta_rho_strong} and
		\eqref{app:eq_delta_local_H1},
		\[
		\varrho_\delta
		\mathbf v_\delta\otimes\mathbf v_\delta
		\rightharpoonup
		\varrho\mathbf v\otimes\mathbf v
		\qquad
		\text{in }\mathcal D'(Q).
		\]
		For the moving and tubular boundary layers, the subcritical
		Korn--Sobolev estimate and their vanishing measure give
		\[
		\lim_{r\to0}
		\limsup_{\delta\to0}
		\left|
		\int_{\{|d_\delta|<r\}}
		\varrho_\delta
		\mathbf v_\delta\otimes\mathbf v_\delta:
		\boldsymbol\Psi
		\right|
		=0
		\]
		for every admissible bounded test tensor $\boldsymbol\Psi$.
		The pure exterior contribution vanishes by
		\eqref{app:eq_delta_exterior_conv_small}. This gives the global
		distributional convergence; the adapted-test convergence follows from
		Proposition~\ref{app:prop_delta_test_construction}.
	\end{proof}
	
	\begin{proposition}[Limit of the viscous stress]
		\label{app:prop_delta_stress_limit}
		For every limiting admissible test field and its approximation from
		Proposition~\ref{app:prop_delta_test_construction},
		\begin{align}
			& \int_0^T\!\!\int_{\mathcal O_\alpha}
			\mathbb S_{\kappa_\delta}^{\zeta_\delta^*}
			(\nabla^{\phi_\delta}\mathbf v_\delta):
			\nabla^{\phi_\delta}\mathbf q_\delta
			\,\dd\bm x\,\dd t
			\to
			\int_0^T\!\!\int_{\mathcal O_{\zeta^*}}
			\mathbb S(\nabla^\phi\mathbf v):
			\nabla^\phi\mathbf q
			\,\dd\bm x\,\dd t .
			\label{app:eq_delta_stress_limit} 
		\end{align}
	\end{proposition}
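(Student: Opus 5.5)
I will split the space--time domain into the trimmed physical region, the moving boundary layers, and the pure exterior. On the first two the extended viscosities take the physical values, so $\mathbb S_{\kappa_\delta}^{\zeta_\delta^*}=\mathbb S$ there. The stress term is linear in $\mathbf v_\delta$, so the convergence should follow from weak convergence of $\nabla^{\phi_\delta}\mathbf v_\delta$ against strongly convergent test gradients, plus smallness of everything near and outside the interface. Concretely, fix $r>0$ and write the left side of \eqref{app:eq_delta_stress_limit} as $I^{\rm int}_{\delta,r}+I^{\rm lay}_{\delta,r}+I^{\rm ext}_{\delta}$. These are integrals over $D^r_\delta(t)$, over $\{|d_\delta|<r\}\cup\{0<z<r\}$, and over $\{d_\delta\ge r\}$, respectively, with the layer and exterior pieces intersected with $\supp\mathbf q_\delta$.

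\textbf{Interior piece.} By Lemma~\ref{lem:fixed_cylinders_graph} and \eqref{eq:delta_zeta_strong}, $D^r_\delta$ is eventually covered by fixed cylinders $I_j\times U_j$. On each of them \eqref{app:eq_delta_local_H1} gives $\nabla\mathbf v_\delta\rightharpoonup\nabla\mathbf v$ in $L^2$. Together with $\phi_\delta\to\phi$ in $C^{\vartheta_0}_tC^2_x$, this gives $\nabla^{\phi_\delta}\mathbf v_\delta\rightharpoonup\nabla^\phi\mathbf v$ weakly in $L^2$. The test approximations converge strongly, $\nabla^{\phi_\delta}\mathbf q_\delta\to\nabla^\phi\mathbf q$ in $L^2$, by Proposition~\ref{app:prop_delta_test_construction}: the correction $R_{\delta,r_\delta}\to0$ in $L^2_tH^1$, and the smooth precursor converges. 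A weak--strong pairing then gives convergence to $\int\!\!\int_{D^r}\mathbb S(\nabla^\phi\mathbf v):\nabla^\phi\mathbf q$. Letting $r\to0$ recovers the full integral over $\mathcal O_{\zeta^*}$ by monotone convergence, since $\mathbb S(\nabla^\phi\mathbf v):\nabla^\phi\mathbf q\in L^1$.

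\textbf{Layer piece.} Cauchy--Schwarz and the uniform physical-stress bound from Lemma~\ref{lem:delta_uniform_bounds} give
\[
|I^{\rm lay}_{\delta,r}|\le C\,\|\nabla^{\phi_\delta}\mathbf q_\delta\|_{L^2(\{|d_\delta|<r\}\cup\{0<z<r\})}.
\]
I split $\mathbf q_\delta$ into precursor and correction. The correction tends to zero in $H^1$. The precursor has gradient bounded in $L^\infty$ (or at least $L^{2+}$, given that the projected test is only $H^1$ across the $H^2$ interface), so its contribution is $O(r^{\theta})$ for some $\theta>0$, uniformly in $\delta$, by the measure bound $|\{|d_\delta|<r\}|\le Cr$. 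Hence $\lim_{r\to0}\limsup_\delta|I^{\rm lay}_{\delta,r}|=0$.

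\textbf{Exterior piece (main obstacle).} Here the viscosity is only $\kappa_\delta$, and $\mathbf v_\delta$ is controlled only through $\kappa_\delta\int|\nabla^{\phi_\delta}\mathbf v_\delta|^2\le C$. The test must therefore carry no exterior weight. My first attempt is to use that $\mathbf q_\delta$ is supported in $\mathcal O_\alpha$ but equal to the smooth precursor outside the tube. I would arrange, in the test construction, that the precursor is cut off to vanish on $\{d_\delta\ge r\}$ for the fixed $r$, since the limiting identity only sees $\mathcal O_{\zeta^*}$ and the tube $T_\delta\subset\{d_\delta<h_\delta\}$. Then $I^{\rm ext}_\delta=0$ for small $\delta$.

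If this cutoff is not available, I fall back on the direct bound
\[
|I^{\rm ext}_\delta|\le \kappa_\delta^{1/2}\Bigl(\kappa_\delta\!\int\!|\nabla^{\phi_\delta}\mathbf v_\delta|^2\Bigr)^{1/2}\|\nabla\mathbf q_\delta\|_{L^2}\le C\kappa_\delta^{1/2}\to0.
\]
The residual difficulty is the transition zone $\{0<d_\delta<r\}$, where $\vartheta_\kappa^{\zeta}$ passes from $1$ to $0$ and $\mu_\kappa^\zeta$ is neither physical nor small. By construction this zone lies within $O(\kappa_\delta^{1/2})$ of the graph. The precursor part is therefore handled by the layer estimate above, using $|\mu_\kappa^\zeta|\le\mu$ and the full bound on $\int\mathbb S_\kappa:\nabla^\phi\mathbf v$, which controls $\sqrt{\mu_\kappa}\nabla\mathbf v$ in $L^2$. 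Combining the three pieces and sending $r\to0$ after $\delta\to0$ yields \eqref{app:eq_delta_stress_limit}.
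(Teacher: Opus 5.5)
Your proposal is correct and is essentially the paper's argument. On a trimmed physical region the stress is exactly $\mathbb S$ and the local $H^1$ bound \eqref{app:eq_delta_local_H1} gives a weak--strong pairing, while the boundary, viscosity-transition, and exterior contributions are removed by Cauchy--Schwarz against the viscous dissipation, the $O(r)$ layer measure, and $\kappa_\delta\to0$. The cutoff variant is unnecessary (and not available, since the approximating tests are fixed by Proposition~\ref{app:prop_delta_test_construction}). Your fallback bound is what is needed, best phrased as Cauchy--Schwarz for the positive semidefinite form $(\mathbf a,\mathbf b)\mapsto\mathbb S_{\kappa_\delta}^{\zeta_\delta^*}(\nabla^{\phi_\delta}\mathbf a):\nabla^{\phi_\delta}\mathbf b$; this also covers the transition zone without claiming the viscosity is physical there.
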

	
	\begin{proof}
		Fix an interior offset $r>0$. On the corresponding trimmed physical
		region,
		$
		\mathbb S_{\kappa_\delta}^{\zeta_\delta^*}
		=
		\mathbb S,
		$
		and \eqref{app:eq_delta_local_H1} gives
		\[
		\int
		\mathbb S_{\kappa_\delta}^{\zeta_\delta^*}
		(\nabla^{\phi_\delta}\mathbf v_\delta):
		\nabla^{\phi_\delta}\mathbf q_\delta
		\longrightarrow
		\int
		\mathbb S(\nabla^\phi\mathbf v):
		\nabla^\phi\mathbf q
		\]
		on the trimmed region.
		
		Let $\mathcal R_{\delta,r}$ denote the contribution of the physical
		boundary layer, the viscosity-transition layer, and the pure exterior.
		By Cauchy--Schwarz inequality and the viscous dissipation,
		\[
		\lim_{r\to0}
		\limsup_{\delta\to0}
		|\mathcal R_{\delta,r}|
		=0,
		\]
		using the vanishing layer measure and the exterior-viscosity scaling
		\eqref{eq:global_nu_choice}. Hence
		\eqref{app:eq_delta_stress_limit} follows.
	\end{proof}

\subsection{Completion of the transformed existence proof}

\begin{proof}[Proof of Theorem~\ref{thm:transformed_existence}]
	Set
	\[
	\tau^\zeta
	:=
	T\wedge
	\inf\left\{
	t>0:
	\inf_\Gamma(1+\zeta(t))\le\alpha
	\ \text{or}\ 
	\|\zeta(t)\|_{H^s(\Gamma)}\ge\alpha^{-1}
	\right\}.
	\]
	By \eqref{eq:initial_localization_margin} and
	$\zeta\in C([0,T];H^s(\Gamma))$,
	$
	\mathbb P(\tau^\zeta>0)=1.
	$
	
	The compactness conclusions above give items~\textup{(1)}--\textup{(3)} of
	Definition~\ref{def:trans_solution}. Moreover,
	\[
	\varrho(t)=0
	\qquad
	\text{a.e. on }
	\mathcal O_\alpha\setminus\mathcal O_{\zeta(t)}
	\]
	for a.e. $t<\tau^\zeta$. Proposition~\ref{app:prop_normal_trace} gives item~\textup{(6)}:
	\[
	\left(
	\mathbf v-w\mathbf e_z\circ\Phi_\zeta^{-1}
	\right)
	\cdot\mathbf n^{\zeta,\phi}
	=0
	\qquad\text{on }\Gamma_\zeta .
	\]
	
	Proposition~\ref{app:prop_delta_global_density} and the renormalized
	continuity equation yield item~\textup{(4)}. For every
	$(\mathbf q,\psi)\in\mathcal T_{\rm ad}(\zeta,\phi)$,
	Proposition~\ref{app:prop_delta_test_construction} gives
	\begin{equation}
		\mathfrak C_t
		(\varrho,\mathbf v,\zeta,w;\mathbf q,\psi)
		=0,
		\qquad t<\tau^\zeta .
		\label{eq:final_coupled_residual}
	\end{equation}

	In addition, lower semicontinuity and Fatou's lemma give, for every $p\ge1$,
	\begin{equation}
		\mathbb E
		\left[
		\sup_{0\le t\le\tau^\zeta}\mathcal E_{\rm tr}(t)^p
		+\mathcal D_{\rm tr}(\tau^\zeta)^p
		\right]
		\le C_{p,T}.
		\label{eq:delta_final_energy_moment}
	\end{equation}
	
	Lemma~\ref{app:lem_total_energy_recovery} gives item~\textup{(7)} of
	Definition~\ref{def:trans_solution}. Hence all conditions of
	Definition~\ref{def:trans_solution} are satisfied.
\end{proof}

\begin{proof}[Proof of Theorem~\ref{thm:main}]
	Theorem~\ref{thm:transformed_existence} and
	Lemma~\ref{lem:equivalence} yield a martingale solution of the physical
	system in the sense of Definition~\ref{def:physical_solution}.
\end{proof}

	\medskip
	
			\noindent {\bf Acknowledgements.}
	This work is partially supported by the Baima Lake Laboratory Joint Fund of Zhejiang Provincial NSF of China under Grant No. LBMHZ26E060006, China NSF Grant Nos. 12571164, 12671292, the Jiangsu Provincial Scientific Research Center of Applied Mathematics under Grant No. BK20233002 and the Fundamental Research Funds of Zhejiang Sci-Tech University No. 26062140-Y.

		\medskip

	\appendix
	
	\section{Geometry-adapted tests and stochastic energy recovery}
	\label{app:shell_ito}
	
	This appendix provides the fixed causal test core and the two
	regularized identities used in the final stochastic energy recovery.
	
	\subsection{Geometry-adapted semimartingale test core}
	\label{app:semimartingale_tests}
	
	Fix a stopped geometry $(\zeta,\phi)$. Start from spatially smooth,
	progressively measurable It\^o-semimartingale precursor pairs
	$(\widetilde{\mathbf q},\psi)$ satisfying the fixed-bottom condition.
	Near $\Gamma_\zeta$ we project only the fluid component onto the scalar
	normal constraint, leaving $\psi$ unchanged. The resulting pair
	$(\mathbf q,\psi)$ satisfies
	\[
	\mathbf q|_{\Gamma_b}=0,
	\qquad
	(\mathbf q-\psi\mathbf e_z\circ\Phi_\zeta^{-1})
	\cdot\mathbf n^{\zeta,\phi}=0
	\qquad\text{on }\Gamma_\zeta,
	\]
	and is called an element of $\mathcal T_{\rm ad}(\zeta,\phi)$. Since
	$\Gamma_\zeta$ is only an $H^2$ graph, no $C^\infty$ regularity of the
	projected fluid component across the interface is asserted; its required
	spatial regularity is $H^1$. The shell component remains spatially
	smooth, so the elastic term in Definition~\ref{def:trans_solution} is an
	ordinary $L^2$ pairing. In particular, the definition of
	$\mathcal T_{\rm ad}$ depends only on $(\zeta,\phi)$ and not on
	$(\varrho,\mathbf v,w)$.
	
	Fix once and for all the spatial mollifier and backward Steklov
	regularization used in Lemma~\ref{app:lem_energy_core_density}. If
	$(\mathbf q^{(j)},\psi^{(j)})$ denotes the corresponding causal smooth
	representatives of a core pair, let
	\[
	\mathfrak C_t^{(j)}
	(\varrho,\mathbf v,\zeta,w;\mathbf q,\psi)
	\]
	be the left-hand side minus the right-hand side of the \emph{smooth}
	coupled identity displayed in item~(5) of
	Definition~\ref{def:trans_solution}, with
	$(\mathbf q,\psi)$ and their It\^o coefficients replaced by
	$(\mathbf q^{(j)},\psi^{(j)})$ and with the pressure contribution taken as
	the ordinary integral
	\[
	\int_0^t\!\!\int_{\mathcal O_\zeta(s)}
	p(\varrho)\Div^\phi\mathbf q^{(j)}
	\,\dd\mathbf x\,\dd s.
	\]
	For this fixed causal regularization, we say that the coupled core residual
	is closed if, for every $T'<\tau^\zeta$,
	\[
	\mathfrak C^{(j)}
	\longrightarrow
	\mathfrak C
	\qquad\text{in probability in }C([0,T']).
	\]
	The limit is denoted by
	$\mathfrak C_t(\varrho,\mathbf v,\zeta,w;\mathbf q,\psi)$.
	Definition~\ref{def:trans_solution} requires this \emph{complete}
	residual to vanish. No separate extension of
	$p(\varrho)\Div^\phi\mathbf q$ is made. For a spatially smooth compatible
	core pair the representatives may be chosen constant in $j$, so that
	$\mathfrak C$ is exactly the ordinary smooth weak residual.

	For $(\mathbf q,\psi)\in\mathcal T_{\rm ad}(\zeta,\phi)$ write
	\[
	\dd\mathbf q=\mathbf q_0\,\dd t+\sum_{k=1}^K\mathbf q_k\,\dd W_k,
	\qquad
	\dd\psi=\psi_0\,\dd t+\sum_{k=1}^K\psi_k\,\dd W_k.
	\]
	The coupled weak formulation is understood in the It\^o sense: the
	time-derivative pairings are replaced by the drift pairings with
	$(\mathbf q_0,\psi_0)$ and the additional stochastic term is
	\begin{equation}
		\label{eq:semimart_test_martingale}
		\sum_{k=1}^K\int_0^t
		\left[
		\int_{\mathcal O_\zeta(s)}
		\varrho\mathbf v\cdot\mathbf q_k\,\dd\mathbf x
		+
		\int_\Gamma w\psi_k\,\dd\mathbf y
		\right]\dd W_k(s).
	\end{equation}

\begin{lemma}[Semimartingale extension of the continuity identity]
	\label{app:lem_semimart_continuity}
	Let $T'<\tau^\zeta$ and let $\varphi$ be a spatially smooth,
	compactly supported adapted scalar It\^o semimartingale,
	\[
	\dd\varphi
	=
	\varphi_0\,\dd t
	+
	\sum_{k=1}^K \varphi_k\,\dd W_k,
	\]
	with bounded coefficients after localization. Then, for every
	$b\in C^1(\mathbb R)$ with $b'$ compactly supported,
	\begin{align*}
		& \int_{\mathcal O_\zeta(t)}
		b(\varrho(t))\varphi(t)\,\dd\mathbf x
		-
		\int_{\mathcal O_{\zeta_0}}
		b(\varrho_0)\varphi(0)\,\dd\mathbf x
		\\
		& =
		\int_0^t\!\!\int_{\mathcal O_\zeta(s)}
		b(\varrho)
		\bigl(
		\varphi_0+\mathbf v\cdot\nabla^\phi\varphi
		\bigr)\,\dd\mathbf x\,\dd s
		+
		\int_0^t\!\!\int_{\mathcal O_\zeta(s)}
		\bigl(
		b'(\varrho)\varrho-b(\varrho)
		\bigr)
		\Div^\phi\mathbf v\,\varphi
		\,\dd\mathbf x\,\dd s
		\\
		& \quad
		+
		\sum_{k=1}^K
		\int_0^t
		\left[
		\int_{\mathcal O_\zeta(s)}
		b(\varrho)\varphi_k\,\dd\mathbf x
		\right]\dd W_k(s)
	\end{align*}
	for every $t\le T'$, almost surely.
		The same identity also holds for the linear choice $b(z)=z$.
\end{lemma}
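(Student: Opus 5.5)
The identity is a stochastic product rule. We combine the deterministic-in-time renormalized continuity equation of item~(4) of Definition~\ref{def:trans_solution} with the It\^o dynamics of $\varphi$. Since the transformed continuity equation \eqref{eq:cont_trans} contains no stochastic integral, the process $t\mapsto\int b(\varrho(t))\phi\,\dd\mathbf x$ for a fixed deterministic smooth $\phi$ is of bounded variation. Its quadratic covariation with $\varphi$ therefore vanishes, and no It\^o correction can appear. The plan is to make this rigorous by discretizing $\varphi$ in time.

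First fix $T'<\tau^\zeta$ and localize so that $\varphi,\varphi_0,\varphi_k$ are bounded in $C^1$ in space uniformly on $[0,T']$, with support in a fixed compact set. Then set, for a partition $0=s_0<s_1<\dots<s_n=t$ with mesh $h\to0$, the telescoping sum
\[
\int b(\varrho(t))\varphi(t)-\int b(\varrho_0)\varphi(0)
=\sum_j\Bigl[\int b(\varrho(s_{j+1}))\varphi(s_j)-\int b(\varrho(s_j))\varphi(s_j)\Bigr]
+\sum_j\int b(\varrho(s_{j+1}))\bigl(\varphi(s_{j+1})-\varphi(s_j)\bigr).
\]
The first sum is handled by item~(4), applied with the time-independent test $\varphi(s_j)$ on $[s_j,s_{j+1}]$. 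That item is stated for tests in $C_c^\infty([0,T)\times\overline{\mathcal O}_\zeta)$, and a test frozen at the random but $\mathcal F_{s_j}$-measurable time is allowed pathwise, since the identity holds almost surely simultaneously for a countable dense family of spatial tests. This yields the Riemann sum of the drift terms with $\mathbf v\cdot\nabla^\phi\varphi(s_j)$ and $(b'(\varrho)\varrho-b(\varrho))\Div^\phi\mathbf v\,\varphi(s_j)$. These converge, using $\varrho\in L^\infty L^\gamma$, $\mathbf v\in L^2H^1$ and the continuity of $\varphi$ in time, to the corresponding integrals. Because $b'$ has compact support, $b(\varrho)$ grows at most linearly and $b'(\varrho)\varrho-b(\varrho)$ is bounded, so integrability is harmless.

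In the second sum we insert $\varphi(s_{j+1})-\varphi(s_j)=\int_{s_j}^{s_{j+1}}\varphi_0\,\dd s+\sum_k\int_{s_j}^{s_{j+1}}\varphi_k\,\dd W_k$. We then replace $b(\varrho(s_{j+1}))$ by $b(\varrho(s_j))$ plus an increment. The main part, $\sum_j\int b(\varrho(s_j))\bigl(\varphi(s_{j+1})-\varphi(s_j)\bigr)$, converges to $\int_0^t\int b(\varrho)\varphi_0$ plus the stochastic integral $\sum_k\int_0^t\int b(\varrho)\varphi_k\,\dd W_k$. For this we use that $s\mapsto\int b(\varrho(s))\varphi_k(s)$ is adapted and weakly continuous, from $\varrho\in C_w L^\gamma$ and the renormalized equation, so the left-point Riemann sums converge in probability by the standard stochastic integral approximation. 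The cross term $\sum_j\int\bigl(b(\varrho(s_{j+1}))-b(\varrho(s_j))\bigr)\bigl(\varphi(s_{j+1})-\varphi(s_j)\bigr)$ is the main obstacle, since it is the discrete covariation. We will rewrite the density increment again via item~(4) as a time integral of size $\int_{s_j}^{s_{j+1}}(\|\mathbf v\|_{H^1}+1)\,\dd s$ against $C^1$ norms of the test. Bounding $\sup_j\|\varphi(s_{j+1})-\varphi(s_j)\|_{C^1}\to0$ almost surely by time continuity of $\varphi$ then makes the cross term at most $o(1)\int_0^t(1+\|\mathbf v\|_{H^1})\,\dd s\to0$.

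Passing $h\to0$ along a subsequence gives the identity for each $t$, and continuity in $t$ of both sides upgrades this to all $t\le T'$ simultaneously. The linear case $b(z)=z$ is identical, except that the renormalization term vanishes and integrability of $\varrho\mathbf v\cdot\nabla^\phi\varphi$ follows from $\varrho\in L^\infty L^\gamma$, $\gamma>3/2$, and $\mathbf v\in L^2L^6$. Finally, $\varrho=0$ outside $\mathcal O_\zeta$ lets us write the integrals on $\mathcal O_\zeta(s)$.
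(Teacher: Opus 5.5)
Your proposal is correct and follows essentially the same route as the paper: you freeze $\varphi$ at the left endpoints of a partition, apply the pathwise renormalized identity of item~(4) on each subinterval, and pass the increment sums to the It\^o integral. Your pathwise bound on the cross term, obtained by inserting the anticipating test $\varphi(s_{j+1})-\varphi(s_j)$ into item~(4), makes explicit the zero-covariation step that the paper's proof leaves implicit. The one place to tighten is $b(z)=z$: item~(4) does not cover it because $b'\equiv1$ is not compactly supported, so calling it ``identical'' requires you to name the deterministic linear continuity identity you rerun the argument with (it is inherited from the approximation scheme, where the continuity equation is linear and survives each weak limit); the paper instead obtains this case by truncating $b$ in the semimartingale identity.
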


\begin{proof}
	Let
	\[
	\pi_m=\{0=t_0<\cdots<t_{N_m}=T'\},
	\qquad |\pi_m|\to0,
	\]
	and let $\varphi^{(m)}$ be the left-adapted simple approximation of
	$\varphi$ on $\pi_m$. Apply the deterministic renormalized continuity
	identity on each $(t_i,t_{i+1}]$ and sum over $i$. The discrete increments of
	$\varphi^{(m)}$ converge to
	\[
	\sum_{k=1}^K
	\int_0^t
	\left[
	\int_{\mathcal O_\zeta(s)}
	b(\varrho)\varphi_k\,\dd\mathbf x
	\right]\dd W_k(s).
	\]
	After localization,
	\[
	\varphi^{(m)}\to\varphi,
	\qquad
	\varphi_0^{(m)}\to\varphi_0,
	\qquad
	\varphi_k^{(m)}\to\varphi_k
	\]
	in the deterministic and quadratic-variation dualities of the continuity
	identity. The drift terms pass by dominated convergence, and the stochastic
	terms pass by It\^o's isometry and BDG. This yields the stated identity.
	The case $b(z)=z$ follows by the standard truncation argument.
\end{proof}

\begin{remark}
	The lemma is used only for $b(z)=z$ or for renormalizations with compactly
	supported derivative; no final-level product
	$p(\varrho)\Div^\phi\mathbf v$ is invoked.
\end{remark}

	\subsection{Bending-energy commutator identity}

Set
$
A_k:=\mathcal Y_k\cdot\nabla_\Gamma,
C_k:=[\Delta_\Gamma,A_k],
D_k:=[C_k,A_k].
$
By Lemma~\ref{lem:shell_energy_commutator},
\[
\|C_k\eta\|_{L^2}+\|D_k\eta\|_{L^2}
\le C_k\|\eta\|_{H^2}.
\]

\begin{lemma}[Regularized bending-energy identity]
	\label{app:lem_bending_commutator}
	Let $(\eta,r)$ be progressively measurable and, for every $T'<\tau^\eta$,
	\[
	\eta\in L^2\bigl(\Omega;L^\infty(0,T';H^2(\Gamma))\bigr),
	\qquad
	r\in L^2\bigl(\Omega;L^2(0,T';H^1(\Gamma))\bigr).
	\]
	Assume, in the distributional It\^o sense,
	\begin{equation}
		\dd\eta
		=
		\left(r+\frac12\sum_{k=1}^K A_k^2\eta\right)\dd t
		+
		\sum_{k=1}^K A_k\eta\,\dd W_k .
		\label{app:eq_eta_ito_bending}
	\end{equation}
	Let $J_n$ be a symmetric spatial mollifier, $K_n:=J_n^2$, and set
	\[
	z:=\Delta_\Gamma\eta,
	\qquad
	z_n:=J_nz,
	\qquad
	g_k:=C_k\eta,
	\qquad
	h_k:=D_k\eta .
	\]
	Then
	\begin{align}
		\frac12\|z_n(t)\|_2^2
		={}&
		\frac12\|z_n(0)\|_2^2
		+
		\int_0^t
		\left\langle
		\Delta_\Gamma\eta,
		\Delta_\Gamma K_nr
		\right\rangle\,\dd s
		\notag\\
		&+
		\sum_{k=1}^K\int_0^t
		\widetilde M_{n,k}\,\dd W_k
		+
		\frac12\sum_{k=1}^K\int_0^t
		\widetilde R_{n,k}\,\dd s,
		\label{app:eq_bending_regularized_energy}
	\end{align}
	where
	\begin{align}
		\widetilde M_{n,k}
		&:=
		\left\langle
		z_n,J_n(A_kz+g_k)
		\right\rangle,
		\label{app:eq_Mnk_def}
		\\
		\widetilde R_{n,k}
		&:=
		\|J_n(A_kz+g_k)\|_2^2
		+
		\left\langle
		z_n,J_n(A_k^2z+2A_kg_k+h_k)
		\right\rangle.
		\label{app:eq_Rnk_def}
	\end{align}
	Moreover,
	\begin{equation}
		\widetilde M_{n,k}
		\longrightarrow
		\mathcal G_k(\eta)
		\quad\text{in }L^2(\Omega\times(0,T')),
		\qquad
		\widetilde R_{n,k}
		\longrightarrow
		\mathcal R_k(\eta)
		\quad\text{in }L^1(\Omega\times(0,T')).
		\label{app:eq_bending_Rnk_limit}
	\end{equation}
\end{lemma}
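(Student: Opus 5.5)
The plan is to apply It\^o's formula in a Hilbert space to the regularized quantity $z_n=J_n\Delta_\Gamma\eta$. Because $J_n$ is smoothing, $z_n$ is a genuine $L^2(\Gamma)$-valued semimartingale even though $\Delta_\Gamma\eta$ is only in $L^2$. Apply $J_n\Delta_\Gamma$ to \eqref{app:eq_eta_ito_bending}, which is legitimate after testing against $\Delta_\Gamma J_n\varphi$. This gives
\[
\dd z_n=J_n\Delta_\Gamma\Bigl(r+\tfrac12\sum_k A_k^2\eta\Bigr)\dd t+\sum_k J_n\Delta_\Gamma A_k\eta\,\dd W_k ,
\]
with coefficients in $L^2(\Gamma)$. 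The diffusion coefficient is bounded by $C_n\|\eta\|_{H^2}$. The drift is bounded by $C_n(\|r\|_{H^1}+\|\eta\|_{H^2})$, since the extra derivatives are absorbed by $J_n$. Under the stated integrability the standard It\^o formula for $\frac12\|\cdot\|_2^2$ therefore applies, and the result is \eqref{app:eq_bending_regularized_energy}, up to rewriting the coefficients.

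The rewriting uses the commutator algebra from the proof of Lemma~\ref{lem:shell_energy_commutator}:
\[
\Delta_\Gamma A_k\eta=A_kz+g_k,\qquad \Delta_\Gamma A_k^2\eta=A_k^2z+2A_kg_k+h_k .
\]
The second identity follows from $\Delta_\Gamma A_k^2=A_k\Delta_\Gamma A_k+C_kA_k$ together with $C_kA_k=A_kC_k+D_k$. Substituting these gives $\widetilde M_{n,k}=\langle z_n,J_n\Delta_\Gamma A_k\eta\rangle$ and the first part of $\widetilde R_{n,k}$, which is the It\^o correction $\|J_n\Delta_\Gamma A_k\eta\|_2^2$. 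The Stratonovich drift contributes $\langle z_n,J_n\Delta_\Gamma A_k^2\eta\rangle$, which is the second part. For the $r$-term, symmetry of $J_n$ gives $\langle z_n,J_n\Delta_\Gamma r\rangle=\langle\Delta_\Gamma\eta,\Delta_\Gamma K_nr\rangle$.

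The remaining task is the limits \eqref{app:eq_bending_Rnk_limit}, and this is the main obstacle. The terms $A_kz$ and $A_k^2z$ are not in $L^2$, so one cannot simply let $J_n\to I$. I would first handle $\widetilde M_{n,k}$. Write
\[
\langle z_n,J_nA_kz\rangle=\langle z_n,A_kz_n\rangle+\langle z_n,[J_n,A_k]z\rangle .
\]
The first term vanishes by skew-adjointness of $A_k$ (since $\Div_\Gamma\mathcal Y_k=0$). By the Friedrichs lemma, $[J_n,A_k]z\to0$ in $L^2$ with $\|[J_n,A_k]z\|_2\le C\|z\|_2$. Hence $\widetilde M_{n,k}\to\langle z,g_k\rangle=\mathcal G_k(\eta)$ pointwise in $(\omega,t)$, with domination by $C\|\eta\|_{H^2}^2$. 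Since $\eta\in L^2(\Omega;L^\infty_tH^2)$ only, the $L^2(\Omega\times(0,T'))$ convergence needs fourth moments. I would either obtain these from localization (the stopping time bounds $\|\eta\|_{H^2}$ after cutoff) or settle for convergence in probability, which suffices for the stochastic integral.

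For $\widetilde R_{n,k}$, expand $\|J_nA_kz+J_ng_k\|^2$ and commute $J_n$ past $A_k$ as above. The singular pieces are
\[
\|A_kz_n\|_2^2+\langle z_n,A_k^2z_n\rangle=\|A_kz_n\|_2^2-\|A_kz_n\|_2^2=0,
\]
where the first comes from the square and the second from the drift, and they cancel by skew-adjointness. The cross terms $2\langle A_kz_n,J_ng_k\rangle+2\langle z_n,A_kJ_ng_k\rangle$ are similarly $O(\text{commutator})$. What survives is the second-order commutator remainder $\|[J_n,A_k]z\|^2$ together with a term like $\langle [J_n,A_k]z,A_kz_n\rangle$. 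That last term is the delicate one: it requires the refined Friedrichs statement that $[J_n,A_k]$ is bounded from $H^{-1}$ to $L^2$ uniformly in $n$, and that $[[J_n,A_k],A_k]$ is uniformly bounded on $L^2$ and tends to zero strongly. This holds for smooth $\mathcal Y_k\in W^{3,\infty}$. I would verify this double-commutator bound explicitly and expect it to be the main technical point. Granting it, the limit is $\|g_k\|^2+\langle z,h_k\rangle=\mathcal R_k(\eta)$ by \eqref{eq:Rk_commutator}, with domination by $C\|\eta\|_{H^2}^2\in L^1(\Omega\times(0,T'))$, so dominated convergence concludes.
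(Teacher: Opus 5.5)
Your route is the paper's: It\^o's formula for $\frac12\|z_n\|_2^2$, the identities $\Delta_\Gamma A_k\eta=A_kz+g_k$ and $\Delta_\Gamma A_k^2\eta=A_k^2z+2A_kg_k+h_k$, the Friedrichs lemma for $\widetilde M_{n,k}$ and for the cross terms, and a double-commutator bound for the singular part of $\widetilde R_{n,k}$. The paper phrases the cancellations through $K_n=J_n^2$, namely $\langle J_nf,J_nA_kf\rangle=\frac12\langle f,[K_n,A_k]f\rangle$ and $\|J_nA_kf\|_2^2+\langle J_nf,J_nA_k^2f\rangle=\frac12\langle f,[[K_n,A_k],A_k]f\rangle$. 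You commute $J_n$ itself instead, which is only a cosmetic difference.

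One step of your $\widetilde R_{n,k}$ argument is wrong, however.

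\emph{A term is dropped.} Your list of surviving terms omits the drift contribution $\langle z_n,[J_n,A_k]A_kz\rangle$, which comes from
$J_nA_k^2z=A_k^2z_n+A_k[J_n,A_k]z+[J_n,A_k]A_kz$.

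\emph{The bound you invoke is false.} To handle $\langle[J_n,A_k]z,A_kz_n\rangle$ you appeal to $\sup_n\|[J_n,A_k]\|_{H^{-1}\to L^2}<\infty$. For non-constant $\mathcal Y_k$ this fails. Combined with strong Friedrichs convergence on $L^2$ and compactness of $L^2(\Gamma)\hookrightarrow H^{-1}(\Gamma)$, such a bound would force $\|[J_n,A_k]\|_{L^2\to L^2}\to0$. But the commutator has an $O(1)$ symbol at frequencies comparable to the inverse mollification scale, so its $L^2$ operator norm does not tend to zero.

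\emph{The bound is also unnecessary.} By skew-adjointness, $\langle A_kz_n,[J_n,A_k]z\rangle=-\langle z_n,A_k[J_n,A_k]z\rangle$. So the dropped term and the one you kept combine into a double commutator:
\[
\|J_nA_kz\|_2^2+\langle z_n,J_nA_k^2z\rangle
=\|[J_n,A_k]z\|_2^2+\langle z_n,[[J_n,A_k],A_k]z\rangle .
\]
This is exactly the paper's $\frac12\langle z,[[K_n,A_k],A_k]z\rangle$, rewritten. Hence you need only two facts: that $[[J_n,A_k],A_k]$ is uniformly bounded on $L^2$, and that it converges strongly to zero. The second follows from the first together with convergence on smooth functions. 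The same uniform bound gives the domination $|\widetilde R_{n,k}|\le C\|\eta\|_{H^2}^2$ that your dominated-convergence step requires.

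Your remark on moments is correct. With only $\eta\in L^2(\Omega;L^\infty(0,T';H^2(\Gamma)))$, $\mathcal G_k(\eta)$ need not lie in $L^2(\Omega\times(0,T'))$. The first convergence in \eqref{app:eq_bending_Rnk_limit} therefore needs either localization or the all-order moment bounds available where the lemma is applied. The $L^1$ convergence of $\widetilde R_{n,k}$ is fine by dominated convergence.
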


\begin{proof}
	From
	\[
	\Delta_\Gamma A_k\eta=A_kz+g_k,
	\qquad
	\Delta_\Gamma A_k^2\eta=A_k^2z+2A_kg_k+h_k,
	\]
	we obtain from \eqref{app:eq_eta_ito_bending}
	\begin{align*}
		\dd z_n
		={}&
		J_n\Delta_\Gamma r\,\dd t
		+
		\frac12\sum_{k=1}^K
		J_n(A_k^2z+2A_kg_k+h_k)\,\dd t
		\\
		&+
		\sum_{k=1}^KJ_n(A_kz+g_k)\,\dd W_k .
	\end{align*}
	It\^o's formula gives \eqref{app:eq_bending_regularized_energy}.
	Since $A_k^*=-A_k$,
	\begin{align*}
		\langle J_nf,J_nA_kf\rangle
		&=\frac12\langle f,[K_n,A_k]f\rangle,
		\\
		\langle J_nf,J_nA_k^2f\rangle
		+\|J_nA_kf\|_2^2
		&=\frac12\langle f,[[K_n,A_k],A_k]f\rangle .
	\end{align*}
	The Friedrichs commutator limits
	\[
	[K_n,A_k]f\to0,
	\qquad
	[[K_n,A_k],A_k]f\to0
	\qquad\text{in }L^2(\Gamma)
	\]
	therefore imply
	\[
	\widetilde M_{n,k}\to\langle z,g_k\rangle=\mathcal G_k(\eta),
	\qquad
	\widetilde R_{n,k}\to\|g_k\|_2^2+\langle z,h_k\rangle
	=\mathcal R_k(\eta),
	\]
	in the spaces stated in \eqref{app:eq_bending_Rnk_limit}. The bounds follow
	from \eqref{eq:commutator_energy_bound}.
\end{proof}

\subsection{Normal-compatible kinetic--internal-energy approximation}
	
	Lemma~\ref{app:lem_bending_commutator} provides the bending-energy
	identity at a fixed spatial scale. We now construct normal-compatible
	semimartingale tests for the kinetic--internal-energy part at the same
	scale; the two identities are combined in
	Subsection~\ref{app:total_energy_recovery}.
	
\begin{lemma}[Projected-core density and causal representatives]
	\label{app:lem_energy_core_density}
	Fix $T'<\tau^\zeta$ and a bounded-flow localization. For every smooth
	adapted semimartingale precursor $(\widetilde{\mathbf q},\psi)$ there is a
	projected pair
	\[
	(\mathbf q,\psi)\in\mathcal T_{\rm ad}(\zeta,\phi),
	\qquad
	(\mathbf q-\psi\mathbf e_z\circ\Phi_\zeta^{-1})
	\cdot\mathbf n^{\zeta,\phi}=0
	\quad\text{on }\Gamma_\zeta,
	\]
	and fixed causal smooth representatives
	$(\mathbf q^{(j)},\psi^{(j)})$ satisfying
	\begin{equation}
		\label{app:eq_core_density}
		\mathbf q^{(j)}\to\mathbf q
		\quad\text{in }L^2(0,T';H^1),
		\qquad
		\psi^{(j)}\to\psi
		\quad\text{in }L^2(0,T';H^2(\Gamma)).
	\end{equation}
	If
	\[
	\dd\mathbf q^{(j)}
	=\mathbf q_0^{(j)}\,\dd t
	+\sum_{k=1}^K\mathbf q_k^{(j)}\,\dd W_k,
	\qquad
	\dd\psi^{(j)}
	=\psi_0^{(j)}\,\dd t
	+\sum_{k=1}^K\psi_k^{(j)}\,\dd W_k,
	\]
	then the corresponding coefficients converge to those of
	$(\mathbf q,\psi)$ in the drift and quadratic-variation dualities of
	Definition~\ref{def:trans_solution}. Consequently all non-pressure
	pairings in the coupled identity converge along the same causal family.
\end{lemma}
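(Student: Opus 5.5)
The construction has three stages: build the projected pair $(\mathbf q,\psi)$ from the precursor, build causal smooth representatives $(\mathbf q^{(j)},\psi^{(j)})$ by a fixed spatial mollification composed with a backward Steklov average, and then read off \eqref{app:eq_core_density} and the convergence of the It\^o coefficients.

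\textbf{Step 1: the projected pair.} Start from the smooth precursor $(\widetilde{\mathbf q},\psi)$ and project only the fluid component inside a tube $T_\zeta^h$ of fixed width. Set
\[
\mathbf q:=\widetilde{\mathbf q}-\Xi(d_\zeta/h)\,a\,\mathbf B_{\zeta,\phi},
\qquad
a:=\bigl(\widetilde{\mathbf q}-\psi\mathbf e_z\circ\Phi_\zeta^{-1}\bigr)\cdot\mathbf n^{\zeta,\phi},
\]
where $a$ is extended constantly along vertical fibres, $\Xi$ is a cutoff equal to one near $d_\zeta=0$, and the scalar normalisation $|\nabla^\phi d_\zeta|$ is absorbed into $a$. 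This makes the normal mismatch vanish on $\Gamma_\zeta$ and leaves $\psi$ untouched.

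By Lemma~\ref{app:lem_graph_normal_multiplier}, $\mathbf B_{\zeta,\phi}\in L^\infty\cap W^{1,2}$, and $a$ inherits $H^1(\Gamma)$ regularity from $\nabla_\Gamma\zeta\in H^1$. Hence $\mathbf q\in H^1$ and $\mathbf q|_{\Gamma_b}=0$, so $(\mathbf q,\psi)\in\mathcal T_{\rm ad}(\zeta,\phi)$. Causality holds because $\zeta$, $\phi$ and $(\widetilde{\mathbf q},\psi)$ are progressively measurable.

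The It\^o differential of $\mathbf q$ comes from It\^o's formula. The geometry $\zeta$ has finite variation, since $\partial_t\zeta=w$, and the $\phi$-dependence enters through $C^\vartheta_t$ coefficients. To avoid differentiating $\phi$ in time, I would freeze $\phi$ on the deterministic partition of Subsection~\ref{app:effective_flux}, exactly as in the effective-flux argument. The time derivative of $\mathbf B$ is then controlled by $\|\nabla_\Gamma w\|_{L^2}$, using the last bound of Lemma~\ref{app:lem_graph_normal_multiplier}.

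\textbf{Step 2: the causal representatives.} Define
\[
\mathbf q^{(j)}:=\mathcal S_{1/j}\,J_j\mathbf q,
\qquad
\psi^{(j)}:=\mathcal S_{1/j}\,J_j\psi,
\]
where $J_j$ is a deterministic spatial mollifier (applied after the fixed extension to $\mathcal O_\alpha$) and $\mathcal S_{\varepsilon}f(t):=\varepsilon^{-1}\int_{t-\varepsilon}^tf\,\dd s$ is the backward Steklov average. Both operations are linear and causal, so the representatives are adapted and spatially smooth.

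The convergence \eqref{app:eq_core_density} follows from standard mollifier and Steklov convergence in $L^2(0,T';H^1)$ and $L^2(0,T';H^2(\Gamma))$.

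The It\^o coefficients of the representatives are the averaged and mollified coefficients of $(\mathbf q,\psi)$. More precisely, the Steklov average of a semimartingale has a drift given by the averaged drift plus averaged stochastic increments. To keep a genuine It\^o structure, I would apply the Steklov average to the drift part only and keep $\int J_j\mathbf q_k\,\dd W_k$ unaveraged. The coefficients $J_j\mathbf q_k$ then converge in $L^2(\Omega\times(0,T');L^2)$ by the It\^o isometry and BDG, which gives the drift and quadratic-variation dualities.

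\textbf{Step 3: the pairings, and the main obstacle.} The non-pressure pairings of item~(5) converge by H\"older's inequality, using the energy bounds \eqref{eq:delta_final_energy_moment} for $\varrho\mathbf v$, $\nabla^\phi\mathbf v$, $w$ and $\Delta^\chi_\Gamma\zeta$. The friction trace converges via Proposition~\ref{app:prop_moving_trace_convergence}.

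The step I expect to be the main obstacle is the drift $\mathbf q_0$ in the bulk pairing $\int\varrho\mathbf v\cdot\mathbf q_0$. The term $\partial_t\mathbf B$ is only $L^2_tL^2_x$ through $\nabla_\Gamma w$, while $\varrho\mathbf v$ is only in $L^\infty_tL^{2\gamma/(\gamma+1)}$. The H\"older exponents therefore need $\gamma>3/2$, combined with the tube localization. I would first try using the boundedness of $\mathbf B$ together with the $L^2_tH^1$ bound on $w$ via Sobolev embedding on $\Gamma$. If that fails, I would switch to a $\Xi$ with a thinner tube and pass to the limit along a diagonal in $(h,j)$.
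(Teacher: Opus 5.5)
Your fixed-width projection in Step~1 is fine as a definition of $(\mathbf q,\psi)$: $a\in H^1(\Gamma)\cap L^\infty$ and $\mathbf n^{\zeta,\phi}\in H^1\cap L^\infty$ give $\mathbf q\in H^1$. The freezing step, however, would fail. Freezing $\phi$ on the deterministic partition replaces $\mathbf n^{\zeta,\phi(t)}$ by $\mathbf n^{\zeta,\phi(t_i)}$ on $[t_i,t_{i+1})$. The resulting field then satisfies the normal constraint only up to $O(\xi)$, so it is not an element of $\mathcal T_{\rm ad}(\zeta,\phi)$. It also jumps at every $t_i$, so it is not a continuous It\^o semimartingale $\dd\mathbf q=\mathbf q_0\,\dd t+\sum_k\mathbf q_k\,\dd W_k$. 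Freezing was harmless in the effective-flux argument because the frozen operators there were auxiliary and their $O(\xi)$ errors were removed afterwards; here the constraint must hold exactly. Nor is freezing needed. The flow is horizontal with $\det\nabla_\Gamma\chi=1$, so $\operatorname{cof}(\nabla\phi)=\operatorname{diag}((\nabla_\Gamma\chi)^{-T},1)$ and $\mathbf n^{\zeta,\phi}$ is a smooth function of $(\nabla_\Gamma\zeta,\nabla_\Gamma\chi)$, where $\nabla_\Gamma\chi$ solves a linear SDE with smooth coefficients. It\^o's formula then gives bounded martingale coefficients directly, which is how the paper argues (the projection ``depends smoothly on the regularized graph and flow''). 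Likewise, in Step~2 only your second definition is admissible. The field $\mathcal S_{1/j}J_j\mathbf q$ itself is $C^1$ in time, has no martingale part, and has drift $j\bigl(J_j\mathbf q(t)-J_j\mathbf q(t-1/j)\bigr)$ of size $j^{1/2}$, so its coefficients cannot converge to $(\mathbf q_0,\mathbf q_k)$.

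The more serious gap is Step~3, which is part of the statement and which you leave open. The $\nabla_\Gamma w$-part of $\mathbf q_0$ comes from $\partial_t\mathbf n^{\zeta,\phi}$ and $\partial_t a$; it is fibre-constant and only in $L^2_tL^2_x$. Meanwhile $\varrho\mathbf v$ lies in $L^\infty_tL^{2\gamma/(\gamma+1)}_x$ and in $L^2_tL^r_x$ with $\frac1r>\frac1\gamma+\frac16$. Neither exponent reaches $2$ when $\gamma<3$, so H\"older's inequality does not control $\int\varrho\mathbf v\cdot\mathbf q_0$. Your first fallback does not help, since the bad factor is $\nabla_\Gamma w$, not $w$. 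Your second fallback fails quantitatively. Since $\partial_z[\Xi(d_\zeta/h)\,a\,\mathbf n^{\zeta,\phi}]=h^{-1}\Xi'(d_\zeta/h)\,a\,\mathbf n^{\zeta,\phi}$, a correction of width $h$ has $H^1$ norm at least $c\,h^{-1/2}\|a\|_{L^2(\Gamma)}$, and $a$ is the $O(1)$ mismatch of the precursor. Shrinking the tube therefore destroys the $H^1$ bound and leaves no fixed $\mathbf q$ for the representatives to converge to.

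The missing idea is the paper's: apply the thin vertical-fibre correction $R_{r_j}$ only to the residual mismatch $a_j$ of the mollified and backward-Steklov-averaged object. This $a_j$ is bounded in $H^1(\Gamma)$ and tends to zero in $L^2(\Gamma)$. The paper uses the bound $\|R_{r}\|_{H^1}\le C\bigl(r^{1/2}\|a_j\|_{H^1(\Gamma)}+r^{-1/2}\|a_j\|_{L^2(\Gamma)}\bigr)$ with a deterministic diagonal choice $r_j\|a_j\|_{H^1}^2\to0$, $\|a_j\|_{L^2}^2/r_j\to0$, which gives $\|R_{r_j}\|_{H^1}\to0$. This removes the normal mismatch of the representatives while preserving $\mathbf q^{(j)}\to\mathbf q$ in $L^2(0,T';H^1)$. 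Their It\^o coefficients come from It\^o's formula for the smoothly dependent projection, not from a H\"older bound on the rough drift. Your plain regularizations $J_j\mathbf q$ retain a nonzero normal mismatch and supply none of this.
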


\begin{proof}
	Use deterministic spatial mollification and backward Steklov averaging.
	Let $a_j$ be the scalar normal mismatch before projection and let
	$R_{r_j}$ be the vertical-fiber correction in a layer of width $r_j$.
	Then
	\begin{equation}
		\label{app:eq_projection_correction}
		\|R_{r_j}\|_{H^1}
		\le
		C\left(
		r_j^{1/2}\|a_j\|_{H^1(\Gamma)}
		+r_j^{-1/2}\|a_j\|_{L^2(\Gamma)}
		\right).
	\end{equation}
	After localization,
	\[
	\sup_j\|a_j\|_{H^1(\Gamma)}<\infty,
	\qquad
	\|a_j\|_{L^2(\Gamma)}\to0.
	\]
	Choose a deterministic $r_j\to0$ such that
	\[
	r_j\|a_j\|_{H^1(\Gamma)}^2\to0,
	\qquad
	\frac{\|a_j\|_{L^2(\Gamma)}^2}{r_j}\to0.
	\]
	Hence
	$
	\|R_{r_j}\|_{H^1}\to0,
	$
	which gives \eqref{app:eq_core_density}. The projection depends smoothly
	on the regularized graph and flow; It\^o's formula therefore gives the
	convergence of the drift and martingale coefficients. The remaining
	non-pressure pairings follow from \eqref{app:eq_core_density}, the trace
	estimates of Subsection~\ref{app:geometry}, and BDG inequality. The representatives
	so obtained are precisely the fixed causal family used in
	Appendix~\ref{app:semimartingale_tests}.
\end{proof}

\begin{lemma}[Causal normal-compatible cross-energy approximation]
	\label{app:lem_fluid_energy_compatible_regularization}
	Fix $T'<\tau^\eta$ and set
	\[
	P(\rho):=\frac{a}{\gamma-1}\rho^\gamma,
	\qquad
	p_\gamma:=\frac{2\gamma}{\gamma-1}<6,
	\qquad
	K_n:=J_n^2,
	\qquad
	r_n:=K_nr .
	\]
	Then there exist positive numbers $\sigma_n\to0$ and progressively measurable
	pairs
	$
	(\mathbf U_n,\psi_n),
	$
	each of which is an It\^o semimartingale, with
	\[
	\psi_n
	:=
	\mathcal S^-_{\sigma_n}r_n,
	\qquad
	(\mathcal S^-_\sigma f)(t)
	:=
	\frac1\sigma
	\int_{(t-\sigma)_+}^t
	f(s)\,\dd s,
	\]
	such that
	\begin{equation}
		(\mathbf U_n-\psi_n\mathbf e_z)\cdot\mathbf N^\eta
		=0
		\qquad\text{on }\Gamma_\eta .
		\label{app:eq_energy_core_normal}
	\end{equation}
	
	Moreover,
	\begin{align}
	&	\mathbf U_n-\mathbf u
		\to0\;\;
		\text{in }
		L^2\bigl(
		\Omega\times(0,T');
		L^{p_\gamma}(\mathcal O_\eta)
		\bigr),
		\label{app:eq_energy_core_Lpgamma}
		\\
	&	\mathbb E
		\int_0^{T'}\!\!\int_{\mathcal O_\eta}
		\rho|\mathbf U_n-\mathbf u|^2
		\,\dd\bm x\,\dd t
		\to0,
		\label{app:eq_energy_core_weighted_kinetic}
		\\
		&\mathbb E
		\left|
		\int_0^{T'}\!\!\int_{\mathcal O_\eta}
		\mathbb S(\nabla\mathbf u):
		\nabla(\mathbf U_n-\mathbf u)
		\,\dd\bm x\,\dd t
		\right|
		\to0,
		\label{app:eq_energy_core_viscous_duality}
		\\
		&\mathbb E
		\left|
		\int_0^{T'}\!\!\int_{\Gamma_\eta}
		(\mathbf u-r\mathbf e_z)_{\tau^\eta}
		\cdot
		\Bigl[
		(\mathbf U_n-\psi_n\mathbf e_z)_{\tau^\eta}
		-
		(\mathbf u-r\mathbf e_z)_{\tau^\eta}
		\Bigr]
		\,\dd S\,\dd t
		\right|
		\to0.
		\label{app:eq_energy_core_navier_duality}
	\end{align}
	The corresponding drift and martingale characteristics converge in the
	dualities of Lemma~\ref{app:lem_energy_core_density}.
	Define
	\begin{align*}
		\mathcal K_{{\rm cpl},n}(t)
		&:=
		\int_{\mathcal O_\eta(t)}
		\left(
		\rho\mathbf u\cdot\mathbf U_n
		-\frac12\rho|\mathbf U_n|^2
		+P(\rho)
		\right)\,\dd\bm x
		+\frac12\|J_nr(t)\|_2^2,
		\\
		\mathcal V_n(t)
		&:=
		\int_0^t\!\!\int_{\mathcal O_\eta(s)}
		\mathbb S(\nabla\mathbf u):
		\nabla\mathbf U_n
		\,\dd\bm x\,\dd s,
		\\
		\mathcal N_n(t)
		&:=
		\iota
		\int_0^t\!\!\int_{\Gamma_\eta(s)}
		(\mathbf u-r\mathbf e_z)_{\tau^\eta}
		\cdot
		(\mathbf U_n-\psi_n\mathbf e_z)_{\tau^\eta}
		\,\dd S\,\dd s .
	\end{align*}
	Then
	\begin{align}
		&\mathcal K_{{\rm cpl},n}(t)
		+\mathcal V_n(t)
		+\mathcal N_n(t)
		+\nu_s
		\int_0^t
		\|\nabla_\Gamma J_nr\|_2^2\,\dd s
		\notag\\
		&\qquad\le
		\mathcal K_{{\rm cpl},n}(0)
		-
		\int_0^t
		\left\langle
		\Delta_\Gamma\eta,
		\Delta_\Gamma K_nr
		\right\rangle\,\dd s
		+
		\varepsilon_n(t),
		\label{app:eq_fluid_cross_balance}
	\end{align}
	where
$		\mathbb E
		\sup_{t\le T'}
		|\varepsilon_n(t)|
		\longrightarrow0 .$	
	Finally,
	\begin{align}
		\mathcal K_{{\rm cpl},n}(t)
		&\longrightarrow
		\int_{\mathcal O_\eta(t)}
		\left(
		\frac12\rho|\mathbf u|^2+P(\rho)
		\right)\,\dd\bm x
		+\frac12\|r(t)\|_2^2,
		\label{app:eq_fluid_cross_energy_limit}
		\\
		\mathcal V_n(t)
		&\longrightarrow
		\int_0^t\!\!\int_{\mathcal O_\eta(s)}
		\mathbb S(\nabla\mathbf u):
		\nabla\mathbf u
		\,\dd\bm x\,\dd s,
		\label{app:eq_fluid_cross_viscous_limit}
		\\
		\mathcal N_n(t)
		&\longrightarrow
		\iota
		\int_0^t\!\!\int_{\Gamma_\eta(s)}
		|(\mathbf u-r\mathbf e_z)_{\tau^\eta}|^2
		\,\dd S\,\dd s .
		\label{app:eq_fluid_cross_navier_limit}
	\end{align}
\end{lemma}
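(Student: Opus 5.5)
The plan is to build $\mathbf U_n$ from a space--time regularization of $\mathbf u$ and then correct it inside a thin vertical layer, in the same way as the projected-core construction of Lemma~\ref{app:lem_energy_core_density}. First, define a causal smooth precursor $\widetilde{\mathbf U}_n:=\mathcal S^-_{\sigma_n}J^{(3)}_n\mathbf u$. Here $J^{(3)}_n$ is a spatial mollifier on the fixed cylinder, applied after an $H^1$ extension of $\mathbf u$ across $\Gamma_\eta$ that preserves the zero bottom trace. Backward Steklov averaging makes $\widetilde{\mathbf U}_n$ progressively measurable and absolutely continuous in time, so it is trivially an Itô semimartingale with zero martingale part. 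The shell component is defined analogously, $\psi_n=\mathcal S^-_{\sigma_n}K_nr$.

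Next we enforce the normal constraint. Let $a_n:=(\widetilde{\mathbf U}_n-\psi_n\mathbf e_z)\cdot\mathbf N^\eta$ on $\Gamma$. Subtract the vertical-fiber correction $R_{r_n}=\Xi(d_\eta/r_n)$-type cutoff $\times\, a_n\,\mathbf N^\eta/|\mathbf N^\eta|^2$, supported in a layer of width $r_n$ below $\Gamma_\eta$. Then $\mathbf U_n:=\widetilde{\mathbf U}_n-R_{r_n}$ satisfies \eqref{app:eq_energy_core_normal} exactly. The physical kinematic condition gives $a_n\to0$ in $L^2(\Omega\times(0,T')\times\Gamma)$, using trace continuity, while $a_n$ stays bounded in $L^2_tH^1_\Gamma$ after localization. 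Estimate \eqref{app:eq_projection_correction} with a deterministic choice of $r_n$ then gives $R_{r_n}\to0$ in $L^2H^1$. Since $\mathbf U_n\to\mathbf u$ in $L^2_tH^1$, Sobolev embedding yields \eqref{app:eq_energy_core_Lpgamma} because $p_\gamma<6$. Hölder with $\rho\in L^\infty_tL^\gamma$ yields \eqref{app:eq_energy_core_weighted_kinetic}. The viscous duality \eqref{app:eq_energy_core_viscous_duality} is immediate. The tangential trace convergence \eqref{app:eq_energy_core_navier_duality} follows from trace continuity together with $\psi_n\to r$ in $L^2H^1_\Gamma$. The limits \eqref{app:eq_fluid_cross_energy_limit}--\eqref{app:eq_fluid_cross_navier_limit} are then direct consequences; for example, $\rho\mathbf u\cdot\mathbf U_n-\frac12\rho|\mathbf U_n|^2\to\frac12\rho|\mathbf u|^2$.

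To obtain \eqref{app:eq_fluid_cross_balance}, insert $(\mathbf U_n,\psi_n)$ into the physical coupled momentum--structure identity, which is valid by Lemma~\ref{lem:equivalence}. The pair is admissible because it satisfies the normal condition. This identity produces $\int\rho\mathbf u\cdot\mathbf U_n+\int r\psi_n$ with the viscous, friction and elastic terms $-\mathcal V_n-\mathcal N_n-\int\langle\Delta\eta,\Delta\psi_n\rangle$, plus a stochastic integral. Separately, apply the semimartingale continuity identity, Lemma~\ref{app:lem_semimart_continuity}, with $b(z)=z$ and test $\frac12|\mathbf U_n|^2$, and with the renormalization $P$ (after truncation). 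Combining these gives the evolution of $-\frac12\int\rho|\mathbf U_n|^2+\int P(\rho)$. The pressure work $\int p(\rho)\Div\mathbf U_n$ cancels against $-\int(\rho P'-P)\Div\mathbf u$ up to an error $\int p(\rho)\Div(\mathbf U_n-\mathbf u)$. Handling this error is the main obstacle, since $p(\rho)$ is only $L^1$-type and that product is not controlled. I would instead keep the renormalized inequality in the form supplied by the strong $\delta$-limit, where the renormalized energy identity holds with the one-sided sign. I would then use the interior pressure gain of Proposition~\ref{app:prop_delta_interior_pressure} and the boundary-layer tightness of Proposition~\ref{app:prop_delta_boundary_pressure} to show that $\int p(\rho)\Div(\mathbf U_n-\mathbf u)\to0$ after localizing away from the layer of width $r_n$; this is what requires the $\le$ rather than an equality.

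The convective remainder $\int\rho\mathbf u\otimes(\mathbf u-\mathbf U_n):\nabla\mathbf U_n$ is treated by the Friedrichs/Steklov commutator, using $\partial_t\widetilde{\mathbf U}_n=\sigma_n^{-1}(\cdot)$ compensated against the transport term. Its convergence uses the $L^{p_\gamma}$ bounds. The shell cross term $\int r\psi_n-\frac12\|J_nr\|^2$ combines with $\nu_s\|\nabla J_nr\|^2$ through the Steklov identity, which has a favorable sign. In the elastic term, $\langle\Delta\eta,\Delta\psi_n\rangle$ differs from $\langle\Delta\eta,\Delta K_nr\rangle$ by the Steklov lag, which vanishes as $\sigma_n\to0$ after first choosing $n$. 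The martingale terms then cancel between the momentum identity and the continuity test, and all leftover terms are collected into $\varepsilon_n$. Their uniform control in $\mathbb E\sup_t$ is obtained by BDG and a diagonal choice of $\sigma_n,r_n$.
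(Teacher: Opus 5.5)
Your construction of $(\mathbf U_n,\psi_n)$, a causal regularization followed by a thin-layer normal correction, matches the paper. So does your derivation of \eqref{app:eq_energy_core_Lpgamma}--\eqref{app:eq_energy_core_navier_duality} and \eqref{app:eq_fluid_cross_energy_limit}--\eqref{app:eq_fluid_cross_navier_limit}: projected core, subcritical embedding with $p_\gamma<6$, H\"older against $\rho\in L^\infty_tL^\gamma_x$, and identity \eqref{app:eq_cross_kinetic_identity}.

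\textbf{The pressure step does not close.} You try to derive the balance \eqref{app:eq_fluid_cross_balance} at the final level, by testing the limit equations with $(\mathbf U_n,\psi_n)$ and renormalizing the continuity equation with $b=P$.
\begin{itemize}
\item That renormalization is not available. Item~(4) of Definition~\ref{def:trans_solution} allows only $b$ with compactly supported $b'$. Removing the truncation needs $p(\rho)\Div\mathbf u\in L^1$, which with $\Div\mathbf u$ only in $L^2$ requires $\rho\in L^{2\gamma}$.
\item The available estimates fall short of this. Proposition~\ref{app:prop_delta_interior_pressure} gives only $\rho\in L^{\gamma+\Theta}_{\rm loc}$ for some small $\Theta>0$. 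Proposition~\ref{app:prop_delta_boundary_pressure} gives only equi-integrability of $p(\rho)$ near the boundary.
\item For the same reason, the step you call the main one fails. Since $\Div(\mathbf U_n-\mathbf u)\to0$ only in $L^2$, the quantity $\int p(\rho)\Div(\mathbf U_n-\mathbf u)$ is not even defined, let alone small. Cutting out the $r_n$-layer does not help, because the integrability deficit is in the interior.
\item Even $\int p(\rho)\Div\mathbf U_n$ is undefined for your projected test, which is only $H^1$ near $\Gamma_\eta$. This is why the paper pairs $p(\rho)$ only with the fixed smooth causal representatives $\mathbf U_n^{(j)}$ and works with the closed residual.
\end{itemize}

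\textbf{The Steklov sign is reversed.} With $J_nr$ extended by zero to negative times,
\[
\int_0^t\langle r,\partial_t\psi_n\rangle\,\dd s
=\frac12\,\mathcal S^-_{\sigma_n}\bigl(\|J_nr\|_2^2\bigr)(t)
+\frac1{2\sigma_n}\int_0^t\bigl\|J_nr(s)-J_nr(s-\sigma_n)\bigr\|_2^2\,\dd s .
\]
This drift pairing appears on the right-hand side of the coupled identity with a plus sign. The nonnegative remainder therefore gives a lower bound for $\frac12\|J_nr(t)\|_2^2$, not an upper one. The causal backward average is exactly what produces such It\^o-type positive remainders; a favorable sign would need a forward, non-adapted average. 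So nothing in your final-level computation produces the $\le$.

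\textbf{How the paper proceeds.} It carries out the whole combination at each regularized approximation level: the It\^o formula \eqref{app:eq_scalar_kinetic_ito}, Lemma~\ref{app:lem_semimart_continuity} with $b(z)=z$, the pressure-potential balance, and the smooth coupled identity. At that level all pressure pairings are classical, and the one-sided inequality \eqref{app:eq_cross_balance_smooth} comes from the approximate pressure-potential balance. The paper then passes the chain $N\to\infty,\dots,\delta\to0$ for fixed $n,j$, using Proposition~\ref{app:prop_delta_global_density} only against smooth tests, and lets $j\to\infty$ afterwards. Your remark about keeping the one-sided inequality ``supplied by the $\delta$-limit'' points in this direction, but the argument you actually give is the final-level pairing, and that does not close.
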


\begin{proof}
	Choose
	$
	\psi_n=\mathcal S^-_{\sigma_n}K_nr
	$
	and apply Lemma~\ref{app:lem_energy_core_density}. By
	Lemma~\ref{app:lem_subgraph_subcritical_korn} and
	$p_\gamma<6$,
	\[
	\mathbf U_n-\mathbf u
	\to0
	\quad\text{in }
	L^2\bigl(
	\Omega\times(0,T');
	L^{p_\gamma}(\mathcal O_\eta)
	\bigr).
	\]
	Hence, by H\"older's inequality,
	\begin{align}
		\mathbb E
		\int_0^{T'}\!\!\int_{\mathcal O_\eta}
		\rho|\mathbf U_n-\mathbf u|^2
		\,\dd\bm x\,\dd t\le
		\mathbb E
		\left[
		\|\rho\|_{L^\infty_tL^\gamma_x}
		\|\mathbf U_n-\mathbf u\|_{L^2_tL^{p_\gamma}_x}^2
		\right]
		\longrightarrow0 .
		\label{app:eq_weighted_kinetic_estimate}
	\end{align}
	The viscous and Navier limits follow from the corresponding
	$H^1$ and trace dualities of
	Lemma~\ref{app:lem_energy_core_density}, which proves
	\eqref{app:eq_energy_core_Lpgamma}--%
	\eqref{app:eq_energy_core_navier_duality}.
	
	For fixed $n$, let
	$(\mathbf U_n^{(j)},\psi_n^{(j)})$ be the causal smooth
	representatives furnished by
	Lemma~\ref{app:lem_energy_core_density}, and set
	$
	\varphi_n^{(j)}
	:=
	\frac12|\mathbf U_n^{(j)}|^2 .
	$
	If
	\[
	\dd\mathbf U_n^{(j)}
	=
	\mathbf U_{n,0}^{(j)}\,\dd t
	+
	\sum_{k=1}^K
	\mathbf U_{n,k}^{(j)}\,\dd W_k,
	\]
	then
	\begin{align}
		\dd\varphi_n^{(j)}
		={}
		\left(
		\mathbf U_n^{(j)}
		\cdot\mathbf U_{n,0}^{(j)}
		+
		\frac12
		\sum_{k=1}^K
		|\mathbf U_{n,k}^{(j)}|^2
		\right)\dd t+
		\sum_{k=1}^K
		\mathbf U_n^{(j)}
		\cdot\mathbf U_{n,k}^{(j)}
		\,\dd W_k .
		\label{app:eq_scalar_kinetic_ito}
	\end{align}
	Lemma~\ref{app:lem_semimart_continuity}, applied with
	$b(z)=z$ and $\varphi=\varphi_n^{(j)}$, gives the corresponding
	kinetic correction.
		At every regularized approximation level, combine this identity with
	the pressure-potential balance and the smooth coupled
	momentum--structure identity before passing to the limit. The resulting
	identity has the form
	\begin{align}
		&\mathcal K_{{\rm cpl},n}^{(j)}(t)
		+\mathcal V_n^{(j)}(t)
		+\mathcal N_n^{(j)}(t)
		+\nu_s
		\int_0^t
		\|\nabla_\Gamma J_nr\|_2^2\,\dd s
		\notag\\
		&\qquad\le
		\mathcal K_{{\rm cpl},n}^{(j)}(0)
		-
		\int_0^t
		\left\langle
		\Delta_\Gamma\eta,
		\Delta_\Gamma K_nr
		\right\rangle\,\dd s
		+
		\varepsilon_{n,j}(t).
		\label{app:eq_cross_balance_smooth}
	\end{align}
	For fixed $n,j$, the pressure pairings in
	\eqref{app:eq_cross_balance_smooth} are classical. Applying the limit chain
	\[
	N\to\infty,
	\qquad
	m\to\infty,
	\qquad
	\epsilon\to0,
	\qquad
	l\to0,
	\qquad
	\delta\to0,
	\]
	with Proposition~\ref{app:prop_delta_global_density} at the last step gives,
	for fixed $n$,
	\[
	\mathcal K_{{\rm cpl},n}^{(j)}
	+\mathcal V_n^{(j)}
	+\mathcal N_n^{(j)}
	\longrightarrow
	\mathcal K_{{\rm cpl},n}
	+\mathcal V_n
	+\mathcal N_n
	\qquad (j\to\infty).
	\]
	Hence \eqref{app:eq_cross_balance_smooth} converges to
	\eqref{app:eq_fluid_cross_balance}. The drift and martingale dualities of
	Lemma~\ref{app:lem_energy_core_density}, together with BDG, yield
	\[
	\mathbb E
	\sup_{t\le T'}
	|\varepsilon_n(t)|
	\longrightarrow0
	\qquad (n\to\infty).
	\]

	Finally,
	\begin{equation}
		\rho\mathbf u\cdot\mathbf U_n
		-\frac12\rho|\mathbf U_n|^2
		=
		\frac12\rho|\mathbf u|^2
		-\frac12\rho|\mathbf U_n-\mathbf u|^2.
		\label{app:eq_cross_kinetic_identity}
	\end{equation}
	Combining
	\eqref{app:eq_cross_kinetic_identity} with
	\eqref{app:eq_energy_core_weighted_kinetic},
	\eqref{app:eq_energy_core_viscous_duality}, and
	\eqref{app:eq_energy_core_navier_duality} yields
	\eqref{app:eq_fluid_cross_energy_limit}--%
	\eqref{app:eq_fluid_cross_navier_limit}.
\end{proof}

	\subsection{Total-energy recovery}\label{app:total_energy_recovery}

\begin{lemma}[Coupled total-energy recovery]
	\label{app:lem_total_energy_recovery}
	Let $(\varrho,\mathbf v,\zeta,w)$ be the final transformed solution and set
	$
	\widehat\eta:=\zeta\circ\chi^{-1},
	\widehat r:=w\circ\chi^{-1}.
	$
	Fix $T'<\tau^\zeta$. Then, for every $t\le T'$,
	\begin{align}
		\mathcal E_{\rm tr}(t)+\mathcal D_{\rm tr}(t)
		\le{}&
		\mathcal E_{\rm tr}(0)
		+
		\sum_{k=1}^K
		\int_0^t
		\mathcal G_k(\widehat\eta(s))\,\dd W_k(s)+
		\frac12
		\sum_{k=1}^K
		\int_0^t
		\mathcal R_k(\widehat\eta(s))\,\dd s .
		\label{app:eq_total_energy_recovered}
	\end{align}
	Here $\mathcal E_{\rm tr},\mathcal D_{\rm tr}$ are defined by
	\eqref{eq:trans_energy_def}--\eqref{eq:trans_dissipation_def}, and
	$\mathcal G_k,\mathcal R_k$ by
	\eqref{eq:Gk_commutator}--\eqref{eq:Rk_commutator}.
\end{lemma}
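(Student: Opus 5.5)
The plan is to prove \eqref{app:eq_total_energy_recovered} by adding, at a common spatial regularization scale $n$, the regularized bending-energy identity of Lemma~\ref{app:lem_bending_commutator} and the cross-energy inequality \eqref{app:eq_fluid_cross_balance} of Lemma~\ref{app:lem_fluid_energy_compatible_regularization}. We then let $n\to\infty$. We work in physical variables $(\rho,\mathbf u,\eta,r)=(\varrho,\mathbf v,\widehat\eta,\widehat r)\circ(\phi^{-1},\chi^{-1})$ via Lemma~\ref{lem:equivalence}. Because $\phi$ and $\chi$ are volume- and area-preserving and the Piola identity transports the interface weight $J_\Gamma^\phi$, every term of $\mathcal E_{\rm tr}$ and $\mathcal D_{\rm tr}$ equals its physical counterpart. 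In particular $\|\Delta_\Gamma^\chi\zeta\|_2=\|\Delta_\Gamma\widehat\eta\|_2$ and $\|\nabla_\Gamma^\chi w\|_2=\|\nabla_\Gamma\widehat r\|_2$.

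\emph{Step 1: the shell equation in It\^o form.} From item (3) of Definition~\ref{def:trans_solution} we have $\partial_t\zeta=w$. The Stratonovich chain rule (cf.\ \eqref{eq:approx_eta_physical}) then gives $\dd\widehat\eta=\widehat r\,\dd t+\sum_kA_k\widehat\eta\circ\dd W_k$. Converting to It\^o form produces exactly \eqref{app:eq_eta_ito_bending}. The regularity hypotheses $\widehat\eta\in L^2_\omega L^\infty_tH^2$ and $\widehat r\in L^2_\omega L^2_tH^1$ hold after localization, by \eqref{eq:delta_final_energy_moment}. Lemma~\ref{app:lem_bending_commutator} therefore yields \eqref{app:eq_bending_regularized_energy}, whose cross term is $+\int_0^t\langle\Delta_\Gamma\widehat\eta,\Delta_\Gamma K_n\widehat r\rangle\,\dd s$.

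\emph{Step 2: cancellation.} Add \eqref{app:eq_bending_regularized_energy} to \eqref{app:eq_fluid_cross_balance}. The elastic cross terms $\pm\int_0^t\langle\Delta_\Gamma\eta,\Delta_\Gamma K_nr\rangle$ cancel exactly at fixed $n$. This cancellation is why the same $K_n=J_n^2$ is used in both lemmas: the term need not have a limit separately, since it only makes sense for $\eta\in H^2$ and $r\in H^1$. The result is
\[
\mathcal K_{{\rm cpl},n}(t)+\tfrac12\|z_n(t)\|_2^2+\mathcal V_n(t)+\mathcal N_n(t)+\nu_s\int_0^t\|\nabla_\Gamma J_nr\|_2^2
\le\mathcal K_{{\rm cpl},n}(0)+\tfrac12\|z_n(0)\|_2^2+\sum_k\int_0^t\widetilde M_{n,k}\,\dd W_k+\tfrac12\sum_k\int_0^t\widetilde R_{n,k}+\varepsilon_n(t).
\]

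\emph{Step 3: the limit $n\to\infty$.} On the left-hand side:
\begin{itemize}
\item the fluid and shell-velocity terms converge by \eqref{app:eq_fluid_cross_energy_limit}--\eqref{app:eq_fluid_cross_navier_limit};
\item $\|z_n(t)\|_2\to\|\Delta_\Gamma\eta(t)\|_2$, since $J_n$ is an approximate identity;
\item $\int_0^t\|\nabla_\Gamma J_nr\|_2^2\to\int_0^t\|\nabla_\Gamma r\|_2^2$.
\end{itemize}
On the right-hand side, $\varepsilon_n\to0$ in $L^1_\omega C_t$, the It\^o corrections converge in $L^1$ by \eqref{app:eq_bending_Rnk_limit}, and $\widetilde M_{n,k}\to\mathcal G_k$ in $L^2_{\omega,t}$. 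By BDG this gives $\int\widetilde M_{n,k}\,\dd W_k\to\int\mathcal G_k\,\dd W_k$ in $L^2_\omega C_t$.

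\emph{Main obstacle: the inequality for every $t$, not just in a limit sense.} The convergence of $\mathcal K_{{\rm cpl},n}(t)$ must hold pointwise in $t$, or at least for a.e.\ $t$ along a subsequence that works almost surely. I would proceed as follows.
\begin{enumerate}
\item Pass to a subsequence converging a.s.\ uniformly in $t$ for the martingale, drift and $\varepsilon_n$ terms.
\item For the energy terms, use \eqref{app:eq_cross_kinetic_identity}, noting that the defect $-\tfrac12\rho|\mathbf U_n-\mathbf u|^2$ has a favourable sign. This gives the inequality for a.e.\ $t$.
\item Extend to every $t\le T'$ by choosing the lower-semicontinuous representative of $\mathcal E_{\rm tr}$, using weak continuity of $\varrho$ and $\zeta$ together with weak lower semicontinuity.
\end{enumerate}
At $t=0$ the initial terms converge strongly, since $J_n\Delta\eta_0\to\Delta\eta_0$ and $\mathbf U_n(0)\to\mathbf u_0$ in the weighted space. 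Finally, undo the localization by letting $T'\uparrow\tau^\zeta$.
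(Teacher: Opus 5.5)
Your plan is the paper's proof: apply \eqref{app:eq_bending_regularized_energy} and \eqref{app:eq_fluid_cross_balance} with $\eta=\widehat\eta$, $r=\widehat r$, cancel the elastic cross terms at the common scale $K_n=J_n^2$, and let $n\to\infty$ using \eqref{app:eq_bending_Rnk_limit}, BDG and \eqref{app:eq_fluid_cross_energy_limit}--\eqref{app:eq_fluid_cross_navier_limit}; your It\^o-form check of \eqref{app:eq_eta_ito_bending} and your every-$t$ discussion are details the paper leaves implicit. One correction to that discussion: by \eqref{app:eq_cross_kinetic_identity}, $\mathcal K_{{\rm cpl},n}(t)$ equals $\int(\frac12\rho|\mathbf u|^2+P(\rho))\,\dd\bm x+\frac12\|J_nr(t)\|_2^2$ minus $\frac12\int\rho|\mathbf U_n-\mathbf u|^2\,\dd\bm x$, so at time $t$ the defect has the \emph{unfavourable} sign, and the a.e.-$t$ inequality must instead come from the defect vanishing for a.e.\ $t$ along a subsequence, which \eqref{app:eq_energy_core_weighted_kinetic} provides; the sign is favourable only at $t=0$, where it lets you drop the defect rather than invoke the unproved strong convergence $\mathbf U_n(0)\to\mathbf u_0$.
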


\begin{proof}
	Apply
	\eqref{app:eq_bending_regularized_energy} and
	\eqref{app:eq_fluid_cross_balance} with
	$
	\eta=\widehat\eta,
	r=\widehat r.
	$
	The terms
$
	\pm
	\int_0^t
	\left\langle
	\Delta_\Gamma\widehat\eta,
	\Delta_\Gamma K_n\widehat r
	\right\rangle\,\dd s
	$
	cancel, and therefore
	\begin{align}
		&\mathcal K_{{\rm cpl},n}(t)
		+\frac12
		\|J_n\Delta_\Gamma\widehat\eta(t)\|_2^2
		+\mathcal V_n(t)
		+\mathcal N_n(t)
		+\nu_s
		\int_0^t
		\|\nabla_\Gamma J_n\widehat r\|_2^2\,\dd s
		\notag\\
		&\qquad\le
		\mathcal K_{{\rm cpl},n}(0)
		+\frac12
		\|J_n\Delta_\Gamma\widehat\eta(0)\|_2^2
		+\sum_{k=1}^K
		\int_0^t
		\widetilde M_{n,k}\,\dd W_k
		\notag\\
		&\qquad\quad
		+\frac12
		\sum_{k=1}^K
		\int_0^t
		\widetilde R_{n,k}\,\dd s
		+\varepsilon_n(t).
		\label{app:eq_total_energy_regularized}
	\end{align}
	
	By
	\eqref{app:eq_fluid_cross_energy_limit}--%
	\eqref{app:eq_fluid_cross_navier_limit},
	\[
	J_n\Delta_\Gamma\widehat\eta
	\to
	\Delta_\Gamma\widehat\eta
	\quad\text{in }
	L^2(0,T';L^2(\Gamma)),
	\]
	\[
	J_n\widehat r
	\to
	\widehat r
	\quad\text{in }
	L^2(0,T';H^1(\Gamma)),
	\qquad
	\mathbb E
	\sup_{t\le T'}
	|\varepsilon_n(t)|
	\to0.
	\]
	Furthermore,
	\eqref{app:eq_bending_Rnk_limit} and the BDG inequality give
	\[
	\sum_{k=1}^K
	\int_0^\cdot
	\widetilde M_{n,k}\,\dd W_k
	\longrightarrow
	\sum_{k=1}^K
	\int_0^\cdot
	\mathcal G_k(\widehat\eta(s))\,\dd W_k(s)
	\quad
	\text{in }L^1(\Omega;C([0,T'])),
	\]
	and
	\[
	\sum_{k=1}^K
	\int_0^t
	\widetilde R_{n,k}\,\dd s
	\longrightarrow
	\sum_{k=1}^K
	\int_0^t
	\mathcal R_k(\widehat\eta(s))\,\dd s
	\quad
	\text{in }L^1(\Omega).
	\]
	Passing to the limit in
	\eqref{app:eq_total_energy_regularized} and using the
	volume- and area-preserving change of variables yields
	\eqref{app:eq_total_energy_recovered}.
\end{proof}

\bibliographystyle{amsplain}

\end{document}